\newcommand{\HH}[0]{\mathbb{H}}
\newcommand{\HHH}[0]{\mathcal{H}}
\newcommand{\MM}[0]{\mathcal{M}}
\newcommand{\AM}[0]{\mathcal{AM}}
\newcommand{\MCG}[0]{\mathcal{MCG}}
\newcommand{\TT}[0]{\mathcal{T}}
\newcommand{\PP}[0]{\mathcal{P}}
\newcommand{\CC}[0]{\mathcal{C}}
\newcommand{\ZZ}[0]{\mathbb{Z}}
\newcommand{\NN}[0]{\mathbb{N}}
\newcommand{\diam}[0]{\mathrm{diam}}
\newcommand{\Ext}[0]{\mathrm{Ext}}
\newcommand{\base}[0]{\mathrm{base}}
\newcommand{\tmu}[0]{\widetilde{\mu}}
\newcommand{\teta}[0]{\widetilde{\eta}}
\newcommand{\tsigma}[0]{\widetilde{\sigma}}
\newcommand{\tH}[0]{\widetilde{H}}
\newcommand{\TSigma}[0]{\widetilde{\Sigma}}
\newcommand{\ttau}[0]{\widetilde{\tau}}
\newcommand{\tT}[0]{\widetilde{\textbf{T}}}
\newcommand{\tI}[0]{\widetilde{\textbf{I}}}
\newcommand{\bT}[0]{\textbf{T}}
\newcommand{\bI}[0]{\textbf{I}}
\newcommand{\tg}[0]{\tilde{g}}
\newcommand{\tb}[0]{\tilde{b}}
\newcommand{\tf}[0]{\tilde{f}}
\newcommand{\tV}[0]{\widetilde{V}}
\newtheorem {theorem}{Theorem}[subsection]
\newtheorem {lemma} [theorem] {Lemma}
\newtheorem {proposition} [theorem] {Proposition}
\newtheorem {corollary} [theorem] {Corollary}
\newtheorem{remark}[theorem]{Remark}
\newtheorem{definition}[theorem]{Definition}
\begin{document}

\title{The augmented marking complex of a surface}
\author{Matthew Gentry Durham}
\address{Department of Mathematics, University of Michigan, 3079 East Hall, 530 Church Street, Ann Arbor 48109}
\email{durhamma(at)umich.edu}
\maketitle

\begin{abstract}
We build an augmentation of the Masur-Minsky marking complex by Groves-Manning combinatorial horoballs to obtain a graph we call the \emph{augmented marking complex}, $\AM(S)$.  Adapting work of Masur-Minsky, we show this augmented marking complex is quasiisometric to Teichm\"uller space with the Teichm\"uller metric.  A similar construction was independently discovered by Eskin-Masur-Rafi \cite{EMR13}.  We also completely integrate the Masur-Minsky hierarchy machinery to $\AM(S)$ to build flexible families of uniform quasigeodesics in Teichm\"uller space.  As an application, we give a new proof of Rafi's distance formula for $\TT(S)$ with the Teichm\"uller metric.  We have included an appendix in which we prove a number of facts about hierarchies that we hope will be of independent interest.
\end{abstract}

\section{Introduction}

The study of various combinatorial complexes built from simple closed curves on surfaces has greatly advanced the state of knowledge of the geometry of Teichm\"uller space, $\TT(S)$, the mapping class group, $\MCG(S)$, and hyperbolic 3-manifolds.  In \cite{Br03}, Brock showed that $\TT(S)$ with the Weil-Petersson metric is quasiisometric to the graph of pants decompositions on $S$, $\PP(S)$, an insight which he used to prove that the Weil-Petersson distance between two points in $\TT(S)$ is coarsely the volume of the convex core of the quasi-Fuchsian hyperbolic 3-manifold they simultaneously uniformize.  Beginning with their proof of hyperbolicity of the curve complex, $\CC(S)$, in \cite{MM99}, the hierarchy machinery Masur-Minsky developed in \cite{MM00} was essential in the proof of the Ending Lamination Theorem (\cite{Min03}, \cite{BCM11}) for hyperbolic 3-manifolds.  Moreover, in \cite{MM00}, Masur-Minsky built the marking complex, $\MM(S)$, and prove it is quasiisometric to $\MCG(S)$ in any word metric, an analogy essential to the proofs of the rank conjecture (\cite{BM08}) and quasiisometric rigidity (\cite{BKMM}) theorems for the mapping class group.\\

The main goal of this paper is to build a combinatorial complex, the \emph{augmented marking complex}, which is quasi-isometric to $\TT(S)$ in the Teichm\"uller metric:

\begin{theorem} \label{r:main theorem}
The augmented marking complex, $\AM(S)$, is $\MCG(S)$-equivariantly quasiisometric to $\TT(S)$ in the Teichm\"uller metric. 
\end{theorem}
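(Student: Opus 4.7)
The plan is to produce an $\MCG(S)$-equivariant coarse map $\Phi\colon \TT(S) \to \AM(S)$ and verify that it is a quasi-isometry. At a point $X\in\TT(S)$, I would set $\Phi(X)$ to be a Bers short marking $\mu_X$ at $X$ (a shortest pants decomposition together with clean transversals), lifted above each pants curve $\alpha$ into the Groves--Manning horoball attached to the $\MCG$-orbit of $\alpha$ to the level determined by a fixed bi-Lipschitz function of $\log\bigl(1/\Ext_X(\alpha)\bigr)$. Equivariance is automatic from the construction, and $\Phi$ is coarsely well-defined because Bers markings at a single point have bounded $\MM(S)$-diameter and because the extremal length of each pants curve varies by a bounded multiplicative factor over this set of choices.

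For the upper bound $d_{\AM}(\Phi(X),\Phi(Y)) \preceq d_{\TT}(X,Y)$, I would adapt the Masur--Minsky argument that $\MM(S)$ is quasi-isometric to $\MCG(S)$. Sample points $X=X_0,X_1,\ldots,X_N=Y$ along a Teichm\"uller geodesic at a fixed spacing; the original Masur--Minsky analysis shows that $\mu_{X_i}$ and $\mu_{X_{i+1}}$ differ by a bounded number of elementary marking moves, contributing boundedly to $\MM(S)$-distance at each step. The new feature is the horoball coordinate: Kerckhoff's formula implies that the extremal length of any curve changes by at most a fixed multiplicative factor over a bounded Teichm\"uller segment, so the logarithmic depth changes boundedly per step, contributing boundedly to $\AM$-distance by the definition of the Groves--Manning metric.

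The main obstacle is the reverse inequality $d_{\TT}(X,Y)\preceq d_{\AM}(\Phi(X),\Phi(Y))$. Here I would invoke Minsky's product regions theorem, which says that the $\epsilon$-thin stratum on which a collection $\sigma$ of curves is short is coarsely isometric, in the sup metric, to the product of $\TT(S\setminus\sigma)$ with one upper-half-plane factor per $\alpha\in\sigma$, parameterized by twist and $\log(1/\ell_\alpha)$. The Groves--Manning horoballs in $\AM(S)$ are calibrated to serve as combinatorial quasi-models for such $\HH^2$ factors, so one can split $d_{\AM}(\Phi(X),\Phi(Y))$ into a base-marking contribution plus horoball contributions and match each piece against the corresponding factor of $d_{\TT}(X,Y)$ via the product-region decomposition. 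The delicate point will be to arrange Bers markings at $X$ and $Y$ that are simultaneously compatible with a common collection of curves capturing the thin parts along the geodesic, so that the thick/thin decompositions at the two endpoints are comparable and the split distances can be matched curve-by-curve.

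Coarse surjectivity then follows by constructing, for any vertex of $\AM(S)$, a point of $\TT(S)$ whose Fenchel--Nielsen coordinates realize the underlying marking as a short marking with extremal lengths prescribed by the given horoball levels. Combined with $\MCG(S)$-equivariance, which is evident from the construction, this yields the desired equivariant quasi-isometry.
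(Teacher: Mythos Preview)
Your map $\Phi$ is essentially the paper's map $F$, and your upper bound argument (sample along a Teichm\"uller geodesic, use Kerckhoff to bound extremal-length ratios, bound marking moves per step) is close in spirit to the paper's proof that $F$ is Lipschitz, though the paper actually routes this through the $\AM(S)$ distance formula (Theorem~\ref{r:ams distance}) by bounding every subsurface projection between $\tmu_{\sigma_1}$ and $\tmu_{\sigma_2}$ when $d_T(\sigma_1,\sigma_2)=1$.

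The substantive divergence is in the reverse inequality. You propose to bound $d_{\TT}(X,Y)$ by decomposing it via product regions and matching pieces against a corresponding decomposition of $d_{\AM}(\Phi(X),\Phi(Y))$. The ``delicate point'' you flag is exactly the obstruction: there is no reason the thin parts at $X$ and $Y$ should be organized around a common simplex, so product regions do not give you a single decomposition of $d_{\TT}(X,Y)$ to match against. Carrying this out directly would force you to track how the thin regions change along the geodesic and amounts to re-deriving Rafi's distance formula, which is a theorem of comparable depth to the one you are trying to prove.

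The paper sidesteps this entirely. It takes the Fenchel--Nielsen map $G:\AM(S)\to\TT(S)$ you already built for coarse surjectivity, observes that $F\circ G=\id_{\AM(S)}$, and proves $G$ is Lipschitz \emph{edge by edge} in $\AM(S)$. For a vertical or twist edge in a horoball $\HHH_\alpha$, both endpoints lie in $Thin_\alpha$ and Minsky's product regions theorem applies directly; for a flip edge, both endpoints are thick and there are only finitely many such pairs up to the $\MCG(S)$-action. Having two Lipschitz maps with $F\circ G=\id$ gives the quasi-isometric embedding immediately (Lemma~\ref{r:qi condition}), and your surjectivity argument finishes it. The point is that working on edges of $\AM(S)$ rather than on arbitrary pairs in $\TT(S)$ lets you always invoke product regions in a single thin stratum, which is precisely what your direct approach cannot arrange.
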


A large part of this paper is spent adapting the Masur-Minsky hierarchy machinery for $\MM(S)$ and $\PP(S)$ to $\AM(S)$.  We use these augmented hierarchies for $\AM(S)$ to build families of uniform quasigeodesics called \emph{augmented hierarchy paths} and derive a version of Rafi's distance formula for the Teichm\"uller metric (Theorem \ref{r:Rafi}), thereby completing the unification of the coarse geometries of $\MCG(S)$ and $\TT(S)$ in the Weil-Petersson and Teichm\"uller metrics by a common framework developed in \cite{MM99,MM00,Br03,Raf05, Raf07}.  In a recent paper, Eskin-Masur-Rafi (\cite{EMR13}) used $\AM(S)$ and augmented hierarchy paths, which they independently discovered, to prove the Brock-Farb Geometric Rank Conjecture for $\TT(S)$ with the Teichm\"uller metric (see \cite{BF06}).  Bowditch \cite{Bow14}, Behrstock-Hagen-Sisto \cite{BHS14}, and the author \cite{Dur14} have also used $\AM(S)$ to give different, independent proofs of the rank conjecture.\\

Our construction follows upon the work of Masur and Minsky on the curve and marking complexes \cite{MM99, MM00} and Rafi's applications of their machinery to Teichm\"uller geometry \cite{Raf05,Raf07}, though we emphasize that our work is independent of Rafi's. We now briefly discuss the context of these results.\\

The Teichm\"uller space of a surface $S$, denoted $\TT(S)$, is the space of hyperbolic metrics on $S$ up to isotopy.  The geometry of the thin part of $\TT(S)$, those metrics for which the hyperbolic lengths of some curves on the surface are small, is fundamentally different from its complement, the thick part.  One can see this in the completion of $\TT(S)$ in the Weil-Petersson metric, where curves are pinched to nodes and the geometry of the boundary strata is that of a product of the Teichm\"uller spaces of the complements of the pinched curves.  While this stark phenomenon does not exactly hold in the Teichm\"uller metric, Minsky proved in \cite{Min96} that the Teichm\"uller metric on the thin part of $\TT(S)$ is quasiisometric to the product of the Teichm\"uller spaces of the complements of the short curves and a product of horodisks, one for each short curve (see Theorem \ref{r:product}) with the sup metric; that is, the thin parts of $\TT(S)$ coarsely have a product structure.\\

In \cite{MM99}, Masur and Minsky proved that Harvey's complex of simple closed curves  \cite{Ha81} on $S$, denoted by $\CC(S)$, is $\delta$-hyperbolic and that the electrification of the thin parts of  $\TT(S)$ is quasi-isometric to $\CC(S)$ and thus hyperbolic.  While this provides for a substantial amount of control over the large-scale geometry of $\CC(S)$ and the thick part of $\TT(S)$, $\CC(S)$ is locally infinite, whereas $\TT(S)$ is proper with the Teichm\"uller metric, and thus hyperbolicity does little a priori to inform upon the local geometry of either.  In \cite{MM00}, they introduced the machinery of \emph{hierarchies} of tight geodesics which record the combinatorial information sufficient to gain a great deal of control over the local geometry of $\CC(S)$, proving it shares some properties with locally finite complexes.  These hierarchies also contain the information sufficient to build quasigeodesics in the associated marking complex, $\MM(S)$, called \emph{hierarchy paths}.  They proved that the progress along a hierarchy path coarsely occurs in subsurfaces to which the end markings have heavily overlapping projections.  Using the hierarchy machinery, they proved that $\MM(S)$ is $\MCG(S)$-equivariantly quasiisometric to $\MCG(S)$ with any word metric and obtained a coarse distance formula for $\MCG(S)$ (Theorem \ref{r:MM distance} below).\\
 
The connection between the work of Masur-Minsky and the Teichm\"uller metric was largely developed by Rafi; see \cite{Raf14} for a summary of the current state of this project.  A Teichm\"uller geodesic is a path through a space of metrics on $S$ and one may ask when a given curve $\alpha \in \CC(S)$ is shorter than some fixed constant.  In \cite{Raf05}, Rafi proved that the hyperbolic length of a curve along a Teichm\"uller geodesic, $\mathcal{G}$, is shorter than the constant from Minsky's Product Regions theorem (Theorem \ref{r:product}) at some point along $\mathcal{G}$ if the vertical and horizontal foliations which determine $\mathcal{G}$ heavily overlap on a subsurface of which that curve is a boundary component.  In its sibling paper, \cite{Raf07}, Rafi took this condition on foliations and translated it into the context of the curve complex.  He proves $\mathcal{G}$ enters the thin part of $\TT(S)$ of a subsurface $Y \subset S$ if and only if the curves which constitute $\partial Y$ are short along $\mathcal{G}$, which happens if and only if $Y$ is filled by subsurfaces to whose curve complexes the vertical and horizontal foliations have sufficiently large projections.  In addition, he adapted the Masur-Minsky coarse distance formula for $\MCG(S)$ to obtain a coarse distance formula for $\TT(S)$ with the Teichm\"uller metric (Theorem \ref{r:Rafi} below).\\

\begin{figure}
\begin{center}
\begin{tikzpicture}[scale=1.1]
\node (teich) at (0,0) {$(\TT(S), d_T)$}; 		
\node (ams) at (4.5,0) {$\AM(S)$};
\node (mcg) at (0,-1.5) {$\MCG(S)$};
\node (mark) at (4.5,-1.5) {$\MM(S)$};
\node (wp) at (0,-3) {$(\TT(S),d_{WP})$};
\node (pp) at (4.5,-3) {$\PP(S)$};
\node (t el) at (0,-4.5) {$\TT^{el}(S)$};
\node (cc) at (4.5,-4.5) {$\CC(S)$};

\draw[->] (teich) -- node[above] {Theorem \ref{r:main theorem}} (ams);
\draw[->] (teich) --  (mcg);
\draw[->] (ams) -- (mark);
\draw[->] (mcg) -- node[above] {Masur-Minsky \cite{MM00}} (mark);
\draw[->] (mcg) -- (wp);
\draw[->] (wp) -- node[above] {Brock \cite{Br03}} (pp);
\draw[->] (wp) -- (t el);
\draw[->] (mark) -- (pp);
\draw[->] (pp) -- (cc);
\draw[->] (t el) -- node[above] {Masur-Minsky \cite{MM99}} (cc);
\end{tikzpicture}
\end{center}
\caption{The above figure represents a flow of ideas: the vertical arrows indicate a reduction of complexity, while all horizontal arrows are $\MCG(S)$-equivariant quasiisometries.}
\end{figure}
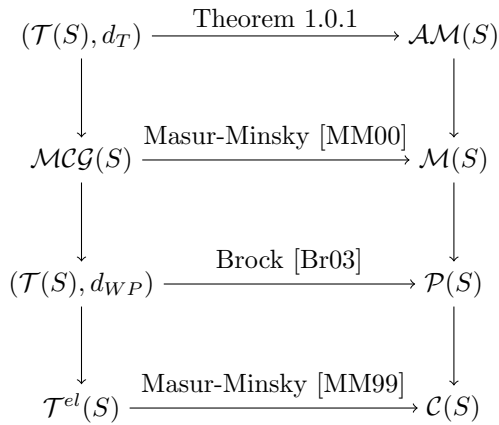


The outline of the paper is as follows: In Section 2, we give the background necessary for the paper; in Section 3, we show how to build $\AM(S)$ from $\MM(S)$; in Section 4, we define augmented hierarchies, and show how to translate most of \cite{MM00} to our setting; in Section 5, we explain how to build augmented hierarchy paths; in Section 6, we derive a distance formula for $\AM(S)$ and prove that augmented hierarchy paths are uniform quasigeodesics; in Section 7, we prove that $\AM(S)$ and $(\TT(S),d_{T})$ are quasiisometric; finally, in the Appendix, we prove structural results about hierarchies which may be of interest to the experts.\\


\textbf{Acknowledgements}
The author would like to thank Daniel Groves for his great encouragement and guidance.  He would also like to thank to Jonah Gaster, Hao Liang, Howard Masur, Yair Minsky, and Kasra Rafi for interesting conversations.

\section{Preliminaries}

For the remainder of the paper, let $S$ be a connected, orientable surface of finite type with negative Euler characteristic.\\

In this section, we recall from \cite{MM00} the basic construction of the marking complex for a surface of finite type, $\MM(S)$.  We then briefly explain Rafi's combinatorial model  \cite{Raf07} for Teichm\"uller space in the Teichm\"uller metric, $(\TT(S), d_T)$.  Finally, we introduce the notion of a combinatorial horoball from \cite{GM08}.

\subsection{Notation}

To simplify the exposition, we adopt some standard notation from coarse geometry.  Given a pair of constants, $C_1, C_2 \geq 0$, and a pair of quantities, $A$ and $B$, we write $A \asymp_{(C_1, C_2)} B$ or simply $A \asymp B$ if
\[\frac{1}{C_1} \cdot A -C_2 \leq B \leq C_1 \cdot A + C_2\]

In this paper, any such constants $C_1$ and $C_2$ involved in a coarse equality depend on the topology of $S$.

\subsection{Curve complexes and subsurface projections}

The \emph{complex of curves} of $S$, denoted $\CC(S)$, is a simplicial complex whose simplices consist of disjoint collections of isotopy classes of simple closed curves on $S$.  In the case where $S$ is a once-punctured torus or four-holed sphere, minimal intersection replaces disjointness as the adjacency relation.  For $Y_{\alpha}$ an annulus in $S$ with core curve $\alpha$, $\CC(Y_{\alpha}) = \CC(\alpha)$ is the simplicial complex with vertices consisting of paths between the two boundary components of the metric compactification, $\overline{Y_{\alpha}}$, of $\widetilde{Y}_{\alpha}$, the cover of $S$ corresponding to $Y_{\alpha}$, up to homotopy relative to fixing the endpoints on the boundary; two paths are connected by an edge if they have disjoint interiors.\\

We will be considering only the 1-skeleton of $\CC(S)$ with its path metric.  Endowed with this metric, we have a remarkable theorem of Masur and Minsky \cite{MM99}:

\begin{theorem}
$\CC(S)$ is infinite-diameter and Gromov hyperbolic.
\end{theorem}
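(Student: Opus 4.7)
The plan is to establish both conclusions via Teichm\"uller-geometric shadowing, the strategy that the paper's whole framework builds upon. For infinite diameter, I would exhibit an orbit in $\CC(S)$ of a pseudo-Anosov element $\phi \in \MCG(S)$ that escapes every bounded set. Fix any simple closed curve $\alpha$ and consider the iterates $\phi^n\alpha$. Since $\phi$ has north--south dynamics on $\mathcal{PMF}(S)$, these iterates converge projectively to the attracting foliation $\mathcal{F}^+$ of $\phi$, which is arational and hence not the class of any simple closed curve. If $d_{\CC(S)}(\alpha,\phi^n\alpha)$ were bounded, one could pass to a subsequence and, via the shadowing construction below, extract a curve-complex-bounded shadow of a Teichm\"uller ray converging to $\mathcal{F}^+$, contradicting the minimality of $\mathcal{F}^+$.

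For hyperbolicity, the strategy is to shadow Teichm\"uller geodesics by curves in $\CC(S)$. Define a map $\Phi \colon \TT(S) \to \CC(S)$ sending a marked hyperbolic surface $X$ to a choice of systole on $X$. The proof then has three steps. First, show that $\Phi$ is coarsely Lipschitz and that for any Teichm\"uller geodesic $\mathcal{G} \subset \TT(S)$, the shadow $\Phi(\mathcal{G})$ is an unparametrized quasigeodesic in $\CC(S)$ with constants depending only on the topology of $S$. Second, establish a contraction property: there exist constants $D, L$ such that for any curve $\gamma$ with $d_{\CC(S)}(\gamma, \Phi(\mathcal{G})) \geq D$, the closest-point projection to $\Phi(\mathcal{G})$ of the curve-complex ball of radius $\tfrac{1}{2} d_{\CC(S)}(\gamma, \Phi(\mathcal{G}))$ about $\gamma$ has diameter at most $L$. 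Third, invoke a Morse-type local-to-global criterion: a geodesic space admitting such a family of uniform unparametrized quasigeodesic paths with the contraction property of Step 2 is Gromov hyperbolic.

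The main obstacle is the contraction estimate. Its key geometric input is the strict log-convexity of the extremal length of a fixed simple closed curve along a Teichm\"uller geodesic, which confines the time during which any given curve can realize the systole to a bounded interval. This convexity lets one compare shadows of far-away points to $\Phi(\mathcal{G})$ via divergence of geodesics in $\TT(S)$. The most delicate point is that the thick--thin dichotomy in $\TT(S)$ forces one to either choose the shadow more carefully (e.g.\ taking \emph{all} sufficiently short curves at time $t$ rather than a single systole) or to pre-electrify the thin parts, since within a thin part $\TT(S)$ itself is not hyperbolic; indeed, the paper's Minsky product-region statement (Theorem \ref{r:product}) tells us exactly what kind of degeneracy must be neutralized. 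Once this contraction is in hand, both the unparametrized quasigeodesic property of Step 1 and the hyperbolicity conclusion of Step 3 follow from by-now standard arguments of Mazur-type / thin-triangles flavor.
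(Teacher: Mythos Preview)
The paper does not supply a proof of this statement: it is quoted as a background theorem of Masur--Minsky \cite{MM99} (``Endowed with this metric, we have a remarkable theorem of Masur and Minsky''). There is therefore no in-paper argument to compare against.

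That said, your outline is broadly the strategy of the original \cite{MM99} proof: a systole-type map $\Phi\colon\TT(S)\to\CC(S)$, shadows of Teichm\"uller geodesics as coarse paths, a contraction/nesting property for projections, and a thin-triangles criterion. Two points deserve tightening. First, the assertion that the extremal length of a fixed curve is \emph{strictly log-convex} along a Teichm\"uller geodesic is not what drives the argument and is not true in that form; the actual input in \cite{MM99} is their nesting lemma for nearest-point projections, proved via Kerckhoff's formula and quadratic-differential estimates rather than any convexity of $\Ext$. Second, your infinite-diameter argument is circular as written: you invoke ``the shadowing construction below'' to contradict boundedness of $d_{\CC(S)}(\alpha,\phi^n\alpha)$, but that construction is exactly what you are trying to set up. A self-contained route is to use the elementary bound $d_{\CC(S)}(\alpha,\beta)\succ \log i(\alpha,\beta)$ together with the exponential growth of $i(\alpha,\phi^n\alpha)$ for a pseudo-Anosov $\phi$.
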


The curve complex is locally infinite, but the links of vertices are often (products of) Gromov hyperbolic graphs, which gives us a substantial amount of control over the global geometry of $\CC(S)$, via the hierarchy machinery in \cite{MM00}.\\

Consider a curve $\alpha \in \CC(S)$.  Then the link of $\alpha$ is $\CC(S\setminus \alpha)$, where $\CC(S\setminus \alpha)$ is the join $\CC(S_1) * \CC(S_2)$ if $\alpha$ is separating and $S\setminus \alpha = S_1\coprod S_2$.  More generally, if $Y \subset S$ is any proper subsurface, then $\CC(Y)$ lives in the 1-neighborhood of $\partial Y \subset \CC(S)$.\\

We are often interested in understanding the combinatorial relationship between two curves or simplices of $\CC(S)$ from the perspective of $\CC(Y)$ for some subsurface $Y \subset S$.  Let $\alpha \subset \CC(S)$ be any simplex and let $Y \subset S$ be any subsurface of $S$ which is not a pair of pants.  The \emph{subsurface projection} of $\alpha$ to $Y$ is the canonical completion of the arcs in $\alpha \cap Y$ along the boundary of a regular neighborhood of $\alpha \cap Y$ and $\partial Y$ to curves in $Y$.  We denote this projection by $\pi_Y (\alpha)$ and remark that it is a simplex in $\CC(Y)$.  If $Y_{\gamma}$ is an annulus with core $\gamma$ and $\alpha$ intersects $\gamma$ transversely, then $\pi_{\gamma}(\alpha)$ is the finite, diameter-1 set of lifts of $\alpha$ to $\widetilde{Y}_{\gamma}$ which connect the two boundary components of $\overline{Y}_{\gamma}$.  See Section 2 of \cite{MM00} for more details.\\

For any two simplices $\alpha, \beta \subset \CC(S)$ and subsurface $Y \subset S$, we use the shorthand $d_Y(\alpha, \beta) = d_Y(\pi_Y(\alpha), \pi_Y(\beta))$.\\

Subsurface projections are essential objects in the Masur-Minsky hierarchy machinery.  One of the main outputs of that machinery is the distance formula for $\MM(S)$, Theorem \ref{r:MM distance} below.

\subsection{Marking complexes}\label{r:marking complexes}

A \emph{marking}, $\mu$, on a surface S is a collection of \emph{transverse pairs}, $(\alpha, t_{\alpha})$, where the $\alpha$ form a simplex in $\CC(S$, called the \emph{base} of $\mu$, denoted $base(\mu)$, and each $t_{\alpha}$ is a diameter-1 set of vertices in the annular complex $\CC(\alpha)$ (see Section 2.4 of \cite{MM00} ), called the set of \emph{transversals}.  We say a marking $\mu$ is \emph{complete} if $\base(\mu)$ is a pants decomposition of $S$, and \emph{clean}, if the only base curve each transversal $t_{\alpha}$ intersects is its paired base curve, $\alpha$.\\

We remark that, in any complete clean marking, each transversal intersects either one or two other transversals.   Indeed, since the base curves form a pants decomposition, one can decompose $S$ into a collection of pairs of pants where the base curves form the cuffs and the transverse curves are cut into essential arcs in the pairs of pants.  In each pair of pants, each transverse arc must intersect exactly one other transverse arc.  In the case that $\alpha$ is two cuffs in one pair of pants (that is, $\alpha$ and $t_{\alpha}$ fill a one-holed torus), $t_{\alpha}$ intersects only one other transverse curve; otherwise, each transverse curve intersects two others.\\

The \emph{marking complex} of $S$, denoted $\MM(S)$, is a graph whose vertices are complete clean markings and two markings are connected by an edge if they can be related by one of two types of \emph{elementary moves}, called \emph{twists} and \emph{flips}, which we define now.\\

Given a marking $\mu$ and a pair $(\alpha, t_{\alpha})$ in $\mu$, a \emph{twist move} around $\alpha$ involves replacing $\mu$ with $T_{\alpha}(\mu)$, where $T_{\alpha}$ is a Dehn twist or half-twist around $\alpha$, depending on whether $\alpha \cup t_{\alpha}$ fills a once-puncture torus or a four-holed sphere, respectively.  By construction, $t_{\alpha}$ is the only curve in $\mu$ which intersects $\alpha$, so this reduces to $(\alpha, t_{\alpha}) \mapsto (\alpha, T_{\alpha}(t_{\alpha}))$.\\

Given a pair $(\alpha, t_{\alpha})$, a \emph{flip move} performed at $\alpha$ involves a flip $(\alpha, t_{\alpha}) \mapsto (t_{\alpha}, \alpha)$ and some extra changes to preserve cleanliness, which we now explain.  As noted above, each transverse curve intersects (either one or two) others, so now that a transverse curve has become a base curve, at least one other transverse pair has been made unclean.  In [Lemma 2.4, \cite{MM00}], Masur and Minsky show that by choosing replacement transversals to minimize distance in the annular curve complexes of their bases, one has a finite number of possible new transversals which are all uniformly close to each other.  The purpose of this cleaning is to preserve the twisting data around $\alpha$ while allowing for future flip moves to occur without the resulting base sets failing to be pants decompositions.  See Figure \ref{fig:marking}.\\

\begin{figure}
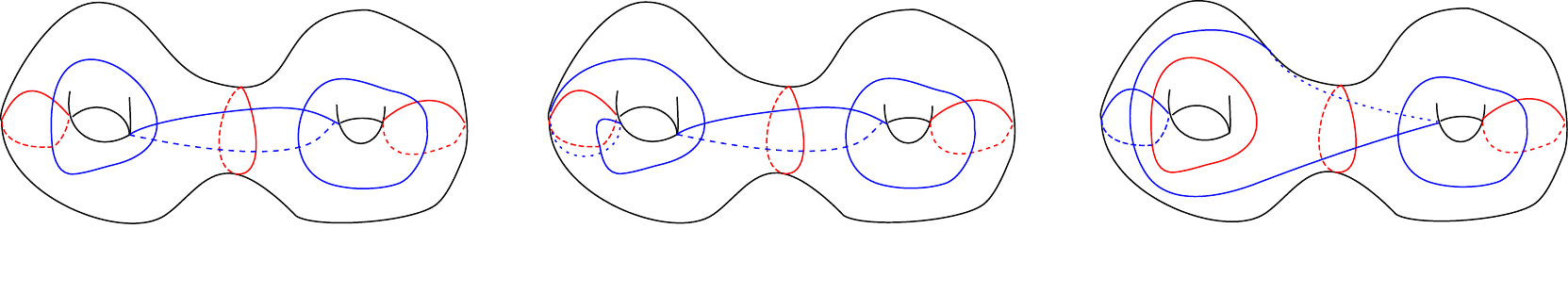
\caption{(a) A marking $\mu \in \MM(S)$ on a genus two surface, where the \textcolor{red}{red} curves are $base(\mu)$ and the \textcolor{blue}{blue} curves are the transversal; (b) $\mu$ after a twist move around the left base curve; (c) $\mu$ after a flip move at the left transverse pair.}
\label{fig:marking}
\end{figure}

In the rest of the paper, we assume that all markings are clean and complete.

\begin{definition} [Subsurface projections for markings] \label{r:subproj for mark}
We will be interested in subsurface projections for markings.  For any $\mu \in \MM(S)$ and $Y \subset S$ any subsurface which is not an annulus whose core is in $base(\mu)$, we define the \emph{subsurface projection of $\mu$ to $\CC(Y)$} by $\pi_Y (\mu) = \pi_Y(base(\mu))$.  In the case that $Y$ is an annulus with core $\alpha \in base(\mu)$ with transversal $t_{\alpha}$, then $\pi_Y(\mu) = t_{\alpha}$.
\end{definition}

We now define the projection of a marking on $S$ to a marking on a subsurface:
\begin{definition}[Projections of markings to markings on subsurfaces]
Let $\mu \in \MM(S)$ and $Y \subset S$ be any subsurface.  We build $\pi_{\MM(Y)}(\mu)$ inductively as follows.  Choose a curve $\alpha_1 \in \pi_Y(\mu)$, then build a pants decomposition on $Y$ by choosing $\alpha_i \in \pi_{Y \setminus \bigcup_{j=1}^{i-1} \alpha_j} (\mu)$.  From this pants decomposition, build a marking on $Y$ by choosing transverse pairs $(\alpha_i, \pi_{\alpha_i}(\mu))$.  We define $\pi_{\MM(Y)}(\mu) \subset \MM(Y)$ to be the collection of all markings resulting from varying the choices of the $\alpha_i$.
\end{definition}

Lemma 2.4 in \cite{MM00} and Lemma 6.1 of \cite{Beh06} show that the freedom in this process builds a bounded diameter subset of $\MM(Y)$.  We remark however that if $\partial Y \subset base(\mu)$, then $\pi_{\MM(Y)}(\mu)$ is a unique point in $\MM(Y)$, since every curve in $base(\mu)$ either projects to itself in $\CC(Y)$ or has an empty projection.

\begin{remark} \label{r:lie in Y}
The process of constructing $\pi_{\MM(Y)}(\mu)$ preserves any curve  $\alpha \in base(\mu)$ which happens to lie in $Y$, for $\alpha \in \pi_Y(\mu)$ and $\pi_Y$ preserves disjointness.  Otherwise, we could have chosen to build $\pi_{\MM(Y)}(\mu)$ by first preferentially choosing curves in $base(\mu)$ which lie in $Y$. 
\end{remark}


\subsection{Hierarchies, large links, and the Masur-Minsky distance formula} \label{r:hrll}

Since a substantial portion of this paper is spent adapting the Masur-Minsky machinery to Teichm\"uller space, we now only briefly outline the features of the Masur-Minsky hierarchies.  The main references for the hierarchy theory are \cite{MM00} and \cite{Min03}, and we will point the reader to the corresponding sections when possible; the initial exposition begins in \cite{MM00}[Section 4].  See also the theses of Tao \cite{Tao13} and Behrstock \cite{Beh06} for nice introductions to the theory.  Our treatment begins in Section \ref{r:aughier section}.\\

Given any two markings $\mu_1, \mu_2 \in \MM(S)$, a \emph{hierarchy}, $H$, between $\mu_1$ and $\mu_2$ is family of special geodesics $g_Y \subset \CC(Y)$ with partial markings associated, denoted $\textbf{I}(g_Y)$ and $\textbf{T}(g_Y)$.  Each such geodesic is supported on a distinct subsurface $Y \subset S$, such that the geodesics satisfy a number of subordinancy relations among the $g_Y$ determined by the associated partial markings; see Subsection 4.1 of \cite{MM00}.  Any such hierarchy $H$ can be used to build a uniform quasigeodesic between $\mu$ and $\eta$ in $\MM(S)$, called a \emph{hierarchy path}.\\

Given any pair of markings $\mu_1, \mu_2 \in \MM(S)$, we say that a subsurface $Y \subset S$ is a $K$-\emph{large link} for $\mu_1$ and $\mu_2$ if $d_Y(\mu_1, \mu_2) > K$.  \cite{MM00}[Lemma 6.12] tells us large links are the main building blocks of hierarchy paths:

\begin{lemma}[Lemma 6.12 in \cite{MM00}] \label{r:large link condition}
There exists a $K>0$ such that for any $\mu_1, \mu_2 \in \MM(S)$ and subsurface $Y \subset S$ such that $d_Y(\mu_1, \mu_2)>K$, then $Y$ supports a geodesic $g_Y \in H$ for any hierarchy $H$ between $\mu_1$ and $\mu_2$.
\end{lemma}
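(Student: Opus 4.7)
The plan is to combine the Bounded Geodesic Image Theorem (BGI) of Masur--Minsky (\cite{MM00}, Theorem~3.1) with an induction on the complexity $\xi(S)$. Recall that BGI provides a constant $M = M(S)$ so that for any subsurface $W \subset S$, any geodesic $\gamma \subset \CC(W)$, and any strict subsurface $Y \subsetneq W$, if every vertex of $\gamma$ intersects $\partial Y$ essentially then $\diam_{\CC(Y)}(\pi_Y(\gamma)) \leq M$. Contrapositively, if the $Y$-distance between the endpoints of $\gamma$ exceeds $M$, then some vertex $v \in \gamma$ must be disjoint from $\partial Y$, placing $Y$ inside a component domain of the pair $(v, \gamma)$.

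I would set $K$ to be a large multiple of $M$ (tuned at the end of the induction) and treat the base case $Y = S$ trivially, since every hierarchy carries a main geodesic with domain $S$. For $Y \subsetneq S$, apply BGI to the main geodesic $g_S$ of $H$, whose endpoints are $\pi_S(\mu_1), \pi_S(\mu_2)$: the hypothesis $d_Y(\mu_1, \mu_2) > K > M$ produces a vertex $v \in g_S$ disjoint from $\partial Y$, so $Y$ is contained in a component $W$ of $S \setminus v$. If $Y = W$, the component-domain structure of the hierarchy (\cite{MM00}, Section~5) forces a geodesic $g_Y \in H$ subordinate to $g_S$ at $v$ and we are done. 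If $Y \subsetneq W$, the same structure yields a geodesic $g_W \in H$ with $D(g_W) = W$; its initial and terminal markings $\tI(g_W), \tT(g_W)$ are determined by projecting $\mu_1, \mu_2$ through the hierarchy, and since $\pi_Y$ factors coarsely through $\pi_W$ when $Y \subsetneq W$, one has $d_Y(\tI(g_W), \tT(g_W)) \geq d_Y(\mu_1, \mu_2) - C$ for a constant $C$ depending only on $S$. The inductive hypothesis applied to the lower-complexity $W$ and the markings $\tI(g_W), \tT(g_W)$ then produces the desired $g_Y \in H$.

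The main obstacle is the bookkeeping of constants so that the induction closes: each descent into a component domain costs additive $O(1)$ error both from BGI and from replacing $\mu_i$ by the partial markings $\tI, \tT$, and one must also verify that $d_W(\tI(g_W), \tT(g_W))$ is large enough for $g_W \in H$ to be forced at each recursive step (this is where one uses that $v$ being an interior vertex of $g_S$ disjoint from $\partial Y$ places $Y$ squarely inside a component domain with nontrivial $\CC(W)$-projection data, rather than at the extremes of $g_S$). The recursion depth is bounded by $\xi(S)$, so choosing $K$ larger than the sum of these errors over a maximal chain of component domains in $S$ closes the induction. A subsidiary technicality is the annular case, when $Y$ is an annulus whose core lies in $\base(\mu_i)$ and $\pi_Y(\mu_i) = t_\alpha$: here one applies the annular version of BGI after identifying the annular core as a curve appearing somewhere in the hierarchy, and the argument proceeds as above.
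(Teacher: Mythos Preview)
Your approach differs from the paper's (and Masur--Minsky's original) argument, which does not proceed by induction on complexity. The paper instead uses the structure of the $\Sigma$-sequences (Theorem~\ref{r:sigma} here, Theorem~4.7 in \cite{MM00}): for any $Y$, the geodesics $g_Z \in H$ with $Y \subset D(g_Z)$ and nonempty $\bI(g_Z)|_Y$ (respectively $\bT(g_Z)|_Y$) form a single subordinancy chain $\Sigma^-(Y)$ (respectively $\Sigma^+(Y)$) sharing a common ``top'' geodesic $g_{X_0}$. The Sigma Projection Lemma (Lemma~\ref{r:sigma proj}, which is Lemma~6.1 in \cite{MM00}) applies BGI once along each segment of these chains to bound $\diam_Y$ of the entire package $\sigma(Y,H)$ by a constant $M_2$ whenever $Y \subsetneq X_0$. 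Since $\mu_1, \mu_2$ lie in $\sigma$, this yields $d_Y(\mu_1,\mu_2) \le M_2$ whenever $Y$ supports no geodesic, and the lemma follows by contraposition with $K = M_2$. No recursion is needed.

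Your inductive step has a genuine gap. You assert $d_Y(\bI(g_W), \bT(g_W)) \geq d_Y(\mu_1, \mu_2) - C$, but this can fail when the footprint $\phi_{g_S}(Y)$ contains more than one vertex (footprints on tight geodesics can have diameter up to~$2$). Concretely: if $v = v_i$ is the \emph{first} vertex of $g_S$ disjoint from $\partial Y$ and $v_{i+1}$ is also disjoint from $\partial Y$, then for the component domain $W$ at $v_i$ one has $\bT(g_W) = v_{i+1}$, and $\pi_Y(v_{i+1}) = \emptyset$. Thus $d_Y(\bI(g_W), \bT(g_W))$ is undefined and there is nothing to which the inductive hypothesis applies. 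Choosing instead the last footprint vertex produces the symmetric problem for $\bI(g_W)$. Your justification that ``$\pi_Y$ factors coarsely through $\pi_W$'' is beside the point: the issue is not compatibility of projections but that the terminal marking of $g_W$ carries no $Y$-information whatsoever. To repair this you must follow $Y$ down through the entire nested chain of component domains containing it, controlling projections at each step---which is exactly the content of the $\Sigma$-sequence structure and the Sigma Projection Lemma, at which point the induction on $\xi$ becomes superfluous.
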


\begin{remark}[Large link] \label{r:largelink}
The intuition behind the term large link is as follows: If $Y\subset S$ is a large link for $\mu_1, \mu_2$, we know from Lemma \ref{r:large link condition} that $Y$ supports some geodesic $g_Y \in H$; moreover, $Y$ will necessarily appear as the component of some $Z\setminus \alpha$ where $Z \subset S$ is a subsurface supporting a geodesic $g_Z \in H$ and $\alpha \in g_Z$.  While the length of $g_Y$ in $\CC(Y)$ is $d_Y(\mu_1, \mu_2) >K$, $g_Y$ lives in the link of $\alpha \in g_Z$ as a path in $\CC(Z)$, and hence the link of $\alpha$ is large from the viewpoint of $\mu_1$ and $\mu_2$.
\end{remark}

One of the main results of the hierarchy machinery is the inspirational Masur-Minsky distance formula for $\MM(S)$, which says that the $\MM(S)$-distance between markings is coarsely the sum of their large links:

\begin{theorem} [$\MM(S)$ distance formula; Theorem 6.12 of \cite{MM00}] \label{r:MM distance}
For $K>0$ as in Lemma \ref{r:large link condition} and any $k>K$, there are $E_1, E_2 >0$, such that for  any $\mu_1, \mu_2 \in \MM(S)$
$$ d_{\MM(S)}(\mu_1, \mu_2) \asymp_{(E_1, E_2)} \sum_{d_Y(\mu_1, \mu_2) > k} d_Y(\mu_1,\mu_2)$$
\end{theorem}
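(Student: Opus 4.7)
The plan is to prove the two inequalities of the coarse equivalence separately, both flowing from the hierarchy machinery sketched above.

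For the upper bound, given $\mu_1, \mu_2 \in \MM(S)$, I would invoke the existence of a hierarchy $H$ between them (\cite{MM00}, Theorem 4.6), then resolve $H$ into a \emph{hierarchy path} $\gamma \subset \MM(S)$ from $\mu_1$ to $\mu_2$ (\cite{MM00}, Section 5). A careful bookkeeping of the resolution procedure shows that
\[ |\gamma| \asymp \sum_{g_Y \in H} |g_Y|, \]
since each elementary move of $\gamma$ is naturally recorded as a unit of progress along exactly one of the component geodesics $g_Y$. By Lemma \ref{r:large link condition}, every subsurface $Y$ with $d_Y(\mu_1, \mu_2) > K$ appears as a domain of some $g_Y \in H$, and $|g_Y| \asymp d_Y(\mu_1, \mu_2)$ since the endpoints of $g_Y$ coarsely coincide with $\pi_Y(\mu_i)$. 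A standard threshold argument (summing dropped terms, each bounded by $k$) shows replacing $K$ by any $k > K$ only changes the sum by a bounded multiplicative and additive factor, giving the desired upper bound.

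For the lower bound, given any path $\mu_1 = \nu_0, \nu_1, \ldots, \nu_n = \mu_2$ in $\MM(S)$, I would rely on two key properties of elementary moves: (i) subsurface projections are coarsely Lipschitz, so there exists $C$ with $d_Y(\nu_i, \nu_{i+1}) \leq C$ for every $Y$; (ii) each elementary move replaces at most two transverse pairs, hence alters $\pi_Y(\nu_i)$ for only a uniformly bounded number of $Y$ (those meeting the relevant curves). Together, for each $Y$ with $d_Y(\mu_1, \mu_2) > k > C$, the triangle inequality forces the path to contain at least $\frac{1}{C}(d_Y(\mu_1, \mu_2) - k)$ moves which actually change the projection to $Y$. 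Summing over all large links and using the bounded multiplicity from (ii) to avoid double-counting gives
\[ n \gtrsim \sum_{d_Y(\mu_1, \mu_2) > k} d_Y(\mu_1, \mu_2). \]

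The hardest step is the coarse length formula $|\gamma| \asymp \sum_{g_Y \in H} |g_Y|$ and the quasigeodesic property of hierarchy paths. This is where the full technical weight of the hierarchy machinery is felt: one must use the Bounded Geodesic Image Theorem (\cite{MM00}, Theorem 3.1) together with the subordinancy structure of $H$ to rule out the possibility that some competing path in $\MM(S)$ could make efficient simultaneous progress across many subsurfaces while short-cutting the hierarchy's combinatorial skeleton. Concretely, one shows that a competitor path must itself traverse (coarse copies of) the component geodesics $g_Y$, up to reordering compatible with the subordinancy poset.
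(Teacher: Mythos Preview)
The paper does not prove this theorem; it is cited from \cite{MM00} as background. However, the paper does prove the augmented analogue (Theorem~\ref{r:ams distance}), and comparing your proposal against that proof reveals a genuine gap in your lower bound.

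Your claim (ii) is false as stated. An elementary move at a transverse pair $(\alpha, t_\alpha)$ alters $\pi_Y$ for \emph{every} subsurface $Y$ that meets $\alpha$ or $t_\alpha$, and there are infinitely many such $Y$. So you cannot bound the multiplicity of moves across large links by simply counting how many subsurfaces a single move touches. What you need instead is a mechanism preventing a path from making simultaneous progress in too many large-link domains at once. The paper's proof of Theorem~\ref{r:ams distance} (adapting an argument of Aougab--Taylor--Webb) does this via Behrstock's inequality (Lemma~\ref{r:beh}): for each large link $Y$ one defines an interval $I_Y \subset \{0,\dots,N\}$ during which the path is ``between'' the projections of the endpoints in $\CC(Y)$, and Lemma~\ref{r:order and projections} shows that if $Y \pitchfork Z$ then $I_Y \cap I_Z = \emptyset$. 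Since at most $s = 2\xi(S)-6$ pairwise non-overlapping domains exist, each index lies in at most $s$ intervals, giving the bound. This is the content your argument is missing; the ``bounded multiplicity from (ii)'' does not exist in the form you assert.

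Your upper bound is closer to correct but also glosses over a nontrivial step. You need not only that large links appear in $H$, but that the \emph{short} geodesics of $H$ (those with $|g_Y| \leq k$) do not dominate $|H|$. This is the content of Lemma~\ref{r:large links dominate} in the paper (and the counting argument in \cite{MM00}, Theorem 6.10): the number of component domains of a geodesic is a fixed multiple of its length, so short geodesics can be absorbed into the long ones that spawned them. Your ``standard threshold argument'' does not suffice on its own, since a priori $H$ could contain arbitrarily many length-one geodesics.
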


\subsection{The thick part and Minsky's product regions}

One of the main corollaries to the hyperbolicity of $\CC(S)$ is \cite{MM99}[Theorem 1.2], which states that the electrification of $(\TT(S), d_T)$ is quasiisometric to $\CC(S)$.  In contrast, Minsky showed in \cite{Min96}[Theorem 6.1] that the thin regions of $(\TT(S), d_T)$, where at least one curve is short, are quasiisometric to a product space with its sup metric.\\

Let $\gamma = \gamma_1, \dots, \gamma_n$ be a simplex in $\CC(S)$, and let $Thin_{\epsilon}(S, \gamma) = \{\sigma \in \TT(S) \big| l_{\sigma}(\gamma_i) \leq \epsilon\}$, where $l_{\sigma}(\gamma_i)$ is the hyperbolic length of $\gamma_i$ in $\sigma$, for each $i$.  Let 
\begin{equation} \label{r:Min prod}
\TT_{\gamma} = \TT(S\setminus \gamma) \times \prod_{\gamma_i \in \gamma} \HH_{\gamma_i}
\end{equation}
be endowed with the sup metric, where $S\setminus \gamma$ a disjoint union of punctured surfaces and each $\HH_{\gamma_i}$ is a \emph{horodisk}, that is, a copy of the upper half-plane model of $\HH^2$ with imaginary part $\geq 1$.

\begin{theorem}[Product regions; Theorem 6.1 in \cite{Min96}] \label{r:product}
The Fenchel-Nielsen coordinates on $\TT(S)$ give rise to a natural homeomorphism $\Pi: \TT(S) \rightarrow \TT_{\gamma}$, and for $\epsilon>0$ sufficiently small, this homeomorphism restricted to $Thin_{\epsilon}(S, \gamma)$ distorts distances by a bounded additive amount.
\end{theorem}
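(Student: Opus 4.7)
The plan is to use Fenchel--Nielsen coordinates relative to a pants decomposition extending $\gamma$ to identify $\TT(S)$ with $\TT(S\setminus\gamma)\times\prod_i \HH_{\gamma_i}$: for each $\gamma_i$, the length--twist pair $(\ell_{\gamma_i},\tau_{\gamma_i})$ is sent to $(\tau_{\gamma_i}/\ell_{\gamma_i},\,1/\ell_{\gamma_i})$ in the upper half-plane, while the remaining Fenchel--Nielsen coordinates parametrize $\TT(S\setminus\gamma)$. The fact that $\Pi$ is a homeomorphism is classical; the real content is the quasi-isometric comparison of metrics on $Thin_{\epsilon}(S,\gamma)$.

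First I would establish the lower bound $d_T(\sigma_1,\sigma_2)\geq d_{\TT_\gamma}(\Pi(\sigma_1),\Pi(\sigma_2))-C$. For each horodisk factor, Kerckhoff's formula $d_T(\sigma_1,\sigma_2)\geq\tfrac{1}{2}\log\bigl(\mathrm{Ext}_{\sigma_1}(\gamma_i)/\mathrm{Ext}_{\sigma_2}(\gamma_i)\bigr)$ combined with the thin-part estimate $\mathrm{Ext}_\sigma(\gamma_i)\asymp 1/\ell_\sigma(\gamma_i)$ controls the change of the imaginary coordinate; for the twist coordinate one uses the standard fact that a $K$-quasiconformal map shifts twist across a collar of modulus $m$ by at most $K$ in the natural units of $m$. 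For the $\TT(S\setminus\gamma)$ factor, pinching $\gamma$ and capping off induces a coarse projection $\TT(S)\to\TT(S\setminus\gamma)$ which is $1$-Lipschitz up to an additive constant, provable by comparing extremal lengths of curves in $S$ with those in $S\setminus\gamma$ in the thin part.

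Next I would establish the upper bound by explicit construction. Given $\sigma_1,\sigma_2\in Thin_{\epsilon}(S,\gamma)$, set $K$ equal to the sup metric distance on $\TT_\gamma$ and build a quasiconformal map $f:\sigma_1\to\sigma_2$ of dilatation at most $e^{K+O(1)}$. On the standard collar of each $\gamma_i$, one introduces annular rectangle coordinates in which the isometry group of $\HH_{\gamma_i}$ acts by affine maps: the twist component rotates the collar and the length component rescales its modulus, so any hyperbolic move realizing distance $d_i$ in $\HH_{\gamma_i}$ is realized by an affine map of dilatation exactly $e^{d_i}$. On the $\epsilon$-thick complement one uses a Teichm\"uller extremal map realizing the $\TT(S\setminus\gamma)$-distance. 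These are then glued across a uniformly bounded interpolating annulus near each collar boundary.

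The main obstacle is this gluing step: the local maps on the collars realize horodisk distances only relative to standardized collar boundaries, and one must match them to the Teichm\"uller map on the complement while keeping the total dilatation within an additive constant of the sup of the factor distances. Controlling this requires uniformity in the standard-collar estimates at scale $\epsilon$ and Wolpert-type formulas relating twist parameters to the underlying conformal structure on $S\setminus\gamma$, so that the interpolation errors can be absorbed into the additive constant uniformly over all $\sigma_1,\sigma_2\in Thin_{\epsilon}(S,\gamma)$ and over which subsets of $\gamma$ are short.
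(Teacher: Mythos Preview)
This theorem is not proved in the paper---it is quoted as background from Minsky's paper \cite{Min96} (Theorem~6.1 there) and used as a black box throughout. There is therefore no proof in the present paper to compare your proposal against.

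That said, your sketch is a reasonable outline of the strategy Minsky actually follows in \cite{Min96}. The lower bound via Kerckhoff's extremal-length formula and the thin-part estimate $\mathrm{Ext}_\sigma(\gamma_i)\asymp 1/\ell_\sigma(\gamma_i)$, and the upper bound via an explicit quasiconformal map built from affine maps on collars glued to an extremal map on the thick complement, is indeed the shape of Minsky's argument. The gluing issue you flag is real and is the technical heart of \cite{Min96}; Minsky handles it by developing careful extremal-length estimates for curves crossing the collars (his Sections~4--5), which control how the collar maps and the complement map interact. Your sketch correctly identifies where the work lies but would need those estimates to be made precise.
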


In what follows, fix $\epsilon>0$ to be sufficiently small so that \ref{r:product} holds.  When we say that a curve $\alpha$ is \emph{short} for some $\sigma \in \TT(S)$, we mean that $l_{\sigma}(\alpha) < \epsilon$.

\begin{remark}
Up to quasiisometry, we may take the sup or product metric on the product space in (\ref{r:Min prod}), though Minsky's version with the sup metric is finer and results in only an additive error.
\end{remark}

\subsection{Rafi's combinatorial model}

The main result of \cite{Raf07} is an adaptation of the machinery in \cite{MM00} to the setting of $(\TT(S),d_T)$.  In particular, Rafi obtains a distance estimate in Theorem 6.1 of \cite{Raf07} analogus to the Masur-Minsky formula (Theorem \ref{r:MM distance} above), restated below in Theorem \ref{r:Rafi}.\\

Given $\sigma \in \TT(S)$, a \emph{shortest marking} $\mu_{\sigma} \in \MM(S)$ for $\sigma$ is a marking inductively built by choosing a shortest curve in $\alpha_1 \in \CC(S)$ on $\sigma$ with respect to extremal length, $\Ext_{\sigma}$, then choosing a shortest curve $\alpha_2 \in \CC(S\setminus \alpha_1)$, etc., until one has arrived at a shortest pants decomposition of $S$.  One completes this to a shortest marking by choosing shortest curves $\beta_i$ which intersect $\alpha_i$ but not $\alpha_j$ for $j \neq i$.  The result is a complete, clean marking, of which there are finitely-many by [\cite{MM00}, Lemma 2.4].  We note that the collection of curves which are shorter in $\sigma$ than the constant $\epsilon$ in Minsky's Theorem \ref{r:product} form a simplex in $\CC(S)$ by the Collar Lemma.  Thus in the case that $\sigma \in Thin_{\gamma}$ for some simplex $\gamma \subset \CC(S)$, we necessarily have $\gamma \subset base(\mu_{\sigma})$.
 
\begin{theorem}[Rafi's formula; Theorem 6.1 in \cite{Raf07}] \label{r:Rafi} Let $\epsilon>0$ be as in Theorem \ref{r:product}. There exists $k > 0$ such the following holds:\\

Let $\sigma_1, \sigma_2 \in \TT(S)$, define $\Lambda$ to be the set of curves short in both $\sigma_1$ and $\sigma_2$, and define $\Lambda_i$ to be the set of curves short in $\sigma_1$ and not in $\Lambda$.  Let $\mu_i$ be the shortest marking for $\sigma_i$.  Then
\begin{equation} \label{r:Rafi's formula}
d_{\TT}(\sigma_1, \sigma_2) \asymp \sum_Y \left[d_Y(\mu_1,\mu_2)\right]_k + \sum_{\substack{\alpha \notin \Lambda}} \log \left[d_{\alpha}(\mu_1,\mu_2)\right]_k + \max_{\substack{\alpha\in \Lambda}} d_{\HH_{\alpha}}(\sigma_1, \sigma_2) + \max_{\substack{\alpha \in \Lambda_i\\ i=1,2}} \log \frac{1}{l_{\sigma_i}(\alpha)}
\end{equation}
\end{theorem}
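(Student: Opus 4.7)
The plan is to deduce (\ref{r:Rafi's formula}) as a corollary of the $\AM(S)$ distance formula (to be established in Section 6) combined with Theorem \ref{r:main theorem}. To each $\sigma_i \in \TT(S)$ I associate an augmented marking $\tmu_i \in \AM(S)$ refining the shortest marking $\mu_i := \mu_{\sigma_i}$: for every short curve $\alpha$ of $\sigma_i$ the transversal to $\alpha$ is replaced by a vertex of the Groves-Manning horoball over $\CC(\alpha)$ at depth $h_i(\alpha) \asymp \log(1/l_{\sigma_i}(\alpha))$, while all other transversals are unchanged. The quasi-isometry of Theorem \ref{r:main theorem} is set up so that $\tmu_i$ is coarsely the image of $\sigma_i$, yielding $d_\TT(\sigma_1,\sigma_2) \asymp d_{\AM(S)}(\tmu_1,\tmu_2)$.

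Next, I invoke the $\AM(S)$ distance formula, which is analogous to Theorem \ref{r:MM distance} except that the contribution of an annular subsurface $Y_\alpha$ is measured in the augmented annular complex $\HHH(\CC(\alpha))$ rather than in $\CC(\alpha)$. The non-annular terms are unchanged from Masur-Minsky and recover exactly the first sum $\sum_Y [d_Y(\mu_1,\mu_2)]_k$ in (\ref{r:Rafi's formula}).

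For each annular curve $\alpha$, a standard computation in Groves-Manning horoballs gives
\[
d_{\HHH(\CC(\alpha))}(\tmu_1,\tmu_2) \asymp |h_1(\alpha) - h_2(\alpha)| + 2\bigl(\log[d_\alpha(\mu_1,\mu_2)]_k - \max(h_1(\alpha),h_2(\alpha))\bigr)_+.
\]
Case analysis then matches each annular contribution with a term of (\ref{r:Rafi's formula}): if $\alpha \notin \Lambda \cup \Lambda_1 \cup \Lambda_2$ both $h_i(\alpha)$ are bounded and the right side is coarsely $\log[d_\alpha(\mu_1,\mu_2)]_k$, feeding the second sum; if $\alpha$ lies in $\Lambda_i$ for only one $i$, one depth dominates and the expression is coarsely $\log(1/l_{\sigma_i}(\alpha)) + \log[d_\alpha(\mu_1,\mu_2)]_k$, populating both the second sum and the fourth term; if $\alpha \in \Lambda$, Minsky's product regions theorem (Theorem \ref{r:product}) identifies the horoball formula with the hyperbolic horodisk distance $d_{\HH_\alpha}(\sigma_1,\sigma_2)$ between the $\HH_\alpha$-factors of $\sigma_1,\sigma_2$, yielding the third term. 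Since the number of short curves on $S$ is bounded by its topological complexity, sums and maxes over curves in $\Lambda$ and in $\Lambda_1 \cup \Lambda_2$ are coarsely equivalent, reconciling the $\sum$-based $\AM(S)$ distance formula with the $\max$-based terms in (\ref{r:Rafi's formula}).

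The main obstacle will be the last identification, for $\alpha \in \Lambda$: I must match the Groves-Manning horoball distance with the hyperbolic horodisk distance with sufficient precision both in the depth parameter (which should correspond to the imaginary coordinate of the Fenchel-Nielsen parametrization of $\HH_\alpha$) and in the horizontal twisting parameter (which should match the real coordinate, hence the subsurface projection distance $d_\alpha$). This reduces to showing that the quasi-isometry of Theorem \ref{r:main theorem} behaves well with respect to the product coordinates on $Thin_\epsilon(S,\gamma)$ supplied by Theorem \ref{r:product}, and is the delicate piece that ties the whole construction together; once it is in place the remaining cases are mechanical.
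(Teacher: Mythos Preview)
Your proposal is correct and follows essentially the same route as the paper: associate to each $\sigma_i$ a shortest augmented marking (the paper's map $F$), invoke the quasiisometry $\AM(S)\simeq\TT(S)$ (Theorem~\ref{r:qi}) together with the $\AM(S)$ distance formula (Theorem~\ref{r:ams distance}), and read off the result as Theorem~\ref{r:comb teich dist}. The paper actually stops at the clean horoball form in Theorem~\ref{r:comb teich dist} and does not spell out your case analysis recovering Rafi's original four-term expression; your ``main obstacle'' (matching the Groves--Manning horoball with the hyperbolic horodisk) is exactly Proposition~\ref{r:horo} and Lemma~\ref{r:horo to H}, so it is already in hand rather than a gap.
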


One of the main products of this paper, Theorem \ref{r:comb teich dist}, is an independent, combinatorial proof of Rafi's distance formula.

\subsection{Bers pants decompositions}

Our augmented markings are markings with some length data.  When we associate a point in $\TT(S)$ to an augmented marking, it will be important that the extremal lengths of the curves we choose for the marking are uniformly bounded.  We will not use the greedy algorithm used to build the shortest markings for Rafi's Theorem \ref{r:Rafi}.  Recall the following theorem of Bers:

\begin{theorem}[Bers]\label{r:bers}
There is a constant $L>0$ depending only on the topology of $S$, such that for any point $\sigma \in \TT(S)$, there is a pants decomposition $P_{\sigma}$ with $l_{\sigma}(\alpha) < L$ for each $\alpha \in P_X$.
\end{theorem}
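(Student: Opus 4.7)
The plan is to prove Bers' theorem by induction on the complexity $\xi(S) = 3g - 3 + n$, where $g$ is the genus and $n$ the number of boundary components or punctures. The crux is a \emph{short systole lemma}: for any finite-area hyperbolic surface $\Sigma$ (possibly with geodesic boundary of total length at most $B$), there is a simple closed geodesic of length at most some constant $L_0 = L_0(\chi(\Sigma), B)$ depending only on topology and boundary data.

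To establish the short systole lemma, I would use an area growth argument. By Gauss-Bonnet, the area of $\Sigma$ is bounded in terms of $\chi(\Sigma)$ and $B$. At any interior point $p \in \Sigma$, if the ball $B(p, r)$ is isometrically embedded, then its area equals that of a hyperbolic disc, $2\pi(\cosh r - 1)$, which must be at most $\mathrm{Area}(\Sigma)$. Hence the injectivity radius at $p$ is bounded above by some explicit $R_0$, so there is a nontrivial loop through $p$ of length at most $2R_0$. Homotoping to a closed geodesic and applying standard surgery to remove self-intersections yields a simple closed geodesic of length at most $L_0 = 2R_0$.

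For the induction, the base case $\xi(S) = 0$ (a pair of pants) is trivial. For the inductive step, apply the short systole lemma to $\sigma$ to find $\gamma_1 \in \CC(S)$ with $l_\sigma(\gamma_1) \leq L_0$, then cut $\sigma$ along $\gamma_1$ to obtain a (possibly disconnected) hyperbolic surface $S'$ with geodesic boundary, satisfying $\xi(S') < \xi(S)$ and total boundary length bounded by $L_0$ plus any pre-existing boundary of $S$. Since lengths of curves disjoint from $\gamma_1$ are preserved under the cutting operation, the inductive hypothesis applied to each component of $S'$ produces shorter pants decompositions whose curves, together with $\gamma_1$, give the desired pants decomposition of $S$.

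The main obstacle is tracking how the constant $L$ grows through the induction. At each step, the boundary lengths inherit the previous step's short-curve bound, so the applicable value of $L_0$ changes with each iteration. Since the induction terminates after at most $\xi(S)$ steps, the final $L$ depends only on the topology of $S$, though the resulting bound is quite inefficient (potentially iterated exponentially). One could alternatively sidestep this bookkeeping by invoking Mumford compactness: the ``shortest pants decomposition length'' is a continuous, $\MCG(S)$-invariant function on $\TT(S)$, automatically bounded on the thick part of moduli space by compactness, while in the thin part the short curves themselves contribute to the pants decomposition and the argument reduces to the subsurfaces obtained by cutting.
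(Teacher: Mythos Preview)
The paper does not actually prove this theorem; it is stated as a classical result of Bers and used without proof. Your inductive strategy is the standard one and is essentially correct, but there is a real gap in your short systole lemma when $\Sigma$ has cusps or geodesic boundary (and the latter case is unavoidable in your induction). The injectivity radius bound you derive is valid at every interior point, but the short nontrivial loop it produces may be \emph{peripheral}: homotopic into a cusp or to a boundary component. Such a loop does not tighten to an interior simple closed geodesic---in the cusp case it has no geodesic representative at all---so the step ``homotoping to a closed geodesic'' can fail outright. This is not hypothetical: points deep in a cusp have injectivity radius tending to zero, witnessed only by peripheral horocycles.

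The standard repair is to choose the basepoint $p$ more carefully, for instance a point maximizing distance to the boundary and to standard cusp neighborhoods of fixed area. One then argues that the area bound forces such a point to lie at controlled depth, and that a sufficiently short loop through $p$ cannot be peripheral since any peripheral curve through $p$ has length bounded below by twice that depth. Your alternative via Mumford compactness is cleaner and sidesteps this issue, though you should verify that the proof of Mumford compactness you have in mind does not itself invoke Bers' constant.
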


For any $X \in \TT(S)$, any $P_X \in \PP(S)$ as in Theorem \ref{r:bers} is called a \emph{Bers pants decomposition}.\\

The following lemma is a consequence of the Collar Lemma:

\begin{lemma}\label{r:extreme}
There exist constants $\epsilon_0, L_0>0$ depending only on $S$ such that the following holds.  Let $\sigma \in \TT(S)$ and let $P_{\sigma}$ be any Bers pants decomposition for $\sigma$.  Then:
\begin{enumerate}
\item For any $\alpha \in \CC(S)$, if $l_{\sigma}(\alpha)< \epsilon_0$, then $\alpha \in P_{\sigma}$.
\item For any $\beta \in P_{\sigma}$, $\Ext_{\sigma}(\beta) < L_0$.
\end{enumerate}
\end{lemma}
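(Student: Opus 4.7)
The plan is to derive both statements from the Collar Lemma together with the standard comparison between hyperbolic length and extremal length for bounded-length curves; the constant $L$ of Theorem \ref{r:bers} plays a bridging role in both parts.

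For part (1), I would argue by contradiction. Assume $l_\sigma(\alpha) < \epsilon_0$ and some $\beta \in P_\sigma$ intersects $\alpha$ essentially. By the Collar Lemma, around the geodesic representative of $\alpha$ there is an embedded annular collar of width $w(l_\sigma(\alpha))$, where $w(\ell) \to \infty$ as $\ell \to 0$ (explicitly, $w(\ell) = \sinh^{-1}(1/\sinh(\ell/2))$). Any simple closed geodesic that crosses $\alpha$ transversely must traverse this collar, forcing $l_\sigma(\beta) \geq 2 w(l_\sigma(\alpha)) \geq 2 w(\epsilon_0)$. Choosing $\epsilon_0$ small enough that $2 w(\epsilon_0) > L$, where $L$ is the Bers constant of Theorem \ref{r:bers}, contradicts the fact that every curve in a Bers pants decomposition has $\sigma$-length less than $L$. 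Hence every curve of $P_\sigma$ is either disjoint from or isotopic to $\alpha$, and since $P_\sigma$ is a maximal disjoint simplex, the topological type of $\alpha$ already appears in $P_\sigma$.

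For part (2), I would invoke Maskit's comparison inequality relating hyperbolic length $\ell = l_\sigma(\beta)$ and extremal length: there is a universal function $F$ with $\Ext_\sigma(\beta) \leq F(\ell)$ which is continuous and monotone increasing in $\ell$. (A standard estimate is $\Ext_\sigma(\beta) \leq \tfrac{1}{2}\ell\, e^{\ell/2}$, but any monotone bound suffices.) Since every $\beta \in P_\sigma$ satisfies $l_\sigma(\beta) < L$, setting $L_0 := F(L)$ gives $\Ext_\sigma(\beta) < L_0$ for each $\beta \in P_\sigma$.

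The main obstacle is essentially bookkeeping — both ingredients (the width-of-collar estimate and the extremal-versus-hyperbolic length comparison) are classical and depend only on the topology of $S$, so $\epsilon_0$ and $L_0$ inherit that property. The only point that requires a moment of care is part (1), where one has to rule out the degenerate case that $\beta$ coincides with $\alpha$ versus is disjoint from $\alpha$; both possibilities give $\alpha \in P_\sigma$ after applying maximality of the pants decomposition.
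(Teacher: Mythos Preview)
Your proof is correct and follows essentially the same strategy as the paper: part (1) is argued identically via the Collar Lemma forcing any transversal to a sufficiently short curve to exceed the Bers constant $L$, and part (2) bounds extremal length in terms of hyperbolic length. The only cosmetic difference is that for (2) you invoke Maskit's comparison inequality, whereas the paper appeals directly to the Collar Lemma to produce an embedded annulus whose modulus bounds $\Ext_\sigma(\beta)$ from above; these are the same mechanism in different packaging.
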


\begin{proof}
For (1), we can choose $\epsilon_0$ small enough so that if $\gamma$ intersects $\alpha$ where $l_{\sigma}(\alpha)< \epsilon_0$, then $l_{\sigma}(\gamma)>L$ by the Collar Lemma.  For (2), the Collar Lemma states that there is a regular neighborhood of $\beta$ on $\sigma$, with diameter depending only on $l_{\sigma}(\beta)$, which is an embedded annulus.  The reciprocal of this diameter is thus both an upper bound for $\Ext_{\sigma}(\beta)$ and bounded above because $l_{\sigma}(\beta)$ is, completing the proof.
\end{proof}

For the rest of the paper, fix $\epsilon_0>0$ sufficiently small satisfying both Lemma \ref{r:extreme} and Theorem \ref{r:product}.

\subsection{Combinatorial horoballs} \label{r:combo horo subsection}

Combinatorial horoballs were introduced by Groves and Manning in \cite{GM08} in the context of relatively hyperbolic groups; see \cite{CC92} for an earlier, similar construction.  In particular, suppose that $G$ is a finitely-generated group and $\mathcal{P} = \{P_1, \dots, P_n\}$ is a finite collection of finitely-generated subgroups of $G$.  Among other equivalences, in [Theorem 3.25, \cite{GM08}] they showed that the augmentation of the Cayley graph of $G$ by combinatorial horoballs along the subgroups in $\mathcal{P}$ is hyperbolic if and only if $G$ is relatively hyperbolic to $\mathcal{P}$ in the sense of Gromov.\\

While $\MCG(S)$ is not relatively hyperbolic to any family of subgroups (Theorem 8.1 in \cite{BDM08}), the process of adding efficient paths  to the marking complex via combinatorial horoballs to build the augmented marking complex is reminiscent of and indeed inspired by the relatively hyperbolic construction.  We use combinatorial horoballs to model the hyperbolic upper half-planes which appear in the product structure of the thin parts discovered by Minsky \cite{Min96} in Theorem \ref{r:product}.  We fully explain the construction of $\AM(S)$ in the next section.

\begin{definition}[Combinatorial horoball] 
Let $X$ be any simplicial complex.  The \emph{combinatorial horoball based on $X$}, $\mathcal{H}(X)$, is the 1-complex with vertices $\mathcal{H}(X)^{(0)} = X^{(0)} \times (\{0\} \cup \NN)$ and edges as follows:
\begin{itemize}
\item If $x,y \in X^{(0)}$ and $n\in \{0\} \cup \NN$ such that $0<d_X(x,y) \leq e^n$, then $(x,n)$ and $(y,n)$ are connected by an edge in $\mathcal{H}(X)$.
\item If $x \in X^{(0)}$ and $n\in \{0\} \cup \NN$, then $(x,n)$ is connected to $(x,n+1)$ by an edge.
\end{itemize}
\end{definition}

The metric on $\mathcal{H}(X)$ is the path metric, where each edge is isometric to $[0,1]$.  
\begin{remark} $X$ sits inside of $\mathcal{H}(X)$ as the full subgraph containing the vertices $X^{(0)} \times \{0\}$.
\end{remark}

As with horoballs in $\HH^n$, combinatorial horoballs are uniformly hyperbolic:

\begin{theorem}[Theorem 3.8 in \cite{GM08}]\label{r:horo hyp}
Let $X$ be any simplicial complex.  Then $\mathcal{H}(X)$ is $\delta$-hyperbolic where $\delta$ is independent of $X$.
\end{theorem}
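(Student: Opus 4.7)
The plan is to establish hyperbolicity by first describing a canonical family of quasigeodesics in $\mathcal{H}(X)$ with a very rigid shape, and then to verify the thin triangles condition from that shape. The intuition is that vertical edges are efficient in terms of horizontal reach: ascending from level $n$ to level $n+1$ replaces each horizontal edge of $X$-length $\leq e^{n}$ with one of $X$-length $\leq e^{n+1}$. This suggests that efficient paths have a ``tent'' shape: ascend vertically, travel a short horizontal distance at a high level, then descend vertically.

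The first step is to construct a preferred path $\gamma_{(x,m),(y,n)}$ between any two vertices. Let $D = d_X(x,y)$ and set $N = \max(m,\, n,\, \lceil \log D \rceil)$. The path ascends vertically from $(x,m)$ and $(y,n)$ to level $N$, then connects $(x,N)$ to $(y,N)$ by a single horizontal edge (which exists by the choice of $N$). This gives a path of length $(N - m) + (N - n) + 1$. The second step, which is the main technical obstacle, is to prove this path length is a coarse lower bound for $d_{\mathcal{H}(X)}$ with constants independent of $X$. I would analyze an arbitrary path by its ``level profile,'' using that a horizontal edge at level $k$ advances $X$-distance by at most $e^{k}$: if a path from $(x,m)$ to $(y,n)$ attains maximum level $K$, it must contain at least $(K-m) + (K-n)$ vertical edges and, between them, horizontal segments at heights $\leq K$ whose total length is at least $D/e^{K}$. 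An exchange argument lets one straighten such a path to a tent-shaped one at a loss of at most an additive constant, and minimizing $(K-m) + (K-n) + D/e^{K}$ over $K$ recovers the preferred path length up to an additive error.

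With this uniform distance estimate in hand, the third step verifies Gromov's four-point condition, or equivalently the $\delta$-thin triangles condition, with $\delta$ absolute. Two preferred paths sharing an endpoint agree along their vertical ascent from that endpoint until the ascents for the other endpoints diverge, so they fellow-travel with a uniform constant. Replacing each side of a geodesic triangle by its preferred path and comparing level-by-level, one sees that at each level $k$ the three corresponding points have $X$-distance at most a uniform multiple of $e^{k}$, hence $\mathcal{H}(X)$-distance bounded by an absolute constant after a few more vertical steps. The hardest part remains the lower bound of Step two, since paths in $\mathcal{H}(X)$ may oscillate in level and use horizontal edges at many heights; but once the tent-shaped reduction is in place, the hyperbolicity constant $\delta$ emerges from absolute constants, manifestly independent of $X$.
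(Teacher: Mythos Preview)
The paper does not prove this theorem; it is quoted as Theorem~3.8 of Groves--Manning \cite{GM08} and used as a black box. So there is no in-paper argument to compare against. That said, your outline is essentially the Groves--Manning approach: the paper even quotes the key structural lemma you are aiming for (Lemma~3.10 of \cite{GM08}, stated here as Lemma~\ref{r:horo geo}), namely that geodesics in $\mathcal{H}(X)$ consist of at most two vertical segments and one short horizontal segment and lie within Hausdorff distance~$4$ of your tent-shaped preferred paths.

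One point needs tightening. Your Step~2 shows that the preferred path has length within an additive constant of the actual distance, i.e.\ that preferred paths are uniform quasigeodesics. But Step~3, as written, replaces each side of a \emph{geodesic} triangle by its preferred path and then argues thinness for the preferred-path triangle. That replacement is only legitimate if geodesics are uniformly \emph{Hausdorff}-close to preferred paths, which is strictly stronger than the length comparison of Step~2. This does follow from your level-profile analysis---a geodesic that rises too high, or that spends too many horizontal edges at low levels, would exceed the distance estimate---but it should be stated and proved as a separate lemma (this is exactly Lemma~\ref{r:horo geo}). Alternatively, once Step~2 gives you the closed-form distance estimate
\[
d_{\mathcal{H}(X)}\big((x,m),(y,n)\big) \;=\; 2\max\big(m,\,n,\,\lceil\log d_X(x,y)\rceil\big) - m - n + O(1),
\]
you can verify Gromov's four-point inequality directly by an elementary case analysis on which of the three arguments of each $\max$ dominates; this bypasses thin triangles and the Hausdorff-closeness issue entirely, and is arguably the cleanest route.
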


\begin{remark} The combinatorial horoballs we use are a simple case of the above, for $X$ is the orbit of a Dehn twist or half-twist and thus a copy of $\ZZ$.
\end{remark}

The following is a usual fact from Groff \cite{Grf12}[Lemma 6.2]:

\begin{lemma}\label{r:extend qi to horo}
Let $q:A \rightarrow B$ be a $(k,c)$-quasiisometry of graphs.  Then there exists a $(1,C)$-quasiisometry $\hat{q}:\HHH(A) \rightarrow \HHH(B)$, where $C$ depends only on $k$ and $c$.
\end{lemma}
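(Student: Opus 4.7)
The plan is to define $\hat{q}$ as $q$ composed with a vertical shift, exploiting that the exponential vertical growth of the horoball converts multiplicative distortions at the base into additive errors.  First, I would recall from \cite{GM08} the coarse distance formula: there is a universal constant $D_0$ such that for distinct $(x,n),(y,m)\in\HHH(X)$,
\[
d_{\HHH(X)}\bigl((x,n),(y,m)\bigr) \asymp_{(1,D_0)} |n-m| + 2\max\bigl\{0,\,\lceil\log d_X(x,y)\rceil - \max(n,m)\bigr\},
\]
reflecting that an efficient path ascends to the common level $\max\{n,m,\lceil\log d_X(x,y)\rceil\}$, takes a single horizontal step, and then descends.

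Next, I would choose an integer $N=N(k,c)$ just larger than $\log k + \log(1+c)$ and define $\hat{q}:\HHH(A)\to\HHH(B)$ on vertices by $\hat{q}(x,n):= (q(x),\,n+N)$, extending over edges by geodesic segments of bounded length in $\HHH(B)$.  Since $q$ is a $(k,c)$-quasiisometry,
\[
\tfrac{1}{k}d_A(x,y) - \tfrac{c}{k}\;\le\;d_B(q(x),q(y))\;\le\;k\,d_A(x,y) + c.
\]
Taking logarithms, when $d_A(x,y)\ge 2c$ both bounds yield $|\log d_B(q(x),q(y)) - \log d_A(x,y)| \le \log k + O(1)$; for smaller $d_A(x,y)$, both logarithms are bounded by a constant depending only on $k$ and $c$.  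Plugging this into the distance formula, the shift by $N$ in both columns cancels the $\log k$ discrepancy inside the $\max$-term, yielding
\[
\bigl|\,d_{\HHH(B)}(\hat{q}(x,n),\hat{q}(y,m)) - d_{\HHH(A)}((x,n),(y,m))\,\bigr| \le C
\]
for a constant $C=C(k,c)$.

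For coarse surjectivity, I would take an arbitrary $(y,m)\in\HHH(B)$ and use coarse density of $q(A)$ in $B$ to find $x\in A$ with $d_B(q(x),y)\le c'=c'(k,c)$; applying the horoball distance formula then bounds $d_{\HHH(B)}\bigl((y,m),\,\hat{q}(x,\max\{0,m-N\})\bigr)$ by a constant depending only on $N$ and $c'$, hence only on $k$ and $c$.

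The main obstacle I anticipate is organizing the two interlocking case analyses cleanly: the logarithmic comparison $|\log d_B - \log d_A|\le\log k + O(1)$ must be treated separately when $d_A(x,y)$ is comparable to $c$, and the horoball formula itself splits into regimes according to whether $\lceil\log d_X(x,y)\rceil$ exceeds $\max(n,m)$.  In each regime, however, the extra discrepancy is governed solely by $k$ and $c$, so all error terms collapse into a single additive constant $C$.
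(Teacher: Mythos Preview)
The paper does not actually prove this lemma; it simply records it as a known fact and cites Lemma~6.2 of Groff \cite{Grf12}.  Your argument is essentially the one that appears there and is correct: the horoball distance formula from \cite{GM08} depends on the heights and on $\lceil\log d_X(x,y)\rceil$, and since a $(k,c)$-quasiisometry satisfies $|\log d_B(q(x),q(y))-\log d_A(x,y)|\le C'(k,c)$ whenever $d_A(x,y)$ is not too small (with a trivial separate bound otherwise), the two horoball distances agree up to a uniform additive error.

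One remark: the vertical shift by $N$ is unnecessary.  The simpler map $\hat q(x,n)=(q(x),n)$ already works, because the logarithmic discrepancy above is two-sided; your shift subtracts $N$ from the term inside the $\max$ regardless of the sign of that discrepancy, so it does not actually cancel anything and only transfers error from one inequality to the other.  This is harmless, but it does no real work, and dropping it would streamline the case analysis you anticipate.
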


We need the understand efficient paths in combinatorial horoballs.  Fortunately, they have a nice description from  Lemma 3.10 in \cite{GM08}:

\begin{lemma}[Lemma 3.10 in \cite{GM08}] \label{r:horo geo}
Let $\mathcal{H}(X)$ be a combinatorial horoball and $x,y \in \mathcal{H}(X)$ distinct vertices.  Then there is a uniform quasigeodesic $\gamma(x,y)=\gamma(y,x)$ between $x$ and $y$ which consists of at most two vertical segments and a single horizontal segment of length at most 3.\\
\indent Moreover, any other geodesic between $x$ and $y$ is Hausdorff distance at most 4 from this quasigeodesic and no geodesic can have a horizontal segment of length greater than 4.
\end{lemma}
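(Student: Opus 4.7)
The plan is to construct an explicit ``up--across--down'' path $\gamma(x,y)$ joining $x$ to $y$, and then verify both that it is close to optimal in length and that any geodesic between the same endpoints must track it closely.

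First, given $x=(a,m)$ and $y=(b,n)$ with $a\neq b$, set $D=d_X(a,b)$ and $N=\max\{m,n,\lceil \log D\rceil\}$.  I would define $\gamma(x,y)$ to be the concatenation of the vertical segment from $(a,m)$ up to $(a,N)$, a horizontal segment from $(a,N)$ to $(b,N)$ (which exists of length $1$ because $e^N\geq D$), and the vertical segment from $(b,N)$ down to $(b,n)$.  Its length is $2N-m-n+1$.  The degenerate case $a=b$ is handled by the vertical path alone.

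The lower bound on an arbitrary path $P$ from $x$ to $y$ rests on two observations.  Letting $M$ be the maximum level that $P$ reaches, $P$ must contain at least $(M-m)+(M-n)$ vertical edges.  Moreover, each horizontal edge of $P$ at level $k\leq M$ shifts the $X$-coordinate by at most $e^k\leq e^M$, and the total change in $X$-coordinate is $D$, so at least $D/e^M$ horizontal edges are required by the triangle inequality in $X$.  Thus $|P|\geq 2M-m-n+D/e^M$.  Minimizing over $M\geq \max(m,n)$ in the two regimes $\log D \geq \max(m,n)$ and $\log D < \max(m,n)$ matches $|\gamma(x,y)|$ up to a uniform additive constant; hence $\gamma(x,y)$ is a uniform quasigeodesic.

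For the Hausdorff-distance statement, I would invoke Theorem \ref{r:horo hyp}: $\HHH(X)$ is $\delta$-hyperbolic with $\delta$ independent of $X$, so the Morse stability lemma applies uniformly, yielding a universal bound on the Hausdorff distance between $\gamma(x,y)$ and any geodesic from $x$ to $y$.  To rule out a horizontal segment of length $\ell\geq 5$ in any geodesic, I would argue by a detour: such a segment at level $k$ traverses $X$-distance at most $\ell e^k$, which can be replaced by an up-edge, a horizontal segment at level $k+1$ of length at most $\lceil \ell/e\rceil$, and a down-edge, for a total length $2+\lceil \ell/e\rceil < \ell$ whenever $\ell\geq 5$, contradicting geodesicity.

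The main obstacle is not conceptual but bookkeeping: splitting into the two regimes for the optimal $M$, handling the rounding in the definition of $N$, and calibrating universal constants so that the three horizontal steps in the quasigeodesic, the Hausdorff bound of $4$, and the maximum horizontal length of $4$ for geodesics all emerge simultaneously and independent of the base complex $X$.
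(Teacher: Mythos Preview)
The paper does not prove this lemma; it is quoted verbatim from Groves--Manning \cite{GM08} and used as a black box, so there is no ``paper's own proof'' to compare against.  Your outline is essentially the standard argument, and the construction of $\gamma(x,y)$ together with the lower bound $|P|\ge 2M-m-n + D/e^M$ is correct and yields the quasigeodesic statement.

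Two comments on the second half.  First, your appeal to Theorem~\ref{r:horo hyp} plus Morse stability certainly gives \emph{some} uniform Hausdorff bound, but it will not produce the constant $4$; in \cite{GM08} that bound is obtained by a direct combinatorial argument comparing an arbitrary geodesic to $\gamma(x,y)$ level by level, not by invoking stability of quasigeodesics in hyperbolic spaces.  If you only need a uniform constant (as this paper does), your route is fine, but you should say so rather than claim the specific value $4$.

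Second, your detour inequality $2+\lceil \ell/e\rceil<\ell$ for $\ell\ge 5$ silently uses the real number $e$ rather than $\lfloor e^{k+1}\rfloor/\lfloor e^k\rfloor$.  At level $k=0$ one has $\lfloor e^0\rfloor=1$ and $\lfloor e^1\rfloor=2$, so five horizontal edges at level~$0$ can span $X$-distance $5$, and the one-level detour costs $2+\lceil 5/2\rceil=5$, which is not strictly shorter.  The fix is either to go up two levels in that case or to accept the slightly weaker bound (the paper itself wavers between ``greater than $4$'' and ``greater than $5$'').  You already flag the bookkeeping as the main obstacle, and this is exactly where it bites.
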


Following [\cite{GM08}, Section 5.1], we define preferred paths for $\mathcal{H}(X)$.\\

Suppose that $x,y \in X$ have $d_X(x,y) = C$.  For any $(x,a), (y,b) \in \mathcal{H}(X)$, consider the path between these two points which consists of (at most) three segments: a vertical segment from $(x,a)$ to $(x,\lceil \ln C\rceil)$, a horizontal segment of one edge from $(y, \lceil \ln C \rceil)$, and another vertical segment from $(y, \lceil \ln C \rceil)$ to $(y,b)$.  In the case that $a$ or $b\geq \ln C$, then the respective vertical segment is not included and the horizontal segment connects at either height $a$ or $b$, depending on whether or not $a\geq b$.\\

These paths are not geodesics (which are similar but will differ slightly in vertical height depending on the divisibility of $C$), but they are uniform quasigeodesics which are a uniformly bounded distance from geodesics, with the bound independent of $X$.  This can be seen from the easily verified fact that no geodesic can contain a horizontal segment of length greater than 5 (see Figure 3 in the proof of Lemma 3.11 in \cite{GM08}).  Because they are easy to define, these are the preferred paths through horoballs we consider in what follows.  It is obvious from their definition that they are unique.  See Figure \ref{fig:prefpath}.

\begin{figure}
\centering
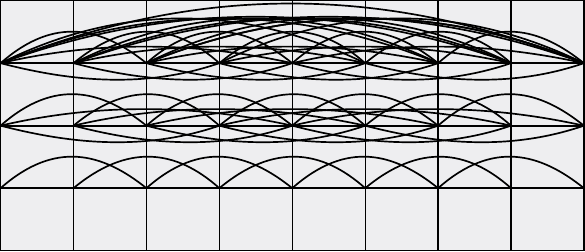
\label{fig:combhor}
\caption{A busy $(4 \times 8)$-slice of the base of a combinatorial horoball over $\ZZ$; every edge has length 1.  Notice that at height 2, each vertex is connected to half the others by edges, while all vertices are connected by edges at height 3.}
\end{figure}

\begin{figure} 
\centering
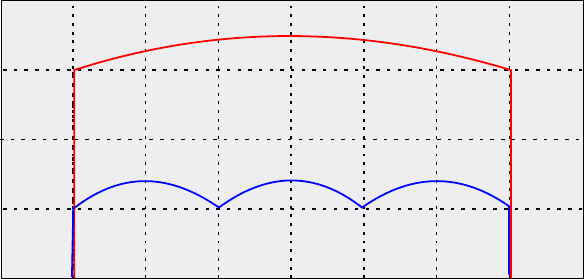
\caption{Two paths between a pair of points in a combinatorial horoball: The \textcolor{red}{red} path is a preferred path, while the \textcolor{blue}{blue} path is a geodesic.}
\label{fig:prefpath}
\end{figure}

\section{Construction of $\AM(S)$} \label{r:constructing}

The main idea of the construction of $\AM(S)$ is to model the product regions discovered by Minsky \cite{Min96} using $\MM(S)$ as the thick part.  We begin by showing a combinatorial horoball over an orbit of a Dehn twist or half-twist in $\MM(S)$ is quasiisometric to a horodisk.  We then define $\AM(S)$ as a graph and make some observations about its structure.  We finish the section by defining the maps identifying $\AM(S)$ with $\TT(S)$ and prove some basic facts about the identification.

\subsection{The horoballs $\HHH_{\alpha}$ are quasiisometric to horodisks}

Let $\HHH_{(\alpha, t_{\alpha})}$ be the combinatorial horoball over the orbit of the action of $\langle T_{\alpha}\rangle$ on $\mu$, where $\mu$ contains a transverse pair $(\alpha, t_{\alpha})$.  A typical point in $\HHH_{(\alpha, t_{\alpha})}$ is of the form $(\alpha, T_{\alpha}^k(t_{\alpha}), n)$, where $T_{\alpha}^k(t_{\alpha})$ records the horizontal position, $n$ records the vertical position, and $\alpha$ and $t_{\alpha}$ identify the particular horoball.  When the context is clear, we write $(\alpha, T_{\alpha}^k(t_{\alpha}), n) = (k, n)$.  We also frequently suppress the transverse curve when referring to a horoball and simply write $\HHH_{\alpha}$ when the context is clear.\\

We begin this section with an elementary proof of the fact that horodisks are quasiisometric to combinatorial horoballs over orbits of Dehn twists or half-twists.  In order to do this, we use a set of criteria for a map to be a quasiisometry from the lemma in Subsection 4.2 of \cite{CC92}:

\begin{lemma} \label{r:quasi cond}
Let $X$ and $Y$ be spaces with path metrics.  In order for $\phi:X \rightarrow Y$ to be a quasiisometry, it suffices that
\begin{enumerate}
\item for some $L>0$, $Y \subset N_L(\phi(X))$;\label{r:quasi cond 1}
\item for some $K>0$ and for all $x_1, x_2 \in X$, $d_Y(\phi(x_1),\phi(x_2)) \leq K \cdot d_X(x_1, x_2)$; and \label{r:quasi cond 2}
\item for each $M>0$ there exists an $N>0$ such that if $d_X(x_1,x_2)>N$ then $d_Y(\phi(x_1), \phi(x_2))>M$. \label{r:quasi cond 3}
\end{enumerate}
\end{lemma}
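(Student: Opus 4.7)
The plan is to verify the three standard ingredients of a quasiisometry---coarse density of the image, an upper Lipschitz-type bound, and a coarse linear lower bound---and observe that conditions (1) and (2) give the first two almost verbatim, while condition (3) must be combined with (1) and the path-metric structure of $Y$ to produce the third. Concretely, condition (1) is exactly coarse density, and condition (2) gives $d_Y(\phi(x_1),\phi(x_2)) \le K \cdot d_X(x_1,x_2)$ directly; the content of the lemma is therefore to produce constants $A,B$ with $d_X(x_1,x_2) \le A \cdot d_Y(\phi(x_1),\phi(x_2)) + B$.

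To prove this lower bound, I would first apply condition (3) once with $M := 2L + 1$, where $L$ is the constant from (1), and let $N = N(M)$ be the resulting threshold; the contrapositive says that whenever $d_Y(\phi(z),\phi(w)) \le M$ one has $d_X(z,w) \le N$. Given $x_1, x_2 \in X$, set $D := d_Y(\phi(x_1),\phi(x_2))$ and use that $Y$ is a path-metric space to choose a path from $\phi(x_1)$ to $\phi(x_2)$ of length at most $D + 1$; subdivide at unit arc length into points $y_0 = \phi(x_1), y_1, \ldots, y_k = \phi(x_2)$ with $k \le D + 2$ and $d_Y(y_i,y_{i+1}) \le 1$. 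By condition (1), pick $z_i \in X$ with $d_Y(\phi(z_i),y_i) \le L$, taking $z_0 = x_1$ and $z_k = x_2$. Then $d_Y(\phi(z_i),\phi(z_{i+1})) \le 2L + 1 = M$, so $d_X(z_i,z_{i+1}) \le N$ by the contrapositive of (3), and the triangle inequality in $X$ gives $d_X(x_1,x_2) \le k N \le N \cdot d_Y(\phi(x_1),\phi(x_2)) + 2N$, as desired.

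The main point to watch, if there is one, is purely organizational: condition (3) provides only an implicit control function $M \mapsto N(M)$, and the key observation is that a single evaluation of this function at the fixed scale $M = 2L + 1$ suffices once one chains through a unit-scale subdivision of a path in $Y$. Apart from the triangle inequality in $X$ and the path-metric structure of $Y$, no further properties of the two spaces are used.
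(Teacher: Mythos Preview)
Your proof is correct. Note, however, that the paper does not supply its own proof of this lemma: it simply quotes the statement from \cite{CC92} (Subsection 4.2) and uses it as a black box. So there is no ``paper's proof'' to compare against; you have filled in what the paper omits.

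Your argument is the standard one and essentially what one finds in Cannon--Cooper: use the path-metric structure of $Y$ to chain a path from $\phi(x_1)$ to $\phi(x_2)$ into unit-scale pieces, pull each waypoint back to $X$ via coarse density (condition (1)), and then apply the contrapositive of condition (3) at the single fixed scale $M = 2L+1$ to bound each link of the chain in $X$. The observation that one evaluation of the implicit function $M \mapsto N(M)$ suffices is exactly the right point to highlight.
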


\begin{proposition}[Horoballs are quasiisometric to horodisks] \label{r:horo} Let $\mu \in \MM(S)$, $(\alpha, t_{\alpha})$ a transverse pair in $\mu$, and $\HHH_{\alpha}$ the combinatorial horoball over the orbit of the action of $\langle T_{\alpha}\rangle$ on $\mu$.  Then $\HHH_{\alpha}$ with the path metric is quasi-isometric to a horodisk with the Poincar\'e metric.
\end{proposition}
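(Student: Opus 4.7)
The plan is to construct an explicit map $\phi: \HHH_\alpha \to \HH_\alpha$ and verify the criteria of Lemma \ref{r:quasi cond}. The natural candidate sends $(k, n) \in \HHH_\alpha$ (shorthand for $(T_\alpha^k(t_\alpha), n)$) to the point $\phi(k, n) = k + i e^n$ in the horodisk. The logarithmic scaling of the vertical coordinate is forced by the construction: a single horizontal edge in $\HHH_\alpha$ at level $n$ reaches horizontal distance up to $e^n$ in the orbit of $\langle T_\alpha\rangle$, exactly matching the exponential contraction of horocyclic length under the Poincar\'e metric.

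Conditions (\ref{r:quasi cond 1}) and (\ref{r:quasi cond 2}) are routine. The image $\{k + i e^n : k \in \ZZ,\, n \geq 0\}$ forms a uniform net in $\HH_\alpha$: consecutive imaginary values $e^n$ and $e^{n+1}$ are Poincar\'e-distance $1$ apart, and at height $e^n$ consecutive values of $k$ determine horocyclic arcs of length $1/e^n \leq 1$. For the Lipschitz upper bound, each edge of $\HHH_\alpha$ has length $1$ and its image is of uniformly bounded Poincar\'e length: vertical edges $(k, n) \to (k, n+1)$ map to hyperbolic segments of length exactly $1$, while a horizontal edge $(k_1, n) \to (k_2, n)$ with $|k_1 - k_2| \leq e^n$ maps to a horocyclic arc of length $|k_1 - k_2|/e^n \leq 1$, which dominates the corresponding Poincar\'e distance.

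For condition (\ref{r:quasi cond 3}), I would invoke the preferred-path description in Lemma \ref{r:horo geo} to obtain the combinatorial asymptotic
\[ d_{\HHH_\alpha}\bigl((k_1, n_1),\, (k_2, n_2)\bigr) \asymp \max\bigl(0,\, \lceil \ln|k_1 - k_2|\rceil - n_1\bigr) + \max\bigl(0,\, \lceil \ln|k_1 - k_2|\rceil - n_2\bigr), \]
and then compare it with the standard Poincar\'e-distance estimate
\[ d_{\HH}\bigl(k_1 + i e^{n_1},\, k_2 + i e^{n_2}\bigr) \asymp \log \frac{\max(e^{n_1},\, e^{n_2},\, |k_1 - k_2|)^2}{e^{n_1 + n_2}}. \]
Splitting into the two regimes $|k_1 - k_2| \leq \max(e^{n_1}, e^{n_2})$ (where both expressions reduce to $|n_1 - n_2|$) and $|k_1 - k_2| > \max(e^{n_1}, e^{n_2})$ (where both reduce to $2\ln|k_1 - k_2| - n_1 - n_2$) shows the two quantities agree up to a uniform additive constant, which verifies (\ref{r:quasi cond 3}) with room to spare.

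The only real obstacle is the case-analysis matching the two asymptotics in the far-apart regime, which is a bookkeeping exercise with ceilings and the classical upper half-plane distance formula. No genuine difficulty arises because the exponential edge-reach built into the Groves--Manning horoball was designed to model exactly the horocyclic contraction in $\HH^2$.
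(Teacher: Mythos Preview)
Your map $\phi$ and your treatment of conditions (\ref{r:quasi cond 1}) and (\ref{r:quasi cond 2}) coincide with the paper's. The divergence is in condition (\ref{r:quasi cond 3}). The paper argues by contradiction via $\delta$-thinness: assuming $d_\Delta(\phi(x_1),\phi(x_2))$ bounded while one of $|k_1-k_2|$, $|n_1-n_2|$ is large, it builds a geodesic triangle with vertices $(k_1,e^{n_1})$, $(k_1,e^{n_2})$, $(k_2,e^{n_2})$, observes the angle at $(k_1,e^{n_2})$ exceeds $\pi/2$, and uses thinness to force the hypotenuse to be long. You instead compute both distances outright via Lemma \ref{r:horo geo} and the upper-half-plane formula, and match them regime-by-regime. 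Your route is more hands-on but yields more: it shows $\phi$ is a $(1,C)$-quasi-isometry directly, not merely that it is proper.

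One correction: your displayed asymptotic for $d_{\HHH_\alpha}$ is not quite right. When both $n_1, n_2 \geq \ln|k_1-k_2|$ your formula collapses to $0$, whereas the preferred path has length $|n_1-n_2|+O(1)$; the correct coarse expression is $2\max(n_1,n_2,\ln|k_1-k_2|)-n_1-n_2$. Your regime analysis immediately after (``both expressions reduce to $|n_1-n_2|$'') gets this right, so the slip is in the displayed line rather than the argument itself.
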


\begin{proof}[Proof of Proposition \ref{r:horo}]

Let $\Delta$ be the standard horodisk with the Poincar\'e metric.  Define a map $\phi: \HHH_{\alpha} \rightarrow \Delta$ by $\phi(\alpha, T_{\alpha}^k(t_{\alpha}), n) = \phi(k, n) = (k, e^n)$.  We verify that $\phi$ satisfies the conditions from Lemma \ref{r:quasi cond}.\\

To see that $\phi(\HHH_{\alpha})$ is quasidense in $\Delta$ and thus satisfies condition \ref{r:quasi cond 1}, observe that $\phi(\HHH_{\alpha})$ is all the points of the form $(n, e^k)$, where $n, k \in \ZZ_{\geq 0}$.  Since the $\Delta$-distance between two horizontally adjacent vertices in $\phi(\HHH_{\alpha})$ is uniformly bounded by the distance between two vertices at height 1, every point in $\Delta$ is at most distance 1 from a vertical geodesic line in $\phi(\HHH_{\alpha})$.  Similarly, the distance between two vertically adjacent vertices in $\phi(\HHH_{\alpha})$ is bounded by $\frac{e-1}{e}$.  Thus $\phi(\HHH_{\alpha})$ is quasidense in $\Delta$.\\

We now verify condition \ref{r:quasi cond 2} on endpoints of edges of $\HHH_{\alpha}$.  Vertical edges are geodesics in $\HHH_{\alpha}$ and $\phi$ sends them to vertical segments which are geodesics of the same length in $\Delta$.  Similarly, a horizontal edge in $\HHH_{\alpha}$, connecting $(k_1, n)$ and $(k_2, n)$ where $|k_1- k_2| <e^n$, is a geodesic of length 1.  A calculation verifies that the $d_{\Delta}\left((k_1,e^n),(k_2,e^n)\right)$ is bounded by $\frac{1}{\sqrt{2}}$, confirming condition \ref{r:quasi cond 2}.\\

Finally, we check condition \ref{r:quasi cond 3}.  Suppose that we have $x_1= (k_1, n_1), x_2 = (k_2, n_2) \in \mathcal{H}_{\alpha}$ such that $d_{\Delta}\left((k_1, e^{n_1}), (k_2, e^{n_2})\right)$ is bounded.  We claim that implies $|k_1 - k_2|$ and $|n_1 - n_2|$ are bounded.  From this, it follows immediately that $d_{\HHH_{\alpha}}\left((k_1, n_1),(k_2, n_2)\right)$ is bounded, confirming condition \ref{r:quasi cond 3} for the vertices.\\

Now we check condition \ref{r:quasi cond 3} for points in the interior of the edges.  Assume that at least one of $|k_1 - k_2|, |n_1-n_2|$ is large, for a contradiction.  As noted above, $\phi$ sends vertical geodesics in $\HHH_{\alpha}$ to vertical geodesics in $\Delta$ of the same length, so if $k_1 = k_2$, then $d_{\HHH_{\alpha}}(x_1,x_2) = d_{\Delta} (\phi(x_1),\phi(x_2))$, so we may assume $k_1 \neq k_2$.  Without loss of generality, assume that $k_1 < k_2$ and $n_1 \leq n_2$.  Consider the $\Delta$-geodesic triangle $\bigtriangledown$ with vertices $\bar{a} = [(k_1, e^{n_1}),(k_1, e^{n_2})], \bar{b} = [(k_1, e^{n_2}), (k_2, e^{n_2})], \bar{c} = [(k_1,e^{n_1}),(k_2,e^{n_2})]$; we note that $|\bar{c}|_{\Delta} = d_{\Delta} (\phi(x_1),\phi(x_2))$.\\

Since we are assuming that $|\bar{c}|$ is bounded, our assumption that one of $|k_1- k_2|$ or $|n_1 - n_2|$ is large implies that one of $|\bar{a}|$ or $|\bar{b}|$ is large.  It follows immediately the triangle inequality that both $|\bar{a}|$ and $|\bar{b}|$ are large.  By $\delta$-hyperbolicity of $\Delta$, $\bigtriangledown$ is $\delta$-thin.  Note that angle in $\bigtriangledown$ at the vertex $(k_1, e^{n_2})$ where $\bar{a}$ and $\bar{b}$ meet is bigger than $\frac{\pi}{2}$.  If we parametrize $\bar{a}$ and $\bar{b}$  moving away from $(k_1, e^{n_2})$ by $f_{\bar{a}}:[0, |\bar{a}|] \rightarrow \Delta$ and $f_{\bar{b}}:[0, |\bar{b}|] \rightarrow \Delta$, then $d_{\Delta}(f_{\bar{a}}(t), f_{\bar{b}}(t))> \delta$ for $t >\delta$.  Thus $\delta$-thinness of $\bigtriangledown$ implies that $\bar{c}$ must be $\delta$-close to $\bar{a}$ and $\bar{b}$ for almost their entire lengths.  Since they were long , it implies that $\bar{c}$ must have been long, a contradiction.
\end{proof}

\subsection{Building $\AM(S)$ from $\MM(S)$}

We are now ready to define the augmented marking complex for a surface, denoted $\AM(S)$.  $\AM(S)$ is a simplicial 1-complex with vertices and edges as follows.\\

A vertex $\tmu \in \AM^{(0)}(S)$, called an \emph{augmented marking}, is a complete clean marking, $\pi_{\MM(S)} (\tmu)= \mu\in \MM(S)$ along with a collection of lengths for the curves in $base(\mu)= \{\alpha_1, \dots, \alpha_n\}$: 
\[\tmu = \left(\mu, D_{\alpha_1}(\tmu), \dots, D_{\alpha_n}(\tmu)\right)\]
where the $D_{\alpha_i}(\mu)$ are nonnegative integers.  The $D_{\alpha_i}(\tmu)$ are called the \emph{length data} of $\tmu$.  When the context is clear, we shorten this to $D_{\alpha}$.  We also write $(\alpha, t_{\alpha}, D_{\alpha}) \in \tmu$ if $\alpha\in base(\tmu)$ with transverse curve $t_{\alpha}$ and length $D_{\alpha}$.

\begin{remark} [Thick and thin] \label{r:thickdef}
The integer $D_{\alpha_i}$ coarsely stands in for how short $\alpha_i$ is in a given augmented marking, in terms of extremal (or hyperbolic) length, with $D_{\alpha_i}$ positive implying $\alpha_i$ is short; this analogy is made explicit in the definition of the map $G: \AM(S) \rightarrow \TT(S)$ in Subsection \ref{r:A to T} below.  When $D_{\alpha_i}(\tmu) = 0$ for all $\alpha_i \in base(\mu)$, we say that $\tmu$ is \emph{in the thick part of } $\AM(S)$.  Similarly, if $D_{\alpha_i}(\tmu) >0$, we say $\alpha_i$ is \emph{short} in $\tmu$ and $\tmu$ is in the $\alpha_i$-\emph{thin part} of $\AM(S)$.\\
\indent More generally, let $\rho \subset \CC(S)$ be a simplex.  We say that $\tmu \in \AM(S)$ is in the $\rho$-\emph{thin part} of $\AM(S)$ if $D_{\alpha}(\tmu) > 0$ for each $\alpha \in \rho$.  If, in addition, $D_{\beta}(\tmu) = 0$ for all $\beta \in \CC(S\setminus \rho)$, we say that $\tmu$ is \emph{thick relative to} $\rho$.
\end{remark}

There are three types of edges in $\AM^{(1)}(S)$.  The first type is the elementary \emph{flip move} from $\MM(S)$.  The second type is a \emph{twist move}, which comes from bundles of elementary twist moves from $\MM(S)$ and corresponds to a horizontal edge in a combinatorial horoball.  The last type is a \emph{vertical move}, which involves adjusting the length data and corresponds to a vertical edge in a combinatorial horoball.  We connect two augmented markings $\tmu_1, \tmu_2 \in \AM^{(0)}(S)$ by an edge in each of the following cases:

\begin{itemize}
\item \emph{Flip moves}: If  $\mu_1, \mu_2 \in \MM(S)$ differ by a flip move at a transverse pairing $(\alpha, t) \mapsto (t, \alpha)$, and if $\tmu_1, \tmu_2$ have the same base curves and length data, with $D_{\alpha}(\tmu_1) = D_{\alpha}(\tmu_2) = 0$.
\item \emph{Twist moves}: If $\alpha \in base(\mu_1) = base(\mu_2)$, $D_{\alpha}(\tmu_1) = D_{\alpha}(\tmu_2) = k > 0$, and $\tmu_1= T^n_{\alpha} \tmu_2$ with $0 < n < e^k$.
\item \emph{Vertical moves}: If $\mu_1 = \mu_2$ and if $\tmu_1, \tmu_2$ only differ in length data by 1 in one component, say $D_{\alpha}(\tmu_1) = D_{\alpha}(\tmu_2) + 1$ and $D_{\beta}(\tmu_1) = D_{\beta}(\tmu_2)$ for all $\beta \in base(\mu_1) \setminus \alpha = base(\mu_2) \setminus \alpha$.
\end{itemize}

\begin{remark} [No flipping a short curve] \label{r:notflip}
If $\tmu \in \AM(S)$, $D_{\alpha}(\tmu) >0$ and $(\alpha, t)$ a transverse pair, then it is not possible, by construction, to perform a flip move $(\alpha, t) \mapsto (t, \alpha)$, for only base curves can be short.  This is precisely to guarantee that the Teichm\"uller distance between the image under the map $G$ of two augmented markings which differ by an elementary move is uniformly bounded; see Lemma \ref{r:G lipschitz} below.
\end{remark}

Since $\MM(S)$ is locally finite and each augmented marking has at most 2 vertical edges for each base curve, we have the following immediately from the definition:

\begin{lemma} \label{r:local finite}
$\AM(S)$ is locally finite, but not uniformly locally finite.
\end{lemma}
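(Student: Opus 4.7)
The plan is to prove both parts directly from the definition of the three edge types in $\AM(S)$, using the local finiteness of $\MM(S)$ (from \cite{MM00}) as a black box for the flip part.

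First, for local finiteness, I fix $\tmu \in \AM^{(0)}(S)$ with $base(\pi_{\MM(S)}(\tmu)) = \{\alpha_1, \dots, \alpha_n\}$ and length data $D_{\alpha_1}, \dots, D_{\alpha_n}$, and I count the edges of each of the three types emanating from $\tmu$. The flip moves require $D_\alpha = 0$ for the flipped curve (by the definition of flip edges and Remark \ref{r:notflip}), so the flip neighbors of $\tmu$ are in bijection with a subset of the flip neighbors of $\pi_{\MM(S)}(\tmu)$ in $\MM(S)$; since $\MM(S)$ is locally finite, this set is finite. The twist moves at $\alpha_i$ produce neighbors of the form $T^{\pm n}_{\alpha_i} \tmu$ with $0 < n < e^{D_{\alpha_i}}$, and there are therefore at most $2(e^{D_{\alpha_i}}-1)$ such neighbors (this is finite because $D_{\alpha_i}$ is a fixed nonnegative integer). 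The vertical moves at $\alpha_i$ change $D_{\alpha_i}$ by $\pm 1$ and fix the rest of the length data, giving at most $2$ neighbors per base curve. Summing over the finitely many $\alpha_i \in base(\tmu)$ yields a finite total degree.

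For the second assertion, I produce a sequence of augmented markings whose degrees are unbounded. Pick any complete clean marking $\mu \in \MM(S)$ with a transverse pair $(\alpha, t_\alpha) \in \mu$, and for each $k \geq 1$ let $\tmu_k$ be the augmented marking obtained from $\mu$ by setting $D_\alpha(\tmu_k) = k$ and $D_\beta(\tmu_k) = 0$ for all other $\beta \in base(\mu)$. By the twist-move clause, $\tmu_k$ is adjacent to $T^n_\alpha \tmu_k$ for every integer $n$ with $0 < |n| < e^k$, giving at least $2(e^k - 1)$ neighbors. As $k \to \infty$, this quantity is unbounded, so no uniform bound on the vertex degrees of $\AM(S)$ can exist.

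The argument is essentially an exercise in unpacking definitions; I do not anticipate any serious obstacle. The only place one must be careful is the twist clause: the relation "$\tmu_1 = T^n_\alpha \tmu_2$ with $0 < n < e^k$" should be understood as a symmetric adjacency relation on augmented markings, so the twist-neighbors of $\tmu$ include both $T^n_\alpha \tmu$ and $T^{-n}_\alpha \tmu$ for $0 < n < e^{D_\alpha}$, which is exactly what drives both the finiteness (bounded by $2(e^{D_\alpha}-1)$) and the failure of uniform local finiteness (unbounded as $D_\alpha \to \infty$).
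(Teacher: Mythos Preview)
Your proof is correct and follows essentially the same approach as the paper, which simply states that the lemma is immediate from the definition together with the local finiteness of $\MM(S)$ and the observation that each augmented marking has at most two vertical edges per base curve. Your argument just makes this sketch explicit by counting the three edge types separately and exhibiting the exponential growth of twist-neighbors as $D_\alpha \to \infty$ to witness the failure of uniform local finiteness.
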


The metric on $\AM(S)$ is the path metric, where each edge is given length 1.  We close this subsection with a series of remarks.

\begin{remark} [$\MM(S) \hookrightarrow \AM(S)$] \label{r:inclusion} For any subsurface $Y \subset S$, there is a natural inclusion of $i_Y:\MM(Y) \hookrightarrow \AM(Y)$ given by $i_Y(\mu) = \left(\mu, 0, \dots, 0\right)$ and we call this embedded copy of $\MM(S)$ the \emph{thick part} of $\AM(Y)$ and points therein \emph{thick points}.  In particular, when $Y = S$, we think of $i_S(\MM(S)) \subset \AM(S)$ as the thick part of $\AM(S)$.  As we will see in Subsection \ref{r:A to T}, $i_S(\MM(S))$ can be identified with the thick part of $\TT(S)$, justifying our terminology.
\end{remark}

\begin{remark} [Combinatorial horoballs in $\AM(S)$]
Let $\mu \in \MM(S)$ and $(\alpha, t)$ a transverse pair in $\mu$.  Consider the orbit, $X_{\alpha}\subset \MM(S)$, of $\mu$ under $\langle T_{\alpha} \rangle \leq \MCG(S)$, the subgroup generated by the Dehn twist or half-twist about $\alpha$.  Consider the image of $X_{\alpha}$ in $\AM(S)$, namely $i_S(X_{\alpha})$.  Then $i_S(X_{\alpha})$ lies at the base of the combinatorial horoball $\HHH_{\alpha} \subset \AM(S)$.
\end{remark}

\begin{remark} [Shadows] \label{r:shadow}
There is a natural map $\pi_{\MM(S)}: \AM(S) \rightarrow \MM(S)$ defined by $\pi_{\MM(S)}(\tmu) = \mu$ for any $\tmu \in \AM(S)$, which we call the \emph{shadow map}.  Similarly, any path in $\AM(S)$ shadows a path in $\MM(S)$.  
\end{remark}

\begin{remark}[Thin parts and product regions]
Let $\rho \subset \CC(S)$ be a simplex.  If we ignore the technical concerns about cleaning markings after flip moves, then the collection of $\rho$-thin points in $\AM(S)$, which we call \emph{the $\rho$-thin part of} $\AM(S)$, coarsely has the structure of the 1-skeleton of $\prod_{\alpha \in \rho} \HHH_\alpha \times \AM(S\setminus \rho)$ (See Theorem \ref{r:product} for comparison).
\end{remark}

\section{Augmented hierarchies} \label{r:aughier section}

In this section, we develop the $\AM(S)$-analogue of the Masur-Minsky hierarchy machinery.  Informally, an augmented hierarchy will be a hierarchy in which the geodesics in annular curve complexes have been replaced by geodesics in combinatorial horoballs.  Much of the work in \cite{MM00} goes through to this setting unchanged, as the role the annular geodesics plays in a standard hierarchy almost entirely hinges on the core of the annuli in question.

\subsection{Combinatorial horoballs over annular curve graphs}

We must first replace annular curve graphs with combinatorial horoballs over them.  Recall from Subsection \ref{r:combo horo subsection} that any graph admits a combinatorial horoball, that combinatorial horoballs are uniformly hyperbolic (Theorem \ref{r:horo hyp}), and that the combinatorial horoballs over quasiisometric graphs are quasiisometric (Lemma \ref{r:extend qi to horo}).\\

Following \cite{MM00}[Subsection 2.4], we observe that annular curve graphs $\CC(\alpha)$ are quasiisometric to $\ZZ$.  For any curve $\alpha \in \CC(S)$, choose an arc $\beta_{\alpha} \in \CC(\alpha)$.  For $\gamma \in \CC(\alpha)$, let $\gamma \cdot \beta$ denote the algebraic intersection number of $\gamma$ with $\beta$.  The map $\phi_{\beta_{\alpha}}: \CC(\alpha) \rightarrow \ZZ$, given by $\phi_{\beta_{\alpha}}(\gamma) = \gamma \cdot \beta$ is a $(1,2)$-quasiisometry, independent of the choice of $\beta$.  The map $\phi_{\beta_{\alpha}}$ essentially records twisting around $\alpha$ relative to $\beta$.\\

Lemma \ref{r:extend qi to horo} implies that $\HHH(\CC(\alpha))= \HHH(\alpha)$ is uniformly quasiisometric to $\HHH(\ZZ)$ for each $\alpha \in \CC(S)$.  Proposition \ref{r:horo} gives us:

\begin{lemma}\label{r:horo to H}
For any $\alpha \in \CC(S)$, $\HHH(\alpha)$ is uniformly quasiisometric to a horodisk in $\HH^2$.
\end{lemma}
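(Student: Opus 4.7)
The plan is to chain together the two ingredients already set up just before the statement. First, I would invoke the $(1,2)$-quasiisometry $\phi_{\beta_\alpha} : \CC(\alpha) \to \ZZ$ recorded via algebraic intersection with a chosen arc $\beta_\alpha \in \CC(\alpha)$. The key point is that the quasiisometry constants $(1,2)$ are independent of $\alpha$, which is what will ultimately let me extract uniformity.

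Next, I would feed $\phi_{\beta_\alpha}$ into Lemma \ref{r:extend qi to horo}, obtaining a $(1,C)$-quasiisometry $\hat{\phi}_{\beta_\alpha}: \HHH(\CC(\alpha)) = \HHH(\alpha) \to \HHH(\ZZ)$, where $C$ depends only on the constants $(1,2)$ and is therefore independent of $\alpha \in \CC(S)$. This is the step that promotes the base-level uniform quasiisometry to a uniform quasiisometry of the full combinatorial horoballs.

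Finally, I would identify $\HHH(\ZZ)$ with the combinatorial horoball $\HHH_\alpha$ over an orbit of the Dehn twist (or half-twist) $\langle T_\alpha \rangle$ on a marking containing $(\alpha, t_\alpha)$: the orbit is a copy of $\ZZ$ generated by $T_\alpha$, so the two combinatorial horoballs are isometric. Applying Proposition \ref{r:horo}, $\HHH_\alpha$ is quasiisometric to a horodisk in $\HH^2$ with explicit constants coming from the proof of that proposition, which do not depend on $\alpha$ either.

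Composing the uniform quasiisometry $\HHH(\alpha) \to \HHH(\ZZ)$ with the quasiisometry $\HHH(\ZZ) \to \HH^2\text{-horodisk}$ yields the desired uniform quasiisometry. There is no real obstacle: the proof is essentially a one-line composition, and the only thing to verify is that every constant in sight depends on $S$ but not on the particular curve $\alpha$, which follows from the $\alpha$-independence of the constants in $\phi_{\beta_\alpha}$ and in Proposition \ref{r:horo}.
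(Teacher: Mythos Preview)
Your proposal is correct and follows essentially the same approach as the paper: the paper's argument is precisely the composition of Lemma \ref{r:extend qi to horo} (applied to the uniform $(1,2)$-quasiisometry $\CC(\alpha)\to\ZZ$) with Proposition \ref{r:horo}. Your only addition is making explicit the identification of $\HHH(\ZZ)$ with the horoball $\HHH_\alpha$ over a Dehn-twist orbit, which the paper leaves implicit.
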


Vertices $x \in \HHH(\alpha)$ are pairs, $x = (t_{\alpha}, D_{\alpha})$, where $t_{\alpha} \in \CC(\alpha)$ and $D_{\alpha} \in \ZZ_{\geq 0}$.\\

In what follows, we build augmented hierarchies by replacing geodesics in $\CC(\alpha)$ with geodesics in $\HHH(\alpha)$.

\subsection{Augmented hierarchies defined}

In this subsection, we will define augmented hierarchies, following the lead of \cite{MM00}[Sections 4 and 5].\\

Let $Y \subset S$ be nonannular and $g \in \CC(Y)$ be a geodesic $v_1, \dots, v_n$, where the vertices $v_i$ are possibly simplices.  For any $i\geq1$, note that $v_i \cap v_{i+2} \neq \emptyset$ since $g$ is a geodesic.  Let $F(v_i \cup v_{i+2})$ be the subsurface of $Y$ which they fill.  We say $g$ is \emph{tight} if $\partial F(v_i \cup v_{i+2}) = v_{i+1}$ for each $i$ and $g$ has associated \emph{initial} and \emph{terminal} augmented markings, $\tI(g)$ and $\tT(g)$ respectively; tight geodesics exist by \cite{MM00}[Lemma 4.5].  If $Y$ is an annulus with core $\alpha$, then we take $\CC(Y) = \HHH(\alpha)$ and we adopt the convention that any geodesic in $\HHH(\alpha)$ is tight.  From now on, we will assume that all such marked geodesics are tight.\\

Let $Y \subset S$ be a nonannular subsurface and $\tmu \in \AM(S)$ be an augmented marking.  The \emph{restriction of $\tmu$ to $Y$}, denoted $\tmu|_Y$, is the set of transverse triples $(\alpha, t_{\alpha}, D_{\alpha})$ in $\tmu$ whose base curve $\alpha$ meets $Y$ essentially.  If $Y \subset S$ is an annulus, then we set $\tmu|_Y = \pi_{\HHH(\alpha)}(\tmu)$.\\

Let $X, Y \subset S$ be subsurfaces with $X$ nonannular.  Let $g_X \subset \CC(X)$ be a geodesic.  We say that $Y$ is a \emph{component domain} of $g_X$ if $Y$ is a component of $X \setminus v$ for some $v \in g_X$.  Suppose that $Y$ is component domain for the $i^{th}$ vertex of $g_X$, namely $v_i \in g_X$, $Y \subset X \setminus v_i$.  We note that this determines $v_i$ uniquely.\\

We define the \emph{initial augmented marking of $Y$ relative to $g_X$} to be

\[\tI(Y,g_X) = 
\begin{cases}
v_{i-1}  & \text{if }v_i \text{ is not the first vertex of $g_X$} \\
\tI(g_X)|_Y, & \text{if } v_i\text{ is the first vertex of $g_X$}
\end{cases}\]

Similalry, we define the \emph{terminal augmented marking of $Y$ relative to $g_X$} to be

\[\tT(Y,g_X) = 
\begin{cases}
v_{i+1}  & \text{if }v_i \text{ is not the last vertex} \\
\tT(g_X)|_Y, & \text{if } v_i\text{ is the last vertex}
\end{cases}\]

We say that a subsurface $Y \subset S$ is \emph{directly backward subordinate} to $g_X$ and write $g_X \swarrow Y$ if $Y$ is a component domain of $g_X$ and $\tI(Y,g_X) \neq \emptyset$.  Similarly, $Y \subset S$ is \emph{directly forward subordinate} to $g_Z$, written $Y \searrow g_Z$, if $Y$ is a component domain of $g_Z$ and $\tT(Y,g_Z) \neq \emptyset$.  For a tight geodesic $g_Y \subset \CC(Y)$, we write $g_X \swarrow g_Y$ if $g_X \swarrow Y$ and $\tI(g_Y) = \tI(Y, g_X)$; similarly, we write $g_Y \searrow g_Z $ if $Y \searrow g_Z$ and $\tT(g_Y) = \tT(Y, g_Z)$.\\

We can now state the definition of an augmented hierarchy, which is essentially \cite{MM00}[Definition 4.4]:

\begin{definition}[Augmented hierarchies]\label{r:hier def}
A \emph{hierarchy} between two augmented markings $\tmu, \teta \in \MM(S)$ is a collection of tight geodesics $\tH$ satisfying the following:

\begin{enumerate}
\item[(H1)] There is a distinguished \emph{main geodesic}, $\tg_{\tH} \in \tH$ with $D(\tg_{\tH}) = S$, such that $\tI(\tg_{\tH}) = \mu$ and $\tT(\tg_{\tH}) = \eta$. \label{r:hier def 1}
\item[(H2)] Let $\tg_X, \tg_Z \in \tH$ and $Y \subset S$ such that $\tg_X \swarrow Y \searrow \tg_Z$, then there is a unique $\tg_Y \in \tH$ with $\tg_X \swarrow \tg_Y \searrow \tg_Z$. \label{r:hier def 2}
\item[(H3)] For every $\tg_Y \in \tH$ with $\tg_Y \neq \tg_{\tH}$, there are $\tg_X,\tg_Z \in \tH$ with $\tg_X \swarrow \tg_Y \searrow \tg_Z$. \label{r:hier def 3}
\end{enumerate}
\end{definition}

\subsection{Augmented hierarchies exist}

The proof of the existence of augmented hierarchies hews closely to original proof of the existence of hierarchies in \cite{MM00}[Theorem 4.6].

\begin{theorem}[Augmented hierarchies exist]\label{r:aug hier exist}
Given any pair of augmented markings $\tmu, \teta \in \AM(S)$, there exists an augmented hierarchy $\tH$ with $\tI(\tH) = \tmu$ and $\tT(\tH) = \teta$.
\end{theorem}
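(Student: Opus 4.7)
The plan is to adapt the proof of the Masur--Minsky existence theorem for hierarchies, \cite{MM00}[Theorem 4.6], to the augmented setting. The only substantive new ingredient is that at annular subsurfaces $Y$ with core $\alpha$ the geodesics must live in the combinatorial horoball $\HHH(\alpha)$ rather than in $\CC(\alpha)$. Since by convention every geodesic in $\HHH(\alpha)$ is tight, such geodesics always exist between any two given vertices, so the scaffolding from \cite{MM00} will carry over essentially unchanged, with horoball geodesics slotted in wherever Masur--Minsky use annular geodesics.

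First I would construct the main geodesic $\tg_{\tH}$ by applying \cite{MM00}[Lemma 4.5] to obtain a tight geodesic in $\CC(S)$ from a vertex of $\base(\pi_{\MM(S)}(\tmu))$ to a vertex of $\base(\pi_{\MM(S)}(\teta))$, decorated with $\tI(\tg_{\tH}) = \tmu$ and $\tT(\tg_{\tH}) = \teta$. Then I would iteratively enlarge $\tH$ by processing each geodesic already added: for each $\tg_X \in \tH$ and each component domain $Y$ of $\tg_X$, I locate the forward/backward partner $\tg_Z$ (this may be another geodesic already in $\tH$, or $Y$ may abut the initial or terminal simplex of $\tg_X$, in which case the relevant endpoint data is extracted from the restriction $\tmu|_Y$ or $\teta|_Y$). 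When $Y$ is non-annular, \cite{MM00}[Lemma 4.5] produces a tight geodesic $\tg_Y \subset \CC(Y)$ between $\tI(Y, \tg_X)$ and the appropriate terminal data. When $Y$ is annular with core $\alpha$, I choose any geodesic in $\HHH(\alpha)$ between the two relevant vertices: curves $v_{i \pm 1}$ project to the height-$0$ level of $\HHH(\alpha)$, while restrictions of augmented markings project to the vertex $(t_\alpha, D_\alpha) \in \HHH(\alpha)$ carrying the recorded length data.

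Termination follows because each newly added geodesic is supported on a proper subsurface of the support of some geodesic already present, so the subsurface complexity strictly decreases and the annular case serves as the base. Verifying (H1)--(H3) is then straightforward: (H1) holds by the choice of $\tg_{\tH}$, (H3) holds because each $\tg_Y \neq \tg_{\tH}$ is added precisely to realize a $\swarrow$--$\searrow$ pair, and (H2) holds because we add exactly one geodesic per component domain. The main obstacle I anticipate is the same combinatorial bookkeeping that drives \cite{MM00}[Theorem 4.6]: ensuring that the pairing of backward and forward subordinate partners is consistent across the iteration, that every component domain which ought to host a geodesic does (including those which are directly subordinate on only one side, whose opposite-end marking must be drawn from the appropriate restriction of $\tmu$ or $\teta$), and that uniqueness in (H2) is not violated as new geodesics are inserted. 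The horoball modification does not disturb this bookkeeping since annular augmented geodesics are always available, are automatically tight, and do not themselves have component domains requiring further subordinate geodesics.
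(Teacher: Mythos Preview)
Your proposal is correct and follows essentially the same approach as the paper: both adapt \cite{MM00}[Theorem 4.6] by starting with a tight main geodesic and iteratively filling in subordinate geodesics, with the only substantive change being that annular domains get horoball geodesics (which are tight by convention). The paper frames the iteration slightly differently---it uses the language of \emph{partial augmented hierarchies} and \emph{unutilized configurations} $(Y,\tb,\tf)$ with $\tb \swarrow Y \searrow \tf$, filling any such configuration at each step and arguing that the count of unutilized domains at each complexity level is nonincreasing---but this is just a repackaging of your top-down recursion, and the termination and verification of (H1)--(H3) proceed identically.
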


\begin{proof}
We say that a collection of tight geodesics $\tH$ is a \emph{partial augmented hierarchy} if it satisfies conditions (1) and (3) and uniqueness part of (2) from Definition \ref{r:hier def}, but not necessarily the existence part.\\
 
Choose vertices $P \in \base(\tmu)$ and $Q \in \base(\teta)$ and let $\tg_{\tH} \in \CC(S)$ be any tight geodesic between them with $\tI(\tg_{\tH}) = \tmu$ and $\tT(\tg_{\tH}) = \teta$.  Then $\tH_0 = \{\tg_{\tH}\}$ is a partial augmented hierarchy, and we will construct a finite sequence of partial augmented hierarchies $\tH_n$, which terminates in an augmented hierarchy.\\

We call a triple $(Y, \tb,\tf)$ with domain $Y$ and $\tb,\tf \in \tH_n$ an \emph{unutilized configuration} if $\tb \swarrow Y \searrow \tf$ but $Y$ does not support a geodesic $\tilde{k}$ in $\tH_n$ with $\tb \swarrow \tilde{k} \searrow \tf$.\\

Let $(Y_n, \tb_n, \tf_n)$ be any unutilized configuration in $\tH_n$.  Let $\tg_{Y_n} \subset \CC(Y_n)$ be any tight geodesic with $\tI(\tg_{Y_n}) = \tI(Y_n,\tb_n)$ and $\tT(\tg_{Y_n}) = \tT(Y_n, \tf_n)$.  Then $\tb_n \swarrow \tg_{Y_n} \searrow \tf_n$ and we can take $\tH_{n+1} = \tH \cup \{\tg_{Y_n}\}$.\\

It is easy to see that the number of domains $Y$ of each complexity $\xi(Y) = m$ for $m < \xi(S)$ supporting unutilized triples is nonincreasing as a function of $n$.  Since each step $\tH_n \rightarrow \tH_{n+1}$ eliminates an unutilized domain, the sequence $\tH_n$ is finite and the terminal partial augmented hierarchy $\tH$ is an augmented hierarchy.
\end{proof}

\subsection{Hierarchies associated to an augmented hierarchy} \label{r:assoc hier}

In \cite{MM00}[Section 8], Masur-Minsky introduce the notion of \emph{hierarchies without annuli}, which consist of tight geodesics on nonannular domains satisfying the usual subordinancy relations, where markings are replaced by pants decomposition.  Hierarchies without annuli are useful for studying the geometry of the pants graph $\PP(S)$ and, via work of Brock \cite{Br03}, the Weil-Peterrson metric on $\TT(S)$.  Every hierarchy determines a unique hierarchy without annuli and, as noted in \cite{MM00}[Section 8], the hierarchy machinery translates seamlessly to the nonannular setting.  The key idea is that nearly every relevant piece of information encoded in a hierarchy is determined by its nonannular geodesics, with the annular geodesics playing a peripheral role.\\

In this subsection, we explain how to associate a hierarchy to any augmented hierarchy.  Unlike with hierarchies without annuli, this process will not be unique.  Nonetheless, it will provide us a framework upon which to rebuild the work from \cite{MM00}[Sections 4 and 5] in our setting.\\

Let $\tH$ be an augmented hierarchy between $\tmu, \teta \in \AM(S)$.  For each nonannular geodesic $\tg_Y \in \tH$, relabel it as $g_Y$ and assign it new initial and terminal markings by $\bI(g_Y)=\pi_{\MM(Y)}(\tI(\tg_Y))$ and $\bT(g_Y) = \pi_{\MM(Y)}(\tT(\tg_Y))$, respectively.  Let $H_0$ be the collection of the nonannular $g_Y \in \tH$ with these new initial and terminal markings; these geodesics are tight in the original sense of \cite{MM00}[Definition 4.2].  The following lemma confirms that $H_0$ is a partial hierarchy:

\begin{lemma}
$H_0$ is a partial hierarchy.
\end{lemma}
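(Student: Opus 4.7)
My plan is to verify the three defining conditions of a partial hierarchy for $H_0$ directly from the corresponding properties of $\tH$. The driving observation is that the subordinacy relation $\tg_X \swarrow \tg_Y$ requires $Y$ to be a component domain of $\tg_X$, which by the definitions in Subsection \ref{r:aughier section} requires $X$ itself to be nonannular; similarly for $\tg_Y \searrow \tg_Z$, the domain $Z$ must be nonannular. Consequently, for any nonannular $\tg_Y \in \tH$, both its direct forward and backward subordinacy partners in $\tH$ are automatically nonannular, and so survive in $H_0$.

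First I would check (H1). The main geodesic $\tg_{\tH}$ has domain $S$, which is nonannular, so the relabeled $g_{\tH}$ lies in $H_0$, and from the definition of $\pi_{\MM(S)}$ in Subsection \ref{r:marking complexes} together with the fact that $\pi_{\MM(S)}(\tmu) = \mu$ and $\pi_{\MM(S)}(\teta) = \eta$ by construction, we have $\bI(g_{\tH}) = \mu$ and $\bT(g_{\tH}) = \eta$. So $g_{\tH}$ serves as a valid main geodesic for $H_0$.

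Next I would verify (H3). Given any nonannular $g_Y \in H_0$ with $g_Y \neq g_{\tH}$, condition (H3) for $\tH$ produces $\tg_X, \tg_Z \in \tH$ with $\tg_X \swarrow \tg_Y \searrow \tg_Z$. By the key observation, both $X$ and $Z$ are nonannular, so $g_X, g_Z \in H_0$. It remains to verify the projected marking conditions $\bI(g_Y) = \bI(Y, g_X)$ and $\bT(g_Y) = \bT(Y, g_Z)$ in the non-augmented sense. When the supporting vertex $v_i \in g_X$ is interior, $\tI(Y, \tg_X) = v_{i-1}$ is a simplex, and the projected $\bI(g_Y) = \pi_{\MM(Y)}(v_{i-1})$ is a full marking in $\MM(Y)$ extending the simplex $v_{i-1}$, which is precisely what the standard hierarchy machinery of \cite{MM00} admits as an initial marking. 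When $v_i$ is an endpoint of $g_X$, one instead uses that $\pi_{\MM(Y)}$ commutes (coarsely) with the restriction operation defining $\tI(\tg_X)|_Y$, invoking Remark \ref{r:lie in Y} to ensure that the base curves of $\tI(\tg_X)$ already lying in $Y$ are preserved under the projection.

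Finally, the uniqueness part of (H2) is inherited directly from $\tH$: if $g_X, g_Z \in H_0$ and $Y$ satisfy $g_X \swarrow Y \searrow g_Z$, then any two candidates $g_Y^1, g_Y^2 \in H_0$ satisfying the subordinacy chain lift to distinct $\tg_Y^1, \tg_Y^2 \in \tH$ with $\tg_X \swarrow \tg_Y^i \searrow \tg_Z$, contradicting the uniqueness part of (H2) for $\tH$. I expect the main obstacle to lie in the bookkeeping for (H3) — specifically reconciling the fact that $\pi_{\MM(Y)}(v_{i-1})$ is a full marking while the standard hierarchy formalism in \cite{MM00} presents $\bI(Y, g_X)$ as a simplex or partial marking. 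This tension is resolved, but requires care, because \cite{MM00} explicitly allows partial markings and the subordinacy condition depends only on the underlying simplex data, which is preserved by the projection.
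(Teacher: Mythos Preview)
Your approach is essentially the paper's: verify (H1), (H3), and the uniqueness half of (H2) by lifting to $\tH$, using that the domains $X,Z$ in any relation $\tg_X \swarrow \tg_Y \searrow \tg_Z$ are necessarily nonannular. The paper does exactly this, and your explicit observation about nonannularity of $X$ and $Z$ is a point the paper leaves implicit.

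There is, however, a misreading that creates the ``obstacle'' you flag in (H3). In the construction of $H_0$, the symbol $\pi_{\MM(Y)}$ denotes the \emph{shadow map} of Remark~\ref{r:shadow} (forget length data), not the ``projection of markings to markings on subsurfaces'' from Subsection~\ref{r:marking complexes}. Under the shadow map, a simplex $v_{i-1}$ with no length data is sent to itself, not to a full marking on $Y$. Hence in the interior-vertex case one has $\bI(g_Y)=\pi_{\MM(Y)}(v_{i-1})=v_{i-1}=\bI(Y,g_X)$ on the nose; in the endpoint case, restriction and the shadow map commute (both act on transverse triples base-curve-by-base-curve), giving $\pi_{\MM(Y)}(\tI(\tg_X)|_Y)=\bI(g_X)|_Y$ exactly. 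The paper's entire argument for (H3) is the single chain
\[
\bI(g_Y)=\pi_{\MM(Y)}(\tI(\tg_Y))=\pi_{\MM(Y)}(\tI(Y,\tg_X))=\bI(Y,g_X),
\]
together with the remark that this is nonempty if and only if $\tI(Y,\tg_X)$ is. Your appeal to ``subordinacy depends only on the underlying simplex data'' is both unnecessary and not quite right in the \cite{MM00} formalism, where $g_X\swarrow g_Y$ requires the exact equality $\bI(g_Y)=\bI(Y,g_X)$; the shadow-map reading gives you that equality without further work.
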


\begin{proof}
We must prove that $H_0$ satisfies properties (1), (3), and the uniqueness part of (2) of \cite{MM00}[Definition 4.4].  Property (1) is obvious from the definition.\\

To see (3), suppose that $g'_Y \in H_0$.  Then there is a $g_Y \in \tH$ with $D(g_Y) = D(g'_Y)$.  Since $\tH$ is an augmented hierarchy, there exist $g_X, g_Z \in \tH$ with $g_X \swarrow g_Y \searrow g_Z$.  In particular, $\tI(g_Y) = \tI(Y, g_X)$ and $\tT(g_Y) = \tT(Y, g_Z)$.  By definition, $\bI(g'_Y) = \pi_{\MM(Y)}(\tI(g'_Y)) = \pi_{\MM(Y)}(\tI(Y,g_X)) = \bI(Y,g_X)$, which is nonempty if and only if $\tI(Y,g_X)$ is.  Thus $g'_X \swarrow g'_Y$ and similarly $g'_Y \searrow g'_Z$.\\

A similar argument shows that the uniqueness part of (2) holds.

\end{proof}

The unutilized configurations in $H_0$ are precisely the annular domains whose cores are curves appearing along geodesics in $H_0$, which coincide with those annular domains supporting geodesics in $\tH$.  For each unutilized configuration $(Y, g_X,g_Z)$ in $H_0$, where $Y$ is an annulus with core $\alpha$, let $\tg_Y \in \tH$ be the geodesic in $\HHH(\alpha)$, with initial and terminal vertices $\tg_{Y, int}, \tg_{Y,ter} \in \tg_Y$.  Choose a tight geodesic $g_Y$ between $\pi_{\CC(\alpha)}(\tg_{Y, int})$ and $\pi_{\CC(\alpha)}(\tg_{Y, ter})$, with $\bI(g_Y) = \bI(Y, g_X)$ and $\bT(g_Y) = \bT(Y, g_X)$.  It follows from the proof of \cite{MM00}[Theorem 4.6] that the result from adding these tight geodesics to $H_0$ is a hierarchy, $H$.   We call $H$ a \emph{hierarchy associated to $\tH$}.\\

The following proposition describes the relationship between an augmented hierarchy and any hierarchy associated to it:

\begin{proposition}\label{r:assoc hier corr}
Let $\tH$ be an augmented hierarchy between $\tmu, \teta \in \AM(S)$ and let $H$ be any hierarchy associated to $\tH$.  Then the following hold:

\begin{enumerate}
\item The map $\Phi: \tH \rightarrow H$ given by $\Phi(\tg_Y) = g_Y$ is a bijection
\item For any $\tg_Y \in \tH$, we have $g_{Y, int} = \pi_{\CC(Y)}(\tg_{Y, int})$ and $g_{Y, ter} = \pi_{\CC(Y)}(\tg_{Y, ter})$, where $\tg_{Y, int}, \tg_{Y,ter} \in \tg_Y$ are its initial and terminal vertices. 
\item For any $\tg_Y \in \tH$, we have $\bI(g_Y) = \pi_{\MM(Y)}(\tI(\tg_Y))$ and $\bT(g_Y) = \pi_{\MM(Y)}(\tT(\tg_Y))$.
\item For any triple $\tg_X, \tg_Y, \tg_Z \in \tH$, we have $\tg_X \swarrow \tg_Y \searrow \tg_Z$ in $\tH$ if and only if $g_X \swarrow g_Y \searrow g_Z$ in $H$
\end{enumerate}
\end{proposition}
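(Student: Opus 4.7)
The plan is to verify each of the four claims by carefully tracing through the explicit construction of the associated hierarchy $H$ from $\tH$ given just prior to the statement; none of the claims requires new ideas beyond unwinding that construction.

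For (1), I would first observe that the initial partial hierarchy $H_0$ contains exactly one nonannular geodesic $g_Y$ for each nonannular $\tg_Y \in \tH$, obtained by relabeling the underlying tight geodesic in $\CC(Y)$ and replacing the markings. As noted in the paragraph preceding the proposition, the unutilized configurations in $H_0$ are precisely the annular domains whose cores appear as vertices along the nonannular geodesics of $H_0$, and these are in bijection (via the common domain) with the annular geodesics of $\tH$. Since the second step of the construction adds exactly one annular $g_Y$ for each such unutilized configuration, the assignment $\Phi(\tg_Y) = g_Y$ is a well-defined bijection.

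For (2), the nonannular case is automatic, since $g_Y$ and $\tg_Y$ agree vertex-by-vertex and $\pi_{\CC(Y)}$ fixes curves already in $\CC(Y)$. The annular case is immediate from the construction: when $Y$ is an annulus with core $\alpha$, the tight geodesic $g_Y$ was explicitly chosen to run between $\pi_{\CC(\alpha)}(\tg_{Y,int})$ and $\pi_{\CC(\alpha)}(\tg_{Y,ter})$. For (3), in the nonannular case the equalities $\bI(g_Y) = \pi_{\MM(Y)}(\tI(\tg_Y))$ and $\bT(g_Y) = \pi_{\MM(Y)}(\tT(\tg_Y))$ are precisely how the markings on $H_0$ were defined. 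In the annular case with core $\alpha$, the construction assigns $\bI(g_Y) = \bI(Y, g_X)$ where $g_X$ is the unique geodesic in $H_0$ with $g_X \swarrow Y$. Since $g_X$ is necessarily nonannular and therefore has the same underlying vertex sequence as $\tg_X$, the formula for $\bI(Y, g_X)$ and the formula for $\tI(Y, \tg_X)$ read off the same preceding vertex (or the same restriction of the endpoint marking, in the first-vertex case), and compatibility with the projection $\pi_{\MM(Y)} = \pi_{\CC(\alpha)}$ gives $\bI(Y, g_X) = \pi_{\MM(Y)}(\tI(Y, \tg_X)) = \pi_{\MM(Y)}(\tI(\tg_Y))$; the forward case is symmetric.

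For (4), backward subordinacy $\tg_X \swarrow Y$ in $\tH$ requires that $Y$ be a component domain of $\tg_X$ and that $\tI(Y, \tg_X) \neq \emptyset$. By (1) and (2), the bijection $\Phi$ preserves underlying vertex sequences, so $Y$ is a component domain of $\tg_X$ if and only if of $g_X$. By (3), the markings $\bI(Y, g_X)$ and $\tI(Y, \tg_X)$ are related by $\pi_{\MM(Y)}$, and this projection is empty exactly when its input is, so one vanishes iff the other does; thus $g_X \swarrow Y$ iff $\tg_X \swarrow Y$. The upgrade $\tg_X \swarrow \tg_Y$ to $g_X \swarrow g_Y$ then follows by comparing the equalities $\tI(\tg_Y) = \tI(Y, \tg_X)$ and $\bI(g_Y) = \bI(Y, g_X)$ via (3); the forward case is identical. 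The main obstacle, such as it is, will be navigating the annular case of (3) cleanly: one must reconcile the interpretation of $\pi_{\MM(Y)}$ for annular $Y$ with core $\alpha$ as $\pi_{\CC(\alpha)}$, and confirm that the marking $\bI(Y, g_X)$ arising from backward subordinacy agrees with the projection of the endpoint of $\tg_Y$ that was chosen in the construction of $H$.
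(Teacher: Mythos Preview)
Your proposal is correct and follows essentially the same approach as the paper: both arguments simply unwind the construction of $H$ from $\tH$ to verify each claim, with (4) reducing to the observation that $\pi_{\MM(Y)}$ preserves (non)emptiness of the relative initial and terminal markings. Your treatment is in fact more careful than the paper's, which dispatches (1) and (3) with ``follow from the definition'' and does not separately address the annular case of (3) that you flag as the main wrinkle.
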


\begin{proof}
(1) and (3) follow from the definition.  To see (2), simply observe that $\tg_{Y, int} =\pi_{\CC(Y)}(\tg_{Y, int})$ and $\tg_{Y, ter} =\pi_{\CC(Y)}(\tg_{Y, ter})$ when $Y$ is nonannular, and the relation holds by construction when $Y$ is an annulus. To see (4), observe that $\bI(g_Y) = \bI(Y, g_X) = \pi_{\MM(Y)}(\tI(Y, \tg_X))= \pi_{\MM(Y)}(\tI(\tg_Y))$ and $\bT(g_Y) = \bT(Y,g_Z) = \pi_{\MM(Y)}(\tT(Y, \tg_Z)) = \pi_{\MM(Y)}(\tT(\tg_Y))$.  Since $\pi_{\MM(Y)}(\tI(Y, \tg_X)) \neq \emptyset$ and $ \pi_{\MM(Y)}(\tT(Y, \tg_Z)) \neq \emptyset$ if and only if $\tI(Y,\tg_X) \neq \emptyset $ and $\tT(Y, \tg_Z) \neq \emptyset$, (4) follows.
\end{proof}

Note that the above correspondence of subordinancy is independent of how we complete $H_0$ to a hierarchy $H$.  Indeed, all the relevant data is contained in $H_0$.

\subsection{Augmenting the hierarchical technicalities} \label{r:hier tech}

In this subsection, we sketch the translation of \cite{MM00}[Section 4] to the augmented setting.  As with hierarchies without annuli, most of the main constructions adapt without alteration.  As such, the content of this subsection is mostly a series of observations and applications of Proposition \ref{r:assoc hier corr}.\\

We begin with an augmented version of \cite{MM00}(Theorem 4.7).  Given a domain $Y \subset S$ and an augmented hierarchy $\tH$, let
$$\TSigma^-(Y) = \{\tg_Z \in \tH | Y \subset D(\tg_Z) \indent \text{and} \indent \tI(\tg_Z)|_Y \neq \emptyset\}$$
and
$$\TSigma^+(Y) = \{\tg_X \in \tH | Y \subset D(\tg_X) \indent \text{and} \indent \tT(\tg_X)|_Y \neq \emptyset\}$$
These are the \emph{forward} and \emph{backward sequences} of $Y$, respectively.  The following is the augmented analogue of \cite{MM00}[Theorem 4.7]:

\begin{theorem}[Structure of Sigma]\label{r:sigma}
Let $\tH$ be an augmented hierarchy and $Y$ any subsurface.
\begin{enumerate}
\item If $\TSigma^-(Y)$ is nonempty, then it has the form of a sequence: $\tg_{\tH} = \tg_{X_n} \swarrow \cdots \swarrow \tg_{X_0}$.\\
Similarly, if $\TSigma^+(Y)$ is nonempty, then it has the form of a sequence: $\tg_{Z_0} \searrow \cdots \searrow \tg_{Z_m} = \tg_{\tH}$.
\item If $\TSigma^{\pm}(Y)$ are both nonempty, and $\xi(Y) \neq 3$, then $\tg_{X_0} = \tg_{Z_0}$, and $Y$ intersects every vertex of $\tg_{X_0}$ nontrivially.
\item If $Y$ is a component domain of any geodesic $\tg_W \in \tH$ and $\xi(Y) \neq 3$, then
\begin{center} $\tg_X \in \TSigma^-(Y) \Leftrightarrow \tg_X \swarrow \cdots \swarrow Y$ and $\tg_Z \in \TSigma^+(Y) \Leftrightarrow Y \searrow \cdots \searrow \tg_Z$ \end{center}
If, furthermore, $\TSigma^{\pm}(Y)$ are both nonempty, then $X_0=Y = Z_0$.
\item Geodesics in $\tH$ are determined by their support.  That is, if $\tg_X, \tg_Z \in \tH$ have $X = Z$, then $\tg_X = \tg_Z$.
\end{enumerate}
\end{theorem}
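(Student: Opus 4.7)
The plan is to reduce every claim to the corresponding statement for the nonaugmented hierarchy and then quote the original Masur--Minsky structural theorem [MM00, Theorem 4.7]. The technical bridge is Proposition \ref{r:assoc hier corr}, which produces a bijection $\Phi \colon \tH \to H$ preserving support, (projected) initial and terminal data, and subordinacy. My task is to verify that under $\Phi$ the forward and backward sequences $\TSigma^\pm(Y)$ correspond bijectively to the analogous sequences $\Sigma^\pm(Y)$ in the associated hierarchy $H$; once that identification is in hand, the structural claims transport directly.

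To verify the correspondence, I would first observe that $Y \subset D(\tg_Z)$ is equivalent to $Y \subset D(g_Z)$ by Proposition \ref{r:assoc hier corr}(1), since the underlying supports agree. For the nonemptiness condition defining membership in $\TSigma^-(Y)$, I would invoke Proposition \ref{r:assoc hier corr}(3): since $\bI(g_Z) = \pi_{\MM(Z)}(\tI(\tg_Z))$, the restriction $\bI(g_Z)|_Y$ records exactly those transverse pairs whose base curves meet $Y$ essentially, which is the same data determining whether $\tI(\tg_Z)|_Y \neq \emptyset$; Remark \ref{r:lie in Y} confirms that marking projection does not discard curves lying in the target. The analogous argument handles $\TSigma^+(Y)$, so $\Phi$ restricts to a bijection $\TSigma^\pm(Y) \to \Sigma^\pm(Y)$.

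With this identification, parts (1), (2), and (3) follow immediately from [MM00, Theorem 4.7] applied to $H$, since Proposition \ref{r:assoc hier corr}(4) says $\swarrow$ in $\tH$ matches $\swarrow$ in $H$ under $\Phi$, and the main geodesics $\tg_{\tH}$ and $g_{\tH}$ correspond. Thus any chain $g_{X_n} \swarrow \cdots \swarrow g_{X_0}$ in $H$ lifts uniquely to a chain in $\tH$, the common endpoint identity $\tg_{X_0} = \tg_{Z_0}$ in (2) is inherited from $H$, and the component-domain characterization in (3) follows verbatim from its $H$-counterpart.

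For (4), if $\tg_X, \tg_Z \in \tH$ share support $X = Z$, then $g_X = \Phi(\tg_X)$ and $g_Z = \Phi(\tg_Z)$ share support in $H$, so [MM00, Theorem 4.7(4)] forces $g_X = g_Z$, and injectivity of $\Phi$ then yields $\tg_X = \tg_Z$. The main obstacle I anticipate is the careful bookkeeping in the annular case: when $Y$ is an annulus with core $\alpha$, nonemptiness of $\tI(\tg_Z)|_Y = \pi_{\HHH(\alpha)}(\tI(\tg_Z))$ must be aligned with its nonaugmented shadow $\bI(g_Z)|_Y$, but because $Y \subset D(\tg_Z)$ is a condition on underlying curves alone and the projections to $\CC(\alpha)$ and $\HHH(\alpha)$ commute with restriction to $Y$, this reduces to the same combinatorial condition already checked for the nonannular case.
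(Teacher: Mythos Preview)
Your proposal is correct and follows exactly the paper's approach: fix an associated hierarchy $H$, invoke Proposition \ref{r:assoc hier corr} to identify $\TSigma^\pm(Y)$ with $\Sigma^\pm(Y)$ and to transport subordinacy, then quote \cite{MM00}[Theorem 4.7]. The paper compresses this into a single sentence, whereas you have spelled out the verification that nonemptiness of $\tI(\tg_Z)|_Y$ matches nonemptiness of $\bI(g_Z)|_Y$ (including the annular case), but the underlying argument is identical.
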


\begin{proof}
Let $H$ be a hierarchy associated to $\tH$ as constructed in Subsection \ref{r:assoc hier}.  The proof is an easy application of \cite{MM00}[Theorem 4.7] to $H$ and Proposition \ref{r:assoc hier corr}.
\end{proof}

We say that an augmented hierarchy $\tH$ is \emph{complete} if for every subsurface $Y$ with $\xi(Y)\neq 3$, if $Y$ is a component domain of some geodesic in $\tH$, then $Y$ is the support of some geodesic in $\tH$.  The following is an immediate consequence of Theorem \ref{r:sigma}:

\begin{lemma}
Given any augmented hierarchy, if $\tI(\tH)$ and $\tT(\tH)$ are complete augmented markings, then $\tH$ is complete.
\end{lemma}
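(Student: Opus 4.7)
The plan is to apply Theorem \ref{r:sigma} directly. I would fix a subsurface $Y$ with $\xi(Y)\neq 3$ appearing as a component domain of some geodesic in $\tH$, and aim to produce a geodesic $\tg_Y\in\tH$ with support $Y$. The bulk of the work is already done by parts (3) and (1) of Theorem \ref{r:sigma}: once both $\TSigma^{-}(Y)$ and $\TSigma^{+}(Y)$ are nonempty, part (3) gives $X_0 = Y = Z_0$, and part (1) realizes $X_0$ as the support of a geodesic in $\tH$. So the whole proof reduces to checking that $\TSigma^{\pm}(Y)$ are both nonempty.

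For this, I would show that the main geodesic $\tg_{\tH}$ itself lies in $\TSigma^{-}(Y)\cap\TSigma^{+}(Y)$. Since $D(\tg_{\tH})=S\supset Y$, the only thing to verify is that $\tI(\tg_{\tH})|_Y=\tmu|_Y$ and $\tT(\tg_{\tH})|_Y=\teta|_Y$ are nonempty. This is precisely where the completeness hypothesis on $\tmu$ and $\teta$ is used: their bases are pants decompositions of $S$.

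The argument then splits by the type of $Y$. If $Y$ is non-annular, then since $\xi(Y)\neq 3$, maximality of the pants decomposition $\base(\tmu)$ forces some base curve of $\tmu$ to meet $Y$ essentially; otherwise $Y$ would itself be a pair of pants of the decomposition, contradicting $\xi(Y)\neq 3$. If $Y$ is an annulus with core $\alpha$, then $\tmu|_Y = \pi_{\HHH(\alpha)}(\tmu)$, and either $\alpha\in\base(\tmu)$ (so the transversal-length datum supplies a vertex of $\HHH(\alpha)$), or $\alpha$ transversely crosses some curve of $\base(\tmu)$ (whose projection sits at the base of $\HHH(\alpha)$); either way $\tmu|_Y\neq\emptyset$, and the identical reasoning gives $\teta|_Y\neq\emptyset$. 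I do not expect any real obstacle: the only mild subtlety is handling the annular case through the combinatorial horoball $\HHH(\alpha)$ rather than the annular curve graph $\CC(\alpha)$, and this is immediate from the definition of restriction.
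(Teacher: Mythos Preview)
Your proposal is correct and follows essentially the same approach as the paper: show that the main geodesic $\tg_{\tH}$ lies in both $\TSigma^{\pm}(Y)$ using completeness of $\tmu$ and $\teta$, then invoke Theorem \ref{r:sigma} to conclude that $Y$ supports a geodesic. Your version is in fact slightly more careful, supplying the case split on annular versus nonannular $Y$ and citing parts (1) and (3) of Theorem \ref{r:sigma} (which together give $X_0=Y$ and a geodesic supported there) rather than part (2) alone as the paper does.
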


\begin{proof}
If $\xi(Y) \neq 3$, then both $\tI(\tH)|_Y, \tT(\tH)|_Y \neq \emptyset$.  Thus $\tg_{\tH} \in \TSigma^+(Y), \TSigma^-(Y)$, and so $Y$ supports a geodesic in $\tH$ by Theorem \ref{r:sigma}(2).
\end{proof}

We now construct augmented versions of the tools that originally went into proving \cite{MM00}[Theorem 4.7], as we need them in the next section.  For the rest of the subsection, fix a hierarchy $H$ associated to $\tH$.\\

Recall the definition of a \emph{footprint} of a subsurface on a geodesic.  For any subsurface $Y \subset S$ and geodesic $\tg_X \in \tH$ with $X$ nonannular, let $\phi_{\tg_X}(Y)$ be the set of vertices of $\tg_X$ disjoint from $Y$; if $Y$ is an annulus with core $\alpha$, $\phi_{\tg_X}(Y)$ are simply those vertices of $\tg_X$ disjoint from $\alpha$.  If $g_X\in H$ is the geodesic corresponding to $\tg_X$, then $\phi_{\tg_X}(Y) = \phi_{g_X}(Y)$.  We note that augmented versions of \cite{MM00}[Lemma 4.10 and Corollary 4.11] follow immediately from this observation.\\

Masur-Minsky define two partial orders on geodesics in a hierarchy which we will recall and redefine for augmented hierarchies.  We will show that the correspondence between $H$ and $\tH$ preserves these orders.  The first is time-order \cite{MM00}[Definition 4.16]:

\begin{definition}[Time order] \label{r:to def}
Given two geodesics $\tg_X, \tg_Z \in \tH$, we say $\tg_X$ is \emph{time-ordered} before $\tg_Z$ and write $\tg_X \prec_t \tg_Z$ if there is a geodesic $\tg_Y \in \tH$ with $X, Z \subset Y$ and $\max \phi_{\tg_Z}(X) < \min \phi_{\tg_Z}(Y)$.
\end{definition}
Observe that if $\tg_X \prec_t \tg_Z$ and $g_X, g_Z, g_Y \in H$ are the corresponding geodesics, then $\max \phi_{g_Z}(X) =\max \phi_{\tg_Z}(X) < \min \phi_{\tg_Z}(Y) =  \min \phi_{g_Z}(Y)$, and so $\tg_X \prec_t \tg_Z$ if and only if $g_X \prec_t g_Z$.\\

Given a geodesic $\tg_Y \in \tH$, a \emph{position} on $\tg_Y$ is either a vertex or one of $\tI(\tg_Y)$ or $\tT(\tg_Y)$.  We can extend the natural linear order on the vertices $\tg_Y$ to a linear order on positions by taking $\tI(\tg_Y) < v < \tT(\tg_Y)$ for all $v \in \tg_Y$.  A \emph{pointed geodesic} is a pair $(\tg_Y, v)$, where $v$ is some position on $\tg_Y$.\\

We can define a notion of footprint on pointed geodesics as follows: Given a pointed geodesic $(\tg_Y, v)$  and a geodesic $\tg_X\in \tH$, we set
\begin{equation*} \hat{\phi}_{\tg_X}(\tg_Y, v) = \left\{
\begin{array}{lr} \phi_{\tg_X}(Y) & \text{if } Y \subset X\\
v & \text{if } $X=Y$\\
\end{array} \right.
\end{equation*}

If $g_X, g_Y \in H$ are the geodesics corresponding to $\tg_X, \tg_Y \in \tH$, then it is clear that $\hat{\phi}_{\tg_X}(\tg_Y, v) = \hat{\phi}_{g_X}(g_Y, v)$ unless $X=Y$ is an annulus, in which case $\prec_p$ restricts to the linear orders on positions of $\tg_X$ and $g_X$.\\

We can now define a partial order on pointed geodesics:

\begin{definition}\label{r:po on pointed}
Given two pointed geodesics $(\tg_X, v_X), (\tg_Z, v_Z)$, we write $(\tg_X, v_X)\prec_p (\tg_Z, v_Z)$ if and only if there exists some geodesic $\tg_Y \in \tH$ with $\tg_X \underset{=}\searrow \cdots \underset{=}\searrow \tg_Y \underset{=}\swarrow \cdots \underset{=}\swarrow \tg_Z$ and 
$$\max \hat{\phi}_{\tg_Y}(\tg_X,v_X) < \min \hat{\phi}_{\tg_Y}(\tg_Z, v_Z)$$ 
\end{definition}

If $g_X, g_Y, g_Z \in H$ are the geodesics corresponding to $\tg_X, \tg_Y, \tg_Z \in \tH$, then observe that $g_X \underset{=}{\searrow} \cdots \underset{=}\searrow g_Y \underset{=}\swarrow \cdots \underset{=}\swarrow g_Z$ and $\max \hat{\phi}_{g_Y}(g_X,v_X) =\max \hat{\phi}_{\tg_Y}(\tg_X,v_X) < \min \hat{\phi}_{\tg_Y}(\tg_Z, v_Z) = \min \hat{\phi}_{g_Y}(g_Z, v_Z)$, so that $(\tg_X, v_X)\prec_p (\tg_Z, v_Z)$ if and only if $(g_X, v_X)\prec_p (g_Z, v_Z)$, unless $X=Y=Z$ is an annulus, in which case $\prec_p$ is again just the linear orders on positions of $\tg_X$ and $g_X$.\\

We have shown:

\begin{lemma}\label{r:po}
Let $\tH$ be an augmented hierarchy and $H$ any associated hierarchy.  Then:
\begin{enumerate}
\item Both $\prec_t$ and $\prec_p$ are strict partial orders.
\item For any $\tg_X, \tg_Y \in \tH$ with corresponding geodesics $g_X, g_Y \in H$, then 
\begin{center} $\tg_X \prec_t \tg_Y \Leftrightarrow g_X \prec_t g_Y$ \end{center}
\item If in addition $X$ and $Y$ are nonannular, then 
\begin{center} $(\tg_X, x) \prec_p (\tg_Y, y) \Leftrightarrow (g_X, x) \prec_p (g_Y, y)$ \end{center}
\end{enumerate}
\end{lemma}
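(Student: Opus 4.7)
The plan is to transfer structure from the associated hierarchy $H$ of Subsection \ref{r:assoc hier} to the augmented hierarchy $\tH$, exploiting the fact that, by Proposition \ref{r:assoc hier corr}, the bijection $\Phi$ preserves all the combinatorial data relevant to both $\prec_t$ and $\prec_p$. Indeed, most of the verifications needed are exactly the observations already recorded in the two paragraphs preceding the lemma; the proof is mostly a matter of assembling them.

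For part (2), I would unpack Definition \ref{r:to def}: $\tg_X \prec_t \tg_Y$ requires an intermediate $\tg_W \in \tH$ with $X,Y \subset W$, and such a $W$ is necessarily nonannular since it properly contains two subsurfaces. For any nonannular domain, the geodesic $\tg_W$ and its associated $g_W$ coincide as vertex sequences in $\CC(W)$ by construction in Subsection \ref{r:assoc hier}, so $\phi_{\tg_W}(X) = \phi_{g_W}(X)$ and similarly for $Y$. Proposition \ref{r:assoc hier corr}(4) transports the required containment $\tg_W \in \tH$ to $g_W \in H$, so the inequality $\max \phi_{\tg_W}(X) < \min \phi_{\tg_W}(Y)$ is literally the same inequality as $\max \phi_{g_W}(X) < \min \phi_{g_W}(Y)$. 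Part (3) is the same argument for pointed geodesics: when $X$ and $Y$ are nonannular, any intermediate $\tg_W$ in the $\swarrow/\searrow$-chain of Definition \ref{r:po on pointed} is also nonannular (since $X \underset{=}{\subset} W$), so $\hat{\phi}_{\tg_W}$ agrees with $\hat{\phi}_{g_W}$ on both pointed geodesics, and Proposition \ref{r:assoc hier corr}(4) translates the chain of subordinacy relations.

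For part (1), I would invoke the fact that $\prec_t$ and $\prec_p$ are strict partial orders on a hierarchy, proven by Masur-Minsky in \cite{MM00}. Any hypothetical cycle under $\prec_t$ on $\tH$ transports under $\Phi$ to a cycle on $H$ by part (2), a contradiction; the same argument works for $\prec_p$ when all geodesics involved are nonannular, via part (3). The only case not covered directly by the transport argument is $\prec_p$ when the intermediate $\tg_W$ is annular, which forces $X = W = Y$ to equal a single annulus, and here $\prec_p$ restricts to the standard linear order on positions of $\tg_W$ inside $\HHH(\alpha)$, which is evidently a strict partial order. The main (and only real) obstacle is simply being careful in this case analysis to ensure that $W$ is nonannular whenever we need to transport the order to $H$; as sketched above, this follows automatically from $X$ or $Y$ being nonannular together with the subsurface containment.
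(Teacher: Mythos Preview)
Your proposal is correct and takes essentially the same approach as the paper: the lemma is stated immediately after ``We have shown:'', so the paper's own proof \emph{is} the two paragraphs preceding it, and you have correctly identified and assembled those observations. One small expository point: your ``hypothetical cycle'' language for part (1) only directly addresses irreflexivity, but since the biconditional in (2) gives a full order-isomorphism via $\Phi$, transitivity of $\prec_t$ transfers immediately from $H$; for $\prec_p$ the same transfer works once you note (as the paper does, slightly more generally than the lemma statement) that the biconditional actually holds whenever the intermediate domain is nonannular, which is automatic unless $X=Y$ is a single annulus.
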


As with hierarchies, we have the following four mutually exclusive cases for $(\tg_X, x) \prec_p (\tg_Y, y)$:
\begin{itemize}
\item $\tg_X \prec_t \tg_Y$
\item $\tg_X = \tg_Y$ and $x < y$
\item $\tg_X \searrow \cdots \searrow \tg_Y$ and $\max \phi_{\tg_Y}(X) < y$
\item $\tg_X \swarrow \cdots \swarrow \tg_Y$ and $x < \min \phi_{\tg_X}(Y)$.
\end{itemize}

We think of a pointed geodesic as giving a position on a geodesic in $\tH$, so that $\prec_p$ gives a partial order on positions on a geodesic.  In the next section, we describe how to build coordinates, called slices, on an augmented hierarchy, which are special arrangements of these positions.  We will upgrade $\prec_p$ to a partial order on these coordinates, which we can then use to build paths in $\AM(S)$ which make definite progress through the augmented hierarchy.

\section{Augmented hierarchy paths}

In this section, we explain how to build augmented hierarchy paths from augmented hierarchies.  Similar to hierarchy paths, this process involves resolving an augmented hierarchy into a sequence of slices, then finding a sequence of associated augmented markings which we connect with boundedly-many elementary moves in $\AM(S)$.

\subsection{Augmented slices}

In this subsection, we develop the notion of a slice of an augmented hierarchy, which is roughly a way of giving coordinates in the augmented hierarchy which respect the subordinancy relations.  The definition of a slice of a hierarchy \cite{MM00}[Section 5] is the same as that of an augmented slice, except that one takes geodesics in combinatorial horoballs over annular curve graphs instead.

\begin{definition}[Augmented slices] \label{r:slice def}
An \emph{augmented slice} $\ttau$ of an augmented hierarchy $\tH$ is a collection of pairs $(\tg_X, x)$ with $x \in \tg_X \in \tH$ satisfying the following:

\begin{enumerate}
\item[(S1)] A geodesic $\tg_X$ appears at most once in $\ttau$.
\item[(S2)] There is a distinguished pair $(\tg_{\ttau}, v_{\ttau}) \in \ttau$ called the \emph{bottom pair} of $\ttau$ and $\tg_{\ttau}$ is the \emph{bottom geodesic}. \label{r:slice def 2}
\item[(S3)] For every pair $(\tg_Y, y) \in \ttau$ other than the bottom pair, there is a pair $(\tg_X,x) \in \ttau$ of which $Y$ is a component domain. \label{r:slice def 3}
\end{enumerate}
We say that $\ttau$ is \emph{complete} if
\begin{enumerate}
\item[(S4)] Given a pair $(\tg_Y,y) \in \ttau$, for every component domain $X$ of $(\tg_Y,v)$, there exists a pair $(\tg_X,x) \in \ttau$. \label{r:slice def 4}
\end{enumerate} 
\end{definition}

An augmented slice $\ttau$ is called \emph{initial} if for each pair $(\tg_Y, y) \in \ttau$, $y = \tg_{Y, int}$.  A complete initial slice is uniquely determined by its bottom geodesic, and $\tH$ has a unique initial slice with bottom geodesic $\tg_{\tH}$.  We can define \emph{terminal} augmented slices similarly.\\
  
To each augmented slice $\ttau$, there is a unique way to associate an augmented marking $\tmu_{\ttau}$ as follows: First, observe by induction that the vertices $\alpha$ appearing in nonannular geodesics in $\ttau$ are disjoint and distinct, so that they form a maximal simplex in $\CC(S)$, which we make $\base(\tmu_{\ttau})$.  We can then associate transversal and length coordinates to each base curve $\alpha \in \base(\tmu_{\ttau})$ if $\ttau$ contains a pair $(\tg_X, x)$ with $x = (t_{\alpha}, D_{\alpha})$, where $X$ is an annulus with core $\alpha$, by choosing $t_{\alpha}$ and $D_{\alpha}$ as the transversal and length coordinate for $\alpha$  in $\tmu_{\ttau}$.  Note that a complete slice determines a complete augmented marking.  Typically, this underlying marking is not clean, so one can clean the transversals to base curves by choosing new transversals that minimize the distance in the corresponding annular curve graphs.  We say that any such complete, clean augmented marking is \emph{compatible} with its associated slice.  The number of such compatible augmented markings is uniformly bounded, similar to \cite{MM00}[Lemma 2.4]:

\begin{lemma}\label{r:compatible}
There exists $C'>0$ depending only on $S$ such that for any augmented slice $\ttau$ of an augmented hierarchy $\tH$, the number of augmented markings compatible $\ttau$ is less than $C$, each of which differs by a bounded number of twist moves.
\end{lemma}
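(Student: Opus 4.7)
The plan is to reduce the statement to the corresponding marking--complex result, \cite{MM00}[Lemma 2.4], by observing that the length data is inert under the cleaning process and that only the transversals carry any ambiguity.

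First, I would note that the slice $\ttau$ determines the base simplex $\base(\tmu_{\ttau}) = \{\alpha_1,\dots,\alpha_n\}$ uniquely, as the collection of vertices appearing in the nonannular pairs of $\ttau$; these are disjoint and distinct by induction on subordinancy. For each $\alpha_i$, if $\ttau$ contains an annular pair $(\tg_{Y_i}, x_i)$ with $Y_i$ the annulus of core $\alpha_i$, then writing $x_i = (t_{\alpha_i}, D_{\alpha_i})$ we read off a designated transversal and a designated length datum. Hence the underlying (possibly unclean) augmented marking associated to $\ttau$ is determined by $\ttau$, and every ambiguity in building a compatible complete clean augmented marking arises purely from the cleaning of the transversals.

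Next I would observe that cleaning touches only the $\CC(\alpha_i)$-coordinate of a transversal; it does not alter the height $D_{\alpha_i}$ in the horoball $\HHH_{\alpha_i}$, because the height is a piece of data fixed by the slice and not a function of which horizontal representative one chooses. Thus every compatible augmented marking $\tmu$ satisfies $D_{\alpha_i}(\tmu) = D_{\alpha_i}$ for each $i$. Applying \cite{MM00}[Lemma 2.4] to the underlying marking, we obtain a constant $C_0$ depending only on $S$ such that there are at most $C_0$ clean markings on $S$ obtained by cleaning $\tmu_{\ttau}$, and any two such live within a uniformly bounded distance of one another in each annular curve graph $\CC(\alpha_i)$.

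Finally, I would lift this count to $\AM(S)$. Since all compatible augmented markings agree on base and on length data, any two of them differ only by modifying the transversals $t_{\alpha_i}$ at the fixed heights $D_{\alpha_i}$. A bounded distance in $\CC(\alpha_i)$ between two candidate cleanings at a common height in $\HHH(\alpha_i)$ translates, by the definition of horizontal edges in the combinatorial horoball and the definition of twist moves in $\AM(S)$, into a uniformly bounded number of twist moves along $\alpha_i$. Taking $C' = C_0$ and summing these bounds over the finitely many $\alpha_i \in \base(\tmu_{\ttau})$ (of which there are at most $\xi(S)$) gives the desired uniform bound on both the number of compatible markings and the number of twist moves relating them. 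The only mild subtlety, which I would flag but expect to dispatch with a remark, is that at height $0$ the twist edges in $\AM(S)$ arise from the base of the horoball $\HHH_{\alpha_i}$ sitting inside $\MM(S)$ and thus coincide with the Masur--Minsky Dehn twist edges, so the bounded-move claim is uniform across the thick and thin regimes.
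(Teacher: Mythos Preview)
Your proof is correct and follows essentially the same approach as the paper's: both observe that the base and the length data $D_{\alpha_i}$ are determined by the slice, so the only ambiguity lies in cleaning the transversals, and both bound that ambiguity via the annular curve graph (the paper uses $\CC(\alpha)\asymp\ZZ$ directly, you invoke \cite{MM00}[Lemma 2.4], which amounts to the same thing). Your write-up is somewhat more detailed than the paper's terse version---in particular your explicit remark that cleaning does not touch the height coordinate and your handling of the translation to twist moves in $\AM(S)$---but the argument is the same.
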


\begin{proof}
Fix a clean augmented marking $\tmu$ compatible with $\ttau$.  Then $\base(\tmu) = \base(\tmu_{\ttau})$ and $D_{\alpha}(\tmu) = D_{\alpha}(\tmu_{\ttau})$ for all $\alpha \in \CC(S)$ by definition.  Because $\CC(\alpha) \asymp \ZZ$, for each triple $(\alpha, t_{\alpha}, D_{\alpha}) \in \tmu_{\ttau}$, there is a choice of clean transversal $\beta \in \CC(\alpha)$ which minimizes $d_{\alpha}(t_{\alpha}, \pi_{\alpha}(\beta)$, where the minimum is uniformly bounded, completing the proof.
\end{proof}

\subsection{Partial order on slices}

In \cite{MM00}[Section 5], Masur-Minsky define a partial order on the set of complete slices of $H$.  We now do this for augmented slices.\\

Let $\tV(\tH)$ be the set of complete augmented slices on $\tH$.  Given $\ttau, \ttau' \in \tV(\tH)$, we say $\ttau \prec_s \ttau'$ if and only if $\ttau \neq \ttau'$ and for any $(\tg_Y, y) \in \ttau$, either $(\tg_Y, y) \in \ttau'$ or  there is some $(\tg_X, x) \in \ttau'$ with $(\tg_Y, y) \prec_p (\tg_X, x)$.

\begin{lemma}\label{r:s po}
Let $\tH$ be an augmented hierarchy.  Then $\prec_s$ is a strict partial order on $\tV(\tH)$.
\end{lemma}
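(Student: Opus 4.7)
To show $\prec_s$ is a strict partial order on $\tV(\tH)$, I verify irreflexivity and transitivity.

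Irreflexivity is immediate: the definition of $\ttau \prec_s \ttau'$ explicitly requires $\ttau \neq \ttau'$, so no slice is $\prec_s$-related to itself. The real content is transitivity, so assume $\ttau_1 \prec_s \ttau_2$ and $\ttau_2 \prec_s \ttau_3$. I will separately verify the forward-advancement condition and the inequality $\ttau_1 \neq \ttau_3$.

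The forward-advancement condition---that for each $(\tg_Y, y) \in \ttau_1$, either $(\tg_Y, y) \in \ttau_3$ or there is $(\tg_X, x) \in \ttau_3$ with $(\tg_Y, y) \prec_p (\tg_X, x)$---follows by a straightforward case analysis. If $(\tg_Y, y) \in \ttau_2$, the $\ttau_2 \prec_s \ttau_3$ condition applies directly. If $(\tg_Y, y) \notin \ttau_2$, the $\ttau_1 \prec_s \ttau_2$ condition produces $(\tg_W, w) \in \ttau_2$ with $(\tg_Y, y) \prec_p (\tg_W, w)$; then applying $\ttau_2 \prec_s \ttau_3$ to $(\tg_W, w)$ yields either $(\tg_W, w) \in \ttau_3$, which is done, or $(\tg_W, w) \prec_p (\tg_X, x) \in \ttau_3$, and transitivity of $\prec_p$ (Lemma \ref{r:po}) chains the two comparisons to give $(\tg_Y, y) \prec_p (\tg_X, x)$.

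For the inequality $\ttau_1 \neq \ttau_3$, my plan is to reduce to the corresponding hierarchy statement via the associated-hierarchy machinery of Subsection \ref{r:assoc hier}. Fix any hierarchy $H$ associated to $\tH$, and define a projection $\Psi \colon \tV(\tH) \to V(H)$ on complete slices by replacing each annular horoball pair $(\tg_Y, (t_\alpha, D_\alpha))$ with its curve-graph shadow $(g_Y, t_\alpha)$ and leaving nonannular pairs unchanged. Using Proposition \ref{r:assoc hier corr} and Lemma \ref{r:po}(2,3), $\Psi$ carries $\prec_s$ to the analogous slice order on $H$; the corresponding strict-partial-order statement for hierarchy slices, implicit in the resolution theory of \cite{MM00}[Section 5], then forces $\Psi(\ttau_1) \neq \Psi(\ttau_3)$ and hence $\ttau_1 \neq \ttau_3$, except in the residual case where the two augmented slices have identical $\Psi$-images. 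In that residual case, $\ttau_1$ and $\ttau_3$ agree on all geodesic, base, and transversal data and differ only in some horoball-height component $D_\alpha$; examining the annular links in the $\prec_p$-chain constructed during the forward-advancement step reveals a strict increase in height coordinate along an annular pair common to both slices, using the annular case of Lemma \ref{r:po}, which contradicts the assumed equality.

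The main obstacle I anticipate is the uniqueness conclusion $\ttau_1 \neq \ttau_3$, specifically the bookkeeping needed to rule out coincidence of horoball-height data in the annular reduction; by contrast, the forward-advancement condition is a routine application of transitivity of $\prec_p$ once the hierarchy correspondence of Subsection \ref{r:assoc hier} is in place.
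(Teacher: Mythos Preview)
Your forward-advancement verification is correct and matches the paper's argument exactly. The divergence is in how you establish $\ttau_1 \neq \ttau_3$, and here your proposal is both more complicated than necessary and has a genuine gap.

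The paper's argument is short: since $\ttau_1 \neq \ttau_2$, some pair $p_1 \in \ttau_1$ lies outside $\ttau_2$, so the $\prec_s$-condition forces $p_1 \prec_p p_2$ for some $p_2 \in \ttau_2$. Chaining through $\ttau_2 \prec_s \ttau_3$ (exactly as you already do) gives $p_1 \prec_p p_3$ for some $p_3 \in \ttau_3$. If $\ttau_1 = \ttau_3$, then $p_1$ and $p_3$ both lie in $\ttau_1$ and are strictly $\prec_p$-comparable, but distinct pairs in a single complete slice are never $\prec_p$-comparable (this is \cite{MM00}[Lemmas 4.18(1) and 4.19], transferred to augmented hierarchies via Lemma \ref{r:po}). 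That is the entire argument; no projection to an associated hierarchy is needed.

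Your $\Psi$-reduction has a gap: to invoke the \cite{MM00} slice-order result on $V(H)$ you would need $\Psi(\ttau_1) \prec_s \Psi(\ttau_2) \prec_s \Psi(\ttau_3)$ in $V(H)$, which in particular requires $\Psi(\ttau_1) \neq \Psi(\ttau_2)$ and $\Psi(\ttau_2) \neq \Psi(\ttau_3)$. You have not established these, and they can fail---for instance when $\ttau_1$ and $\ttau_2$ differ only in horoball heights---so your ``residual case'' must actually absorb more than just $\Psi(\ttau_1) = \Psi(\ttau_3)$. Moreover, your sketch of the residual case (locating a strict height increase along an annular pair common to both slices) is not clearly valid: the $\prec_p$-chain you build in the forward-advancement step may pass through nonannular pairs or through annular pairs with different cores, so nothing forces the height coordinate on any \emph{fixed} annulus to increase. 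The slice-incomparability fact used in the paper sidesteps all of this in one line.
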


\begin{proof}
We proceed as in the proof \cite{MM00}[Lemma 5.1] by showing that $\prec_s$ is transitive, since it is never reflexive by definition.  Suppose $\ttau_1 \prec_s \ttau_2 \prec_s \ttau_3$ for $\ttau_i \in \tV(\tH)$.\\

By definition of $\prec_s$, for $i=1,2$, given any pair $p_i \in \ttau_i$, there exists a pair $p_{i+1} \in \ttau_{i+1}$ such that either $p_i \prec_p p_{i+1}$ or $p_i = p_{i+1}$.  Since $\prec_p$ is a strict partial order (Lemma \ref{r:po}), either $p_1 \prec_p p_3$ or $p_1=p_3$, implying either $\ttau_1 \prec_s \ttau_3$ or $\ttau_1 = \ttau_3$.   Since augmented slices in $\tV(\tH)$ are complete, we must have some $p_1 \in \ttau_1$ with $p_1 \notin \ttau_2$.  Thus $p_1 \prec_p p_2$ and thus $p_1 \prec_p p_3$, implying $p_3 \notin \ttau_1$, since pairs in the same slice are not $\prec_p$-comparable by \cite{MM00}[Lemmas 4.18(1) and Lemma 4.19], which hold for augmented hierarchies by Lemma \ref{r:po}.
\end{proof}

\subsection{Elementary moves of augmented slices}

In this subsection, we describe, following \cite{MM00}[Section 5], how to resolve an augmented hierarchy into a sequence of complete augmented slices which are related by certain elementary moves, which we define shortly.  Informally, an elementary move of augmented slices is one which make progress by one vertex along some geodesic in $\tH$.  First, we need to define transition slices, which will record the reorganization that accompanies this progress.\\

Let $\tg_X \in \tH$ and suppose $x \in \tg_X$ is not the last vertex of $\tg_X$, with $x'$ its successor.  We presently define \emph{transition slices} for $x$ and $x'$, $\tsigma$ and $\tsigma'$, which have the property that $\tmu_{\tsigma} = \tmu_{\tsigma'} = x \cup x'$ when $\xi(X) > 4$.\\

Let $\tsigma$ be the smallest slice with bottom pair $(\tg_X, x)$ such that, for any $(\tg_Z, z) \in \ttau$ and $Y$ a component domain of $(Z, z)$,
\begin{itemize}
\item[(E1):] If $x'|_Y \neq \emptyset$ and $Y$ supports a geodesic $\tg_Y \in \tH$, then $(\tg_Y, y) \in \tsigma$, where $y$ is the \emph{terminal} vertex of $\tg_Y$.
\item[(E2):] If $x'|_Y = \emptyset$, then no geodesic in $Y$ is included in $\tsigma$.
\end{itemize}
One builds $\tsigma$ inductively and confirms easily that it satisfies (S1)-(S3) of Definition \ref{r:slice def}.  We call the domains in (E2) \emph{unused domains} for $\tsigma$.  Similarly, we may define $\tsigma'$ as the smallest slice with bottom pair $(\tg_X, x')$, such that for any $(\tg_Z, z) \in \ttau'$ and $Y$ a component domain of $(Z, z)$,
\begin{itemize}
\item[(E1'):] If $x|_Y \neq \emptyset$ and $Y$ supports a geodesic $\tg_Y \in \tH$, then $(\tg_Y, y) \in \tsigma$, where $y$ is the \emph{initial} vertex of $\tg_Y$.
\item[(E2'):] If $x|_Y = \emptyset$, then no geodesic in $Y$ is included in $\tsigma$.
\end{itemize}

We remark on transition slices for $y, y' \in \tg_Y \in \tH$ with $\xi(Y) \leq 4$:

\begin{itemize}
\item If $Y$ is an annulus, then $\tsigma  = \{(\tg_Y, y)\}$ and $\tsigma' = \{(\tg_Y, y')\}$.
\item If $Y$ is a once-punctured torus, then $y$ and $y'$ intersect in $Y$.  Let $X$ and $X'$ be annuli with cores $y, y'$, respectively.  Then $\tsigma = \{(\tg_Y, y), (\tg_X, \pi_{\HHH_{y}}(y'))\}$ and $\tsigma' = \{(\tg_Y, y'), (\tg_{X'}, \pi_{\HHH_{y'}}(y))\}$.
\item If $Y$ is a four-holed sphere, then $y$ and $y'$ intersect twice, so $\pi_X(y') = \tT(\tg_X)$ has two components, one of which is the last vertex of $\tg_X$.
\end{itemize}

The following lemma characterizes transition slices for most geodesics and is a restatement and direct consequence of \cite{MM00}[Lemma 5.2]:

\begin{lemma}\label{r:lemma 5.2}
Let $y, y'$ be successive vertices along a geodesic $\tg_Y \in \tH$ with $\xi(Y)>4$, and let $\tsigma, \tsigma'$ be the associated transition slices.  Then no geodesics in $\tsigma$ and $\tsigma'$ have annular domains, the associated augmented markings $\tmu_{\tsigma}$ and $\tmu_{\tsigma'}$ have no transversals and are both equal to $y \cup y'$, and the unused domains in $\tsigma$ and $\tsigma'$ are exactly the component domains of $(Y, y \cup y')$.
\end{lemma}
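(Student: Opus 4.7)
The plan is to exploit the correspondence between augmented hierarchies and their associated hierarchies established in Subsection \ref{r:assoc hier} to reduce the statement to the original Masur-Minsky Lemma 5.2 in \cite{MM00}. Fix a hierarchy $H$ associated to $\tH$, and let $\Phi: \tH \to H$ denote the bijection of Proposition \ref{r:assoc hier corr}(1). Because $\xi(Y) > 4$, the vertices $y, y'$ are curves (or simplices) in $\CC(Y)$ and the same transition construction can be carried out in $H$ to produce transition slices $\sigma, \sigma'$ for the pair $y, y'$ on the corresponding geodesic $g_Y = \Phi(\tg_Y) \in H$.

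The first step is to establish that $\Phi$ extends to a bijection between transition slices of $\tH$ and transition slices of $H$, taking $\tsigma \mapsto \sigma$ and $\tsigma' \mapsto \sigma'$. This follows from Proposition \ref{r:assoc hier corr}(4), which says that the subordinancy relations $\swarrow$ and $\searrow$ are preserved by $\Phi$. Since the defining conditions (E1), (E1$'$), (E2), and (E2$'$) for transition slices depend only on which subsurfaces support geodesics and on whether $x'|_Y$ or $x|_Y$ is empty, and these properties are preserved by $\Phi$ (again via Proposition \ref{r:assoc hier corr}), the inductive construction of $\tsigma$ matches that of $\sigma$ pair-for-pair in nonannular domains. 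Thus the nonannular portions of $\tsigma$ and $\sigma$ are in bijection, with matching positions on matching geodesics.

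Now apply \cite{MM00}[Lemma 5.2] to $\sigma, \sigma'$: since $\xi(Y) > 4$, no annular domains appear in $\sigma$ or $\sigma'$, the underlying markings $\mu_{\sigma}, \mu_{\sigma'}$ have no transversals and both equal $y \cup y'$, and the unused domains are exactly the component domains of $(Y, y \cup y')$. Because the bijection between transition slices preserves membership of annular domains, $\tsigma$ and $\tsigma'$ contain no annular geodesics either; in particular, by the way the augmented marking $\tmu_{\tsigma}$ is associated to $\ttau$ (transversals and length data come only from pairs $(\tg_X, x)$ with $X$ annular), the markings $\tmu_{\tsigma}$ and $\tmu_{\tsigma'}$ inherit no transversal or length data and reduce simply to the union of base curves appearing on nonannular geodesics. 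These base curves are exactly those appearing in $\mu_{\sigma}$ and $\mu_{\sigma'}$, namely $y \cup y'$.

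The main obstacle in carrying this out carefully is verifying that the inductive construction of transition slices in $\tH$ indeed corresponds, step by step, with that in $H$, in particular that the condition "$Y$ supports a geodesic" is preserved across the correspondence: this is precisely the content of Proposition \ref{r:assoc hier corr}(1) and (4). Once that is in place, the identification of unused domains is routine, since an unused domain is one for which $x'|_Y$ (resp. $x|_Y$) is empty, a condition on the underlying combinatorial data of the vertex that is invariant under $\Phi$. The conclusion then transfers verbatim from \cite{MM00}[Lemma 5.2].
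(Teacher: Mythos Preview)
Your proposal is correct and follows essentially the same approach as the paper: both reduce to \cite{MM00}[Lemma 5.2] via the correspondence of Proposition \ref{r:assoc hier corr} between $\tH$ and an associated hierarchy $H$. The paper is slightly more direct---it defines $\sigma = \{(g_Z, \pi_{\CC(Z)}(z)) \mid (\tg_Z, z) \in \tsigma\}$ and verifies this \emph{is} the transition slice in $H$---whereas you build the $H$-transition slices independently and argue the inductive constructions match; but this is a stylistic rather than substantive difference.
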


\begin{proof}
Let $H$ be any hierarchy associated to $\tH$.  Let $\tg_Y \in \tH$ with $\xi(Y)>4$ and let $y \in \tg_Y$ be not the terminal vertex of $\tg_Y$ with successor $y' \in \tg_Y$.  If $\tsigma, \tsigma'$ are the associated transition slices, set $\sigma = \left\{\left(g_Z, \pi_{\CC(Z)}(z)\right)| (\tg_Z, z) \in \tsigma\right\}$ and $\sigma' = \left\{\left(g_Z, \pi_{\CC(Z)}(z)\right)| (\tg_Z, z) \in \tsigma'\right\}$.   It follows easily from Proposition \ref{r:assoc hier corr} that $\sigma$ and $\sigma'$ are the transition slices for $y, y'$ along $g_Y$.  Thus the lemma follows from \cite{MM00}[Lemma 5.2].
\end{proof}

\begin{definition}[Forward elementary move of augmented slices]
Let $y, y'$ be successive vertices along $\tg_Y \in \tH$ with transition slices $\tsigma, \tsigma'$.  We say that two complete augmented slices $\ttau$ and $\ttau'$ are related by a \emph{forward elementary move of augmented slices} along $\tg_Y$ from $y$ to $y'$ if $\tsigma \subset \ttau$, $\tsigma' \subset \ttau'$, and $\ttau\setminus \tsigma = \ttau' \setminus \tsigma'$.
\end{definition}

The next lemma confirms that a forward elementary move in $\tV(\tH)$ makes progress in $\prec_s$, as in \cite{MM00}[Lemma 5.3], whose proof is identical:

\begin{lemma}
Suppose $\ttau, \ttau' \in \tV(\tH)$ and are related by an elementary move $\ttau \rightarrow \ttau'$ along $\tg_Y \in \tH$.  Then $\ttau \prec_s \ttau'$.
\end{lemma}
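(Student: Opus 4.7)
The plan is to transcribe the argument of \cite{MM00}[Lemma 5.3] into the augmented setting, leveraging the augmented subordinancy and position-order machinery established in Subsection \ref{r:hier tech}, particularly Lemma \ref{r:po} and the augmented analogues of the footprint results. The task is to establish the two defining requirements of $\ttau \prec_s \ttau'$: that the slices are distinct, and that every pair of $\ttau$ either lies in $\ttau'$ or is $\prec_p$-dominated by some pair of $\ttau'$.

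To verify $\ttau \neq \ttau'$, note that the bottom pair of $\tsigma \subset \ttau$ is $(\tg_Y, y)$ while the bottom pair of $\tsigma' \subset \ttau'$ is $(\tg_Y, y')$, and property (S1) forces $\tg_Y$ to appear at most once per slice, so the distinct pairs $(\tg_Y, y)$ and $(\tg_Y, y')$ cannot both sit in either slice. For the domination condition, fix $(\tg_Z, z) \in \ttau$. If $(\tg_Z, z) \in \ttau \setminus \tsigma$, then by hypothesis $(\tg_Z, z) \in \ttau' \setminus \tsigma' \subset \ttau'$ and there is nothing to show. Otherwise $(\tg_Z, z) \in \tsigma$, and I will prove $(\tg_Z, z) \prec_p (\tg_Y, y')$, where $(\tg_Y, y') \in \tsigma' \subset \ttau'$. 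When $\tg_Z = \tg_Y$, forcing $z = y$, this follows from the second of the four mutually exclusive cases listed after Lemma \ref{r:po}, since $y < y'$ in the linear order on positions of $\tg_Y$.

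When $\tg_Z \neq \tg_Y$, the inductive construction of $\tsigma$ starting from $(\tg_Y, y)$ produces a chain of component-domain refinements $Y \supset V_1 \supset \cdots \supset V_k = Z$ with $V_{i+1} \subset V_i \setminus v_i$, and each $v_i$ is the terminal vertex of $\tg_{V_i}$ by condition (E1). In particular $Z \subset Y \setminus y$, so $y$ is disjoint from $Z$, giving $y \in \phi_{\tg_Y}(Z)$; simultaneously condition (E1) guarantees $y'|_Z \neq \emptyset$, so $y' \notin \phi_{\tg_Y}(Z)$. The augmented analogues of \cite{MM00}[Lemma 4.10 and Corollary 4.11], recorded in Subsection \ref{r:hier tech}, imply that $\phi_{\tg_Y}(Z)$ is a contiguous set of at most two consecutive vertices of $\tg_Y$; together with the previous two facts this forces $\max \phi_{\tg_Y}(Z) \leq y < y'$. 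Combined with the subordinate chain $\tg_Z \searrow \cdots \searrow \tg_Y$ extracted from the refinement tree of $\tsigma$ (using the (H2) structure of $\tH$ together with the fact that each $v_i$ is terminal), the footprint inequality allows us to apply the third of the four mutually exclusive cases after Lemma \ref{r:po} to conclude $(\tg_Z, z) \prec_p (\tg_Y, y')$.

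The main obstacle is the $\tg_Z \neq \tg_Y$ subcase, not for any deep new reason but because one must unpack the inductive tree structure of $\tsigma$ carefully enough to produce the subordinate chain $\tg_Z \searrow \cdots \searrow \tg_Y$ and to verify both footprint containments $y \in \phi_{\tg_Y}(Z)$ and $y' \notin \phi_{\tg_Y}(Z)$. All annular-versus-nonannular distinctions are absorbed into the linear-order clause of $\prec_p$ via Lemma \ref{r:po}, so the remaining work is essentially bookkeeping that mirrors the nonannular original of \cite{MM00}.
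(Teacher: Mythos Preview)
Your proposal is correct and follows essentially the same approach as the paper's own proof, which is itself a direct transcription of \cite{MM00}[Lemma 5.3]. The paper's argument is terser: it simply observes that for $(\tg_X,x)\in\tsigma$ with $X\subsetneq Y$, the footprint $\phi_{\tg_Y}(X)$ contains $y$ but not $y'$, hence $\max\phi_{\tg_Y}(X)<y'$, and concludes $(\tg_X,x)\prec_p(\tg_Y,y')$ without spelling out the subordinate chain $\tg_X\searrow\cdots\searrow\tg_Y$. You make this chain explicit via the refinement tree of $\tsigma$; your justification through (H2) and terminality is on the right track, though a fully rigorous extraction also uses (H3) and the domain-uniqueness clause of Theorem \ref{r:sigma}(4) to pin down each $\tg_{V_i}\searrow\tg_{V_{i-1}}$.
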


\begin{proof}
Since $\tsigma \neq \tsigma'$, we have $\ttau \neq \ttau'$.  Let $(\tg_X, X) \in \ttau$ such that $(\tg_X, x) \notin \ttau'$.  Then $(\tg_X, x) \in \tsigma$ and thus $X \subset Y$ and $y'|_X \neq \emptyset$, by definition of $\tsigma$.  If $\tg_X = \tg_Y$, then $(\tg_X, x) = (\tg_Y, y) \prec_p (\tg_Y, y')$, and we are done.  If not, then $\phi_{\tg_Y}(X)$ contains $y$ and not $y'$, so that $\max \phi_{\tg_Y}(X) = v < v'$, implying $(\tg_X, x) \prec_p (\tg_Y, y')$, completing the proof.
\end{proof}

\subsection{Resolutions of augmented slices}

In this subsection, we prove that every complete augmented hierarchy $\tH$ admits a sequence of elementary moves between its initial and terminal augmented slices, called a \emph{resolution of $\tH$}.  Importantly, the length of any such resolution is bounded by $|\tH| = \sum_{\tg_Y \in \tH} |\tg_Y|$.  The proof is a straight-forward adaptation of \cite{MM00}[Proposition 5.4], so we leave some details to the reader.  

\begin{proposition}[Resolutions exist]\label{r:resolutions}
Any complete augmented hierarchy admits a sequence of forward elementary moves $\ttau_0 \rightarrow \cdots \rightarrow \ttau_N$ where $\ttau_0$ is the initial slice, $\ttau_N$ the terminal slice, and $N \leq |\tH|$. 
\end{proposition}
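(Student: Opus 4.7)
The plan is to construct the resolution inductively. Starting from $\ttau_0$, at each stage I will exhibit a forward elementary move to the next slice, terminating upon reaching $\ttau_N$. The core technical step is showing that a forward elementary move is always available from a non-terminal complete slice; the length bound then follows from the strict partial order $\prec_s$ of Lemma \ref{r:s po}.

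First I would reduce the problem as follows: given $\ttau \in \tV(\tH)\setminus\{\ttau_N\}$, it suffices to exhibit a pair $(\tg_Y, y) \in \ttau$ with $y$ not the terminal vertex of $\tg_Y$ such that the transition slice $\tsigma$ at $(\tg_Y, y)$ is contained in $\ttau$. Once such a pair is found, setting $\ttau' := (\ttau \setminus \tsigma) \cup \tsigma'$, where $\tsigma'$ is the transition slice at the successor $y'$, produces a complete augmented slice related to $\ttau$ by a forward elementary move along $\tg_Y$ from $y$ to $y'$; the completeness of $\ttau'$ is built into conditions (E1)--(E2').

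To find such a pair, I would argue by descent on the complexity of the supporting subsurface. Since $\ttau \neq \ttau_N$, there is some pair $(\tg_{X_0}, x_0) \in \ttau$ with $x_0$ not the terminal vertex of $\tg_{X_0}$. If the transition slice at $(\tg_{X_0}, x_0)$ is contained in $\ttau$, we are done. Otherwise, by the recursive definition of transition slices, there is a component domain $W$ of $(X_0, x_0)$ with $x'_0|_W \neq \emptyset$ supporting a geodesic $\tg_W \in \tH$ whose terminal pair $(\tg_W, w_{ter})$ is not in $\ttau$. By completeness (S4) of $\ttau$, however, there is some $(\tg_W, w) \in \ttau$, and since $w \neq w_{ter}$, the position $w$ is non-terminal. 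Replace $(\tg_{X_0}, x_0)$ by $(\tg_W, w)$ and iterate. Since $\xi(W) < \xi(X_0)$, the descent must terminate; in the extreme case it terminates at a pair supported on an annulus, where the description of transition slices in the bullets preceding Lemma \ref{r:lemma 5.2} shows $\tsigma$ is the singleton $\{(\tg_W, w)\}$, which is trivially contained in $\ttau$. Proposition \ref{r:assoc hier corr} allows us, if convenient, to translate all component-domain and subordinancy claims to any associated hierarchy $H$ and invoke the original Masur-Minsky structure there.

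The bound $N \leq |\tH|$ then follows by bookkeeping. Each forward elementary move advances exactly one pair $(\tg_Y, y)$ to its successor $(\tg_Y, y')$, and by Lemma \ref{r:s po} we have $\ttau_0 \prec_s \ttau_1 \prec_s \cdots$, so no vertex-position on any geodesic of $\tH$ serves as the advanced position of more than one move. Summing over all geodesics gives $N \leq \sum_{\tg_Y \in \tH} |\tg_Y| = |\tH|$. The main obstacle I anticipate is verifying that the descent step is correctly set up across all complexity regimes: the definition of $\tsigma$ branches into the annular, once-punctured torus, and four-holed sphere cases preceding Lemma \ref{r:lemma 5.2}, and ensuring in each that (a) completeness (S4) delivers the requisite pair and (b) the iteration strictly reduces complexity is where the care is needed. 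Once the descent is secured, the remainder of the proof is formal.
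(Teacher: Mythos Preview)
Your proposal is correct and follows essentially the same approach as the paper: your descent by complexity terminates precisely at a pair $(\tg_Y,y)$ minimal among non-terminal pairs in $\ttau$, which is exactly what the paper selects directly, and your bound via $\prec_s$-monotonicity matches the paper's argument that positions along each geodesic are nondecreasing in the resolution. One small imprecision: the failure of $\tsigma \subset \ttau$ need not occur at an \emph{immediate} component domain of $(X_0,x_0)$ but possibly at some deeper domain appearing in $\tsigma$; since any such domain is still properly contained in $X_0$, your complexity descent goes through unchanged once you allow $W$ to range over all pairs of $\tsigma$.
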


\begin{proof}
First, suppose that $\ttau \in \tV(\tH)$ is not the terminal slice of $\tH$.  Then there exists $(\tg_Y, y) \in \ttau$ such that $y$ is not the terminal vertex of $\tg_Y$ with successor $y'$.  Choose $\tg_Y$ minimally so that if $(\tg_X, x) \in \ttau$ and $X \subset Y$, then $x$ is the terminal vertex of $\tg_X$.  Because $\tg_Y$ is minimal and $\ttau$ is complete, the subset
$$\tsigma = \left\{(\tg_X, x) \in \ttau| X \subset Y, y'|_X \neq \emptyset \right\}$$
satisfies the two transition slice properties (E1) and (E2).  Using (E1') and (E2'), one can build the other transition slice $\tsigma'$ for $y$ and $y'$.  Set $\ttau' = \tsigma' \cup (\ttau \setminus \tsigma)$.  One can confirm, as done in \cite{MM00}[Proposition 5.4], that $\ttau'$ is a complete augmented slice, thus making $\ttau \rightarrow \ttau'$ a forward elementary move.\\

This builds a sequence of slice $\ttau_0 \rightarrow \ttau_1 \rightarrow \cdots$, which terminates, say at $\ttau_N$, because each move makes progress with respect to $\prec_s$ and $\tV(\tH)$ is finite.  It remains to prove that $N \leq |\tH|$.\\

To see this, suppose that $(\tg_Z, z) \in \ttau_n$ and $(\tg_Z, z') \in \ttau_m$ for $n< m$.  Then $\ttau_n \prec_s \ttau_m$ and so $z \leq z'$.  If not, then $(\tg_Z, z') \prec_p (\tg_Z, z)$ implying by definition of $\prec_s$ that there is some $(\tg_W, w) \in \ttau_m$ with $(\tg_Z, z) \prec_p (\tg_W, w)$, which is a contradiction of the fact that pairs in the same slice are not $\prec_p$-comparable, as in Lemma \ref{r:s po}.  This shows that vertices cannot reappear once traversed by the resolution process.\\

By definition, a forward elementary move advances exactly one step along a geodesic and replaces pairs $(\tg_Y, \tg_{Y, ter})$ with pairs $(\tg_X, \tg_{X, int})$, leaving all other pairs fixed.  It follows from the previous paragraph that $N \leq \sum_{\tg_Y \in \tH} = |\tH|$, completing the proof.

\end{proof}

\subsection{Augmented hierarchy paths defined}

Given any augmented hierarchy $\tH$, Proposition \ref{r:resolutions} builds a sequence $\ttau_0 \rightarrow \ttau_i \rightarrow \cdots \ttau_N$ of forward elementary moves, where $\ttau_0$ and $\ttau_N$ are the initial and terminal augmented slices of $\tH$, respectively.  For each $i$, let $\tmu_i$ be any augmented marking compatible with $\ttau_i$, choosing $\tmu_0 = \tmu$ and $\tmu_N = \teta$.  This gives a sequence of augmented markings $\tmu = \tmu_0 \rightarrow \cdots \rightarrow \tmu_N = \teta$, which we call an \emph{augmented hierarchy path} between $\tmu$ and $\teta$.\\

Eventually, we will prove that augmented hierarchy paths are uniform quasigeodesics in $\AM(S)$.  The following lemma, similar to \cite{MM00}[Lemma 5.5], is the first step in this process.  It proves that each step in an augmented hierarchy path moves a uniformly bounded distance in $\AM(S)$.

\begin{lemma}\label{r:elm move distance}
There exists a $B>0$ depending only on $S$ so that $d_{\AM(S)}(\tmu_i, \tmu_{i+1}) < B$, for each $i = 0, \dots, N-1$.
\end{lemma}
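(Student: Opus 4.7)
The plan is to follow the case analysis in the proof of \cite{MM00}[Lemma 5.5] for $\MM(S)$-hierarchy paths, splitting on the domain $Y$ of the geodesic $\tg_Y \in \tH$ that carries the elementary move $\ttau_i \to \ttau_{i+1}$, where $\ttau_i \to \ttau_{i+1}$ advances from vertex $y$ to its successor $y' \in \tg_Y$. Since Lemma \ref{r:compatible} bounds the ambiguity in the compatible augmented markings by a uniformly bounded number of twist moves, it suffices to produce one choice of $\tmu_i, \tmu_{i+1}$ with bounded $\AM(S)$-distance.

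First suppose $Y$ is an annulus with core $\alpha$, so $\tg_Y \subset \HHH(\alpha)$ and the move $y \to y'$ is a single edge of $\HHH(\alpha)$. If the edge is horizontal at height $D$, then $y = (t, D)$ and $y' = (T_\alpha^k(t), D)$ for some integer $k$ with $0 < |k| \leq e^D$; if vertical, then $y = (t, D)$ and $y' = (t, D \pm 1)$. The pairs in $\ttau_i \setminus \tsigma = \ttau_{i+1} \setminus \tsigma'$ agree, so I may choose $\tmu_i, \tmu_{i+1}$ identical off the triple $(\alpha, t_\alpha, D_\alpha)$, which then differ by exactly one twist or one vertical move of $\AM(S)$.

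Now suppose $Y$ is nonannular. Passing to a hierarchy $H$ associated to $\tH$ as in Subsection \ref{r:assoc hier}, the slices $\ttau_i, \ttau_{i+1}$ project via $(\tg_Z, z) \mapsto (g_Z, \pi_{\CC(Z)}(z))$ to complete slices $\tau_i, \tau_{i+1}$ of $H$ related by an elementary move along $g_Y$ in the classical sense of \cite{MM00}[Section 5]. The $\MM(S)$-shadows $\mu_i = \pi_{\MM(S)}(\tmu_i)$ are compatible with $\tau_i$, so \cite{MM00}[Lemma 5.5] yields $d_{\MM(S)}(\mu_i, \mu_{i+1}) \leq B'$ for some $B' = B'(S)$. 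To lift this bound to $\AM(S)$, note that for every base curve $\beta \notin y \cup y'$ appearing in both markings, the annular pair for $\beta$ lies in $\ttau_i \cap \ttau_{i+1}$, so $D_\beta(\tmu_i) = D_\beta(\tmu_{i+1})$ and each twist or flip realizing the $\MM(S)$-path on $\beta$ lifts to $\AM(S)$ at the same fixed height. The only subtlety is that flips of curves in $y \to y'$ are permissible in $\AM(S)$ only when the flipped curve is thick (Remark \ref{r:notflip}): for $\xi(Y) = 4$ this is immediate from the explicit description of the transition slices preceding Lemma \ref{r:lemma 5.2}, which places their annular pairs at height zero; for $\xi(Y) > 4$, Lemma \ref{r:lemma 5.2} combined with the subordinacy relation $\tg_{X_\alpha} \searrow \tg_Y$ for any annular geodesic $\tg_{X_\alpha} \in \tH$ with core $\alpha \in y$ forces $\tg_{X_\alpha}$ to be entirely exhausted --- and hence the annular pair to return to height zero --- before the resolution can advance $\tg_Y$ past $y$.

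The main obstacle is precisely this final point: verifying that in any $\prec_s$-monotone resolution, every relevant annular geodesic terminates before the nonannular geodesic containing its core advances past it. This is a structural argument using the partial orders $\prec_p$ and $\prec_s$ on pointed geodesics and slices (Definition \ref{r:po on pointed}, Lemmas \ref{r:po}, \ref{r:s po}) together with the subordinacy structure of Theorem \ref{r:sigma}. Once this ordering is established, one takes $B := B' + O(1)$ depending only on $S$ and the case analysis combines to give the bound.
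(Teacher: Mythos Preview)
Your annular case and your $\xi(Y)=4$ case are fine and match the paper's treatment. The difficulty is in your handling of $\xi(Y)>4$, where both the claimed obstruction and your proposed resolution are off.

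First, the subordinacy relation you invoke is false in this range. For $\alpha\in y$ and $\xi(Y)>4$, the successor $y'$ is disjoint from $\alpha$, so $y'|_{X_\alpha}=\emptyset$; hence $X_\alpha\not\searrow \tg_Y$ and there is no relation $\tg_{X_\alpha}\searrow\tg_Y$ to appeal to. Consequently your argument that the annular geodesic must be ``entirely exhausted and return to height zero'' before $\tg_Y$ advances has no footing, and in fact the conclusion is not true: the pair $(\tg_{X_\alpha}, x_\alpha)$ sits in $\ttau_i\setminus\tsigma=\ttau_{i+1}\setminus\tsigma'$ at whatever height it had, not necessarily zero.

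Second, and more to the point, there is no obstacle here at all. Lemma~\ref{r:lemma 5.2} says that for $\xi(Y)>4$ the transition slices $\tsigma,\tsigma'$ contain no annular pairs whatsoever, so every annular pair of $\ttau_i$ lies in $\ttau_i\setminus\tsigma=\ttau_{i+1}\setminus\tsigma'$ unchanged. It follows that $\ttau_i$ and $\ttau_{i+1}$ have identical base curves and identical horoball positions; the associated augmented markings $\tmu_i,\tmu_{i+1}$ are therefore each compatible with \emph{both} slices, and Lemma~\ref{r:compatible} bounds their distance directly. No flip occurs, no lifting of an $\MM(S)$-path is required, and the ``structural argument using $\prec_p$ and $\prec_s$'' you flag as the main obstacle is unnecessary. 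This is exactly how the paper dispatches the $\xi(Y)>4$ case in two lines; once you replace your last paragraph with this observation, the proof is complete.
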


\begin{proof}
Suppose that $\ttau_i \rightarrow \ttau_{i+1}$ comes from a transition $y \rightarrow y'$ along $\tg_Y \in \tH$.  If $Y$ is an annulus, let $y = (t_{\alpha}, D_{\alpha}(\tmu_i)) \in \HHH(\alpha)$ and $y' = (t'_{\alpha}, D_{\alpha}(\tmu_{i+1})) \in \HHH(\alpha)$.  If $D_{\alpha} (\tmu)= D_{\alpha}(\tmu_{i+1})$, then $d_{\alpha}(t_{\alpha}, t'_{\alpha}) \leq 2^{D_{\alpha}(\tmu_i)}$, so a bounded number of twist moves in $\AM(S)$ yields an augmented marking $\tmu'_{i+1}$ compatible with $\ttau_{i+1}$.  If $D_{\alpha}(\tmu_i) \neq D_{\alpha}(\tmu_{i+1})$, then $\ttau_i \rightarrow \ttau_{i+1}$ encodes a vertical move and $\pi_{\MM(S)}(\tmu_i) = \pi_{\MM(S)}(\tmu_{i+1})$, implying $d_{\AM(S)}(\tmu_i, \tmu_{i+1}) = 1$.\\

Now suppose that $\xi(Y) = 4$.  Then recall from before that the transition slices are $\tsigma_i = \{(\tg_Y, y), (\tg_X, x)\}$ and $\tsigma_{i+1} = \{(\tg_Y, y'), (\tg_{X'
}, x')\}$, where $X$ and $X'$ are annuli with cores $y$ and $y'$, respectively, and $x$ and $x'$ are vertices of $\pi_{X}(y')$ and $\pi_{X'}(y)$, respectively.  Construct a clean augmented marking $\tmu'_i$ compatible with $\ttau_i$ which contains the triple  $(y, \pi_X(y'), D_{X}(\tmu'_i))$, where $D_X (\tmu'_i)= 0$ necessarily.  A flip move on $\tmu'_i$ along $y$ results in an augmented marking $\tmu'_{i+1}$ with the triple $(y', \pi_{X'}(y), D_{X'}(\tmu'_{i+1}))$, with all other base curves of $\tmu'_{i+1}$ being the same as those of $\tmu'_i$, $D_{\alpha}(\tmu'_i) = D_{\alpha}(\tmu'_{i+1})$ for all $\alpha \in \CC(S)$, and the transversals at uniformly bounded distance by Lemma \ref{r:compatible}.  Thus $\tmu'_{i+1}$ is a uniformly bounded number of twist moves along the base curves from an augmented marking $\tmu''_{i+1}$ compatible with $\ttau_{i+1}$.  Since the distance between augmented markings compatible with the same augmented slice is uniformly bounded by Lemma \ref{r:compatible}, this implies $d_{\AM(S)}(\tmu_i, \tmu_{i+1})$ is uniformly bounded.\\

Finally, if $\xi(Y) >4$, then $\ttau_i$ and $\ttau_{i+1}$ have the same base curves and positions on their horoball geodesics.  Thus $\tmu_i$ and $\tmu_{i+1}$ are both compatible with $\ttau_i$ and $\ttau_{i+1}$, implying that $d_{\AM(S)}(\tmu_i, \tmu_{i+1})$ is uniformly bounded in this case again by Lemma \ref{r:compatible}, completing the proof.
 
\end{proof}

\section{Length and efficiency of augmented hierarchy paths}

In this section, we convert the structural results in the previous section to prove that augmented hierarchy paths are uniform quasigeodesics in $\AM(S)$, from which we give a combinatorial proof of Rafi's distance formula for $\TT(S)$, Theorem \ref{r:Rafi}.

\subsection{Projecting augmented markings to subsurfaces}

In this subsection, we define subsurface projections for augmented markings, the $\AM(S)$-analogue of those for markings, as in Definition \ref{r:subproj for mark}.\\

Let $Y \subset S$ be any subsurface and $\tmu \in \AM(S)$ any augmented marking.  If $Y$ is an annulus with core $\alpha$, then set $\pi_Y(\tmu) = \pi_{\HHH(\alpha)}(\tmu) = (\pi_{\alpha}(\tmu), D_{\alpha}(\tmu)) \in \HHH(\alpha)$.  If $Y$ is nonannular, set $\pi_Y(\tmu) = \pi_Y(\pi_{\MM(S)}(\tmu))$.\\

The following lemma proves that subsurface projections are 4-lipschitz:

\begin{lemma}[Lemma 2.3 in \cite{MM00}]\label{r:lipschitz proj}
For any $\tmu \in \AM(S)$ and subsurface $Y \subset S$, $\diam_Y(\pi_Y(\tmu)) < 4$.
\end{lemma}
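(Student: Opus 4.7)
The plan is to split along the two cases in the definition of $\pi_Y$ on augmented markings and reduce each to \cite{MM00}[Lemma 2.3]. Let $\mu = \pi_{\MM(S)}(\tmu)$ denote the underlying marking of $\tmu$.

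If $Y$ is nonannular, then by definition $\pi_Y(\tmu) = \pi_Y(\mu) = \pi_Y(\base(\mu))$, which is precisely the quantity bounded by \cite{MM00}[Lemma 2.3] for projections of clean markings to nonannular subsurfaces. The diameter bound in $\CC(Y)$ is therefore immediate.

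If $Y$ is an annulus with core $\alpha$, then $\pi_Y(\tmu) = (\pi_\alpha(\tmu), D_\alpha(\tmu)) \subset \HHH(\alpha)$ lies entirely at the single vertical height $D_\alpha(\tmu)$ (with the convention that $D_\alpha(\tmu) = 0$ when $\alpha \notin \base(\mu)$). Consequently, it suffices to bound the horizontal spread $\pi_\alpha(\tmu)$ inside $\CC(\alpha)$, because the $\HHH(\alpha)$-distance between two vertices at the same height is at most their $\CC(\alpha)$-distance. When $\alpha \in \base(\mu)$, the horizontal coordinate $\pi_\alpha(\tmu)$ is exactly the transversal $t_\alpha$, which is a diameter-$1$ subset of $\CC(\alpha)$ by the definition of a transverse pair in a clean marking. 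When $\alpha \notin \base(\mu)$, we have $\pi_\alpha(\tmu) = \pi_\alpha(\base(\mu))$, to which \cite{MM00}[Lemma 2.3] again applies.

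The only potential obstacle is confirming that passing from $\CC(\alpha)$ into $\HHH(\alpha)$ does not inflate the diameter of a set concentrated at a fixed height. This is ruled out directly by the construction of the combinatorial horoball: at height $n$ the horizontal edge relation only adds edges relative to $\CC(\alpha)$ (two vertices being adjacent once their $\CC(\alpha)$-distance drops below $e^n$), and never removes them. Thus the Masur-Minsky bound transfers through the horoball projection with the same constant, yielding $\diam_Y(\pi_Y(\tmu)) < 4$ in both cases.
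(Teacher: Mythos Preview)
Your proof is correct and follows essentially the same approach as the paper: reduce the nonannular case directly to \cite{MM00}[Lemma 2.3], and in the annular case observe that the projection sits at a single height in $\HHH(\alpha)$ so that the horoball diameter is bounded by the $\CC(\alpha)$-diameter, which is again controlled by \cite{MM00}[Lemma 2.3]. The paper's proof is a one-line compression of exactly this argument, whereas you have helpfully made the case split on whether $\alpha \in \base(\mu)$ and the ``horoballs do not inflate horizontal distances'' step explicit.
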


\begin{proof}
The only case left to consider is when $Y$ is an annulus with core $\alpha$.  Then $$\diam_{\HHH(\alpha)}\left(\pi_{\HHH(\alpha)}(\tmu)\right) \leq \diam_{\alpha}\left(\pi_{\alpha}(\pi_{\MM(S)}(\tmu))\right) \leq 4$$ completing the proof.
\end{proof}

Given two subsurfaces $X, Y \subset S$, we write $X \pitchfork Y$ if $X \cap Y \neq \emptyset$ and neither is contained in the other.  The following lemma is due to Behrstock \cite{Beh06}, but the effective bound is due to Leininger \cite{Man10}.  It holds for augmented markings by definition of the subsurface projection:

\begin{lemma}[Behrstock's inequality] \label{r:beh}
If $X \pitchfork Y$ with $\xi(X), \xi(Y) \geq 4$, then for any $\tmu \in \AM(S)$, we have

$$\min \{d_Y(\tmu, \partial X), d_X(\tmu, \partial Y)\} < 10$$
\end{lemma}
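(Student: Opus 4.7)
The plan is to reduce the statement for augmented markings to the already-known Behrstock inequality for ordinary markings. The crucial observation is the hypothesis $\xi(X), \xi(Y) \geq 4$: this rules out the annular case in the definition of subsurface projection, so neither $\pi_X(\tmu)$ nor $\pi_Y(\tmu)$ involves any length data. By the definition of subsurface projection for augmented markings in the nonannular case, we have
\[
\pi_X(\tmu) = \pi_X(\pi_{\MM(S)}(\tmu)) \quad \text{and} \quad \pi_Y(\tmu) = \pi_Y(\pi_{\MM(S)}(\tmu)).
\]
Thus $d_X(\tmu, \partial Y) = d_X(\pi_{\MM(S)}(\tmu), \partial Y)$ and $d_Y(\tmu, \partial X) = d_Y(\pi_{\MM(S)}(\tmu), \partial X)$, since $\partial X$ and $\partial Y$ are simplices in $\CC(S)$ whose projections likewise do not see length data.

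First, I would set $\mu = \pi_{\MM(S)}(\tmu) \in \MM(S)$. Then I would invoke the original Behrstock inequality (with the effective bound $10$ of Leininger; see the statement as recorded by Mangahas \cite{Man10}) applied to the marking $\mu$ and the pair of subsurfaces $X \pitchfork Y$: this gives
\[
\min\{d_Y(\mu, \partial X), d_X(\mu, \partial Y)\} < 10.
\]
Combining with the previous identification yields exactly the required bound for $\tmu$.

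There is essentially no obstacle here: the content of the lemma is entirely contained in the marking version, and the role of the hypothesis $\xi(X), \xi(Y) \geq 4$ is precisely to force the projections to be computed from the underlying marking alone. The only thing to be careful about is to note explicitly that $\partial X$ and $\partial Y$ are curves (or simplices) in $\CC(S)$ disjoint from the annular complexes whose lengths are augmented, so $d_X(\tmu, \partial Y)$ really does coincide with $d_X(\mu, \partial Y)$ with no appeal to length coordinates; once this is said, the lemma is immediate.
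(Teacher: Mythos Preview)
Your proposal is correct and is exactly the paper's approach: the paper simply remarks that the lemma ``holds for augmented markings by definition of the subsurface projection,'' and your write-up fleshes out precisely this observation, namely that $\xi(X),\xi(Y)\geq 4$ forces both projections to be computed from the underlying marking $\mu=\pi_{\MM(S)}(\tmu)$, reducing immediately to the classical Behrstock inequality with Leininger's bound.
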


One of the key tools of \cite{MM00} is the following theorem:

\begin{theorem}[Bounded geodesic image theorem; Theorem 3.1 in\cite{MM00}]\label{r:bgit}
There is a constant $M_0>0$ such that the following holds.  Let $\gamma \subset \CC(S)$ be any geodesic and $Y \subset S$ any subsurface.  If $d_{\CC(S)}(\gamma, \partial Y) > 1$, then $diam_{\CC(Y)}(\gamma) < M_0$.

\end{theorem}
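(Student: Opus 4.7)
The plan is to follow the classical Masur–Minsky strategy, lifting the combinatorial geodesic $\gamma \subset \CC(S)$ to a Teichmüller geodesic $\mathcal{G} \subset \TT(S)$ whose $Y$-behavior can be controlled by the Collar Lemma and the Product Regions Theorem.

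First I would unpack the hypothesis. Since $d_{\CC(S)}(\gamma, \partial Y) > 1$, every vertex $v$ of $\gamma$ satisfies $d_{\CC(S)}(v, \partial Y) \geq 2$, so $v$ intersects $\partial Y$ essentially and $\pi_Y(v)$ is well-defined and nonempty; moreover any curve within $\CC(S)$-distance $1$ of $\gamma$ still intersects $\partial Y$. Using that the electrification of $(\TT(S), d_T)$ is quasi-isometric to $\CC(S)$ (Theorem 1.2 of \cite{MM99}), I would associate to $\gamma$ a Teichmüller geodesic $\mathcal{G}$ whose Bers-curve shadow to $\CC(S)$ fellow-travels $\gamma$ up to uniform additive error. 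The Bers curves at each $\sigma \in \mathcal{G}$ therefore intersect $\partial Y$, and since they are short on $\sigma$, the Collar Lemma forces the hyperbolic length of $\partial Y$ on $\sigma$ to be bounded below by a universal constant. Hence $\mathcal{G}$ stays entirely in the $\partial Y$-thick part of $\TT(S)$.

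By Minsky's Product Regions Theorem (Theorem \ref{r:product}), the $\partial Y$-thick part decomposes coarsely as a sup-product with a $\TT(Y)$-factor, so the coordinate projection $\mathcal{G} \to \TT(Y)$ is a coarse Teichmüller quasigeodesic in the thick part of $\TT(Y)$. Its shortest-curve shadow in $\CC(Y)$ coarsely equals $\pi_Y(\gamma)$, and by applying the $\CC(Y)$-version of Masur–Minsky's electrification quasi-isometry (inductively on complexity) it is itself a coarse $\CC(Y)$-quasigeodesic.

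The main obstacle is upgrading this to a \emph{uniform} bound on $\diam_{\CC(Y)}\bigl(\pi_Y(\gamma)\bigr)$, since the raw Lipschitz estimate only gives a bound scaling linearly with the length of $\gamma$. I would resolve this contrapositively, running the Collar Lemma input in the opposite direction: if $\diam_{\CC(Y)}\bigl(\pi_Y(\gamma)\bigr)$ were to exceed a threshold, then along $\mathcal{G}$ the vertical and horizontal foliations would have large $\CC(Y)$-projection, which by Rafi's short-curve criterion \cite{Raf05} would force $\partial Y$ to become short somewhere on $\mathcal{G}$, contradicting the universal lower bound on $l_\sigma(\partial Y)$ established above. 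Tracking the constants carefully yields the uniform $M_0$ depending only on $\xi(S)$.
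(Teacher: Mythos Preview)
Your proposal misreads what is actually being asked. The theorem is labeled ``Theorem 3.1 in \cite{MM00}'' because the classical case --- $Y$ a genuine subsurface or an annulus with the standard annular curve graph --- is being \emph{cited}, not reproved. In this paper the only novelty is that annular projections now land in the horoball $\HHH(\alpha)$ rather than in $\CC(\alpha)$, so the sole obligation is to check the bound when $Y=\HHH_\alpha$. That reduces to a one-line observation: since every vertex of $\gamma$ intersects $\alpha$, each $\gamma_i$ has $D_\alpha(\gamma_i)=0$, so $\diam_{\HHH(\alpha)}(\gamma)\asymp\log\diam_{\CC(\alpha)}(\gamma)$, and the latter is bounded by the classical annular case of \cite{MM00}. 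You have instead attempted to reprove the full Masur--Minsky result from scratch.

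Even setting aside that misreading, your argument has real problems. First, the Product Regions Theorem (Theorem~\ref{r:product}) describes the $\partial Y$-\emph{thin} part of $\TT(S)$, not the thick part; there is no coarse product decomposition of the $\partial Y$-thick region with a $\TT(Y)$ factor, so your projection step has no foundation. Second, your contrapositive step invokes Rafi's short-curve criterion \cite{Raf05}, but that result itself rests on the hierarchy machinery of \cite{MM00}, including the Bounded Geodesic Image theorem --- so the argument is circular. (The paper also explicitly emphasizes independence from Rafi's work, making this route doubly inappropriate here.) Finally, lifting an arbitrary $\CC(S)$-geodesic to a Teichm\"uller geodesic with fellow-traveling shadow is not a direct consequence of the electrification quasi-isometry of \cite{MM99}; that direction requires additional input about realizing endpoints as foliations.
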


\begin{proof}
We need only prove it when $Y= \HHH_{\alpha}$ for some $\alpha \in \CC(S)$.  Since $d_{\CC(S)}(\gamma, \alpha) > 1$, $D_{\alpha}(\gamma_i) = 0$ for each $\gamma_i \in \gamma$ and so $\mathrm{diam}_{\HHH_{\alpha}} (\gamma) \asymp \log \mathrm{diam}_{\CC(\alpha)}(\gamma)< \mathrm{diam}_{\CC(\alpha)}(\gamma) \asymp 1$, completing the proof.
\end{proof}

\subsection{The forward and backward paths of a subsurface}

Let $Y \subset S$ be any subsurface.  In this subsection, we will show how to convert $\TSigma^+(Y)$ and $\TSigma^-(Y)$ into sets of pointed geodesics which package all the relevant combinatorial information in $\tH$ about $Y$.  In the next subsection, we will use these packages to prove a version of the Large Links Lemma \ref{r:large link condition} for $\AM(S)$ and augmented hierarchies.

We proceed as in \cite{MM00}[Subsection 6.1].  First, recall that Theorem \ref{r:sigma} implies that $\TSigma^+(Y)$ has the form $\tg_{Z_0} \searrow \cdots \tg_{Z_n} = \tg_{\tH}$, and $\TSigma^-(Y)$ has the form $\tg_{\tH} = \tg_{X_m} \swarrow \cdots \swarrow \tg_{X_0}$.  Let 
$$\sigma = \left\{(\tg_Z, z) | z\in \tg_Z \in \TSigma^{\pm}(Y) \text{ and } z|_Y \neq \emptyset\right\}$$

\begin{lemma}
The partial order $\prec_p$ restricts to a linear order on $\sigma$.
\end{lemma}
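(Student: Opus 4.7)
The plan is to handle the case of two pairs sharing a geodesic directly, and to reduce the distinct-geodesic case to the corresponding linear-order statement for the associated hierarchy $H$ from Subsection~\ref{r:assoc hier} via Proposition~\ref{r:assoc hier corr}, where the result is the standard Masur--Minsky claim.

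If $(\tg_Z, z_1), (\tg_Z, z_2) \in \sigma$ share the same underlying geodesic with $z_1 \neq z_2$, the natural linear order on positions of $\tg_Z$ gives either $z_1 < z_2$ or $z_2 < z_1$, which is precisely the second of the four mutually exclusive cases of $\prec_p$ listed immediately after Lemma~\ref{r:po}.

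For pairs $(\tg_{X_1}, x_1), (\tg_{X_2}, x_2) \in \sigma$ on distinct geodesics, Theorem~\ref{r:sigma}(4) forces $X_1 \neq X_2$. Fix a hierarchy $H$ associated to $\tH$ and let $g_{X_i} \in H$ denote the geodesic corresponding to $\tg_{X_i}$ via Proposition~\ref{r:assoc hier corr}(1). Any witness $\tg_W$ appearing in the definition of $\prec_p$ must satisfy $X_1, X_2 \subseteq W$; since $X_1 \neq X_2$, $W$ strictly contains at least one $X_i$ and in particular is nonannular. Then $\tg_W$ and $g_W$ have identical vertex sets (Proposition~\ref{r:assoc hier corr}(2)), and the footprints $\phi_{\tg_W}(X_i)$ coincide with $\phi_{g_W}(X_i)$, intervals on $g_W$ by \cite{MM00}[Corollary~4.11]. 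By Lemma~\ref{r:po}(2)--(3) and Proposition~\ref{r:assoc hier corr}(4), the chain conditions of $\prec_p$ transfer verbatim between $\tH$ and $H$. Thus $(\tg_{X_1}, x_1) \prec_p (\tg_{X_2}, x_2)$ in $\tH$ if and only if the analogous relation holds for $(g_{X_1}, x_1), (g_{X_2}, x_2)$ in $H$. The corresponding linear-order claim for the analogous set in $H$ is the Masur--Minsky statement discussed around \cite{MM00}[Lemma~6.2], which combines the interval structure of footprints with the chain structure of $\Sigma^{\pm}(Y)$ from \cite{MM00}[Theorem~4.7].

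The main technical obstacle is the mixed subcase $\tg_{X_1} \in \TSigma^-(Y) \setminus \TSigma^+(Y)$, $\tg_{X_2} \in \TSigma^+(Y) \setminus \TSigma^-(Y)$, where the supports $X_1, X_2$ may fail to be nested. Here the witness must be chosen as a common forward-ancestor: iterating (H3) produces chains $\tg_{X_1} \searrow \cdots \searrow \tg_{\tH}$ and $\tg_{X_2} \searrow \cdots \searrow \tg_{\tH}$, so $\tg_W = \tg_{\tH}$ (or a smaller common ancestor, as in the Masur--Minsky argument) satisfies the chain requirement of $\prec_p$. Verifying $\max \hat\phi_{\tg_W}(\tg_{X_1}, x_1) < \min \hat\phi_{\tg_W}(\tg_{X_2}, x_2)$ reduces, via the same translation through $H$ and the inclusions $Y \subset X_i \subset W$, to the footprint-separation statement of \cite{MM00}[Theorem~4.7]: the defining condition $\tI(\tg_{X_1})|_Y \neq \emptyset$ forces $\phi_{\tg_W}(X_1)$ to lie on the initial side of the $Y$-footprint on $\tg_W$, while $\tT(\tg_{X_2})|_Y \neq \emptyset$ forces $\phi_{\tg_W}(X_2)$ to lie on the terminal side, with the two intervals disjoint and correctly ordered. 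Once this is in place, the vertices $x_i$ (which lie outside $\phi_{\tg_W}(Y) \supset \phi_{\tg_W}(X_i)$ by the $\sigma$-condition $x_i|_Y \neq \emptyset$) are placed on the matching sides of the respective footprints, and the inequality follows.
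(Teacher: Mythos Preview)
Your reduction strategy---pass to an associated hierarchy $H$ via Proposition~\ref{r:assoc hier corr} and Lemma~\ref{r:po}, then invoke the Masur--Minsky version---is a reasonable alternative to the paper's direct argument, and for pairs lying in the same one of $\TSigma^+(Y)$ or $\TSigma^-(Y)$ it goes through. The paper instead argues directly: for each $\tg_{Z_i}\in\TSigma^+(Y)$ it identifies the contributed segment $\sigma^+_i$ of positions following $\max\phi_{\tg_{Z_i}}(Y)$, uses the augmented Corollary~4.11 of \cite{MM00} to get $\max\phi_{\tg_{Z_i}}(Y)=\max\phi_{\tg_{Z_i}}(Z_{i-1})$, and then reads off $(\tg_{Z_{i-1}},\tT(\tg_{Z_{i-1}}))\prec_p(\tg_{Z_i},z_i)$ directly from the subordinacy chain $\tg_{Z_{i-1}}\searrow\tg_{Z_i}$ already furnished by Theorem~\ref{r:sigma}(1). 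This buys a self-contained argument that never leaves $\tH$ and makes the concatenated linear order $\sigma^-\cdot\sigma^0\cdot\sigma^+$ explicit.

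Your mixed case, however, contains a genuine gap. You produce chains $\tg_{X_1}\searrow\cdots\searrow\tg_{\tH}$ and $\tg_{X_2}\searrow\cdots\searrow\tg_{\tH}$ and declare $\tg_W=\tg_{\tH}$ a witness for $\prec_p$. But Definition~\ref{r:po on pointed} requires $\tg_{X_1}\underset{=}{\searrow}\cdots\underset{=}{\searrow}\tg_W\underset{=}{\swarrow}\cdots\underset{=}{\swarrow}\tg_{X_2}$: the second leg must be a \emph{backward} chain from $\tg_W$ down to $\tg_{X_2}$, and $\tg_{X_2}\searrow\cdots\searrow\tg_{\tH}$ does not give this. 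The structure of $\TSigma^{\pm}(Y)$ from Theorem~\ref{r:sigma}(1) in fact supplies the wrong-direction chains for what you need here. The correct route---visible in the paper's surrounding text---is that the top geodesic $\tg_{X_0}=\tg_{Z_0}$ lies in both $\TSigma^-(Y)$ and $\TSigma^+(Y)$ with empty $Y$-footprint, so all of its positions belong to $\sigma$; the $\sigma^-$ chain terminates there and the $\sigma^+$ chain begins there, and transitivity of $\prec_p$ glues them. Your footprint claim that $\tI(\tg_{X_1})|_Y\neq\emptyset$ forces $\phi_{\tg_W}(X_1)$ to one side of $\phi_{\tg_W}(Y)$ is also not justified as stated: that hypothesis concerns the initial marking of $\tg_{X_1}$, not the position of $X_1$'s footprint on an arbitrary ancestor $\tg_W$, and you would need the specific subordinacy chain of Theorem~\ref{r:sigma}(1) together with \cite{MM00}[Corollary~4.11] to make it precise---which is exactly the paper's argument.
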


\begin{proof}
For each $\tg_{Z_i} \in \TSigma^+(Y)$, let $z_i \in \tg_{Z_i}$ be the position immediately following $\max \phi_{\tg_{Z_i}}(Y)$ (or $z_i = \tT(\tg_{Z_i})$ if $\max \phi_{\tg_{Z_i}}(Y)$ is the last vertex).  Then $\tg_{Z_i}$ contributes a segment $\sigma^+_i = \left\{(\tg_{Z_i}, z_i) \prec_p \cdots \prec_p (\tg_{Z_i}, \tT(\tg_{Z_i}))\right\}$.  By the augmented version of \cite{MM00}[Corollary 4.11] (see Subsection \ref{r:hier tech}), $\max \phi_{\tg_{Z_i}}(Y) = \max \phi_{\tg_{Z_i}}(X_{i-1})$, so $(\tg_{Z_{i-1}}, \tT(\tg_{Z_{i-1}})) \prec_p (\tg_{Z_i}, z_i)$.  It follows that the union of the $\sigma^+_i$ are linearly ordered.  Similarly, each $\sigma^-_i$ has the form $\left\{(\tg_{X_i}, \tI(\tg_{X_i})) \prec_p \cdots \prec_p (\tg_{X_i}, x_i)\right\}$, where $x_i$ is the last position before $\min \phi_{\tg_{X_i}}(Y)$.

\end{proof}

Let $\sigma^+$ be the concatenation $\sigma^+_1 \cup \cdots \cup \sigma^+_n$ with the same linear order, and $\sigma^- = \sigma^-_m \cup \cdots \cup \sigma^-_1$.  If both $\TSigma^{\pm}(Y)$ are nonempty, then the $\tg_{X_0}= \tg_{Z_0}$ and $\phi_{\tg_{X_0}}(Y) = \emptyset$ by Theorem \ref{r:sigma}(2), so all its positions are in $\sigma$, and they follow and precede all pairs of $\sigma_i^-$ and $\sigma_i^+$, respectively, for all $i>0$.  Denote the position on the top geodesic by $\sigma^0$.\\

The following lemma is the augmented analogue of \cite{MM00}[Lemma 6.1]:

\begin{lemma}[Sigma projection]\label{r:sigma proj}
There are constants $M_1, M_2$ depending only on $S$ such that if $\tH$ is any hierarchy and $Y \subset S$ is any subsurface, then
$$\diam_Y\left(\pi_Y(\sigma^+(Y, \tH))\right) \leq M_1 \indent \text{ and } \indent \diam_Y\left(\pi_Y(\sigma^-(Y, \tH))\right) \leq M_1$$
Moreover, if $Y$ is properly contained in the top domain of $\TSigma(Y)$, then
$$\diam_Y\left(\pi_Y(\sigma(Y, \tH))\right) \leq M_2$$
\end{lemma}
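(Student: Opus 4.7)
The plan is to leverage Proposition \ref{r:assoc hier corr} to reduce to \cite{MM00}[Lemma 6.1] for the nonannular case, and to handle the annular case by separately bounding the $\CC(\alpha)$- and length-coordinates of positions in $\sigma^{\pm}$.

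First, I would fix a hierarchy $H$ associated to $\tH$. By Proposition \ref{r:assoc hier corr}, the sequences $\TSigma^{\pm}(Y, \tH)$ correspond bijectively to $\Sigma^{\pm}(Y, H)$ with matching domains, footprints, and subordinancy relations. For nonannular $Y$, the projection $\pi_Y(z)$ of any position $z$ on any geodesic of $\tH$ agrees with $\pi_Y$ of the corresponding position in $H$, since subsurface projections into a nonannular $\CC(Y)$ depend only on the underlying simplex or marking. Thus $\pi_Y(\sigma^{\pm}(Y, \tH)) = \pi_Y(\sigma^{\pm}(Y, H))$, and both the $M_1$ and $M_2$ bounds for nonannular $Y$ follow directly from \cite{MM00}[Lemma 6.1].

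For annular $Y$ with core $\alpha$, I would split positions in $\sigma^{\pm}$ according to whether their supporting geodesic is annular. For any position $z$ on a nonannular geodesic, the length coordinate $D_{\alpha}(z) = 0$, so $\pi_{\HHH(\alpha)}(z)$ sits at horoball-height zero; \cite{MM00}[Lemma 6.1] applied to $H$ bounds the $\CC(\alpha)$-diameter of these projections by $M_1$, and by the definition of the combinatorial horoball, any two height-zero vertices at $\CC(\alpha)$-distance at most $M_1$ are at $\HHH(\alpha)$-distance at most $2\lceil \ln M_1 \rceil + 1$. The only remaining positions come from the annular geodesic $\tg_Y$ itself, which can appear in $\TSigma^{\pm}(Y)$ only when $\tg_Y = \tg_{X_0} = \tg_{Z_0}$; in this case, the definition of $\sigma^{\pm}_0$ as the positions after (resp.\ before) the footprint $\phi_{\tg_Y}(Y)$ contributes only the single endpoint of $\tg_Y$ to each of $\sigma^+$ and $\sigma^-$, which can be absorbed into the constant.

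For the strengthened $M_2$ bound, when $Y$ is properly contained in the top domain of $\TSigma(Y)$, Theorem \ref{r:sigma}(2)--(3) forces $\tg_Y$ to be absent from $\TSigma^{\pm}(Y)$ entirely, so all positions in $\sigma$ have length coordinate zero, and the $M_2$ bound from \cite{MM00}[Lemma 6.1] followed by the logarithmic height conversion completes the proof. The main obstacle is pinning down the contribution of the annular geodesic $\tg_Y$ to $\sigma^{\pm}$ in the boundary case where $Y = X_0 = Z_0$ and $\phi_{\tg_Y}(Y) = \emptyset$; once this contribution is confirmed to be uniformly bounded, the entire proof reduces via the associated hierarchy to the classical Masur--Minsky statement.
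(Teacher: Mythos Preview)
Your reduction to \cite{MM00}[Lemma 6.1] via the associated hierarchy is a reasonable strategy, and for nonannular $Y$ it goes through cleanly as you describe. The paper takes a slightly different route: rather than passing to an associated hierarchy, it argues directly with the \emph{augmented} Bounded Geodesic Image Theorem (Theorem~\ref{r:bgit}) and augmented Lipschitz bound (Lemma~\ref{r:lipschitz proj}), both of which already encompass the horoball case. This lets the paper treat annular and nonannular $Y$ uniformly and avoids tracking heights separately.

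There is, however, a genuine gap in your annular argument. Your claim that ``for any position $z$ on a nonannular geodesic, the length coordinate $D_\alpha(z)=0$'' is false: positions on $\tg_{Z_i}$ include the marking positions $\tI(\tg_{Z_i})$ and $\tT(\tg_{Z_i})$, which are (restrictions of) augmented markings and can carry nonzero length data. Concretely, if $\alpha\in\base(\teta)$ with $D_\alpha(\teta)$ large, then whenever $Z_i$ sits at the terminal vertex of $\tg_{Z_{i+1}}$ we get $\tT(\tg_{Z_i})=\tT(\tg_{Z_{i+1}})|_{Z_i}$, and chaining up gives $\tT(\tg_{Z_i})=\teta|_{Z_i}$ with height $D_\alpha(\teta)$. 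Your logarithmic conversion from a $\CC(\alpha)$-bound to an $\HHH(\alpha)$-bound then fails, since the height gap between such a marking position and a simplex position is not controlled by the $\CC(\alpha)$-distance.

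The repair is an extra structural observation you did not make: when $\alpha\in\base(\teta)$, the terminal vertex of each $\tg_{Z_i}$ lies in $\base(\teta)$ (since $\partial Z_i\subset\base(\teta)$ inductively), hence is disjoint from $\alpha$ and therefore lands in the footprint $\phi_{\tg_{Z_i}}(Y)$. This forces $z_i=\tT(\tg_{Z_i})$ and collapses each $\sigma^+_i$ to the single position $\teta|_{Z_i}$, all of which project to the same point $\pi_{\HHH(\alpha)}(\teta)$. When $\alpha\notin\base(\teta)$, the marking positions genuinely have height zero and your argument works. The symmetric statement handles $\sigma^-$ and $\tmu$. (Your concern about the annular geodesic $\tg_Y$ contaminating $\sigma^\pm$ is also slightly off: by the paper's convention $\sigma^+=\sigma^+_1\cup\cdots\cup\sigma^+_n$ starts at index $1$, so positions on $\tg_{Z_0}=\tg_Y$ live in $\sigma^0$, not $\sigma^\pm$, and are irrelevant to the $M_1$ bound.)
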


\begin{proof}
The Bounded Geodesic Image Theorem \ref{r:bgit} bounds $\diam_Y(\pi_Y(\sigma^{\pm}_i(Y)))$ and $\diam_Y(\pi_Y(\sigma^0))$ when $Y$ is properly contained in the top domain.  The transition from the last position of $\sigma_i^+$ to the first position of $\sigma^+_{i+1}$ involves adding disjoint curves, so it projects to a bounded step in $\CC(Y)$ by Lemma \ref{r:lipschitz proj}; the same holds for other transitions in $\sigma$.  Finally, the number of number of segments $\TSigma^{\pm}(Y)$ contributes is bounded by $\xi(S) - \xi(Y)$.  This completes the proof.
\end{proof}

\subsection{Large links}

In this subsection, we prove an augmented version of the Large Links Lemma \ref{r:large link condition}.  As with \cite{MM00}[Lemma 6.2], it follows almost immediately from Lemma \ref{r:sigma proj}:

\begin{lemma}[Large links for $\AM(S)$] \label{r:large link ams}
If $Y \subset S$ is any subsurface and $d_Y(\tI(\tH), \tT(\tH)) > M_2$, then $Y$ supports a geodesic $\tg_Y \in \tH$.  Conversely, if $\tg_Y \in \tH$, then $\left| |\tg_Y| - d_Y\left(\tI(\tH), \tT(\tH)\right)\right| \leq 2M_1$.
\end{lemma}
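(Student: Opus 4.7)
The plan is to derive both halves of Lemma \ref{r:large link ams} directly from the Sigma projection lemma (Lemma \ref{r:sigma proj}), following the template of the Masur-Minsky large link lemma. The organizing idea is that $\sigma(Y,\tH)$ records how the $Y$-projection evolves through the combinatorial history of $\tH$, so controlling its $Y$-diameter is exactly what is needed.

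For the forward direction, I would argue by contradiction. Suppose $d_Y(\tI(\tH), \tT(\tH)) > M_2$ but $Y$ supports no geodesic in $\tH$. A nontrivial $Y$-projection of the endpoints forces both $\TSigma^{\pm}(Y)$ to contain $\tg_{\tH}$ and hence to be nonempty. By Theorem \ref{r:sigma}(2), the two sequences share a common top geodesic $\tg_{X_0} = \tg_{Z_0}$, and the failure of $Y$ to support a geodesic forces $Y \subsetneq X_0$. The first position of $\sigma^-(Y)$ is $(\tg_{\tH}, \tI(\tg_{\tH}))$, whose $Y$-projection coincides with $\pi_Y(\tI(\tH))$; symmetrically for $\sigma^+$ and $\tT(\tH)$. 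Both endpoint projections therefore lie in $\pi_Y(\sigma(Y, \tH))$, and the second half of Lemma \ref{r:sigma proj} forces $d_Y(\tI(\tH), \tT(\tH)) \leq M_2$, contradicting the hypothesis.

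For the converse, assume $\tg_Y \in \tH$, so $X_0 = Y = Z_0$ by Theorem \ref{r:sigma}(3). Since $|\tg_Y| = d_Y(\tg_{Y,int}, \tg_{Y,ter})$, it suffices to show $\pi_Y(\tI(\tH))$ is $M_1$-close to $\tg_{Y,int}$ and, symmetrically, $\pi_Y(\tT(\tH))$ is $M_1$-close to $\tg_{Y,ter}$. For the initial side I would identify the last position of $\sigma^-$ as the vertex $x_1$ on $\tg_{X_1}$ immediately preceding $\min\phi_{\tg_{X_1}}(Y)$; by the subordinancy relation $\tg_{X_1} \swarrow \tg_Y$ together with tightness, $\pi_Y(x_1)$ equals $\tg_{Y,int}$ up to bounded error. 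The first half of Lemma \ref{r:sigma proj} then bounds $\diam_Y(\pi_Y(\sigma^-))$ by $M_1$, yielding the desired estimate on the initial endpoint. A mirror argument on $\sigma^+$ handles the terminal side, and two applications of the triangle inequality give $\bigl| |\tg_Y| - d_Y(\tI(\tH), \tT(\tH)) \bigr| \leq 2M_1$. The main obstacle is purely bookkeeping: verifying that the extremal positions of $\sigma^{\pm}$ project (cleanly or with uniformly bounded error) to the intended endpoints in $\CC(Y)$, and checking that the annular case $Y=\HHH_\alpha$ requires no separate treatment because Lemma \ref{r:sigma proj} was stated uniformly in $Y$.
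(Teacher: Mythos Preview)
Your proposal is correct and follows essentially the same approach as the paper: both halves are immediate consequences of the Sigma projection lemma (Lemma~\ref{r:sigma proj}) together with Theorem~\ref{r:sigma}. The only cosmetic difference is in the converse: the paper observes directly that $\tI(\tg_Y)$ and $\tI(\tH)$ (respectively $\tT(\tg_Y)$ and $\tT(\tH)$) both lie in $\sigma^-$ (respectively $\sigma^+$), so the $M_1$ diameter bound applies immediately, whereas you route through the last position $x_1$ of $\sigma^-_1$ and the subordinancy $\tg_{X_1} \swarrow \tg_Y$ to reach the same conclusion.
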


\begin{proof}
Let $\tg_{X_0} = \tg_{Z_0}$ be the top geodesic of $\TSigma(Y)$.  We have that either $X_0=Y$ or $Y \subsetneq X_0$.  If the latter, then $Y$ does not support a geodesic and Lemma \ref{r:sigma proj} implies $d_Y\left(\tI(\tH), \tT(\tH)\right) \leq \diam_Y\left(\pi_Y(\sigma)\right) \leq M_2$, proving the first statement.\\

For the second statement, if $Y = X_0$, then $\tg_Y = \tg_{X_0}$ by Theorem \ref{r:sigma}.  Since $\sigma^+$ and $\sigma^-$ contain both $\tT(\tg_Y), \tT(\tH)$ and $\tI(\tg_Y), \tI(\tH)$, respectively, Lemma \ref{r:sigma proj} implies that 
$$d_Y\left(\tI(\tg_Y), \tI(\tH)\right), d_Y\left(\tT(\tg_Y), \tT(\tH)\right) \leq M_1$$
completing the proof.
\end{proof}

Let $M_5 = 2M_1 + 5$ and $M_6 = 4(M_1 + M_5 + 4)$, where $M_1, M_2$ are the constants from Lemma \ref{r:sigma proj} and \ref{r:large link ams}, respectively.  For any $\tmu, \teta \in \AM(S)$ and augmented hierarchy $\tH$ between them, set $\mathcal{G}_{M_6}(\tmu, \teta) = \{\tg_Y \in \tH| d_Y(\tmu, \teta) > M_6\}$.  Note that $|\mathcal{G}_{M_6}(\tmu, \teta)| = \sum_{\tg_Y \in \mathcal{G}_{M_6}(\tmu, \teta)} |\tg_Y|$ is independent of the choice of $\tH$ up to coarse equality by Lemma \ref{r:large link ams}.

\begin{lemma}\label{r:large links dominate}
There are constants $d_0, d_1> 0$ dependent only on $S$ such that $|\mathcal{G}_{M_6}(\tmu, \teta)| > d_0 \cdot |\tH| - d_1$. 
\end{lemma}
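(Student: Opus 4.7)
The plan is to bound the total length of ``short'' geodesics $\tg_Y \in \tH \setminus \mathcal{G}_{M_6}(\tmu, \teta)$ by a constant multiple of $|\mathcal{G}_{M_6}(\tmu, \teta)|$ plus an additive constant, and then rearrange. The first observation is that by Lemma \ref{r:large link ams}, if $\tg_Y \notin \mathcal{G}_{M_6}(\tmu, \teta)$ then
\[|\tg_Y| \leq d_Y(\tmu,\teta) + 2M_1 \leq M_6 + 2M_1 =: C_0,\]
so every short geodesic has uniformly bounded length.

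Next, I would organize $\tH$ into a rooted tree. By property (H3) of Definition \ref{r:hier def}, every $\tg_Y \in \tH$ with $Y \neq S$ satisfies $\tg_X \swarrow \tg_Y$ for some $\tg_X \in \tH$; fix one such $\tg_X$ as the parent of $\tg_Y$, with $\tg_{\tH}$ the root. Since $Y$ is a proper component domain of $X$, the complexity strictly drops along each edge, so the depth of this tree is at most $\xi(S)$. Moreover, each child $\tg_Y$ of $\tg_X$ determines a unique vertex $v \in \tg_X$ with $Y$ a component domain of $(X,v)$ (as noted before Theorem \ref{r:sigma}), and $\tg_Y$ itself is determined by $Y$ by Theorem \ref{r:sigma}(4); since each single vertex $v$ admits at most $k = k(S)$ component domains, $\tg_X$ has at most $k\cdot |\tg_X|$ children.

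Now assign every short geodesic to its nearest large ancestor in the tree, or to $\tg_{\tH}$ if no large ancestor exists. Fix a large $\tg_X \in \mathcal{G}_{M_6}(\tmu,\teta)$; the short descendants of $\tg_X$ whose entire path up to $\tg_X$ passes through short geodesics only number at most $k|\tg_X|$ at depth $1$, and the count multiplies by at most $kC_0$ per subsequent level, over at most $\xi(S)$ levels. Summing the geometric series and multiplying by $C_0$ gives a bound $K_1\cdot |\tg_X|$ on the total length of such descendants, where $K_1 = K_1(S)$. The same iterated counting applied at the root, in the case where $\tg_{\tH}$ itself is short (hence $|\tg_{\tH}| \leq C_0$), bounds the total length of all short geodesics with no large ancestor by a constant $K_2 = K_2(S)$. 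Therefore
\[\sum_{\tg_Y \notin \mathcal{G}_{M_6}(\tmu, \teta)} |\tg_Y| \;\leq\; K_1 \cdot |\mathcal{G}_{M_6}(\tmu, \teta)| + K_2,\]
and since $|\tH|$ is the sum of the large and short contributions, rearranging yields
\[|\mathcal{G}_{M_6}(\tmu, \teta)| \;\geq\; \tfrac{1}{1+K_1}\, |\tH| \;-\; \tfrac{K_2}{1+K_1},\]
giving the result with $d_0 = \frac{1}{1+K_1}$ and, say, $d_1 = \frac{K_2}{1+K_1}+1$ (to obtain a strict inequality).

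The delicate step is justifying the branching bound of $k \cdot |\tg_X|$ children. This combines the uniqueness of geodesics by support (Theorem \ref{r:sigma}(4)) with the fact that each child corresponds injectively to a vertex-component-domain pair along the parent, together with the bounded topological count of components of a subsurface cut along a single simplex. Once this bound is in hand, the iterated counting of short subtrees hanging off each large geodesic and the final rearrangement are routine.
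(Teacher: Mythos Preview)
Your proof is correct and follows essentially the same counting argument that the paper invokes by reference to \cite{MM00}[Theorem 6.10]: the key input in both is exactly your branching bound, namely that the number of component domains of a geodesic $\tg_X$ is at most $k\cdot |\tg_X|$ for $k = k(S)$. You have simply made the argument explicit by organizing $\tH$ into a rooted tree via backward subordinancy, assigning each short geodesic to its nearest large ancestor, and summing the bounded-depth geometric series; the paper leaves all of this implicit in the citation.
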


\begin{proof}
As noted in the proof of \cite{MM00}[Theorem 6.10], the proof is an easy counting argument using the key fact that the number of component domains of any geodesic in $\tH$ is a constant multiple of its length, where the constant only depends on $S$.
\end{proof}

\subsection{A distance formula for $\AM(S)$}

In this subsection, we derive a version of the Masur-Minsky distance formula for $\AM(S)$, which is related to Rafi's Theorem \ref{r:Rafi}.\\

In \cite{MM00}, Masur-Minsky first related the size of a hierarchy to the sum of the size of its large links, then used the $\MM(S)$-analogue of Lemma \ref{r:large link ams} to obtain their distance formula.  While this approach goes through to our setting, we first derive the distance formula then relate it to augmented hierarchies via Lemma \ref{r:large link ams}.\\

\begin{theorem}[Distance formula for $\AM(S)$]\label{r:ams distance}
For each $K> M_6$, there are constants $C_1, C_2>0$ depending only on $S$ and $K$ such that for any $\tmu, \teta \in \AM(S)$, we have

$$d_{\AM(S)}(\tmu, \teta) \asymp_{(C_1, C_2)} \sum_{d_Y(\tmu, \teta)>K}d_Y(\tmu, \teta)$$

\end{theorem}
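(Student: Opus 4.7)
The proof proceeds by establishing the upper and lower bounds on $d_{\AM(S)}(\tmu,\teta)$ separately, following the template of the Masur-Minsky distance formula (Theorem \ref{r:MM distance}). The upper bound is a direct synthesis of the augmented hierarchy machinery developed in Sections \ref{r:aughier section} and \ref{r:assoc hier}; the lower bound rests on a single-edge Lipschitz property of subsurface projections on $\AM(S)$ together with the standard Masur-Minsky potential-function argument.

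\emph{Upper bound.} Given $\tmu,\teta \in \AM(S)$, pick an augmented hierarchy $\tH$ between them via Theorem \ref{r:aug hier exist}. Proposition \ref{r:resolutions} resolves $\tH$ as a sequence of at most $|\tH|$ forward elementary moves of complete augmented slices, and Lemma \ref{r:elm move distance} shows that any compatible sequence of augmented markings has consecutive terms within uniform distance $B$ in $\AM(S)$. Thus $d_{\AM(S)}(\tmu,\teta) \leq B\,|\tH|$. Applying Lemma \ref{r:large links dominate} bounds $|\tH|$ by a linear function of $|\mathcal{G}_{M_6}(\tmu,\teta)|$, and Lemma \ref{r:large link ams} replaces each $|\tg_Y|$ with $\tg_Y \in \mathcal{G}_{M_6}$ by $d_Y(\tmu,\teta)$ up to additive error $2M_1$. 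Combining these yields the upper bound at threshold $M_6$; raising the threshold to any $K > M_6$ is standard, using a Behrstock-type counting argument (Lemma \ref{r:beh}) to absorb the truncated terms (each of size at most $K$) into the multiplicative and additive constants $C_1, C_2$, which are allowed to depend on $K$.

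\emph{Lower bound.} The key is the following Lipschitz property: there exists a constant $C > 0$ such that for every edge $\tmu_1\tmu_2$ of $\AM(S)$ and every subsurface $Y \subset S$, $d_Y(\tmu_1,\tmu_2) \leq C$. I verify this by cases on the three edge types. Flip moves swap a transverse pair and re-clean, so the base curves of $\tmu_1,\tmu_2$ are within bounded diameter and Lemma \ref{r:lipschitz proj} bounds $\pi_Y$ for all nonannular $Y$ and all annuli whose core was not flipped. Vertical moves change only one length coordinate $D_\alpha$ by one and fix the marking, so $\pi_{\HHH(\alpha)}$ advances by a single vertical edge and all other $\pi_Y$ are unchanged. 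For a twist edge $\tmu_1 = T_\alpha^n\tmu_2$ with $0 < n < e^{D_\alpha(\tmu_1)}$, cleanliness forces every non-$\alpha$ transversal of $\tmu_2$ to be disjoint from $\alpha$ and hence fixed by $T_\alpha^n$, so $\base(\tmu_1) = \base(\tmu_2)$ and $\pi_Y$ is unchanged for every $Y$ other than the $\alpha$-annulus; for that annulus, the points $\pi_{\HHH(\alpha)}(\tmu_1)$ and $\pi_{\HHH(\alpha)}(\tmu_2)$ share vertical coordinate $D_\alpha$ and their horizontal coordinates differ by $n < e^{D_\alpha}$, so by definition of $\HHH(\alpha)$ they are joined by a single horizontal edge. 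With $C$ in hand, fix $K > M_6$ large enough relative to Lemma \ref{r:beh} and Theorem \ref{r:bgit}, and define the potential
\[
\Phi_K(\widetilde{\nu}) \;=\; \sum_Y \bigl[d_Y(\widetilde{\nu},\teta)\bigr]_K.
\]
Using Behrstock's inequality to control overlaps among subsurfaces with projections $>K$, one shows that each edge of $\AM(S)$ decreases $\Phi_K$ by at most a uniformly bounded amount. This yields $\Phi_K(\tmu) \leq C'\,d_{\AM(S)}(\tmu,\teta) + C''$, which is the desired lower bound.

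\emph{Main obstacle.} The subtlest point is the Lipschitz property in the twist case: a single twist edge can perform $T_\alpha^n$ for $n$ as large as $e^{D_\alpha}-1$, so $\pi_\alpha$ moves arbitrarily far in $\CC(\alpha)$; it is precisely the combinatorial horoball structure that collapses this to a single horizontal edge in $\HHH(\alpha)$. This is the sense in which $\HHH(\alpha)$ is the correct replacement for $\CC(\alpha)$ in the $\AM(S)$ distance formula, and ultimately what makes $\AM(S)$ a coarse model for $\TT(S)$. The remaining ingredients — the threshold-independence of the large-link sum on both sides, and the potential-function bookkeeping — are a direct transcription of the Masur-Minsky techniques and introduce no essentially new difficulty.
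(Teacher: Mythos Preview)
Your upper bound is correct and matches the paper's exactly, and your edge-by-edge verification of the Lipschitz property $d_Y(\tmu_1,\tmu_2)\le C$ is correct and in fact more explicit than the paper, which simply invokes Lemma~\ref{r:lipschitz proj} with $L=4$; your analysis of the twist edge is precisely the point where the horoball geometry enters.

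The gap is in the lower bound. The potential $\Phi_K(\widetilde\nu)=\sum_Y[d_Y(\widetilde\nu,\teta)]_K$ need not change by a uniformly bounded amount along a single edge of $\AM(S)$. For an edge $\tmu_i\tmu_{i+1}$, each subsurface $Y$ with $d_Y(\tmu_i,\teta)>K$ contributes up to $C$ (if it stays above threshold) or up to $K+C$ (if it crosses below), and there is no bound depending only on $S$ on the number of such $Y$: Behrstock's inequality constrains pairs of transverse domains but does not by itself bound the cardinality of $\{Y:d_Y(\tmu_i,\teta)>K\}$, which grows with $d_{\AM(S)}(\tmu_i,\teta)$. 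The paper sidesteps this by replacing the potential with the Aougab--Taylor--Webb interval-packing argument: for each $Y\in\mathcal L_K(\tmu,\teta)$ one assigns a subinterval $I_Y\subset\{0,\dots,N\}$ of the chosen $\AM(S)$-geodesic during which the projection to $Y$ makes its essential progress, proves via Lemma~\ref{r:beh} that $Y\pitchfork Z$ forces $I_Y\cap I_Z=\emptyset$, and concludes that each index lies in at most $2\xi(S)-6$ intervals. Summing $|I_Y|$ then gives $\sum_Y d_Y(\tmu,\teta)\prec N$ directly. Your Lipschitz estimate is exactly what is used to compare $|I_Y|$ with $d_Y(\tmu,\teta)$ in that argument, so none of your setup is wasted; only the final bookkeeping needs to be reorganized from a per-edge potential bound to an interval-packing count.
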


\begin{proof}
The second inequality follows from Proposition \ref{r:resolutions} and Lemma \ref{r:large links dominate}.  For the first inequality, we adapt the hierarchy-free proof of the $\MM(S)$-distance formula from Aougab-Taylor-Webb \cite{ATW15}.\\

Let $\tmu, \teta \in \AM(S)$ and let $\tmu = \tmu_0, \dots, \tmu_N = \teta$ be any geodesic in $\AM(S)$ between them.  Let $M = 10$ and $L = 4$ be the constants from Lemmata \ref{r:beh} and \ref{r:lipschitz proj}, respectively.  Set $K = 5M + 3L$ and let $\mathcal{L}_K(\tmu, \teta) = \{Y| d_Y(\tmu, \teta) > K\}$ be the set of $K$ large links for $\tmu$ and $\teta$.\\

For each $Y \in \mathcal{L}_K(\tmu, \teta)$, let $i_Y$ be the largest index $k$ such that $d_Y(\tmu_0, \tmu_k) \leq 2M + L$ and $t_Y$ the smallest index $j$ with $t_Y \geq i_Y$ such that $d_Y(\tmu_j, \tmu_N) \leq 2M+L$.  Let $I_Y = [i_Y, t_Y] \subset \{0, 1, \dots, N\}$.  Since $d_Y(\tmu_i, \tmu_{i+1}) < L$ for each $i$, $d_Y(\tmu_0, \tmu_{i_Y}), d_Y(\tmu_{t_Y}, \tmu_N) \geq 2M+L$, and $d_Y(\tmu_{i_Y}, \tmu_{t_Y}) \geq M+L$ by definition of $K$, each such $I_Y$ is nonempty.  Moreover, we \\

The following is essentially \cite{MM00}[Lemma 6.11], but the proof is from \cite{ATW15}:

\begin{lemma} \label{r:order and projections}
If $Y, Z \in \mathcal{L}_K(\tmu, \teta)$ and $Y \pitchfork Z$, then $I_Y \cap I_Z = \emptyset$.

\end{lemma}

\begin{proof}[Proof of Lemma \ref{r:order and projections}]
The proof is an easy application of Lemma \ref{r:beh}.  Assume for a contradiction that there is a $k \in I_Y \cap I_Z$.  Then Lemma \ref{r:beh} implies that either $d_Y(\partial Z, \tmu_0) \leq M$ or $d_Z(\partial Y, \tmu_0) \leq M$.  Assume the former, since the proof in the latter case is the same.\\

Using the triangle inequality, we have:
$$d_Y(\partial Z, \tmu_k) \geq d_Y(\tmu_0, \tmu_k) - d_Y(\tmu_0, \partial Z) \geq 2M+1 - M \geq M+1$$

Thus Lemma \ref{r:beh} implies $d_Z(\partial Y, \tmu_k) \leq M$ so that
$$d_Z(\partial Y, \tmu_N) \geq d_Z(\tmu_k, \tmu_N) - d_Z(\tmu_k, \partial Y) \geq 2M+1 - M \geq M+1$$

with Lemma \ref{r:beh} again implying that $d_Y(\partial Z, \tmu_N) \leq M$.  Having assumed $d_Y(\partial Z, \tmu_0) \leq M$, we have
$$d_Y(\tmu_0, \tmu_N) \leq d_Y(\tmu_0, \partial Z) + d_Y(\partial Z, \tmu_N) \leq 2M < K$$

which contradicts the fact that $Y \in \mathcal{L}_K(\tmu, \teta)$, completing the proof of Lemma \ref{r:order and projections}.
\end{proof}

Returning to the proof of Theorem \ref{r:ams distance}, consider the collection $\{I_Y| Y \in \mathcal{L}_K(\tmu, \teta)\}$, which is a covering of $\{0, 1, \dots, N\}$.  Let $s = 2\xi(S) -6$ be the number of pairwise non-overlapping domains.  By Lemma \ref{r:order and projections}, each $k \in \{0,1,\dots, N\}$ is contained in at most $s$ such $I_Y$.  Thus
$$\sum_{Y \in \mathcal{L}_K(\tmu, \teta)} |I_Y| \leq s \cdot d_{\AM(S)}(\tmu, \teta)$$

Applying Lemma \ref{r:lipschitz proj}, we have
$$d_Y(\tmu,\teta) \leq d_Y(\tmu_{i_Y}, \tmu_{t_Y}) + 4M + 2L \leq L |I_Y|  +4M + 2L$$

Since $d_Y(\tmu, \teta) \geq 5M + 3L$ for each $Y \in \mathcal{L}_K(\tmu, \teta)$ by definition, it follows that $\frac{1}{5L} \cdot d_Y(\tmu, \teta) \leq |I_Y|$.  Combining all this, we get

$$\sum_{Y \in \mathcal{L}_K(\tmu, \teta)} d_Y(\tmu, \teta) \leq 5 s L\cdot d_{\AM(S)}(\tmu, \teta)$$

which completes the proof of the theorem.
\end{proof}

\subsection{Efficiency of augmented hierarchies}

The following is an immediate corollary of Theorem \ref{r:ams distance} and Lemmata \ref{r:large links dominate} and \ref{r:large link ams}:

\begin{theorem}\label{r:eff aug hier}
For each $K'>M_6$ there are constants $C'_1, C'_2>0$ depending only on $S$ and $K'$ such that for any $\tmu, \teta \in \AM(S)$ and augmented hierarchy $\tH$ between them, we have

$$\sum_{d_Y(\tmu, \teta)>K'} d_Y(\tmu, \teta) \asymp_{C'_1, C'_2} |\tH|$$ 
\end{theorem}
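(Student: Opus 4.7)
The plan is to establish the coarse equivalence by chaining the three cited results, with $d_{\AM(S)}(\tmu, \teta)$ serving as an intermediary to reconcile the fixed threshold $M_6$ from Lemma \ref{r:large links dominate} with the quantified $K' > M_6$ in the statement. For the easy direction $\sum_{d_Y > K'} d_Y(\tmu, \teta) \lesssim |\tH|$, I will invoke Lemma \ref{r:large link ams}: since $K' > M_6 > M_2$, each $Y$ with $d_Y(\tmu, \teta) > K'$ supports a distinct geodesic $\tg_Y \in \tH$ with $|\tg_Y| \geq d_Y(\tmu, \teta) - 2M_1$. Summing and using the count bound $N = |\{Y : d_Y(\tmu, \teta) > K'\}| \leq (1/K') \sum_{d_Y > K'} d_Y$ yields $\sum_{d_Y > K'} d_Y \leq |\tH| + (2M_1/K') \sum_{d_Y > K'} d_Y$. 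Since $M_6 = 4(M_1 + M_5 + 4) > 4M_1$, the error term absorbs into half the left-hand side, giving $\sum_{d_Y > K'} d_Y \leq 2|\tH|$.

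For the harder direction $|\tH| \lesssim \sum_{d_Y > K'} d_Y(\tmu, \teta)$, I will first apply Lemma \ref{r:large links dominate} to obtain $|\tH| \leq d_0^{-1}(|\mathcal{G}_{M_6}(\tmu, \teta)| + d_1)$, then bound $|\mathcal{G}_{M_6}|$ symmetrically: the estimate $|\tg_Y| \leq d_Y(\tmu, \teta) + 2M_1$ from Lemma \ref{r:large link ams}, combined with the same count bound, yields $|\mathcal{G}_{M_6}(\tmu, \teta)| \leq (1 + 2M_1/M_6) \sum_{d_Y > M_6} d_Y(\tmu, \teta) \leq 2 \sum_{d_Y > M_6} d_Y$. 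To close the loop, I will invoke Theorem \ref{r:ams distance} at any $K_0 > M_6$ together with $K'$; both $\sum_{d_Y > K_0} d_Y$ and $\sum_{d_Y > K'} d_Y$ are then coarsely equal to $d_{\AM(S)}(\tmu, \teta)$, hence to each other. The small discrepancy between $\sum_{d_Y > M_6} d_Y$ and $\sum_{d_Y > K_0} d_Y$, contributed by subsurfaces $Y$ with $d_Y(\tmu, \teta) \in (M_6, K_0]$, is absorbed via the count bound $N \leq |\mathcal{G}_{M_6}|/(M_6 - 2M_1)$ into the additive constant.

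The main technical obstacle is precisely this threshold-matching step: Lemma \ref{r:large links dominate} is stated at the fixed threshold $M_6$, whereas both the theorem's conclusion and Theorem \ref{r:ams distance} are stated for thresholds strictly above $M_6$. The resolution is to absorb the mismatch using the distance formula, inflating the constants $C'_1, C'_2$ as a function of both $S$ and $K'$ --- matching the theorem's claimed dependence. Apart from this, the argument is a routine combination of Lemmata \ref{r:large links dominate} and \ref{r:large link ams} with the two-sided bound $\bigl||\tg_Y| - d_Y(\tmu, \teta)\bigr| \leq 2M_1$.
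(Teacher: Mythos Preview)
Your overall strategy matches the paper's: the theorem is indeed obtained by chaining Lemmata~\ref{r:large link ams} and~\ref{r:large links dominate} with Theorem~\ref{r:ams distance}, and your ``easy direction'' $\sum_{d_Y>K'} d_Y \leq 2|\tH|$ via Lemma~\ref{r:large link ams} is clean and correct.

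However, your resolution of the threshold-matching step in the hard direction has a genuine gap. You bound the discrepancy $\sum_{M_6 < d_Y \leq K_0} d_Y$ by $K_0 \cdot N$ with $N \leq |\mathcal{G}_{M_6}|/(M_6 - 2M_1)$, and claim this is ``absorbed into the additive constant.'' But $N$ is not bounded by a constant depending only on $S$ and $K'$: it scales with $|\mathcal{G}_{M_6}|$, which is exactly the quantity you are trying to control. Substituting back yields
\[
|\mathcal{G}_{M_6}| \;\leq\; 2\sum_{d_Y>K_0} d_Y \;+\; \frac{2K_0}{M_6 - 2M_1}\,|\mathcal{G}_{M_6}|,
\]
and absorption would require $2K_0/(M_6 - 2M_1) < 1$, i.e.\ $K_0 < (M_6 - 2M_1)/2 < M_6$, contradicting $K_0 > M_6$. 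Replacing $|\mathcal{G}_{M_6}|$ by $|\tH|$ in the bound on $N$ leads to the same circularity with an extra factor of $d_0^{-1} > 1$, which only makes matters worse.

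The correct fix is to observe that the counting argument underlying Lemma~\ref{r:large links dominate} is not specific to the threshold $M_6$: it shows $|\mathcal{G}_{K'}(\tmu,\teta)| \geq d_0'\,|\tH| - d_1'$ for any fixed $K'$, with $d_0', d_1'$ depending on $K'$ and $S$. Applying it directly at threshold $K'$ and then using $|\tg_Y| \leq d_Y + 2M_1$ gives $|\tH| \lesssim \sum_{d_Y > K'} d_Y$ with no threshold matching needed. (Equivalently, the Behrstock-inequality argument inside Theorem~\ref{r:ams distance} already establishes $\sum_{d_Y > K} d_Y \lesssim d_{\AM(S)}$ for all $K \geq 5M + 3L$, hence at $K = M_6$; the paper's own proof of Theorem~\ref{r:ams distance} implicitly uses this.)
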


Theorem \ref{r:eff aug hier} proves that augmented hierarchy paths are globally efficient.  While their local efficiency can be proven using a subsurface projection argument well-known to the experts, in Proposition \ref{r:loceff} of the Appendix \ref{r:App}, we prove that subpaths of augmented hierarchy paths are themselves augmented hierarchy paths in a natural way.  Combining this with Theorem \ref{r:eff aug hier}, we have:

\begin{corollary}\label{r:aug hier qg}
Augmented hierarchy paths are uniform quasigeodesics in $\AM(S)$.
\end{corollary}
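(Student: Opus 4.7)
The plan is to string together Proposition \ref{r:resolutions}, Lemma \ref{r:elm move distance}, Theorem \ref{r:eff aug hier}, Theorem \ref{r:ams distance}, and Proposition \ref{r:loceff} to get both the global length bound and the subpath bound needed for a quasigeodesic. Fix $\tmu, \teta \in \AM(S)$, an augmented hierarchy $\tH$ between them, and an augmented hierarchy path $P \colon \tmu = \tmu_0, \ldots, \tmu_N = \teta$ coming from a resolution of $\tH$.

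First I would establish the global efficiency of $P$. Proposition \ref{r:resolutions} gives $N \leq |\tH|$, and Lemma \ref{r:elm move distance} gives $d_{\AM(S)}(\tmu_i, \tmu_{i+1}) \leq B$ for each $i$. Chaining Theorem \ref{r:eff aug hier} (comparing $|\tH|$ with $\sum_{d_Y(\tmu,\teta) > K'} d_Y(\tmu,\teta)$) and Theorem \ref{r:ams distance} (comparing that sum with $d_{\AM(S)}(\tmu,\teta)$) produces uniform constants $Q_1, Q_2 > 0$ depending only on $S$ such that
\[ N \leq |\tH| \leq Q_1 \cdot d_{\AM(S)}(\tmu, \teta) + Q_2. \]

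Next I would localize this bound to arbitrary subsegments. Note that the above inequality alone only controls the distance between the endpoints of $P$, not between intermediate pairs, so it does not yet imply the quasigeodesic property. Here I would invoke Proposition \ref{r:loceff} from the Appendix: for every $0 \leq i \leq j \leq N$, the subsegment $\tmu_i, \ldots, \tmu_j$ is itself an augmented hierarchy path between $\tmu_i$ and $\tmu_j$ associated to a naturally induced subhierarchy. Applying the global efficiency bound of the previous paragraph to this subpath yields
\[ j - i \leq Q_1 \cdot d_{\AM(S)}(\tmu_i, \tmu_j) + Q_2, \]
which rearranges to the lower bound $\tfrac{1}{Q_1}(j-i) - \tfrac{Q_2}{Q_1} \leq d_{\AM(S)}(\tmu_i, \tmu_j)$. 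The matching upper bound is immediate from Lemma \ref{r:elm move distance}, since $d_{\AM(S)}(\tmu_i, \tmu_j) \leq \sum_{k=i}^{j-1} d_{\AM(S)}(\tmu_k, \tmu_{k+1}) \leq B(j-i)$. Combining, $P$ is a $(\max(Q_1, B), Q_2)$-quasigeodesic with constants depending only on $S$.

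The main obstacle is the passage from global to local efficiency in the second step, since global efficiency of an extremal path does not automatically imply that all its subpaths are efficient. This is precisely the content of Proposition \ref{r:loceff}: a structural closure of the class of augmented hierarchy paths under taking subpaths. Verifying it requires tracking how a resolution of $\tH$ restricts to a resolution of a natural subhierarchy between intermediate augmented markings, and in particular ensuring that the induced sequence of slices is still a legitimate resolution of a complete augmented hierarchy between $\tmu_i$ and $\tmu_j$; this is the reason the author defers the argument to Appendix \ref{r:App}.
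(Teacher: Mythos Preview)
Your proposal is correct and follows essentially the same route as the paper: establish global efficiency via Theorem \ref{r:eff aug hier} together with the distance formula (Theorem \ref{r:ams distance}), then invoke Proposition \ref{r:loceff} to pass from global to local efficiency by restricting to subpaths. The paper's discussion preceding the corollary makes exactly this two-step argument, and your explicit use of Proposition \ref{r:resolutions} and Lemma \ref{r:elm move distance} for the upper bound merely spells out what the paper leaves implicit.
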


See the Appendix \ref{r:App} for more properties of hierarchy paths, augmented or otherwise.

\section{$\AM(S)$ is quasiisometric to $\TT(S)$}

The goal of this section is the Main Theorem \ref{r:qi}, which proves that $\AM(S)$ is quasiisometric to $\TT(S)$ with the Teichm\"uller metric.  We first make some estimates relating extremal length to curve graph distance, then we define the maps between $\AM(S)$ and $\TT(S)$.  Finally, we prove that they are quasiisometries.

\subsection{Extremal length, intersection numbers, and curve complex distance}

In this subsection, we will show that two curves with bounded extremal length with respect to one metric have bounded intersection number.  First, we need the  following useful result of Minsky:

\begin{lemma}[Lemma 5.1 in \cite{Min92}]\label{r:extreme minsky}
For any $\sigma \in \TT(S)$ and $\alpha, \beta \in \CC(S)$, we have $$\Ext_{\sigma}(\alpha)\cdot \Ext_{\sigma}(\beta) \geq i_S(\alpha, \beta)^2$$
\end{lemma}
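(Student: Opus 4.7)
The plan is to realize the inequality through the variational characterization of extremal length and a well-chosen conformal metric. Recall
\[
\Ext_\sigma(\alpha) \;=\; \sup_\rho \frac{L_\rho(\alpha)^2}{\mathrm{Area}(\rho)},
\]
where the supremum ranges over conformal metrics $\rho$ on $\sigma$ and $L_\rho(\gamma)$ is the infimal $\rho$-length in the free homotopy class of $\gamma$. The strategy is to find a single conformal metric that simultaneously computes $\Ext_\sigma(\alpha)$ and controls $L_\rho(\beta)$ from below in terms of $i_S(\alpha,\beta)$.

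The natural candidate is $\rho = |q|$, where $q$ is the Jenkins--Strebel quadratic differential on $\sigma$ whose horizontal foliation consists of simple closed leaves in the class of $\alpha$. In this flat metric, $\sigma$ decomposes as a maximal embedded Euclidean cylinder $C$ with core in the class of $\alpha$, together with a critical graph of $|q|$-measure zero. Writing $L$ for the circumference and $H$ for the height of $C$, I then have $L_{|q|}(\alpha)=L$, $\mathrm{Area}(|q|)=LH$, and
\[
\Ext_\sigma(\alpha) \;=\; \frac{L^2}{LH} \;=\; \frac{L}{H} \;=\; \frac{1}{\mathrm{mod}(C)}.
\]

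Next I would estimate $L_{|q|}(\beta)$ from below. Any representative of $\beta$ meets $\alpha$ at least $i_S(\alpha,\beta)$ times geometrically, and each such intersection forces a complete traversal of the cylinder $C$ in the transverse (vertical) direction. Since every vertical crossing of $C$ has $|q|$-length at least $H$, I obtain
\[
L_{|q|}(\beta) \;\geq\; i_S(\alpha,\beta)\cdot H.
\]
Feeding this into the variational inequality for $\Ext_\sigma(\beta)$ using the same metric $|q|$ yields
\[
\Ext_\sigma(\beta) \;\geq\; \frac{L_{|q|}(\beta)^2}{\mathrm{Area}(|q|)} \;\geq\; \frac{i_S(\alpha,\beta)^2 H^2}{LH} \;=\; i_S(\alpha,\beta)^2\,\frac{H}{L}.
\]
Multiplying by the expression for $\Ext_\sigma(\alpha)=L/H$ collapses to $\Ext_\sigma(\alpha)\cdot \Ext_\sigma(\beta)\geq i_S(\alpha,\beta)^2$.

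The main obstacle is classical rather than combinatorial: one must invoke the existence of a Jenkins--Strebel differential on $\sigma$ realizing $\alpha$ as the core of a single flat cylinder (Jenkins, Strebel; see also Hubbard--Masur for the foliation-theoretic input). Once this structure theorem is available, the argument reduces to the elementary height/circumference bookkeeping above. A purely metric-geometric variant could instead approximate by annular neighborhoods of $\alpha$ and pass to a supremum, but invoking the extremal annulus directly gives the cleanest route and avoids any $\varepsilon$-losses in the final inequality.
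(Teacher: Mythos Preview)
The paper does not supply its own proof of this lemma: it is quoted verbatim as Lemma~5.1 of \cite{Min92} and used as a black box. So there is nothing in the paper to compare your argument against.

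That said, your argument is the standard one and is essentially what Minsky does in \cite{Min92}. The one step you should phrase more carefully is the sentence ``each such intersection forces a complete traversal of the cylinder $C$ in the transverse (vertical) direction.'' As written this sounds like a statement about individual intersection points with the core curve, which is not quite how the bound works. The clean justification is via the horizontal measured foliation $F_h$ of $q$: every nonsingular leaf is a closed curve isotopic to $\alpha$, and as a measured foliation $F_h = H\cdot\alpha$. Hence for any representative of $\beta$,
\[
L_{|q|}(\beta) \;\geq\; \int_\beta |\mathrm{Im}\sqrt{q}\,| \;=\; i(\beta,F_h) \;=\; H\cdot i_S(\alpha,\beta),
\]
which is exactly your inequality. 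With that in place, the rest of your computation is correct and yields the desired bound without any loss.
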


Next, recall Kerckhoff's formula:

\begin{theorem}[Theorem 4 in \cite{Ker78}]\label{r:kerck}
For any $\sigma_1, \sigma_2 \in \TT(S)$,
$$e^{2 d_T(\sigma_1, \sigma_2)} = \sup_{\alpha \in \CC(S)} \frac{\Ext_{\sigma_1}(\alpha)}{\Ext_{\sigma_2}(\alpha)}$$
\end{theorem}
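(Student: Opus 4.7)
The plan is to prove the two inequalities
\[e^{2 d_T(\sigma_1, \sigma_2)} \;\geq\; \sup_{\alpha \in \CC(S)} \frac{\Ext_{\sigma_1}(\alpha)}{\Ext_{\sigma_2}(\alpha)} \quad\text{and}\quad e^{2 d_T(\sigma_1, \sigma_2)} \;\leq\; \sup_{\alpha \in \CC(S)} \frac{\Ext_{\sigma_1}(\alpha)}{\Ext_{\sigma_2}(\alpha)}\]
separately, the first being elementary from the distortion of extremal length under quasiconformal maps, the second requiring Teichm\"uller's existence theorem together with continuity of extremal length on the space of measured foliations.

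For the easy direction, I would use the following fundamental distortion property: if $f\colon \sigma_1\to \sigma_2$ is a $K$-quasiconformal homeomorphism in the correct isotopy class, then for every $\alpha\in \CC(S)$ one has $\Ext_{\sigma_1}(\alpha) \leq K\cdot \Ext_{\sigma_2}(\alpha)$. This follows by pulling back an admissible conformal metric on $\sigma_2$ under $f$ and comparing lengths squared with areas, each of which distorts by at most a factor of $\sqrt{K}$, so the ratio defining extremal length distorts by at most $K$. Taking the supremum over $\alpha$ and then the infimum over admissible $f$, the definition $d_T(\sigma_1,\sigma_2) = \tfrac{1}{2}\log \inf_f K(f)$ yields the first inequality.

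For the reverse inequality, the strategy is to realize the extremal dilatation by a Teichm\"uller map and then show the associated horizontal measured foliation witnesses the supremum. Concretely, I would: (i) invoke Teichm\"uller's existence theorem to produce a $K_0$-quasiconformal map $f\colon\sigma_1 \to \sigma_2$ with $K_0 = e^{2 d_T(\sigma_1,\sigma_2)}$ that is affine in the natural coordinates of a holomorphic quadratic differential $q$ on $\sigma_1$; (ii) let $\mathcal{F}^h$ be the horizontal measured foliation of $q$; (iii) use the Hubbard--Masur identification $\Ext_\sigma(\mathcal{F}) = \|q(\mathcal{F},\sigma)\|$ to compute $\Ext_{\sigma_1}(\mathcal{F}^h)$ and $\Ext_{\sigma_2}(\mathcal{F}^h)$, and observe that in $q$-coordinates the areas satisfy $|q| = K_0\cdot |f^*q'|$ (where $q'$ is the terminal differential), giving $\Ext_{\sigma_1}(\mathcal{F}^h) = K_0 \cdot \Ext_{\sigma_2}(\mathcal{F}^h)$.

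To finish, I would pass from measured foliations back to simple closed curves. Thurston's theorem provides a sequence of weighted simple closed curves $c_n\alpha_n$ converging to $\mathcal{F}^h$ in $\mathcal{MF}(S)$, and the continuous extension of extremal length to $\mathcal{MF}(S)$ gives $\Ext_{\sigma_i}(c_n\alpha_n) \to \Ext_{\sigma_i}(\mathcal{F}^h)$ for $i=1,2$. The homogeneity $\Ext_\sigma(c\alpha) = c^2 \Ext_\sigma(\alpha)$ causes the weights to cancel in the ratio, so
\[\frac{\Ext_{\sigma_1}(\alpha_n)}{\Ext_{\sigma_2}(\alpha_n)} \;=\; \frac{\Ext_{\sigma_1}(c_n\alpha_n)}{\Ext_{\sigma_2}(c_n\alpha_n)} \;\longrightarrow\; \frac{\Ext_{\sigma_1}(\mathcal{F}^h)}{\Ext_{\sigma_2}(\mathcal{F}^h)} \;=\; K_0,\]
establishing the second inequality. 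The main obstacle is the analytic content hidden in step (iii) and the continuity of extremal length on $\mathcal{MF}(S)$; both are genuinely nontrivial and are the core contributions of Kerckhoff's paper, relying on Jenkins--Strebel theory and the Hubbard--Masur parametrization of quadratic differentials by their horizontal foliations.
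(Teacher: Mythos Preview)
The paper does not provide a proof of this statement; it is quoted as Theorem~4 of \cite{Ker78} and used as a black box. Your outline is the standard argument and is essentially Kerckhoff's original proof: the upper bound from quasiconformal distortion of extremal length, and the lower bound by realizing the Teichm\"uller map, computing the extremal-length ratio on its horizontal foliation via Hubbard--Masur, then approximating by weighted simple closed curves using continuity of $\Ext$ on $\mathcal{MF}(S)$. You have also correctly identified where the real work lies, namely the continuous extension of extremal length to measured foliations.
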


The following was observed by Rafi \cite{Raf07}[Proposition 3.5]:

\begin{lemma}\label{r:extreme intersect}
For any $\sigma_1, \sigma_2 \in \TT(S)$, if $\alpha, \beta \in \CC(S)$ are such that $\Ext_{\sigma_1}(\alpha),\Ext_{\sigma_2}(\beta) \asymp 1$, then $\log i_S(\alpha, \beta) \prec d_T(\sigma_1, \sigma_2)$.  In particular, if $d_T(\sigma_1, \sigma_2) \asymp 1$, then $i_S(\alpha, \beta) \asymp 1$.
\end{lemma}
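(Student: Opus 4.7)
The plan is to combine Kerckhoff's formula (Theorem \ref{r:kerck}) with Minsky's extremal length inequality (Lemma \ref{r:extreme minsky}) by transporting one of the curves to a common basepoint. The essential observation is that both hypotheses place extremal length control on $\alpha$ and $\beta$, but at different points of $\TT(S)$; Kerckhoff's formula is precisely the tool for moving extremal length estimates across Teichm\"uller geodesics.

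First, I would apply Kerckhoff's formula to compare the extremal length of $\alpha$ at $\sigma_1$ and $\sigma_2$. Since $\Ext_{\sigma_1}(\alpha) \asymp 1$, Theorem \ref{r:kerck} gives
\[
\Ext_{\sigma_2}(\alpha) \;\leq\; e^{2 d_T(\sigma_1, \sigma_2)} \cdot \Ext_{\sigma_1}(\alpha) \;\prec\; e^{2 d_T(\sigma_1, \sigma_2)}.
\]
Next, since $\Ext_{\sigma_2}(\beta) \asymp 1$ by hypothesis, I apply Lemma \ref{r:extreme minsky} at the single point $\sigma_2$ to both $\alpha$ and $\beta$:
\[
i_S(\alpha, \beta)^2 \;\leq\; \Ext_{\sigma_2}(\alpha) \cdot \Ext_{\sigma_2}(\beta) \;\prec\; e^{2 d_T(\sigma_1, \sigma_2)}.
\]
Taking logarithms and dividing by $2$ yields $\log i_S(\alpha, \beta) \prec d_T(\sigma_1, \sigma_2)$, as desired.

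For the second assertion, when $d_T(\sigma_1, \sigma_2) \asymp 1$ the displayed inequality bounds $i_S(\alpha, \beta)$ above by a constant depending only on the topology of $S$, which gives $i_S(\alpha, \beta) \asymp 1$ (the lower bound is automatic since intersection numbers are nonnegative integers).

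There is no real obstacle here: the whole point is the clean interplay between Kerckhoff's formula (which controls ratios of extremal lengths in terms of Teichm\"uller distance) and Minsky's inequality (which bounds intersection numbers by extremal lengths at a \emph{single} point). The only modest subtlety is remembering that one must evaluate Minsky's inequality at a common basepoint after first transporting the extremal length bound of $\alpha$ from $\sigma_1$ to $\sigma_2$ via Kerckhoff; the opposite convention (transporting $\beta$ from $\sigma_2$ to $\sigma_1$) works equally well and gives the symmetric estimate.
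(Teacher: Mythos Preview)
Your proof is correct and is essentially identical to the paper's: both combine Minsky's inequality (Lemma \ref{r:extreme minsky}) at a single basepoint with Kerckhoff's formula (Theorem \ref{r:kerck}) to transport one curve's extremal length bound across. The paper transports $\beta$ from $\sigma_2$ to $\sigma_1$ rather than $\alpha$ from $\sigma_1$ to $\sigma_2$, which you yourself note is the symmetric and equivalent variant.
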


\begin{proof}
The proof is an easy application of Lemma \ref{r:extreme minsky} and Theorem \ref{r:kerck}:
$$i_S(\alpha, \beta) \leq \Ext_{\sigma_1}(\alpha)\cdot \Ext_{\sigma_1}(\beta) \leq \Ext_{\sigma_1}(\alpha)\cdot\Ext_{\sigma_2}(\beta) e^{2 d_T(\sigma_1, \sigma_2)}$$

Since $\Ext_{\sigma_1}(\alpha),\Ext_{\sigma_2}(\beta) \asymp 1$, applying $\log$ to both sides gives the first conclusion, which is easily seen to apply the second conclusion.  We note the bounds on extremal length determine the bounds on intersection number.
\end{proof}

We will also use the following well-known estimate relating curve complex distance to intersection number:

\begin{lemma} \label{r:curve dist and int}
For any $\alpha, \beta \in \CC(S)$, we have $d_{\CC(S)}(\alpha, \beta) \prec i_S(\alpha, \beta)$.
\end{lemma}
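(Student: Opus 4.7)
The plan is to prove the standard logarithmic bound $d_{\CC(S)}(\alpha, \beta) \leq 2 \log_2 i_S(\alpha, \beta) + C$, which implies the desired asymptotic inequality $d_{\CC(S)}(\alpha, \beta) \prec i_S(\alpha, \beta)$ since $\log x \leq x$. The argument proceeds by induction on $N = i_S(\alpha, \beta)$, with base case $N=0$ giving $d_{\CC(S)}(\alpha, \beta) \leq 1$ directly from the definition of the curve complex (in the low-complexity cases where the adjacency relation is minimal intersection, this base case is handled by a direct small-case check).

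For the inductive step, suppose $N \geq 1$ and place $\alpha$ and $\beta$ in minimal position. The goal is to construct an essential, non-peripheral simple closed curve $\gamma$ with $i_S(\alpha, \gamma) = 0$ and $i_S(\beta, \gamma) \leq N/2$; then $d_{\CC(S)}(\alpha, \gamma) \leq 1$ and the inductive hypothesis yields $d_{\CC(S)}(\gamma, \beta) \leq 2 \log_2(N/2) + C$, so the triangle inequality closes the induction.

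To build $\gamma$, label the $N$ intersection points cyclically along $\beta$, so that $\beta$ is cut by $\alpha$ into arcs $\beta_1, \ldots, \beta_N$. At each endpoint, each $\beta_i$ emanates from a specific side of $\alpha$, giving a $\{L,R\}\times \{L,R\}$ label. By pigeonhole, at least $N/2$ of the $\beta_i$ share the same side-pattern. Pick one such $\beta_i$ and close it up with a subarc of a small regular neighborhood of $\alpha$ to produce $\gamma$; this $\gamma$ is disjoint from $\alpha$ by construction, and its intersections with $\beta$ come only from the $\beta_j$ with the complementary side-labeling, of which there are at most $N/2$. Essentiality of $\gamma$ follows from minimal position of $\alpha \cup \beta$ and $\chi(S) < 0$, which prevents bigons and peripheral components in the surgered curve.

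The main obstacle is the case-analysis verifying that the surgered curve $\gamma$ is genuinely essential and non-peripheral in every topological type of $S$, and handling the small-complexity surfaces (the once-punctured torus and four-holed sphere) where $\CC(S)$ has the Farey graph structure; both of these are routine but require the standard bigon criterion and minimal position arguments.
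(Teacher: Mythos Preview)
Your approach is essentially the content of the paper's citation: the paper does not give a self-contained argument but simply defers to \cite{MM99}[Lemma 2.1] for $\xi(S)>4$, to the Farey graph for $\xi(S)=4$, and to \cite{MM00}[Subsection 2.4] for the annular/horoball case. What you have written is a sketch of the surgery argument behind \cite{MM99}[Lemma 2.1], so the underlying idea is the same; you are just unpacking the reference rather than invoking it.

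Two points about the details. First, your pigeonhole count is off: with four side-patterns in $\{L,R\}\times\{L,R\}$ you only get $N/4$ arcs of a common type, not $N/2$, so the ``complementary'' class may have as many as $3N/4$ arcs. This still yields a logarithmic bound (with a different base) and hence still implies $d_{\CC(S)}(\alpha,\beta)\prec i_S(\alpha,\beta)$, so the error is cosmetic. The cleaner standard version is to take \emph{any} arc $\beta_i$, observe that it cuts $\alpha$ into two subarcs $a_1,a_2$, form the two surgered curves $\gamma_k=\beta_i\cup a_k$, and note that $i(\gamma_1,\beta)+i(\gamma_2,\beta)\le N$, so one of them has at most $N/2$; essentiality of at least one $\gamma_k$ is then what needs checking. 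Your assertion that $\gamma$ meets only the $\beta_j$ of ``complementary'' labeling is not justified as stated and would need this kind of reformulation.

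Second, the paper's proof explicitly treats the case where the ambient complex is an annular curve graph (or its horoball), and this matters: the lemma is immediately applied in Proposition~\ref{r:extreme curve est} with $S$ replaced by an arbitrary subsurface $Y$, including annuli. Your sketch omits this case entirely. It is easy --- for annular complexes, distance is coarsely algebraic intersection number, so the inequality is immediate --- but you should say so.
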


\begin{proof}
When $\xi(S)>4$, this is \cite{MM99}[Lemma 2.1].  When $\xi(S) = 4$, then this is an easy argument in the Farey graph.  When $S$ is an annulus or horoball, this follows from arguments in \cite{MM00}[Subsection 2.4].
\end{proof}

Combining these ideas, we have:

\begin{proposition}\label{r:extreme curve est}
Let $\sigma_1, \sigma_2 \in \TT(S)$ be such that $d_T(\sigma_1, \sigma_2) \asymp 1$.  For any $\alpha, \beta \in \CC(S)$ with $\Ext_{\sigma_1}(\alpha),\Ext_{\sigma_2}(\beta) \asymp 1$ and $Y \subset S$ such that $\pi_Y(\alpha), \pi_Y(\beta) \neq \emptyset$, we have $d_Y(\alpha, \beta) \asymp 1$.
\end{proposition}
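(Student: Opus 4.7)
The plan is to reduce the proposition to the intersection-number statement of Lemma \ref{r:extreme intersect} together with the fact that subsurface projections do not inflate intersection numbers, and then apply Lemma \ref{r:curve dist and int} inside $Y$.

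First I would invoke Lemma \ref{r:extreme intersect} directly: the hypotheses $d_T(\sigma_1,\sigma_2)\asymp 1$ and $\Ext_{\sigma_1}(\alpha),\Ext_{\sigma_2}(\beta)\asymp 1$ give $i_S(\alpha,\beta)\asymp 1$. So the whole question becomes: why does $i_S(\alpha,\beta)\asymp 1$ force $d_Y(\alpha,\beta)\asymp 1$ for every $Y$ meeting both curves essentially?

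Next I would handle the two cases for $Y$ separately. If $Y$ is nonannular, $\pi_Y(\alpha)$ is built by the canonical surgery of the arcs $\alpha\cap Y$ along $\partial Y$, and similarly for $\beta$. A transverse intersection of $\pi_Y(\alpha)$ with $\pi_Y(\beta)$ inside $Y$ comes either from an honest intersection of $\alpha$ with $\beta$ inside $Y$ or from a surgery pairing along $\partial Y$, the number of the latter being controlled by $i_S(\alpha,\partial Y)+i_S(\beta,\partial Y)$, which is itself bounded in terms of $i_S(\alpha,\beta)$ via the standard collar/arc counting. In particular $i_Y(\pi_Y(\alpha),\pi_Y(\beta))\prec i_S(\alpha,\beta)\asymp 1$, so Lemma \ref{r:curve dist and int} applied in $Y$ yields $d_Y(\alpha,\beta)\asymp 1$. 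If $Y$ is annular with core $\gamma$, then $d_Y(\alpha,\beta)$ is coarsely the relative twisting $\mathrm{tw}_\gamma(\alpha,\beta)$, and each unit of twisting of $\alpha$ around $\gamma$ relative to $\beta$ contributes at least $\min(i_S(\alpha,\gamma),i_S(\beta,\gamma))\geq 1$ to $i_S(\alpha,\beta)$; thus $d_Y(\alpha,\beta)\prec i_S(\alpha,\beta)+C\asymp 1$, as also recorded in the annular case of Lemma \ref{r:curve dist and int}.

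The main obstacle is really just bookkeeping in the annular case: one must be careful that the bound on $i_S(\alpha,\beta)$ controls the twisting parameter $\mathrm{tw}_\gamma(\alpha,\beta)$ and not merely the geometric intersection of the arcs of $\alpha,\beta$ with a regular neighborhood of $\gamma$. This is handled by the observation that an arc of $\alpha$ crossing the annulus a total of $n$ times relative to an arc of $\beta$ forces $n$ intersection points in $S$, so $\mathrm{tw}_\gamma(\alpha,\beta)\leq i_S(\alpha,\beta)$ up to an additive constant depending only on $S$. Combining the two cases gives the proposition, and the implicit constants depend only on the constants in the hypotheses and on $S$.
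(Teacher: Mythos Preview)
Your overall strategy matches the paper's: invoke Lemma \ref{r:extreme intersect} to get $i_S(\alpha,\beta)\asymp 1$, then push this down to $Y$ and apply Lemma \ref{r:curve dist and int}. The paper compresses this into the single line $d_Y(\alpha,\beta)\prec i_Y(\alpha,\beta)\prec i_S(\alpha,\beta)\asymp 1$, where $i_Y(\alpha,\beta)$ is really the intersection number of the \emph{arc systems} $\alpha\cap Y$ and $\beta\cap Y$, which is trivially at most $i_S(\alpha,\beta)$; the passage from arc intersection to curve-complex distance goes through the arc-and-curve graph of $Y$ (distance bounded by intersection, and $\pi_Y(\alpha)$ is distance $1$ from $\alpha\cap Y$ there).

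Your nonannular argument, however, contains a genuine error. You try to bound $i_Y(\pi_Y(\alpha),\pi_Y(\beta))$ after surgery and assert that the surgery-induced intersections are controlled by $i_S(\alpha,\partial Y)+i_S(\beta,\partial Y)$, ``which is itself bounded in terms of $i_S(\alpha,\beta)$.'' That last claim is false: $\partial Y$ is an arbitrary multicurve meeting $\alpha$ and $\beta$, so $i_S(\alpha,\partial Y)$ can be arbitrarily large even when $i_S(\alpha,\beta)$ is small (take $\alpha$ and $\beta$ disjoint but $\partial Y$ highly twisted around $\alpha$). There is no ``collar/arc counting'' that produces such a bound. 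The fix is exactly what the paper does implicitly: never try to control the intersection number of the \emph{surgered} curves. Work instead with the arcs $\alpha\cap Y$ and $\beta\cap Y$, whose intersection is bounded by $i_S(\alpha,\beta)$ for free, and use the arc-complex version of Lemma \ref{r:curve dist and int} together with the fact that $\pi_Y$ is a coarse retraction from arcs to curves. Your annular case is fine.
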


\begin{proof}
Since $\pi_Y(\alpha), \pi_Y(\beta) \neq \emptyset$, $d_Y(\alpha, \beta)$ is defined, and Lemmata \ref{r:extreme intersect} and \ref{r:curve dist and int} imply that

$$d_Y(\alpha, \beta) \prec i_Y(\alpha, \beta) \prec i_S(\alpha, \beta) \asymp 1$$
completing the proof.
\end{proof}

\subsection{From $\TT(S)$ to $\AM(S)$} \label{r:T to A map section}

We are now ready to define maps between $\AM(S)$ and $\TT(S)$ which we later prove are quasiisometries in Theorem \ref{r:qi}.\\

Let $\alpha \in \CC(S)$ and $\sigma \in \TT(S)$.  Define a map $d_{\alpha}: \TT(S) \rightarrow \ZZ_{\geq 0}$ by \[d_{\alpha}(\sigma) = \left\{ \begin{array}{lr}
\max \left\{k \Big| \frac{\epsilon_0}{2^{k+1}} < \Ext_{\sigma}(\alpha) < \frac{\epsilon_0}{2^k}\right\} & \text{if } \Ext_{\sigma}(\alpha) < \epsilon_0\\
0 & \text{if }\Ext_{\sigma}(\alpha) \geq \epsilon_0 
\end{array} \right. \]

For each $\sigma \in \TT(S)$, let $\mu_{\sigma}$ be any marking such that $\base(\mu_{\sigma})$ is a Bers pants decomposition for $\sigma$, as in Theorem \ref{r:bers}, and so that we have chosen traversals to $\base(\mu_{\sigma})$ to minimize $l_{\sigma}$.  Note there may be finitely many choices of transversals for each base curve and thus finitely many such markings $\mu_{\sigma}$.\\

Define $F: \TT(S) \rightarrow \AM(S)$ by $F(\sigma) = \left( \mu_{\sigma}, d_{\alpha_1}(\sigma), \dots, d_{\alpha_n}(\sigma)\right)$ where $base(\mu_{\sigma}) = \left\{\alpha_1, \dots, \alpha_n\right\}$.  We think of $F$ as choosing a \emph{shortest augmented marking} for each $\sigma \in \TT(S)$, and outside the context of the map $F$, we may write $\tmu_{\sigma}$ for a shortest augmented marking for a point $\sigma \in \TT(S)$.  The following lemma proves that $F$ is coarsely well-defined:

\begin{lemma} \label{r:F well-defined}
For any $\sigma \in \TT(S)$, we have $\diam_{\AM(S)}\left(F(\sigma)\right) \asymp 1$.
\end{lemma}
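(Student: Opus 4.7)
The plan is to observe that any two augmented markings in $F(\sigma)$ share all their length data and differ only in uniformly bounded combinatorial ways on the underlying marking, so they can be joined by a uniformly bounded sequence of elementary moves in $\AM(S)$.

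First, I would note that each length coordinate $d_\alpha(\sigma)$ is a function of $\Ext_\sigma(\alpha)$ alone, so any two $\tmu_1, \tmu_2 \in F(\sigma)$ agree on length data wherever they share a base curve. Let $\Lambda(\sigma) = \{\alpha : \Ext_\sigma(\alpha) < \epsilon_0\}$ be the short curves. By Lemma \ref{r:extreme}(1) every such $\alpha$ lies in every Bers pants decomposition, hence in $\base(\tmu)$ for every $\tmu \in F(\sigma)$, and by the Collar Lemma $\Lambda(\sigma)$ is a simplex. Thus $\tmu_1$ and $\tmu_2$ share the entire ``short part'' of the augmented marking with matching length data, and can differ only in (i) the complementary thick Bers curves and (ii) the choice of minimizing transversals.

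Second, I would bound the combinatorial distance between the thick parts. By Lemma \ref{r:extreme}(2) every base curve of either marking has $\Ext_\sigma \leq L_0$, and by the Collar Lemma the shortest transversal to any thick base curve also has bounded extremal length. Applying Lemma \ref{r:extreme intersect} with $\sigma_1 = \sigma_2 = \sigma$ (so $d_T = 0$) then gives a uniform intersection bound $i_S(\alpha,\beta) = O(1)$ between any two such thick curves of $\mu_1$ and $\mu_2$. The standard argument of \cite{MM00}[Lemma 2.4] (see also \cite{Beh06}[Lemma 6.1]) converts this intersection bound into a uniform bound on the number of $\MM(S)$-elementary moves needed to interpolate the thick parts. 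Because every curve in $\Lambda(\sigma)$ is preserved throughout, no flip on a short curve is ever invoked, so the sequence lifts to $\AM(S)$ through height-$0$ edges in the thick-curve horoballs, in compliance with Remark \ref{r:notflip}.

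Third, I would reconcile transversals at each base curve. Since $l_\sigma(T_\alpha^n(t))$ is a strictly convex function of $n$, the $l_\sigma$-minimum over Dehn-twist translates is attained at $O(1)$ consecutive values, so any two minimizing transversals $t, t'$ at $\alpha$ differ by a uniformly bounded number $N$ of Dehn twists around $\alpha$. For $\alpha \in \Lambda(\sigma)$ this difference is realized by $O(1)$ horizontal edges in $\HHH_\alpha$ at height $d_\alpha(\sigma)$, with no vertical or flip move; for a thick $\alpha$ it amounts to $O(1)$ Dehn twist moves at height $0$. Combining these bounded contributions with the bound from step two yields $d_{\AM(S)}(\tmu_1, \tmu_2) = O(1)$. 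The main obstacle I expect is coordinating the second and third steps coherently across a possible change of thick pants curves, i.e., ensuring that the sequence of moves rearranging the thick pants can be carried out without destroying the transversal bounds at the intermediate markings; this essentially requires the cleaning procedure from the proof of \cite{MM00}[Lemma 2.4], which controls how a flip forces a bounded adjustment of the nearby transversals.
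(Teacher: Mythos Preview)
Your approach is sound in outline but takes a genuinely different route from the paper. The paper does not attempt to build an explicit short path between two elements of $F(\sigma)$; instead it bounds every subsurface projection $d_Y(\tmu_\sigma,\tmu'_\sigma)$ uniformly (using Proposition~\ref{r:extreme curve est} for nonannular $Y$ and a short case analysis for horoballs) and then invokes the distance formula, Theorem~\ref{r:ams distance}. Your plan instead observes that the short-curve data coincide, bounds intersection numbers among the remaining (thick) curves and their transversals, and then tries to realise this as a bounded sequence of elementary moves.

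The main weakness in your version is the second step. The lemmas you cite---\cite{MM00}[Lemma~2.4] and \cite{Beh06}[Lemma~6.1]---are about the bounded ambiguity in cleaning a marking or in the projection $\pi_{\MM(Y)}$; neither converts a uniform intersection bound between two \emph{different} Bers markings into a uniform bound on $d_{\MM(S)}$. That implication is true, but the clean way to get it is precisely the distance formula (intersection bounds give bounded $d_Y$ for all $Y$ via Lemma~\ref{r:curve dist and int}, then Theorem~\ref{r:MM distance} applies), which is what the paper does and what you were trying to avoid. An alternative is a finiteness argument---only finitely many homeomorphism types of pairs of markings with intersection $\le N$---but you do not make it. Your third step is also a bit loose: the convexity argument shows that two minimising transversals \emph{in the same Dehn-twist orbit} are close, but for a short curve $\alpha$ the transversals in $\tmu_1$ and $\tmu_2$ may lie in different orbits (since the neighbouring pants curves may differ), so what you actually need is that both have bounded twisting relative to a fixed reference arc across the collar---true, but a separate statement. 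The paper's approach buys you a uniform treatment of all these cases at once, at the cost of relying on the heavier Theorem~\ref{r:ams distance}; your approach is more elementary in spirit but, as written, still needs the distance formula to close the gap.
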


\begin{proof}
Let $\sigma \in \TT(S)$ and let $\tmu_{\sigma}, \tmu'_{\sigma} \in F(\sigma) \subset \AM(S)$.  Recall from Lemma \ref{r:extreme} that $\Ext_{\sigma}(\alpha)< L_0$ for each $\alpha \in \base(\tmu_{\sigma}) \cup \base(\tmu'_{\sigma})$, where $L_0$ depends only on $S$.  The goal is to bound all subsurface projections between $\tmu_{\sigma}$ and $\tmu'_{\sigma}$, allowing us to invoke the distance formula, Theorem \ref{r:ams distance}.\\

Let $Y \subset S$ be nonannular.  If there are not $\alpha \in \base(\tmu_{\sigma})$ and $\beta \in \base(\tmu'_{\sigma})$ with $i_S(\alpha,\beta) >0$ and $\pi_Y(\alpha), \pi_Y(\beta) \neq \emptyset$, then clearly $d_Y(\tmu_{\sigma}, \tmu'_{\sigma}) < 4$ by Lemma \ref{r:lipschitz proj}.  If there are, then since $\Ext_{\sigma}(\alpha), \Ext_{\sigma}(\beta) < L_0$, it follows from Proposition \ref{r:extreme curve est} that $d_Y(\alpha, \beta) \asymp 1$, with Lemma \ref{r:lipschitz proj} implying $d_Y(\tmu_{\sigma}, \tmu'_{\sigma}) \asymp 1$.\\

Now let $\gamma \in \CC(S)$ be any curve.  If $\gamma \notin \base(\tmu_{\sigma}) \cup \base(\tmu'_{\sigma})$, then Proposition \ref{r:extreme curve est} implies that $d_{\gamma}(\tmu_{\sigma}, \tmu'_{\sigma})$ is uniformly bounded.  Since $D_{\gamma}(\tmu_{\sigma}) = D_{\gamma}(\tmu'_{\sigma}) = 0$, we can conclude that $d_{\HHH(\gamma})(\tmu_{\sigma}, \tmu'_{\sigma}) \asymp 1$.  If $\gamma \in \tmu_{\sigma} \cap \tmu'_{\sigma}$, then $d_{\HHH(\gamma)}(\tmu_{\sigma}, \tmu'_{\sigma})\asymp 1$ by definition.\\

Finally, if $\gamma \in \tmu_{\sigma}$ but $\gamma \notin \tmu_{\sigma}$, then $l_{\sigma}(\gamma)> \epsilon_0$ by Lemma \ref{r:extreme}.  It follows then the length of the shortest transverse curve to $\gamma$, $t_{\gamma}$, has $l_{\sigma}(t_{\gamma})$ uniformly bounded, with the Collar Lemma implying that $\Ext_{\sigma}(t_{\gamma})$ is uniformly bounded.  Since $\gamma \notin \tmu'_{\sigma}$, there is a $\gamma' \in \base(\tmu'_{\sigma})$ with $i_S(\gamma, \gamma') >0$.  Since $\Ext_{\sigma}(\gamma') < L_0$, we can then apply the above intersection number argument to derive that $d_{\HHH(\gamma)}(\tmu_{\sigma}, \tmu'_{\sigma}) \asymp 1$.

\end{proof}

\subsection{From $\AM(S)$ to $\TT(S)$} \label{r:A to T}

We now construct an embedding $G: \AM(S) \rightarrow \TT(S)$ in terms of Fenchel-Nielsen coordinates.   Consider an augmented marking $\tmu \in \AM(S)$ with $\tmu = \left(\mu, D_{\alpha_1}, \dots, D_{\alpha_n}\right)$.  In building coordinates for $G(\tmu)$, we are given a clear choice of a pants decomposition, $base(\mu)$, and bounds for the length coordinates, $\frac{\epsilon_0}{2^{D_{\alpha_i}+2}}< l_{\alpha_i} < \frac{\epsilon_0}{2^{D_{\alpha_i}+1}}$.  Given a choice of length coordinates, say $l_{\alpha_i} = \frac{\epsilon_0}{2^{D_{\alpha_i}+\frac{3}{2}}}$, we can use the transverse curve data $(\alpha_i, t_i)$ to pick out a unique twisting numbers, $\tau_{\alpha_i}(t_i)$, and thus a unique metric on $S$, as follows.\\

For each $i$, $\alpha_i$ either bounds one or two pairs of pants, depending on whether $\alpha_i$ lives in a four-holed sphere or a one-holed torus.  As we have chosen lengths for all the curves in the pants decomposition, the metrics on the pairs of pants are uniquely determined.\\

In the case of the four-holed sphere, consider the two unique essential geodesic arcs, $\beta_1, \beta_2$ in the pairs of pants connecting $\alpha_i$ to itself.  Let $\tau_{\alpha_i}(t_i)$ be the unique twisting number associated to the gluing of the pairs of pants at $\alpha_i$ which connects $\beta_1$ to $\beta_2$ to realize $t_i$.\\

Similarly, for the case when $\alpha_i$ bounds two cuffs on one pair of pants which glue into a one-holed torus, there is a unique geodesic arc, $\beta$, connecting the two copies of $\alpha_i$.  Let $\tau_{\alpha_i}(t_i)$ be the unique twisting number associated to the gluing of the copies of $\alpha_i$ which connected the two ends of $\beta$ to realize $t_i$.\\ 

We can now define $G: \AM(S) \rightarrow \TT(S)$ by $G(\tmu) = \big(l_{\alpha_i}, \tau_{\alpha_i}(t_i)\big)_i$.  Since $G$ sends each augmented marking to a unique point for which each curve in the base of that marking is short, the shortest augmented marking for any point in the image of $G$ is unambiguous by Lemma \ref{r:extreme}; that is, $F \circ G(\tmu) =\tmu$.  Thus

\begin{lemma}\label{r:maps}
 $F \circ G = id_{\AM(S)}$; in particular, $G$ is an embedding and $F$ is a surjection.
\end{lemma}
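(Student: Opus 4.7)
The plan is to fix $\tmu = (\mu, D_{\alpha_1}, \ldots, D_{\alpha_n}) \in \AM(S)$ with transversals $t_i$, set $\sigma = G(\tmu)$, and check directly that the defining recipe for $F$ can return $\tmu$ as an output $F(\sigma)$. This reduces to three compatibility checks, one for each piece of the augmented marking: (i) $\base(\mu)$ is a valid Bers pants decomposition of $\sigma$; (ii) the length coordinate produced by $F$, namely $d_{\alpha_i}(\sigma)$, equals $D_{\alpha_i}$ for each $i$; and (iii) each transversal $t_i$ is a valid shortest transverse curve to $\alpha_i$ on $\sigma$.

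For (i), the construction of $G$ forces $l_\sigma(\alpha_i) = \epsilon_0/2^{D_{\alpha_i}+3/2} < \epsilon_0 \leq L$, so every $\alpha_i$ is below the Bers constant $L$ and $\base(\mu)$ qualifies as a Bers pants decomposition. For (ii), one invokes the standard comparison between hyperbolic length and extremal length of short curves supplied by the Collar Lemma (the reciprocal of the modulus of the maximal embedded annular collar around $\alpha_i$ approximates $\Ext_\sigma(\alpha_i)$), and observes that the scaling $l_{\alpha_i} = \epsilon_0/2^{D_{\alpha_i}+3/2}$ is calibrated so that the resulting $\Ext_\sigma(\alpha_i)$ lands in the dyadic interval $(\epsilon_0/2^{D_{\alpha_i}+1}, \epsilon_0/2^{D_{\alpha_i}})$ that characterizes $d_{\alpha_i}(\sigma) = D_{\alpha_i}$. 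For (iii), the twist parameter $\tau_{\alpha_i}(t_i)$ was defined to be the unique gluing angle realizing $t_i$ as the concatenation of the prescribed orthogeodesic arcs in the adjacent pair(s) of pants; together with the fixed length coordinate this forces $t_i$ to be a shortest transverse curve to $\alpha_i$ up to exactly the finite ambiguity that is already allowed in the definition of $\mu_\sigma$ (see Lemma \ref{r:compatible} for the analogous bounded ambiguity).

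Once $F \circ G(\tmu) = \tmu$ is verified, the two corollaries drop out: $G$ is injective, because $G(\tmu_1) = G(\tmu_2)$ forces $\tmu_1 = F(G(\tmu_1)) = F(G(\tmu_2)) = \tmu_2$; and $F$ is surjective, because every $\tmu \in \AM(S)$ is the image under $F$ of the point $G(\tmu) \in \TT(S)$. The main obstacle is the calibration required in (ii): verifying that the exponent $3/2$ in the choice of $l_{\alpha_i}$, combined with the Collar-Lemma estimate on extremal length, places $\Ext_\sigma(\alpha_i)$ safely inside the intended dyadic window without spilling into the neighboring intervals $(\epsilon_0/2^{D_{\alpha_i}+2}, \epsilon_0/2^{D_{\alpha_i}+1})$ or $(\epsilon_0/2^{D_{\alpha_i}}, \epsilon_0/2^{D_{\alpha_i}-1})$. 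This may force us to shrink $\epsilon_0$ further than required by Lemma \ref{r:extreme} and Theorem \ref{r:product} alone; everything else is essentially a direct unpacking of definitions.
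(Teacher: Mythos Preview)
Your approach is essentially the same as the paper's: both verify $F\circ G=\id$ by checking that the three pieces of data (base, length, transversal) are recovered when one applies $F$ to $\sigma=G(\tmu)$. The paper's argument is terser and is contained in the sentence immediately preceding the lemma: since every $\alpha_i\in\base(\mu)$ has $l_\sigma(\alpha_i)<\epsilon_0$ by construction of $G$, Lemma~\ref{r:extreme}(1) forces each $\alpha_i$ to lie in \emph{every} Bers pants decomposition of $\sigma$, so $\base(\mu)$ is the unique Bers pants decomposition and the marking $\mu_\sigma$ is unambiguous.

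This is worth flagging because your check (i) only shows that $\base(\mu)$ is \emph{a} Bers pants decomposition (via $l_\sigma(\alpha_i)<L$). For the statement $F\circ G=\id$ to hold as a genuine identity rather than ``$\tmu$ is one possible value of $F(\sigma)$,'' you need uniqueness. The fix is immediate: you already have $l_\sigma(\alpha_i)<\epsilon_0$, so invoke Lemma~\ref{r:extreme}(1) exactly as the paper does. The same uniqueness concern applies to your check (iii): you say $t_i$ is a valid shortest transversal ``up to the finite ambiguity already allowed,'' but if that ambiguity is nontrivial on $G(\AM(S))$ then $F\circ G$ is only the identity up to bounded error, not on the nose. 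The paper is silent on this point; in practice the construction of $\tau_{\alpha_i}(t_i)$ via the orthogeodesic arcs makes $t_i$ the shortest transversal, but neither you nor the paper says so explicitly.

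Your observation in (ii) about the hyperbolic--extremal length calibration is well taken and is something the paper glosses over entirely. The definition of $d_\alpha$ uses $\Ext_\sigma$ while $G$ is specified in Fenchel--Nielsen (hyperbolic length) coordinates, so one does need a comparison (e.g.\ Maskit's inequalities) to ensure the specific choice $l_{\alpha_i}=\epsilon_0/2^{D_{\alpha_i}+3/2}$ lands $\Ext_\sigma(\alpha_i)$ in the correct dyadic window; you are right that this may require shrinking $\epsilon_0$ or adjusting the exponent. This is a genuine detail, but it is a calibration issue rather than a structural obstacle.
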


\subsection{The quasiisometry}\label{r:qi subsection}

We prove, in a series of lemmata, that $G$ is a quasiisometry by showing $F$ and $G$ satisfy the conditions of the following elementary lemma:

\begin{lemma}\label{r:qi condition}
Let $X$ and $Y$ be metric spaces.  If $g: X \rightarrow Y$ and $f:Y \rightarrow X$ are both $L$-lipschitz and there exists a $K>0$ such that  $d_X(f(g(x)),x) < K$ for each $x \in X$, then $g$ is a $(L, 2LK)$-quasiisometric embedding.  If $g(X) \subset Y$ is also quasidense, then $g$ is a quasiisometry.
\end{lemma}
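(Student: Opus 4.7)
The plan is to verify the two inequalities defining an $(L, 2LK)$-quasiisometric embedding directly from the three hypotheses. The upper half, $d_Y(g(x_1), g(x_2)) \leq L \cdot d_X(x_1, x_2) \leq L \cdot d_X(x_1, x_2) + 2LK$, is immediate from $g$ being $L$-Lipschitz. For the lower half, I would chain $g$ with $f$ and apply the triangle inequality around $f(g(x_1))$ and $f(g(x_2))$:
\[ d_X(x_1, x_2) \leq d_X(x_1, f(g(x_1))) + d_X(f(g(x_1)), f(g(x_2))) + d_X(f(g(x_2)), x_2). \]
The outer terms are each bounded by $K$ by the near-identity hypothesis, and the middle term is at most $L \cdot d_Y(g(x_1), g(x_2))$ since $f$ is $L$-Lipschitz. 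Rearranging gives
\[ \tfrac{1}{L} d_X(x_1, x_2) - \tfrac{2K}{L} \leq d_Y(g(x_1), g(x_2)). \]
Assuming $L \geq 1$ (which is harmless; replace $L$ with $\max(L,1)$ if necessary), we have $\tfrac{2K}{L} \leq 2LK$, yielding the lower half of the embedding inequality in the form required by the paper's convention for $\asymp_{(C_1,C_2)}$.

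For the second claim, quasidensity of $g(X)$ in $Y$ produces a constant $R > 0$ such that every $y \in Y$ lies within distance $R$ of $g(X)$, which is precisely the remaining condition needed to upgrade an $(L, 2LK)$-quasiisometric embedding to a quasiisometry (the resulting quasiisometry constants depend on $L$, $K$, and $R$).

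The proof is entirely formal and I anticipate no geometric obstacle — the hypotheses are tailored to the standard chain-and-rearrange argument. The only real care needed is bookkeeping of constants to match the explicit $(L, 2LK)$ form stated in the lemma; the additive constant $2LK$ is a mildly generous overestimate of the sharper $\tfrac{2K}{L}$ produced by the computation, chosen to absorb the $L \geq 1$ normalization and give a clean uniform bound.
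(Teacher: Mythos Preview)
Your proof is correct and follows the same approach as the paper: chain $g$ with $f$, apply the Lipschitz bounds, and use the triangle inequality with the near-identity hypothesis to extract the lower quasi-isometry inequality. Your bookkeeping is in fact cleaner than the paper's brief sketch, and your remark about the $L \geq 1$ normalization explaining the additive constant $2LK$ is exactly the right observation.
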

\begin{proof}
Let $x_1, x_2 \in X$.  Then
$$d_X(x_1, x_2) < L \cdot d_Y\left(g(x_1), g(x_2)\right) < L^2 \cdot d_X\left(f\left(g(x_1)\right), f\left(g(x_2)\right)\right) \leq L^2 d_X(x_1, x_2) + 2L^2K$$
with the triangle inequality implying the last inequality.  Dividing everything by $L$ completes the proof.
\end{proof}

We begin by proving that $F$ is lipschitz, the proof of which proceeds similarly to Lemma \ref{r:F well-defined}:

\begin{lemma}\label{r:F lipschitz}
There is a constant $L_2 = L_2(S)>0$ such that for any $\sigma_1, \sigma_2 \in \TT(S)$ with $d_T(\sigma_1,\sigma_2) = 1$, $d_{\AM(S)}(\tmu_{\sigma_1}, \tmu_{\sigma_2})< L_2$.
\end{lemma}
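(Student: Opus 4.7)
The plan is to invoke the $\AM(S)$ distance formula (Theorem \ref{r:ams distance}) and show that every subsurface projection appearing in the sum is bounded by a constant depending only on $S$. Because the fibers of $F$ have uniformly bounded $\AM(S)$-diameter by Lemma \ref{r:F well-defined}, it suffices to bound $d_Y(\tmu_{\sigma_1}, \tmu_{\sigma_2})$ for one choice of $\tmu_{\sigma_i} \in F(\sigma_i)$ and every subsurface $Y \subset S$.

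For nonannular $Y$, the argument mirrors the corresponding part of Lemma \ref{r:F well-defined}: both $\base(\tmu_{\sigma_i})$ are Bers pants decompositions, so by Lemma \ref{r:extreme} every curve in the base has $\Ext_{\sigma_i}$-extremal length at most $L_0$. Either no pair of curves $\alpha \in \base(\tmu_{\sigma_1})$, $\beta \in \base(\tmu_{\sigma_2})$ projects nontrivially to $Y$, in which case $d_Y(\tmu_{\sigma_1}, \tmu_{\sigma_2}) < 4$ by Lemma \ref{r:lipschitz proj}, or such a pair exists and Proposition \ref{r:extreme curve est} (using $d_T(\sigma_1, \sigma_2) = 1$) gives $d_Y(\alpha, \beta) \asymp 1$, whence Lemma \ref{r:lipschitz proj} yields $d_Y(\tmu_{\sigma_1}, \tmu_{\sigma_2}) \asymp 1$.

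For an annular projection $\HHH(\gamma)$, I will use the combinatorial horoball estimate
\[
d_{\HHH(\gamma)}\!\left((t_1, D_1), (t_2, D_2)\right) \asymp |D_1 - D_2| + \max\{0, \log d_\gamma(t_1, t_2) - \min(D_1, D_2)\}
\]
and split into cases. Kerckhoff's formula (Theorem \ref{r:kerck}) gives $\Ext_{\sigma_1}(\gamma)/\Ext_{\sigma_2}(\gamma) \leq e^2$, so the definition of $d_\alpha$ in Subsection \ref{r:T to A map section} immediately forces $|D_\gamma(\tmu_{\sigma_1}) - D_\gamma(\tmu_{\sigma_2})| = O(1)$ when $\gamma$ is short in at least one $\sigma_i$ (and both $D_\gamma$ vanish otherwise), controlling the vertical component uniformly. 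When $\gamma$ is short in neither $\sigma_i$, both length data are $0$ and the horizontal component reduces to a $\CC(\gamma)$ bound handled exactly as in Lemma \ref{r:F well-defined}, using the Collar Lemma on the Bers transversals and Proposition \ref{r:extreme curve est}. When $\gamma$ is short in at least one $\sigma_i$, I will apply Minsky's Product Regions theorem (Theorem \ref{r:product}): the $\gamma$-thin region identifies with a product whose $\HH_\gamma$ factor is quasiisometric to $\HHH(\gamma)$ by Proposition \ref{r:horo}, and the hypothesis $d_T(\sigma_1, \sigma_2) = 1$ bounds the horodisk distance between the factor projections, which pulls back to bound $d_{\HHH(\gamma)}(\tmu_{\sigma_1}, \tmu_{\sigma_2})$.

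The main obstacle is the annular case when $\gamma$ is short in both $\sigma_i$, since here the absolute twist difference can be as large as $e^{\max D_\gamma}$ while still corresponding to a bounded amount of hyperbolic motion; handling this honestly requires extracting the compatibility between the combinatorial and hyperbolic twist scales from the Product Regions theorem, rather than from intersection-number estimates alone. Once every subsurface projection is bounded by some uniform constant $K_0 = K_0(S)$, choosing any threshold $K > \max\{K_0, M_6\}$ in Theorem \ref{r:ams distance} makes the defining sum vanish, whence $d_{\AM(S)}(\tmu_{\sigma_1}, \tmu_{\sigma_2}) \leq C_2$, and we take $L_2 = C_2$.
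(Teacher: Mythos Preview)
Your proposal is correct and follows essentially the same strategy as the paper: bound every subsurface projection uniformly and then invoke the distance formula (Theorem~\ref{r:ams distance}) with a threshold above the bound so the sum collapses. The nonannular case is identical, and for horoballs both you and the paper control the vertical coordinate by comparing extremal lengths (you via Kerckhoff directly, the paper via Theorem~\ref{r:product}) and then split on how short $\gamma$ is.

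One small point worth tightening: in your subcase ``short in at least one $\sigma_i$'' you appeal to Product Regions, but Theorem~\ref{r:product} only gives the additive distortion bound on $Thin_{\epsilon_0}(S,\gamma)$, so when $\gamma$ is short in $\sigma_1$ but not in $\sigma_2$ the horodisk-factor comparison is not literally available for $\sigma_2$. The paper handles this by introducing a depth threshold $C'$: if $\min\{D_\gamma(\tmu_{\sigma_1}),D_\gamma(\tmu_{\sigma_2})\}>C'$ then Product Regions gives the $\HHH(\gamma)$ bound directly, and otherwise $\Ext_{\sigma_i}(\gamma)$ is bounded above \emph{and below}, so one can find curves of bounded extremal length crossing $\gamma$ and run the intersection-number argument to bound $d_\gamma$ of the transversals. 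Your Kerckhoff bound on $|D_\gamma(\tmu_{\sigma_1})-D_\gamma(\tmu_{\sigma_2})|$ already shows that in the ``short in exactly one'' subcase both $D_\gamma$ are uniformly bounded, so you can simply absorb that subcase into your ``short in neither'' argument rather than invoking Product Regions there.
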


\begin{proof}
Suppose that $\sigma_1, \sigma_2 \in \TT(S)$ with $d_T(\sigma_1,\sigma_2) =1$.  We will uniformly bound all subsurface projections between $\tmu_{\sigma_1}$ and $\tmu_{\sigma_2}$.  The result will then follow from the distance formula, Theorem \ref{r:ams distance}.\\

Let $Y \subset S$ be any nonannular subsurface and let $\alpha \in \base(\tmu_{\sigma_1}), \beta \in \base(\tmu_{\sigma_2})$ with $\pi_Y(\alpha), \pi_Y(\beta) \neq \emptyset$.   By definition of $F$ and Lemma \ref{r:extreme}, we have $\Ext_{\sigma_1}(\alpha), \Ext_{\sigma_2}(\beta) \asymp 1$ for any $\alpha \in \base(\tmu_{\sigma_1}), \beta \in \base(\tmu_{\sigma_2})$.  It then follows from Proposition \ref{r:extreme curve est} and Lemma \ref{r:lipschitz proj} that $d_Y(\tmu_{\sigma_1}, \tmu_{\sigma_2}) \asymp 1$.\\ 

It remains to bound projections in horoballs.  Let $\alpha \in \CC(S)$ and note that $D_{\alpha}(\tmu_{\sigma_1}) \asymp D_{\alpha}(\tmu_{\sigma_2})$ by definition and Theorem \ref{r:product}, because $d_T(\sigma_1, \sigma_2) = 1$.  It will thus suffice to bound projections to annular complexes.  There are four cases, depending on whether $\alpha \in \base(\tmu_{\sigma_i})$ for each $i$.\\

If $\alpha \notin \base(\tmu_{\sigma_1}) \cup \base(\tmu_{\sigma_2})$, then there are curves $\beta \in \base(\tmu_{\sigma_1})$ and $\gamma \in \base(\tmu_{\sigma_2})$ with $i_S(\alpha,\beta), i_S(\alpha, \gamma) >0$.  Thus Proposition \ref{r:extreme curve est} and Lemma \ref{r:lipschitz proj} imply that $d_{\alpha}(\tmu_{\sigma_1}, \tmu_{\sigma_2}) \asymp 1$, as required.\\

Now suppose that $\alpha \in \base(\tmu_{\sigma_1}) \cup \base(\tmu_{\sigma_2})$.  Since $d_T(\sigma_1, \sigma_2) = 1$, Theorem \ref{r:product} implies there exists constants $C', D'>0$ depending only on $S$ such that if $\min \{D_{\alpha}(\tmu_x) D_{\alpha}(\tmu_y)\} >C'$, then $d_{\HHH(\alpha)}(\tmu_x, \tmu_y) < D'$.\\

If not, then $\Ext_{\sigma_1}(\alpha)$ and $\Ext_{\sigma_2}(\alpha)$ are uniformly bounded above and below.  Thus there exist curves $\beta_1, \beta_2 \in \CC(S)$ with $i_S(\beta_i, \alpha) >0$ and $\Ext_{\sigma_i}(\beta_i) \asymp 1$ for $i=1,2$.  Since the length of $\alpha$ is uniformly bounded below in both $\sigma_1$ and $\sigma_2$, it follows that the shortest transverse curves to $\alpha$ in $\sigma_1, \sigma_2$ must have uniformly bounded twisting around $\alpha$ relative to $\beta_1, \beta_2$ in $\sigma_1, \sigma_2$, respectively.\\

For $i=1,2$, if $\alpha \in \base(\tmu_{\sigma_i})$, then let $t_{\alpha, i}$ be its transversal. The above argument then implies that $d_{\alpha}(\beta_i, t_{\alpha, i}) \asymp 1$.  If $\alpha \in \base(\tmu_{\sigma_1}) \cap \base(\tmu_{\sigma_2})$, then the triangle inequality implies that $d_{\alpha}(\tmu_{\sigma_1}, \tmu_{\sigma_2}) \asymp 1$.\\

If $\alpha \in \base(\tmu_{\sigma_1})$ but $\alpha \notin \base(\tmu_{\sigma_2})$, then there is some curve $\gamma \in \base(\tmu_{\sigma_2})$ with $i_S(\gamma, \alpha) >0$, and since $\Ext_{\sigma_2}(\gamma) \asymp 1$, Proposition \ref{r:extreme curve est} applied to $\gamma$ and $\beta_{1}$ implies that $d_{\alpha}(\tmu_{\sigma_1}, \tmu_{\sigma_2}) \asymp 1$.  This completes the proof.

\end{proof}

Next, we prove that $G$ is lipschitz:

\begin{lemma} \label{r:G lipschitz}
There is a constant $L_1=L_1(S)>0$ such that for any $\tmu_1, \tmu_2 \in \AM(S)$ adjacent vertices in $\AM(S)$, $d_{\TT(S)}\left(G(\tmu_1),G(\tmu_2)\right) < L_1$.    
\end{lemma}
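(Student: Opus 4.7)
The plan is to handle the three types of edges in $\AM(S)$---vertical, twist, and flip---separately, with Minsky's Product Regions Theorem \ref{r:product} reducing each case to a bounded step in a single factor of the coarse product decomposition at an appropriate thin-part simplex.

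For a vertical move, $\pi_{\MM(S)}(\tmu_1) = \pi_{\MM(S)}(\tmu_2)$ and a single length datum $D_\alpha$ changes by $1$, so the Fenchel--Nielsen coordinate $l_\alpha$ shifts by a factor of $2$ while all other coordinates agree. Applied at any simplex containing $\alpha$ with $D_\alpha > 0$, Theorem \ref{r:product} identifies this with a vertical shift of $\log 2$ in the horodisk $\HH_\alpha$, so $d_T(G(\tmu_1), G(\tmu_2))$ is bounded. For a twist move, $D_\alpha = k > 0$ is unchanged and $\tmu_2 = T_\alpha^n \tmu_1$ with $0 < n < e^k$, so only $\tau_\alpha$ changes, by $n$; since $n \cdot l_\alpha \lesssim e^k \cdot \epsilon_0/2^k \asymp 1$ (by Wolpert's estimate, and consistently with the quasiisometry of Proposition \ref{r:horo}), this is a bounded horizontal motion in $\HH_\alpha$, and Product Regions again gives a bounded Teichm\"uller change.

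The flip move is the main obstacle. By Remark \ref{r:notflip}, $D_\alpha(\tmu_1) = D_\alpha(\tmu_2) = 0$, so $\alpha$ and $t$ are non-short with $d_{\CC(S)}(\alpha, t) = 1$ and $i_S(\alpha, t) \asymp 1$. Let $\rho$ be the simplex of short curves, which lies in both pants decompositions. Applying Theorem \ref{r:product} at $\rho$, all horodisk coordinates $z_\beta \in \HH_\beta$ for $\beta \in \rho$ agree between $G(\tmu_1)$ and $G(\tmu_2)$ up to the bounded correction arising from the transversal-cleaning step of Lemma 2.4 in \cite{MM00}. It therefore suffices to bound the Teichm\"uller distance of the projections to $\TT(S \setminus \rho)$. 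Both projected pants decompositions have all curve lengths exactly $\epsilon_0/2^{3/2}$ and differ by a single flip supported on a component $Y$ of the complement of the common pants curves, where $Y$ is either a four-holed sphere or a one-holed torus.

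The remaining subproblem is to bound, inside $\TT(Y)$ with fixed boundary data, the Teichm\"uller distance between two points whose systoles are $\alpha$ and $t$, each of fixed length $\epsilon_0/2^{3/2}$, with $d_{\CC(Y)}(\alpha, t) = 1$. Identifying $\TT(Y)$ with $\HH^2$ via Fenchel--Nielsen coordinates modulo the boundary normalization, the level set $\{l_\gamma = c\}$ is a horocycle based at the cusp of $\HH^2$ corresponding to $\gamma$ in the Farey graph; two such horocycles at equal height based at Farey-adjacent cusps lie at uniformly bounded hyperbolic distance. As a backstop for the isolated Fenchel--Nielsen twists that remain, one may alternately invoke Kerckhoff's formula (Theorem \ref{r:kerck}) together with Minsky's inequality (Lemma \ref{r:extreme minsky}) and Proposition \ref{r:extreme curve est} to verify $\Ext_{\sigma_1^{thick}}(\gamma)/\Ext_{\sigma_2^{thick}}(\gamma) \asymp 1$ for every $\gamma \in \CC(S\setminus\rho)$. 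Either way, combining the bound across the product factors via Theorem \ref{r:product} completes the flip case, and hence the lemma.
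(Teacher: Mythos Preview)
Your treatment of vertical and twist moves matches the paper's exactly: both of you apply Minsky's Product Regions Theorem at a simplex containing $\alpha$ and read off a bounded step in the single horodisk factor $\HH_\alpha$ via Proposition~\ref{r:horo}.

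For the flip move, the paper takes a much shorter route than you do. It simply observes (citing \cite{Raf07}, Lemma~5.6) that there are only finitely many $\MCG(S)$-orbits of pairs of markings related by a flip, and since $G$ is $\MCG(S)$-equivariant and $d_T$ is $\MCG(S)$-invariant, the Teichm\"uller distance between $G(\tmu_1)$ and $G(\tmu_2)$ takes only finitely many values. Your argument instead explicitly reduces via Product Regions at the simplex $\rho = \base(\mu_1)\setminus\{\alpha\}$ of common base curves to a distance in $\TT(Y)$, where $Y$ is the one-holed torus or four-holed sphere filled by $\alpha$ and $t_\alpha$, and then invokes the identification $\TT(Y)\cong\HH^2$ together with the Farey structure. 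This is correct and in fact unpacks what the paper's cocompactness argument is implicitly using (the length data are not $\MCG$-cofinite, so one does need the Product Regions reduction to isolate the $\TT(Y)$ factor before invoking finiteness there). Your horocycle sentence is slightly imprecise as written---``distance between horocycles'' is not what is being bounded, but rather the distance between the two specific points picked out by the zero-twist condition relative to the partner curve---but this is exactly the statement that $\MCG(Y)$ acts transitively on Farey edges, which gives the bound. The paper's route is quicker; yours is more transparent about where the bound actually comes from.
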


\begin{proof}
Let $\epsilon>0$ be as in Theorem \ref{r:product}.  First, suppose that $\tmu_1$ and $\tmu_2$ differ by a vertical edge or horizontal edge in a horoball, $\mathcal{H}_{\alpha}$, where $\alpha \in base(\tmu_1)\cap base(\tmu_2)$.  Recall that the length of $\alpha$ in both $G(\tmu_1)$ and $G(\tmu_2)$ is less than $\epsilon$ by the definition of $G$.  By Minsky's Theorem \ref{r:product}, $G(\tmu_1)$ and $G(\tmu_2)$ coarsely live in the product $\HH_{\alpha} \times \TT(S\setminus \alpha)$.  The projections of $G(\tmu_1)$ and $G(\tmu_2)$ to $\TT(S\setminus \alpha)$ are identical, so $d_T(G(\tmu_1),G(\tmu_2))$ is (up to an additive constant) equal to the distance in $\HH_{\alpha}$ of the projections of $G(\tmu_1)$ and $G(\tmu_2)$ to $\HH_{\alpha}$, again by Minsky's Theorem \ref{r:product}.   This distance is coarsely the corresponding distance in a horodisk, via Proposition \ref{r:horo}, which is precisely 1 by Lemma \ref{r:maps}.  Thus there is a uniform bound on $d_T(G(\tmu_1), G(\tmu_2))$.\\

Now suppose that $\tmu_1$ and $\tmu_2$ differ by a flip move, so that they only differ in their underlying marking.  Then, as argued in \cite{Raf07}[Lemma 5.6], there are only finitely many pairs of such markings up to homeomorphism, and the result follows from the local finiteness of $\AM(S)$, Lemma \ref{r:local finite}.
\end{proof}

Finally, we prove that $G(\AM(S)) \subset \TT(S)$ is quasidense, but before we do so, we need:

\begin{lemma}\label{r:thick close}
Every point in the $\epsilon_0$-thick part of $\TT(S)$ is a uniformly bounded distance away from the $\epsilon$-thin parts of $\TT(S)$.  This bound depends only on the topology of $S$.
\end{lemma}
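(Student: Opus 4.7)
The plan is to produce, for each $\sigma$ in the $\epsilon_0$-thick part, an explicit Teichm\"uller geodesic of bounded length terminating in the $\epsilon$-thin part, with the bound depending only on the topology of $S$ and the fixed constants $\epsilon_0$ and $\epsilon$.

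First I would apply Bers' Theorem \ref{r:bers} to $\sigma$ to obtain a pants decomposition $P_\sigma$ whose curves all have hyperbolic length at most $L$, pick any $\alpha \in P_\sigma$, and observe by Lemma \ref{r:extreme}(2) that $\Ext_\sigma(\alpha) \leq L_0$. Next I would invoke the classical Jenkins-Strebel theorem to produce a unit-area holomorphic quadratic differential $q$ on $\sigma$ whose horizontal trajectories fill $\sigma$ with a single cylinder whose core is homotopic to $\alpha$; this differential realizes the extremal length in the sense that $\Ext_\sigma(\alpha) = 1/\mathrm{Mod}(q)$.

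The main step is to flow along the Teichm\"uller geodesic $\mathcal{G}(t)$ determined by $q$ in the direction that contracts horizontal lengths and expands vertical lengths. Under this flow the modulus of the cylinder about $\alpha$ scales by $e^{2t}$, and since the flowed differential is again Jenkins-Strebel for $\alpha$ on $\mathcal{G}(t)$, I obtain the exact equality $\Ext_{\mathcal{G}(t)}(\alpha) = \Ext_\sigma(\alpha)\cdot e^{-2t} \leq L_0 \cdot e^{-2t}$. Applying Maskit's comparison $l_\tau(\alpha) \leq C \cdot \Ext_\tau(\alpha)$, which is valid whenever $\Ext_\tau(\alpha)$ is sufficiently small, I can then select $T = T(S,\epsilon)$ so that $l_{\mathcal{G}(T)}(\alpha) < \epsilon$, placing $\mathcal{G}(T) \in Thin_\epsilon(S, \alpha)$. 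Since the Teichm\"uller flow is parametrized by $d_T$-arclength, the desired uniform bound is exactly $d_T(\sigma, \mathcal{G}(T)) = T$.

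The only potential obstacle is the technical bookkeeping involved in aligning the Maskit constants with the chosen $\epsilon$, which amounts to simply enlarging $T$ until the hyperbolic length drops below $\epsilon$. A more conceptual alternative would combine Mumford's compactness of the $\epsilon_0$-thick part of $\TT(S)/\MCG(S)$ with the $\MCG$-invariance and continuity of the function $\sigma \mapsto d_T\!\left(\sigma, \bigcup_\gamma Thin_\epsilon(S,\gamma)\right)$; the Teichm\"uller flow argument above shows this function is finite at every thick point, so it descends to a continuous function on a compact quotient and is therefore bounded.
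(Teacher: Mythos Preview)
Your argument is correct and complete, but it proceeds quite differently from the paper. The paper takes the full Bers pants decomposition $\gamma=\{\gamma_1,\dots,\gamma_n\}$ for $\sigma$ and simply moves in Fenchel--Nielsen coordinates by scaling all the length parameters down below $\epsilon_0$ while leaving the twist parameters fixed; since the starting lengths are uniformly bounded by Bers' constant $L$, this scaling costs only a uniformly bounded Teichm\"uller distance. You instead single out one curve $\alpha\in P_\sigma$, build its Jenkins--Strebel differential, and flow along the resulting Teichm\"uller geodesic, obtaining the exact decay $\Ext_{\mathcal{G}(t)}(\alpha)=e^{-2t}\Ext_\sigma(\alpha)$. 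Your argument is more explicit and yields a concrete geodesic witnessing the bound, and your alternative via Mumford compactness is also a clean way to finish. One small point worth noting: the paper's version lands $\sigma$ near $Thin_\gamma$ for the \emph{entire} pants decomposition $\gamma$, whereas your flow argument only makes a single curve short; both suffice for the lemma as stated, but the paper immediately uses the stronger conclusion in the inductive proof of Lemma~\ref{r:quasidense}.
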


\begin{proof}
If $\sigma \in \TT(S)$ is in the $\epsilon_0$-thick part of $\TT(S)$ and $\mu_{\sigma} \in \MM(S)$ is the shortest marking for $\sigma$ with $base(\mu_{\sigma}) = \{\gamma_1, \dots, \gamma_n\} = \gamma \in \CC(S)$, then there is a uniform upper bound on the length of the $\gamma_i$, which depends only on the topology of $S$.  Thus there is a uniform bound on the distance between $\sigma$ and some point $\sigma_{thin} \in Thin_{\gamma}$, which is obtained by scaling the lengths of the curves in $\gamma$ in $\sigma$ to be less than $\epsilon_0$.  In fact, this holds for points in the $\epsilon_0$-thick part of $\TT(Y)$ for every subsurface $Y \subset S$, with the same constant bounding the distance to a uniformly thin part.
\end{proof}

\begin{lemma} \label{r:quasidense}
$G(\AM(S))$ is quasidense in $\TT(S)$.
\end{lemma}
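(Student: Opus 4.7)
The plan is to show that for every $\sigma \in \TT(S)$ the point $G(F(\sigma))$ lies within a uniformly bounded Teichm\"uller distance of $\sigma$. Combined with Lemma \ref{r:maps} — which asserts $F \circ G = \id_{\AM(S)}$ and so in particular $G(F(\sigma)) \in F^{-1}(F(\sigma))$ — this reduces quasidensity to bounding the Teichm\"uller diameter of the fiber $F^{-1}(\tmu)$ for each $\tmu \in \AM(S)$, or equivalently bounding $d_T(\sigma, G(F(\sigma)))$.

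First I would fix $\sigma \in \TT(S)$ and set $\tmu = F(\sigma)$, so that $\base(\tmu)$ is a Bers pants decomposition of $\sigma$ and the transversals of $\tmu$ are chosen to be shortest in $\sigma$. Let $\gamma = \{\alpha \in \base(\tmu) : D_\alpha(\tmu) > 0\}$. By the definition of $F$, $\gamma$ is exactly the subset of $\base(\tmu)$ on which $\Ext_\sigma < \epsilon_0$, which by Lemma \ref{r:extreme}(1) is the full simplex of $\epsilon_0$-short curves of $\sigma$. On the other hand, by the definition of $G$ each $\alpha \in \gamma$ has hyperbolic length $\epsilon_0/2^{D_\alpha + 3/2}$ in $G(\tmu)$, so after uniformly enlarging $\epsilon$ both $\sigma$ and $G(\tmu)$ lie in $Thin_\epsilon(S, \gamma)$. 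Minsky's Product Regions Theorem \ref{r:product} then reduces the Teichm\"uller distance, up to bounded additive error, to the sup-metric distance between the images of $\sigma$ and $G(\tmu)$ under $\Pi$ in $\TT(S \setminus \gamma) \times \prod_{\gamma_i \in \gamma}\HH_{\gamma_i}$.

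I would then bound each factor separately. For each horodisk $\HH_{\gamma_i}$ the imaginary coordinate is (up to a multiplicative constant) $1/l(\gamma_i)$, and for short curves hyperbolic and extremal length are comparable, so by construction of $D_{\gamma_i}$ both $l_\sigma(\gamma_i)$ and $l_{G(\tmu)}(\gamma_i)$ lie within a uniform multiplicative factor of $\epsilon_0/2^{D_{\gamma_i}}$, bounding the vertical separation. For the real (twist) coordinate, $G$ is defined precisely so that the transversal $t_{\gamma_i}$ is geometrically realized in $G(\tmu)$, while $\tmu$'s transversals were chosen to be shortest in $\sigma$, so the two twist coordinates agree up to uniformly bounded error. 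For the factor $\TT(S\setminus\gamma)$ both $\Pi(\sigma)$ and $\Pi(G(\tmu))$ lie in its $\epsilon_0$-thick part, share the remaining Bers pants decomposition $\base(\tmu)\setminus\gamma$ (whose curves have extremal length in $[\epsilon_0, L_0]$ by Lemma \ref{r:extreme}(2)), and have matching shortest transversals.

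The main obstacle is bounding the $\TT(S\setminus\gamma)$ factor. What is needed is that two $\epsilon_0$-thick points of a Teichm\"uller space that share a bounded-length complete clean marking lie within uniformly bounded Teichm\"uller distance — that is, a bounded-length clean marking coarsely determines a thick Teichm\"uller point. I would deduce this from Mumford's compactness theorem: the $\MCG(S\setminus\gamma)$-action on the thick part of moduli space is cocompact, and only finitely many $\MCG(S\setminus\gamma)$-translates of a fundamental domain can share a prescribed shortest marking, yielding the required uniform bound. Alternatively one could recurse on $\xi(S)$ by reapplying Product Regions on smaller subsurfaces, but compactness is the cleanest route, and this step is where all the real work lies.
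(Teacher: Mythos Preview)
Your approach is correct and takes a genuinely different route from the paper's. You apply Product Regions once, at the simplex $\gamma$ of curves actually short in $\sigma$, and then dispose of the complementary factor $\TT(S\setminus\gamma)$ in a single step via Mumford compactness: two uniformly thick points sharing a common bounded-length clean marking have bounded Fenchel--Nielsen coordinates relative to that marking, hence lie in a fixed compact set. The paper instead proves a preparatory Lemma~\ref{r:thick close} (every $\epsilon_0$-thick point is uniformly close to some thin region, an immediate consequence of Bers' Theorem~\ref{r:bers}) and runs an induction on the dimension of the short simplex: one repeatedly shrinks the next Bers curve to push $\sigma$, in boundedly many uniformly bounded steps, into the maximally thin region $Thin_{\base(\mu_\sigma)}$, where the complementary factor is a union of pairs of pants and only the horodisk estimates remain. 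Your closing remark about ``recursing on $\xi(S)$'' is exactly this argument. Your route is more direct but imports an external compactness statement; the paper's stays entirely within the tools already assembled.

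One small correction: $\Pi(G(\tmu))$ does \emph{not} lie in the $\epsilon_0$-thick part of $\TT(S\setminus\gamma)$, since by the definition of $G$ every base curve---including those with $D_\alpha=0$---is assigned hyperbolic length $\epsilon_0/2^{3/2}<\epsilon_0$. It is, however, $(\epsilon_0/2^{3/2})$-thick, which is all your compactness argument requires; just run it with that smaller uniform constant.
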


\begin{proof}
We show by induction that $G(\AM(S))$ is quasidense in the $\epsilon_0$-thin parts of $\TT(S)$.  Let $\sigma \in \TT(S)$ and let $F(\sigma)=\tmu_{\sigma}\in \AM(S)$ a shortest augmented marking for $\sigma$.  It suffices to show that there is a uniform bound on the distance between $\sigma$ and $G(\tmu_{\sigma})$.  Suppose first that $\sigma \in Thin_{\gamma}$ where $\gamma =\{\gamma_1, \dots, \gamma_n\} \subset \CC(S)$ is a maximal simplex, i.e. pants decomposition, of $S$.  Then by Theorem \ref{r:product}, $\sigma$ and $G(\tmu_{\sigma})$ coarsely live in $\prod _i \HH_{\gamma_i}$ and have length coordinates which differ at most by $\frac{\epsilon_0}{2}$.  As there is a uniform bound on the distance in each $\HH_{\gamma_i}$ and on the dimension of the simplex $\gamma$, it follows that $\sigma$ and $G(\tmu_{\sigma})$ are uniformly close.\\

Now suppose that $\sigma \in Thin_{\gamma}$ where $\gamma = \{\gamma_1, \dots, \gamma_{n-1}\} \subset \CC(S)$ is a simplex of dimension one less than maximal.  Then $\sigma$ and $G(\tmu_{\sigma})$ coarsely live in $\prod_i \HH_{\gamma_i} \times \TT(S\setminus \gamma)$.  If $\mu_{\sigma}$ is the shortest marking for $\sigma$, with $base(\mu_{\sigma}) = \{\gamma_1, \dots, \gamma_{n-1}, \alpha\}$, then $\alpha$ was the shortest curve in $\sigma$ in $\CC(S\setminus \gamma)$ and $G(\tmu_{\sigma})$ lives in $\prod_i \HH_{\gamma_i} \times \HH_{\alpha}$.  By Lemma \ref{r:thick close}, there is a uniform bound on the distance between $\pi_{\TT(S\setminus \gamma)}(\sigma)$ and $Thin_{\alpha} \subset \TT(S\setminus \gamma)$.  Thus there is a uniform bound on the distance between $\sigma$ and $Thin_{\gamma \cup \{\alpha\}} \subset \TT(S)$ by Theorem \ref{r:product}.  Since $G(\AM(S))$ is quasidense in $Thin_{\gamma \cup \{\alpha\}}$, it follows by induction that $G(\AM(S))$ is quasidense in $\TT(S)$, completing the proof. \end{proof}
Combining Lemmata \ref{r:G lipschitz}, \ref{r:F lipschitz}, and \ref{r:maps} with Lemma \ref{r:qi condition}, we have:

\begin{theorem}\label{r:qi}
$\AM(S)$ with the path metric is quasi-isometric to $\TT(S)$ with the Teichm\"uller metric.
\end{theorem}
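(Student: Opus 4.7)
The plan is to directly invoke Lemma \ref{r:qi condition} with $X = \AM(S)$, $Y = \TT(S)$, $g = G$, and $f = F$, since essentially all of the ingredients have already been established. Specifically, it suffices to verify (i) that $G$ is lipschitz as a map of metric spaces, (ii) that $F$ (after choosing a representative for each point) is lipschitz, (iii) that $F \circ G$ is the identity on $\AM(S)$, and (iv) that $G(\AM(S))$ is quasidense in $\TT(S)$. Each of these is already in hand.

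First, I would promote the adjacent-vertex estimate of Lemma \ref{r:G lipschitz} to a genuine lipschitz estimate on all of $\AM(S)$: given $\tmu_1, \tmu_2 \in \AM^{(0)}(S)$, concatenate a length-realizing edge path and apply the triangle inequality in $(\TT(S), d_T)$ to get $d_T(G(\tmu_1), G(\tmu_2)) \leq L_1 \cdot d_{\AM(S)}(\tmu_1, \tmu_2)$. Similarly, I would extend the one-step estimate in Lemma \ref{r:F lipschitz} to a lipschitz estimate for $F$ by subdividing a $d_T$-geodesic between $\sigma_1$ and $\sigma_2$ into unit-length segments; Lemma \ref{r:F well-defined} guarantees that the coarse ambiguity in choosing representatives of $F(\sigma)$ contributes only a bounded additive error.

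With both maps lipschitz with a common constant $L = \max\{L_1, L_2\}$, Lemma \ref{r:maps} gives $F \circ G = \id_{\AM(S)}$, so the constant $K$ in Lemma \ref{r:qi condition} may be taken to be $0$. This yields that $G$ is an $(L, 0)$-quasiisometric embedding of $\AM(S)$ into $\TT(S)$. Finally, Lemma \ref{r:quasidense} supplies the quasidensity of $G(\AM(S)) \subset \TT(S)$, upgrading the quasiisometric embedding to a quasiisometry.

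No step is particularly hard at this stage, since the real content has already been packaged into Lemmata \ref{r:F lipschitz}, \ref{r:G lipschitz}, \ref{r:maps}, and \ref{r:quasidense}; the only minor technical point is the mild abuse in treating $F$ as a map rather than a coarsely well-defined correspondence, which Lemma \ref{r:F well-defined} handles by showing any two choices of shortest augmented marking for a given $\sigma$ lie within uniformly bounded $\AM(S)$-distance. Finally, since $G$ is an $\MCG(S)$-equivariant embedding by construction and $F$ is $\MCG(S)$-equivariant as a coarse map, the resulting quasiisometry is $\MCG(S)$-equivariant, which also establishes the equivariant form asserted in Theorem \ref{r:main theorem}.
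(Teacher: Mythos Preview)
Your proposal is correct and follows exactly the paper's approach: the paper's proof is literally the single sentence ``Combining Lemmata \ref{r:G lipschitz}, \ref{r:F lipschitz}, and \ref{r:maps} with Lemma \ref{r:qi condition}'', together with Lemma \ref{r:quasidense} for quasidensity. Your write-up is in fact more careful than the paper's, since you spell out how to promote the single-step lipschitz estimates to global ones and address the coarse well-definedness of $F$ via Lemma \ref{r:F well-defined}.
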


As an application of Theorems \ref{r:qi} and \ref{r:ams distance}, we have a new proof of Rafi's distance formula for $\TT(S)$:

\begin{theorem}[A distance formula for $\TT(S)$] \label{r:comb teich dist}
There exists a $K'=K'(S)>0$ such that for any $\sigma_1, \sigma_2 \in \TT(S)$ with shortest augmented markings $\tmu_{\sigma_1}, \tmu_{\sigma_2} \in \AM(S)$, we have

$$d_{\TT(S)}(\sigma_1, \sigma_2) \asymp \sum_{d_Y(\tmu_{\sigma_1}, \tmu_{\sigma_2}) > K} d_Y(\tmu_{\sigma_1}, \tmu_{\sigma_2}) +  \sum_{d_{\HHH(\alpha)}(\tmu_{\sigma_1}, \tmu_{\sigma_2}) > K} d_{\HHH(\alpha)}(\tmu_{\sigma_1}, \tmu_{\sigma_2})$$
where the $Y\subset S$ are nonannular.
\end{theorem}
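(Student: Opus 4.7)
The plan is to combine the two main theorems already proved in the paper: Theorem \ref{r:qi}, which shows that $G: \AM(S)\to \TT(S)$ is a quasiisometry (with $F$ its coarse inverse sending $\sigma$ to a shortest augmented marking $\tmu_{\sigma}$), and Theorem \ref{r:ams distance}, the distance formula for $\AM(S)$ itself. The key observation is that the $\AM(S)$-distance formula already sums over all subsurfaces of $S$, where for an annular subsurface $Y$ with core $\alpha$ the relevant projection is $\pi_Y=\pi_{\HHH(\alpha)}$ by definition of subsurface projection in $\AM(S)$. Thus the single sum in Theorem \ref{r:ams distance} automatically splits as the two sums appearing in Theorem \ref{r:comb teich dist}.

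Concretely, I would proceed as follows. First, by Theorem \ref{r:qi} there are constants $A_1,A_2>0$ such that
\[
\tfrac{1}{A_1}d_{\AM(S)}\bigl(F(\sigma_1),F(\sigma_2)\bigr)-A_2 \;\leq\; d_T(\sigma_1,\sigma_2) \;\leq\; A_1 d_{\AM(S)}\bigl(F(\sigma_1),F(\sigma_2)\bigr)+A_2,
\]
and by construction $F(\sigma_i)=\tmu_{\sigma_i}$, so $d_T(\sigma_1,\sigma_2)\asymp d_{\AM(S)}(\tmu_{\sigma_1},\tmu_{\sigma_2})$. Second, choose any threshold $K'>M_6$ so that Theorem \ref{r:ams distance} applies, giving
\[
d_{\AM(S)}(\tmu_{\sigma_1},\tmu_{\sigma_2}) \;\asymp\; \sum_{d_Y(\tmu_{\sigma_1},\tmu_{\sigma_2})>K'} d_Y(\tmu_{\sigma_1},\tmu_{\sigma_2}),
\]
where $Y$ ranges over all (possibly annular) subsurfaces $Y\subset S$. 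Splitting this sum into its nonannular and annular contributions, and recalling that for an annulus $Y$ with core $\alpha$ we have $d_Y(\tmu_{\sigma_1},\tmu_{\sigma_2})=d_{\HHH(\alpha)}(\tmu_{\sigma_1},\tmu_{\sigma_2})$ by the definition in Subsection 6.1, yields exactly the right-hand side of Theorem \ref{r:comb teich dist}. Chaining the two coarse equalities gives the result (with some modified $K$ absorbing the additive constants).

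There is essentially no obstacle here beyond bookkeeping: the real content has already been established in Theorems \ref{r:qi} and \ref{r:ams distance}. The only point requiring care is to check that the threshold constants and additive errors from the quasiisometry $G$ can be absorbed into a single threshold $K$ in the final formula; this is a standard exercise showing that if $A\asymp_{(C_1,C_2)}B$ and $B\asymp\sum_{x_i>K'}x_i$, then $A\asymp\sum_{x_i>K}x_i$ for any sufficiently large $K$, using the fact that there are only finitely many projections contributing boundedly. Since Theorem \ref{r:ams distance} is stated for \emph{every} $K>M_6$, we may take $K=K'$ directly without any further adjustment, so the proof reduces to a one-line composition of the two results.
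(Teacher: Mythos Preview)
Your proposal is correct and matches the paper's approach exactly: the paper presents Theorem \ref{r:comb teich dist} as an immediate application of Theorems \ref{r:qi} and \ref{r:ams distance} without giving any further argument. Your observation that the annular terms in the $\AM(S)$ distance formula are by definition the horoball distances $d_{\HHH(\alpha)}$ is precisely the bookkeeping needed, and since Theorem \ref{r:ams distance} holds for every threshold $K>M_6$, no further adjustment of constants is required.
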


\section{Appendix: Hierarchical technicalities} \label{r:App}

In this appendix, we prove a number of technical results about hierarchies.  Perhaps the main goal is to prove that subpaths of hierarchy paths are hierarchy paths in a natural way.  We also analyze special subsegments of hierarchy paths during which progress through a subsurface is made.  We have sequestered this section from the rest of the paper to enhance the coherence of the main exposition.  We hope that some of these results will be of independent interest.\\

In order to minimize notational clutter, we will work with standard hierarchies, but everything holds \emph{mutatis mutandis} for augmented hierarchies and hierarchies without annuli.

\subsection{Active segments} \label{r:active segments sect}

In this subsection, we introduce the notion of an active segment of a subsurface along a hierarchy path.\\

For any geodesic $g_Y \in H$, let $v_{Y,int}$ and $v_{Y,ter}$ be the initial and terminal vertices of $g_Y$, respectively.  Let $\tau_{g_Y, int}$ be the first slice containing the pair $(g_Y, v_{Y, int})$ and $\tau_{g_Y, ter}$ the last slice containing the pair $(g_Y, v_{Y, ter})$, with their respective augmented markings, $\mu_{Y,int}$ and  $\mu_{Y, ter}$.\\

We call $\Gamma_Y = [\mu_{Y,int},\mu_{Y, ter}] \subset \Gamma$ the \emph{active segment} of $Y$ along $\Gamma$.  It is clear from the definition of an elementary move of augmented slices that $\Gamma_Y$ is contiguous.  We remark our notion of active segment is similar to Rafi's notion of active interval of a Teichm\"uller geodesic from \cite{Raf14}.\\

See Subsection \ref{r:struct of act} for a structural result about active segments.

\subsection{Subordinancy and slices}

Let $H$ be any hierarchy between $\mu, \eta \in \MM(S)$, and let $\Gamma$ be any hierarchy path based on $H$.  Let $g_Y \in H$ and recall from Subsection \ref{r:active segments sect} the definition of an active segment of $Y$ along $\Gamma$, namely $\Gamma_Y$.  The following lemma connects direct subordinancy for $g_Y \in H$ to the initial and terminal slices of $\Gamma_Y$, $\tau_{Y, int}$ and $\tau_{Y, ter}$, respectively:

\begin{lemma}[Subordinancy and slices] \label{r:ss}
Let $H$ and $\Gamma$ be as above.  Let $g_X, g_Y \in H$ with $D(g_X) = X$ and $D(g_Y) = Y$.  If $(g_X, x) \in \tau_{Y, int}$ and $Y$ is a component domain of $(X,x)$, then $g_X \swarrow g_Y$.  Similarly, if $(g_X, x) \in \tau_{Y, ter}$ and $Y$ is a component domain of $(X,x)$, then $g_Y \searrow g_X$.
\end{lemma}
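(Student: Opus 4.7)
The plan is to prove the first statement; the second follows by the dual argument with $\searrow$-chains and terminal vertices in place of $\swarrow$-chains and initial vertices. I will first observe a uniqueness fact about slices, then identify a candidate pair in $\tau_{Y, \mathrm{int}}$, and finally verify that the candidate directly $\swarrow$-subordinates $g_Y$.

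My first observation is that within any slice $\tau$, the pair $(g_X, x) \in \tau$ for which $Y$ is a component domain of $(X, x)$ is uniquely determined. Indeed, the nested tree structure on the supports of pairs in $\tau$ imposed by (S1)--(S3) forces $X$ to be the minimal support in $\tau$ properly containing $Y$, and then $x$ is the unique vertex of $g_X$ which cuts off $Y$ as a component of $X \setminus x$. Hence to prove the lemma it suffices to exhibit \emph{some} pair in $\tau_{Y, \mathrm{int}}$ above $Y$ that is directly $\swarrow$-subordinate to $g_Y$.

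To produce the candidate, I would track how $(g_Y, v_{Y, \mathrm{int}})$ first enters the resolution $\tau_0 \to \cdots \to \tau_N$. Either $\tau_{Y, \mathrm{int}} = \tau_0$, in which case the pair was introduced by the recursive initial-slice construction (which adds $(g_Z, v_{Z, \mathrm{int}})$ for every component domain $Z$ of a pair already present that supports a geodesic in $H$), or $\tau_{Y, \mathrm{int}}$ results from an elementary move from $\tau_{Y, \mathrm{int} - 1}$ along some $g_W$ from $w$ to $w'$, and rule (E1') places $(g_Y, v_{Y, \mathrm{int}})$ in the new transition slice $\sigma'$ precisely because $Y$ is a component domain of $(W, w')$ and $g_Y \in H$. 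In either case the construction designates a parent pair $(g_W, w') \in \tau_{Y, \mathrm{int}}$ with $Y$ a component domain of $(W, w')$; by the uniqueness observation this parent is the $(g_X, x)$ of the lemma.

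The main obstacle is then to verify the defining equation $\bI(g_Y) = \bI(Y, g_W)$ needed for $g_W \swarrow g_Y$. Since $g_Y \in H$ we have $\xi(Y) \neq 3$ and both $\Sigma^{\pm}(Y)$ nonempty; Theorem \ref{r:sigma} provides the chain $g_H = g_{X_n} \swarrow \cdots \swarrow g_{X_1} \swarrow g_{X_0} = g_Y$, so the direct predecessor of $g_Y$ in $H$ is the unique $g_{X_1}$. I would then argue that in every slice strictly preceding $\tau_{Y, \mathrm{int}}$ the tower of pairs whose supports properly contain $Y$ forms an initial segment of the chain $\Sigma^{-}(Y)$, and that each elementary move either advances one step down this chain or advances along a geodesic already on it. The elementary move that finally produces $(g_Y, v_{Y, \mathrm{int}})$ is therefore the one that advances $g_{X_1}$ past its unique vertex cutting off $Y$, forcing $g_W = g_{X_1}$ and $w'$ to be that vertex. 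The required equation $\bI(g_Y) = \bI(Y, g_{X_1})$ then holds by the very definition of $\swarrow$ recorded when $g_Y$ was inserted into $H$, yielding $g_X \swarrow g_Y$ and completing the proof.
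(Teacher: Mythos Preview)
Your uniqueness observation about the slice-parent and your identification of the elementary move along some $g_W$ (from $w$ to $w'$) that first introduces $(g_Y, v_{Y,\mathrm{int}})$ are both fine, and this setup matches the paper's. The gap is in your final step, where you assert that each elementary move ``advances one step down the chain $\Sigma^-(Y)$'' and conclude $g_W = g_{X_1}$. This is false in general: a single elementary move can introduce several new geodesics at once. Concretely, the transition slice $\sigma' \subset \tau_{Y,\mathrm{int}}$ has bottom pair $(g_W, w')$ and then, by rule (E1$'$), recursively adds $(g_Z, z_{\mathrm{init}})$ for component domains $Z$ with $w|_Z \neq \emptyset$; this recursion may produce a chain $(g_W, w'), (g_{X_{n-1}}, x_{n-1}), \ldots, (g_X, x), (g_Y, v_{Y,\mathrm{int}})$ of arbitrary length, all appearing for the first time in $\tau_{Y,\mathrm{int}}$. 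In that situation the immediate slice-parent $(g_X, x)$ of $(g_Y, v_{Y,\mathrm{int}})$ has $X \subsetneq W$, so $g_W \neq g_{X_1}$, and your argument does not yield $\bI(g_Y) = \bI(Y, g_X)$.

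What must actually be verified is $g_X \swarrow g_Y$, i.e.\ $\bI(g_Y) = \bI(Y, g_X)$; and since in the scenario above $x$ is the \emph{initial} vertex of $g_X$, this reads $\bI(Y, g_X) = \bI(g_X)|_Y$, so one first needs to identify $\bI(g_X)$. The paper does this by inducting down the chain inside $\sigma'$: at the top, $w'$ is \emph{not} the initial vertex of $g_W$, so $\bI(X_{n-1}, g_W) = w$ is visibly nonempty, and then at each step one combines Theorem~4.7(1) (existence of a $\searrow$-successor), axiom~(H2), and domain-uniqueness (Theorem~4.7(4)) to upgrade $g_{X_{i+1}} \swarrow X_i$ to $g_{X_{i+1}} \swarrow g_{X_i}$, obtaining $\bI(g_{X_i}) = w|_{X_i}$ and continuing the induction down to $g_X \swarrow g_Y$. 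Your route via $\Sigma^-(Y)$ could be repaired by proving that the chain in $\sigma'$ coincides with the tail of the $\Sigma^-(Y)$ chain, but that is precisely the same inductive work; the asserted one-step dynamics of elementary moves does not supply it.
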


\begin{proof}
We prove the direct backward subordinate case, as the direct forward subordinate case is similar.  The proof involves understanding what happens in the transition into the initial slice of $\Gamma_Y$.  Let $\mu \in \Gamma$ be the marking preceding $\mu_{Y, int}$ along $\Gamma$ and let $\tau_{\mu} \rightarrow \tau_{Y, int}$ be the slices in the resolution of $H$ which gives $\Gamma$.  Since $\tau_{\mu} \rightarrow \tau_{Y, int}$ is an elementary move of slices, there are by definition some geodesic $g_W \in H$ and vertices $w, w' \in g_W$ so that $\tau_{\mu} \rightarrow \tau_{Y, int}$ is essentially realizing the transition from $w$ to $w'$ along $g_W$.  The reorganization of the hierarchical data is contained in the transition slices $\sigma \subset \tau_{\mu}$ and $\sigma' \subset \tau_{Y, int}$ with $\tau_{\mu} \setminus \sigma = \tau_{Y, int} \setminus \sigma'$.  We shall find $g_Y$ and $g_X$ in these transition slices.\\

Let $y_{int} \in g_Y$ be the initial vertex of $g_Y$.  By assumption and the fact that $\tau_{\mu} \setminus \sigma = \tau_{Y, int} \setminus \sigma'$, we must have that $(g_Y, y_{int}) \in \sigma'$, as $\tau_{Y, int}$ is the first slice involving $g_Y$.  This implies by definition of $\sigma'$ that $w|_Y \neq \emptyset$.  Property (S3) of slices implies there is a pair $(g_X, x) \in \sigma'$, where  $g_X \in H$ with $D(g_X) = X$ and $Y$ a component domain of $(X,x)$.  Consider the simple case where $g_X \swarrow Y$; in order to conclude that $g_X \swarrow g_Y$, we need to prove $\mathbf{I}(g_Y) = \mathbf{I}(Y, g_X)$.  Applying \cite{MM00}[Theorem 4.7(1)], there exists $g_Z \in H$ with $g_Y \searrow g_Z$, which implies that $Y \searrow g_Z$.  Part (H2) of the definition of a hierarchy implies there is $g'_Y \in H$ with $g_X \swarrow g'_Y \searrow g_Z$, but \cite{MM00}[Theorem 4.7(4)] states that geodesics in $H$ are uniquely determined by their domains, so $g'_Y = g_Y$ and $g_X \swarrow g_Y$.\\

In the general case, $\mathbf{I}(Y, g_X) = \mathbf{I}(g_X)|_Y$, and we do not know what the latter marking is.  We will in fact show that $\mathbf{I}(g_X) = w'|_X$, but this requires an inductive application of the above argument.  To begin, property (S3) of slices implies there is a sequence of pairs $\{(g_{X_i}, x_i)\}_{i=1}^n$ in $\sigma'$ with $(g_{X_1}, x_1) = (g_Y, Y)$, $(g_{X_2}, x_2) = (g_X, x)$, and $(g_{X_n}, x_n) = (g_W, w')$ such that, for each $1\leq i<n$, $X_i$ is a component domain of $(X_{i+1}, x_{i+1})$; moreover, the definition of $\sigma'$ implies that $x_{i}$ is the initial vertex of $g_{X_{i}}$ when $1\leq i<n$.  Since $w'$ is not the initial vertex of $g_W$, we have $\mathbf{I}(X_{n-1}, g_{X_n}) = w|_{X_{n-1}}$, which is nonempty by definition of $\sigma'$.  Moreover, since $H$ is a hierarchy, \cite{MM00}[Theorem 4.7 (1)] implies that there is some $g_{Z_{n}} \in H$ with $g_{X_{n-1}} \searrow g_{Z_n}$, implying that $X_{n-1} \swarrow g_{Z_n}$.  The definition of a hierarchy then implies that there is $g'_{X_{n-1}} \in H$ with $D(g'_{X_{n-1}})=X_{n-1}$ and $g_{W} \swarrow g'_{X_{n-1}} \searrow g_{Z_n}$.  But \cite{MM00}[Theorem 4.7 (4)] implies that geodesics in $H$ are uniquely determined by their domains, so $g'_{X_{n-1}} = g_{X_{n-1}}$ and $g_{W} \swarrow g_{X_{n-1}}$, implying $\mathbf{I}(g_{X_{n-1}}) = \mathbf{I}(X_{n-1}, g_W)= w'|_{X_n}$.\\

Now considering $g_{X_{n-1}}$, $x_{n-1}$ is its initial vertex, so $\mathbf{I}(X_{n-2}, g_{X_{n-1}}) = \mathbf{I}(g_{X_{n-1}})|_{X_{n-2}} = \left(w'|_{X_{n-1}}\right)|_{X_{n-2}} = w'|_{X_{n-2}}$, which is nonempty by definition of $\sigma'$.  This implies that $g_{X_{n-1}} \swarrow X_{n-2}$.  Proceeding as above, we find a $g_{Z_{n-1}} \in H$ with $g_{X_{n-2}} \searrow g_{Z_{n-1}}$ and, as before, we can conclude that $g_{X_{n-1}} \swarrow g_{X_{n-2}}$, implying that $\mathbf{I}(g_{X_{n-2}}) = \mathbf{I}(X_{n-2}, g_{X_{n-1}})=w'|_{X_{n-2}}$.  Proceeding by induction, we see that for $1\leq i<n$, that $g_{X_{i+1}} \swarrow g_{X_i}$ and $\mathbf{I}(g_{X_i})= w'|_{X_i}$.  in particular, $g_X \swarrow g_Y$ and $\mathbf{I}(g_Y)= w'|_Y$, which completes the proof of the lemma.
\end{proof}


\subsection{Subpaths of hierarchies}\label{r:cut}

In this subsection, we prove that subpaths of hierarchy paths are themselves hierarchy paths in a natural way.\\

\paragraph{\textbf{Truncating hierarchies}} Let $H$ be any hierarchy between $\mu, \eta \in \MM(S)$, $\Gamma$ a hierarchy path based on $H$, and $[\mu_0, \eta_0] \subset \Gamma$ any subpath.    We will define a way to truncate the geodesics in $H$ to their relevant contributions to $[\mu_0, \eta_0]$.  Initial and terminal marking data are then inductively added to the truncated geodesics.  In Lemma \ref{r:trunc hier}, we prove the resulting collection, $H_0$, is a hierarchy.  We then show in Lemma \ref{r:trunc hier slice} that the original slice resolution of $H$ from which $\Gamma$ was obtained is a slice resolution for $H_0$.  We immediately obtain that $[\mu_0, \eta_0]$ is a hierarchy path based on $H_0$ in Proposition \ref{r:loceff}.\\

Let $g_Y \in H$ with $D(g_Y) = Y$.  Suppose $g_Y \in H$ is such that $\Gamma_Y \cap [\mu_0, \eta_0] \neq \emptyset$.  We can form a new geodesic $g'_Y \subset \CC(Y)$ as follows: If $\mu_0 \in \Gamma_Y$ with $\tau_{\mu_0}$ the corresponding slice, then there exists a pair $(g_Y, v_{Y, \mu_0}) \in \tau_{\mu_0}$ and we can remove the (possibly empty) initial segment of $g_Y$ to obtain a geodesic $g'_Y$ with initial vertex $v_{Y, \mu_0}$; we similarly truncate the end segment of $g_Y$ if it contributes to a pair in $\tau_{\eta_0}$.  If $\mu_0 \in \Gamma_Y$, we say $\Gamma_Y$ is \emph{initially truncated} by $[\mu_0, \eta_0]$; similarly, if $\eta_0 \in \Gamma_Y$, we say $\Gamma_Y$ is \emph{terminally truncated} by $[\mu_0, \eta_0]$.   We note that $v_{Y, \mu_0}$ and $v_{Y, \eta_0}$ can be the initial and terminal vertices of $g_Y$, respectively.  If $\Gamma_Y \subset [\mu_0, \eta_0]$, set $g'_Y = g_Y$.\\

\paragraph{\textbf{Building the initial and terminal markings}} Let $H' = \{g'_Y| \Gamma_Y \cap [\mu_0, \eta_0] \neq \emptyset\}$.  In order to complete $H'$ into a collection of tight geodesics, we need to attach initial and terminal marking data to the $g'_Y$.  We only describe how to build initial marking data, as terminal marking data are built similarly.  For the initial marking data, the key is determining to which geodesic in $H_0$ each $g'_Y$ should be directly backward subordinate, and there are two cases.  First, suppose that $\Gamma_Y$ is initially truncated.  We can build $\textbf{I}(g'_Y)$ inductively from $\mu_0$ as follows.\\

Let $g'_{H}$ be the truncation of the main geodesic $g_H$ at $\mu_0$ and $\eta_0$.  Set $\textbf{I}(g'_{H}) = \mu_0$ and $\textbf{T}(g'_{H}) = \eta_0$.  Given any $g'_Y \in H'$ with $D(g'_Y) = Y$, it follows from the definition of truncation that $g'_Y$ is initially truncated from $g_Y \in H$ if and only if $\tau_{\mu_0}$ contains some pair $(g'_Y, v_{Y, \mu_0}) \in \tau_{\mu_0}$.  Since $\tau_{\mu_0}$ is complete, repeated applications of property (S3) of slices gives a finite sequence of pairs $\{(g_{X_i}, x_i)\}_{i=1}^n$ with $X_1$ an annulus, $g_{X_n} = g_H$, $Y=X_k$ for some $k$, and $D(g_{X_i}) = X_i$ with $X_i$ a component domain of  $(X_{i+1}, x_{i+1})$ for each $i$.   For each $i$, it follows from the definition of truncation that either $g_{X_i}$ is initially truncated at $x_{i}$ to a geodesic $g'_{X_i} \in H'$ with new initial vertex $v_{X_i, \mu_0} = x_i$, or $x_i$ is the initial vertex of $g_{X_i}$.  Either way, we may set $\mathbf{I}(g'_{X_{n-1}}) = \mu_0|_{X_{n-1}}$, and then inductively define $\mathbf{I}(g'_{X_{i}}) = \mathbf{I}(X_i, g'_{X_{i+1}}) = \mathbf{I}(g'_{X_{i+1}})|_{X_{i}}$; we note that each $\mathbf{I}(g'_{X_{i}}) $ is a complete marking on $X_i$ because $\mu_0$ is a complete marking on $S$.  Since each $v_{X_i, \mu_0}$ is the initial vertex of $g'_{X_i}$, it follows that $\mathbf{I}(X_i, g'_{X_{i+1}}) = \mathbf{I}(g'_{X_{i+1}})|_{X_i}$, which is complete and thus nonempty by induction.  In particular, we have $\mathbf{I}(g'_Y) = \mathbf{I}(g_{X_i})$.  It follows that $g'_H \swarrow g'_{X_{n-1}} \swarrow \cdots \swarrow g'_{X_1} \swarrow g'_Y$.  We construct $\mathbf{T}(g'_Y)$ in a similar fashion in the case that $g_Y$ is terminally truncated.\\

For the second case, suppose that $g'_Y\in H$ with $D(g'_Y) = Y$ and $\Gamma_Y$ is not initially truncated.  We need to perform an analysis similar to the proof of Lemma \ref{r:ss}, but truncation adds an extra wrinkle.  Let $\tau_{Y, int}$ be the slice in $H$ which determines the initial marking of $\Gamma_Y$.  Then $(g_Y, y) \in \tau_{Y, int}$, where $y$ is the initial vertex of $g_Y$.  As before, repeated applications of (S3) gives a sequence $\{(g_{X_i}, x_i)\}_{i=1}^n$ with $g_{X_n} = g_H$ and, for each $i$, $D(g_{X_i}) = X_i$ with $X_i$ a component domain of  $(X_{i+1}, x_{i+1})$.  Since $\Gamma_Y \cap [\mu_0, \eta_0] \neq \emptyset$, it follows that there is a least $1\leq m\leq n$ such that $\Gamma_{X_m}$ is initially truncated, with each $\Gamma_{X_k}$ initially truncated for $k\geq m$.  Above, we defined $\mathbf{I}(g'_{X_{m-1}}) = \mathbf{I}(X_{m-1}, g'_{X_{m}})= \mathbf{I}(g'_{X_m})|_{X_{m-1}}$, which is a complete marking on $X_{m-1}$.  If $x_{m-1}$ is not the initial vertex of $g'_{X_{m-1}}$, then we still have that $\mathbf{I}(X_{m},g'_{X_{m-1}})=\mathbf{I}(X_{m-2},g_{X_{m-1}})$ which is nonempty by assumption and we may define $\mathbf{I}(g'_{X_{m-2}}) = \mathbf{I}(g_{X_{m-2}}) =\mathbf{I}(X_{m-2},g_{X_{m-1}})=\mathbf{I}(X_{m-2},g'_{X_{m-1}})$, implying that $g'_{X_{m-1}} \swarrow g'_{X_{m-2}}$.  Otherwise, $x_{m-1}$ is the initial vertex of $g'_{X_{m-1}}$ , and we set $\mathbf{I}(g'_{X_{m-2}}) = \mathbf{I}(X_{m-2},g'_{X_{m-1}}) = \mathbf{I}(g'_{X_{m-1}})|_{X_{m-2}}$, which is a complete marking on $X_{m-2}$ and thus nonempty.  Repeating this process, we can define $\mathbf{I}(g'_Y) = \mathbf{I}(Y, g'_{X_1})$ by induction.  As in Lemma \ref{r:ss}, we find that $g'_{X_n} \swarrow \cdots \swarrow g'_{Y}$.  We define $\mathbf{T}(g'_Y)$ similarly in the case where $\Gamma_Y$ is not terminally truncated.\\

\paragraph{\textbf{The truncated hierarchy}} Let $H_0$ be the collection of the geodesics from $H'$ with their marking data as constructed above. 
Note that every geodesic in $H_0$ is tight as each is obtained by truncating a tight geodesic, truncation preserves the tightness property, and each geodesic has initial and terminal markings which respect the subordinancy relations.   Thus $H_0$ is a collection of tight geodesics.  Observe also that any subsurface $Y \subset S$ is the support of at most one geodesic in $H_0$, as this property holds for $H$ by \cite{MM00}[Theorem 4.7(4)].  We now confirm that $H_0$ is a hierarchy by checking it satisfies the three properties of Definition \ref{r:hier def}.

\begin{lemma}\label{r:trunc hier}
The collection of tight geodesics $H_0$ is a hierarchy between $\mu_0$ and $\eta_0$
\end{lemma}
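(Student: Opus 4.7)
The plan is to verify the three axioms (H1)--(H3) of Definition \ref{r:hier def} for the collection $H_0$. Axiom (H1) is immediate from the construction: $g'_H$ is defined as the truncation of the main geodesic $g_H \in H$, it has $D(g'_H) = S$, and its initial and terminal markings were set to be $\mu_0$ and $\eta_0$ respectively. Axiom (H3) follows directly from the inductive marking construction given just before the lemma: for each $g'_Y \in H_0 \setminus \{g'_H\}$, that construction produced an explicit chain of direct subordinacies $g'_H \swarrow \cdots \swarrow g'_Y \searrow \cdots \searrow g'_H$, witnessing that $g'_Y$ admits both a direct backward and a direct forward subordinate in $H_0$.

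The substantive work lies in axiom (H2). Given $g'_X, g'_Z \in H_0$ and $Y \subset S$ with $g'_X \swarrow Y \searrow g'_Z$, I would first establish that the corresponding untruncated geodesics satisfy $g_X \swarrow Y \searrow g_Z$ in $H$. When $Y$ is a component domain of an interior vertex of $g'_X$ (respectively $g'_Z$), this is automatic because the witnessing predecessor (respectively successor) vertex appears unchanged in $g_X$ (respectively $g_Z$), so $\mathbf{I}(Y, g'_X) = \mathbf{I}(Y, g_X)$. When the witnessing vertex is the initial vertex of $g'_X$ and this vertex is not the initial vertex of $g_X$, the inductive construction of $\mathbf{I}(g'_X)$ from $\mu_0$ supplies, exactly as in the proof of Lemma \ref{r:ss}, a chain of direct backward subordinacies among geodesics of $H$ that forces $g_X \swarrow Y$ in $H$; the terminal case is symmetric. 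Applying axiom (H2) for $H$, there is a unique $g_Y \in H$ with $g_X \swarrow g_Y \searrow g_Z$. To conclude $g'_Y \in H_0$, it suffices to show $\Gamma_Y \cap [\mu_0, \eta_0] \neq \emptyset$, which I would deduce from Lemma \ref{r:ss} by observing that the initial slice of $\Gamma_Y$ sits inside $\Gamma_X$ and its terminal slice inside $\Gamma_Z$; since the slices witnessing $g'_X \swarrow Y \searrow g'_Z$ all lie in the subsequence of slices associated with $[\mu_0, \eta_0]$, the active segment $\Gamma_Y$ must also intersect $[\mu_0, \eta_0]$. Uniqueness in $H_0$ is inherited from the uniqueness of $g_Y$ in $H$ via \cite{MM00}[Theorem 4.7(4)], since truncation preserves the supporting subsurface.

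The main obstacle will be rigorously executing the marking-chasing step of (H2) when $Y$ is witnessed at a truncated endpoint of $g'_X$ or $g'_Z$. Although the idea parallels Lemma \ref{r:ss}, the argument must now contend with the fact that $\mathbf{I}(g'_X)$ is a projection of $\mu_0$ (not of $\mathbf{I}(g_X)$) when $g_X$ is itself initially truncated, so one must carefully unwind the inductive definition to produce the requisite subordinacies inside $H$. A secondary technical point, namely verifying that $\Gamma_Y$ meets $[\mu_0, \eta_0]$, should follow cleanly once the slice-resolution viewpoint developed in the subsequent Lemma \ref{r:trunc hier slice} is made explicit, though stating the precise nesting of active segments will require some bookkeeping.
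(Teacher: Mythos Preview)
Your outline is essentially the same as the paper's: (H1) and (H3) are dispatched exactly as the paper does, and for (H2) you correctly split according to whether the witnessing vertex $x$ sits at an interior or at an endpoint of $g'_X$.  The one organizational difference is that in the endpoint case the paper does \emph{not} attempt to establish $g_X \swarrow Y$ in $H$ and then invoke (H2) for $H$.  Instead, when $x$ is the initial vertex of $g'_X$ (so $(g_X,x)\in\tau_{\mu_0}$), the paper uses completeness of the slice $\tau_{\mu_0}$ directly: since $Y$ is a component domain of $(X,x)$, property (S3) forces a pair $(g_Y,y)\in\tau_{\mu_0}$, which immediately gives $\mu_0\in\Gamma_Y$ and hence $g'_Y\in H_0$; the subordinacy $g'_X\swarrow g'_Y$ then falls out of the inductive definition of $\mathbf I(g'_Y)$.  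Your route instead detours through $H$ and then uses the $\Gamma_Y\cap[\mu_0,\eta_0]\neq\emptyset$ argument, which amounts to the same slice reasoning.

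One small gap to watch: your case analysis at the endpoint covers ``$x$ initial in $g'_X$ but not in $g_X$'' (where, incidentally, $g_X\swarrow Y$ is automatic since $x$ has a predecessor in $g_X$---no Lemma~\ref{r:ss} argument needed), but omits ``$x$ initial in both $g'_X$ and $g_X$''.  In that last case $\mathbf I(g'_X)$ is rebuilt from $\mu_0$ and can differ from $\mathbf I(g_X)$, so the reduction $g_X\swarrow Y$ in $H$ is not obviously valid.  The paper sidesteps this entirely via the slice-completeness shortcut, and your fallback paragraph (``it suffices to show $\Gamma_Y\cap[\mu_0,\eta_0]\neq\emptyset$'') would close the gap the same way; just be aware that this, rather than the reduction to $H$, is the argument actually doing the work in the endpoint case.
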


\begin{proof}
We set $g'_H \in H_0$ to be the main geodesic of $H_0$, which has initial and terminal markings $\textbf{I}(g'_{H}) = \mu_0$ and $\textbf{T}(g'_{H}) = \eta_0$, respectively, thus satisfying property (H1).  For property (H3), note that for each $g'_Y \in H_0$, we have built $\mathbf{I}(g'_Y)$ and $\mathbf{T}(g'_Y)$ by first finding geodesics $g'_X, g'_Z \in H_0$ such that $g'_X \swarrow Y \searrow g'_Z$, and then defining $\mathbf{I}(g'_Y) = \mathbf{I}(Y, g'_X)$ and $\mathbf{T}(g'_Y) = \mathbf{T}(Y, g'_Z)$.  In each case, we have shown these markings to be nonempty, implying that $g'_X \swarrow g'_Y \searrow g'_Z$.  Thus (H3) is satisfied.\\

To see property (H2) holds, let $g'_X, g'_Z \in H_0$ with $D(g'_X) = X, D(g'_Z)=Z$, and suppose $Y \subset S$ is a component domain of $(X, x)$ and $(Z,z)$ with $x \in g'_X$ and $z \in g'_Z$ such that $g'_X \swarrow Y \searrow g'_Z$.  We need to prove there exists a $g'_Y \in H_0$ with $g'_X \swarrow g'_Y \searrow g'_Z$.  In the case where either $\Gamma_X$ is initially truncated at $x$ or $\Gamma_Z$ is terminally truncated at $z$, we find $g_Y$ in the slices for those points of truncation.  We deal with the untruncated case slightly differently, so we begin with it.\\

First, suppose that $\Gamma_X$ and $\Gamma_Z$ are not initially and terminally truncated at $x$ and $z$, respectively---that is, $x$ and $z$ are not the initial and terminal vertices of $g'_X$ and $g'_Z$, respectively.  Then $\mathbf{I}(Y,g_X) = \mathbf{I}(Y, g'_X) \neq \emptyset$ and $\mathbf{T}(Y,g_Z) = \mathbf{T}(Y, g'_Z) \neq \emptyset$, which imply that $g_X \swarrow Y \searrow g_Z$.  Since $H$ is a hierarchy, it follows by definition that there is a unique geodesic $g_Y \in H$ with $D(g_Y)=Y$ with $g_X \swarrow g_Y \searrow g_Z$.  In this case, it follows that $\Gamma_Y$ is neither initially nor terminally truncated, implying that $g_Y=g'_Y \in H_0$.  Using the ending markings defined above, we have $\mathbf{I}(g'_Y) = \mathbf{I}(g_Y) = \mathbf{I}(Y,g_X) = \mathbf{I}(Y, g'_X)$ and $\mathbf{T}(g'_Y) = \mathbf{T}(g_Y) = \mathbf{T}(Y,g_Z) = \mathbf{T}(Y, g'_Z)$, implying $g'_X \swarrow g'_Y \searrow g'_Z$ by definition.\\

Now suppose that $\Gamma_X$ is initially truncated at $x$.  Then $(g'_X, x) \in \tau_{\mu_0}$ and property (S3) of slices implies that there is a pair $(g_Y, y) \in \tau_{\mu_0}$ with $D(g_Y) = Y$.  It follows then that $\mu_0 \in \Gamma_Y$; thus $\Gamma_Y$ is initially truncated and there is $g'_Y \in H_0$ with $D(g'_Y)=Y$.  Moreover, it follows from the inductive construction of $\mathbf{I}(g'_Y)$ above that $g'_X \swarrow g'_Y$.  A similar argument implies $g'_Y \searrow g'_Z$ if $\Gamma_Z$ is initially truncated at $z$.  We note that $g'_Y$ is unique because $g_Y \in H$ is unique by \cite{MM00}[Theorem 4.7(4)].\\

There are two mixed cases, where either $\Gamma_X$ or $\Gamma_Z$ is truncated, but not both; each can be handled in the same fashion as the other.  In the case where $\Gamma_X$ is truncated at $x$, we have already shown that there are $g_Y \in H$ and $g'_Y \in H_0$ with $D(g_Y) = D(g'_Y) = Y$ such that $g'_X \swarrow g'_Y$.  We have also shown that $Y \searrow g_Z$ since $\Gamma_Z$ is not truncated at $z$.   Since $Y$ supports a geodesic $g_Y \in H$, \cite{MM00}[Theorem 4.7(1)] implies that $g_Y \searrow g_Z$ and it follows from the above argument that $g'_Y \searrow g'_Z$.

\end{proof}

\paragraph{\textbf{Resolving the truncated hierarchy}} Having proved that $H_0$ is a hierarchy, we can now prove:

\begin{lemma} \label{r:trunc hier slice}
The resolution of slices $\tau_{\mu} = \tau_1 \rightarrow \cdots \rightarrow \tau_k = \tau_{\eta}$ of $H$ is also a resolution of slices of $H_0$. 
\end{lemma}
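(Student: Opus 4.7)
The plan is to show that the subsegment $\tau_a \to \cdots \to \tau_b$ of the given resolution of $H$, where $\mu_0 = \mu_a$ and $\eta_0 = \mu_b$, becomes a complete resolution of $H_0$ after translating each slice via truncation. For each $i \in [a,b]$, define $\tau'_i = \{(g'_Y, y) : (g_Y, y) \in \tau_i\}$, where $g'_Y \in H_0$ is the truncation of $g_Y$ from Subsection \ref{r:cut}. This translation is well-defined for two reasons. First, whenever $(g_Y, y) \in \tau_i$ with $i \in [a,b]$, the marking $\mu_i$ lies in $\Gamma_Y \cap [\mu_0, \eta_0]$, hence $g'_Y \in H_0$. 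Second, by the monotonicity of positions along a single geodesic through a resolution (used in the proof of Proposition \ref{r:resolutions}), if $(g_Y, v_{Y,\mu_0}) \in \tau_a$ and $(g_Y, v_{Y,\eta_0}) \in \tau_b$ then $v_{Y,\mu_0} \leq y \leq v_{Y,\eta_0}$, so $y$ is a vertex of $g'_Y$; when $g_Y$ is not (initially or terminally) truncated, the condition is vacuous.

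Next I would verify that each $\tau'_i$ is a complete slice of $H_0$. Properties (S1)--(S3) pass immediately from $\tau_i$: truncation preserves uniqueness of geodesics per domain, sends the bottom pair $(g_H, x_H)$ to $(g'_H, x_H)$, and leaves component domain relations unchanged. Completeness (S4) follows from completeness of $\tau_i$ together with the observation that any component domain $Y$ of a pair $(g_X, x) \in \tau_i$ supporting a geodesic $g_Y \in H$ must satisfy $\mu_i \in \Gamma_Y$, giving $g'_Y \in H_0$ and $(g'_Y, y) \in \tau'_i$. For the endpoints, $\tau'_a$ is the initial slice of $H_0$: every pair $(g'_Y, v_{Y,\mu_0}) \in \tau'_a$ has position equal to the initial vertex of $g'_Y$ by the very definition of initial truncation, which identifies $\tau'_a$ as the unique complete initial slice of $H_0$ with bottom pair $(g'_H, v_{H,\mu_0})$. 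A symmetric argument identifies $\tau'_b$ with the terminal slice of $H_0$.

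The main obstacle is verifying that each step $\tau'_i \to \tau'_{i+1}$ for $a \leq i < b$ is a forward elementary move of slices in $H_0$. The move $\tau_i \to \tau_{i+1}$ runs along some $g_W \in H$ from a vertex $w$ to its successor $w'$, and monotonicity again ensures that $w, w'$ remain successive vertices of $g'_W$. Writing $\sigma_i \subset \tau_i$ and $\sigma_{i+1} \subset \tau_{i+1}$ for the associated transition slices in $H$, their translations $\sigma'_i \subset \tau'_i$ and $\sigma'_{i+1} \subset \tau'_{i+1}$ satisfy $\tau'_i \setminus \sigma'_i = \tau'_{i+1} \setminus \sigma'_{i+1}$ automatically. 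The delicate point is that $\sigma'_i$ and $\sigma'_{i+1}$ are precisely the transition slices in $H_0$ for the move $(g'_W, w) \to (g'_W, w')$: conditions (E1), (E2) and their primed analogues refer to component domains and the restrictions $w|_Y, w'|_Y$, which are unaffected by truncation. The only genuine subtlety is that any pair $(g_Y, y_{Y,ter}) \in \sigma_i$ retired at step $i$ must have $\mu_{Y,ter} = \mu_i < \mu_b$, so $g'_Y$ is not terminally truncated and $y_{Y,ter}$ remains the terminal vertex of $g'_Y$ in $H_0$; the symmetric statement holds for pairs $(g_Y, y_{Y,int}) \in \sigma_{i+1}$ introduced at step $i+1$. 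Combined with the bijective correspondence between $H_0$-geodesics and those $H$-geodesics active somewhere in $[a,b]$, this ensures that $\sigma'_i$ coincides with the unique transition slice of $H_0$ with bottom pair $(g'_W, w)$, completing the identification of $\tau'_a \to \cdots \to \tau'_b$ with a resolution of $H_0$.
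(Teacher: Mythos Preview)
Your proposal is correct and follows essentially the same route as the paper. Both arguments translate each slice $\tau_i$ to $H_0$ by replacing $(g_Y,y)$ with $(g'_Y,y)$, then verify that each elementary move $\tau_i \to \tau_{i+1}$ along $g_W$ from $w$ to $w'$ remains an elementary move in $H_0$ by checking that the transition slices $\sigma, \sigma'$ survive truncation; the crux in both is your observation that a pair $(g_Y, y_{Y,ter}) \in \sigma_i$ retired strictly before $\mu_b$ cannot be terminally truncated, so its terminal vertex is preserved (and dually for $\sigma_{i+1}$). You are simply more careful than the paper on several points it dismisses as ``follows from the definitions'': the well-definedness of the translation via position monotonicity, completeness (S4), and the identification of $\tau'_a, \tau'_b$ with the initial and terminal slices of $H_0$. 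You also correctly make explicit that the relevant segment is $\tau_a \to \cdots \to \tau_b$, which the paper's statement leaves notationally ambiguous.
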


\begin{proof}
First of all, it follows from the definitions that each slice in the above resolution is a complete slice on $H_0$.  It suffices to prove that each move $\tau_i \rightarrow \tau_{i+1}$ is an elementary move of slices.\\

Since $\tau_i \rightarrow \tau_{i+1}$ is an elementary move along some geodesic $g_V \in H$ from $v$ to $v'$ where $v, v' \in g_V$, there are initial and terminal transition slices, $\sigma$ and $\sigma'$, respectively, such that $\sigma \subset \tau_i$, $\sigma' \subset \tau_{i+1}$, and $\tau_i \setminus \sigma = \tau_{i+1}\setminus \sigma'$.  Any geodesic $g_X$ involved in $\tau_i$ or $\tau_{i+1}$ has a truncation $g'_X \in H_0$ by definition.  Let $Y \subset S$ be such that $Y|_{v'} \neq \emptyset$ so that $(g_Y, y) \in \sigma$, where  $y$ is the terminal vertex of $g_Y$.  Then it follows from the definition of $\sigma$ that $\tau_i$ is the terminal slice of $\Gamma_Y$.  As such, $\Gamma_Y$ is not terminally truncated at $y$ and $y$ is the terminal vertex of $g'_Y$, putting $(g'_Y, y)$ in the $H_0$ initial transition slice from $\tau_i$ to $\tau_{i+1}$.  Similarly, if $Z|_{v} \neq \emptyset$ so that $(g_Z, z) \in \sigma'$, then $\tau_{i+1}$ is the initial slice of $\Gamma_Z$ and $(g'_Z, z)$ is in the $H_0$ terminal transition slice from $\tau_i$ to $\tau_{i+1}$.  That is, $\sigma$ and $\sigma'$ are the $H_0$-transition slices for $\tau_i \rightarrow \tau_{i+1}$, proving that it is an elementary move in $H_0$.\\

This proves that $\tau_{\mu} = \tau_1 \rightarrow \cdots \rightarrow \tau_k = \tau_{\eta}$ is a resolution of slices of $H_0$.
\end{proof}

Thus we have shown:

\begin{proposition} \label{r:loceff}
The subpath $[\mu_0, \eta_0] \subset \Gamma$ is a hierarchy path based on $H_0$.  In particular, subpaths of hierarchy paths are hierarchy paths.
\end{proposition}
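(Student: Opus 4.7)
The plan is to observe that Proposition \ref{r:loceff} follows almost directly by combining Lemmata \ref{r:trunc hier} and \ref{r:trunc hier slice} with the definition of a hierarchy path. Recall that $\Gamma$ was produced from some resolution of slices $\tau_1 \rightarrow \cdots \rightarrow \tau_N$ of $H$, and for each $i$ the marking $\mu_i \in \Gamma$ is compatible with $\tau_i$. First I would identify the indices $a \leq b$ such that $\tau_a = \tau_{\mu_0}$ and $\tau_b = \tau_{\eta_0}$, so that the subpath $[\mu_0, \eta_0]$ corresponds precisely to the subsequence of slices $\tau_a \rightarrow \cdots \rightarrow \tau_b$.

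Next, I would invoke Lemma \ref{r:trunc hier} to conclude that $H_0$ is a genuine hierarchy between $\mu_0$ and $\eta_0$, and Lemma \ref{r:trunc hier slice} to conclude that the subsequence $\tau_a \rightarrow \cdots \rightarrow \tau_b$ is itself a resolution of complete slices of $H_0$ by forward elementary moves. This is the content that has already been verified: each $\tau_i$ is a complete slice on $H_0$ (since every geodesic appearing in $\tau_i$ truncates to a geodesic in $H_0$, with corresponding initial/terminal markings agreeing with those constructed in the passage to $H_0$), and the transition from $\tau_i$ to $\tau_{i+1}$ remains a forward elementary move because the transition slices $\sigma, \sigma'$ used in $H$ are also the transition slices in $H_0$.

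To finish, I would note that each $\mu_i \in [\mu_0, \eta_0]$ is compatible with $\tau_i$ by construction of $\Gamma$, and compatibility depends only on the slice, not on which hierarchy the slice is viewed as sitting in. Therefore the sequence $\mu_0 \rightarrow \mu_1 \rightarrow \cdots \rightarrow \mu_k = \eta_0$ of markings compatible with the resolution $\tau_a \rightarrow \cdots \rightarrow \tau_b$ of $H_0$ is, by Definition, a hierarchy path based on $H_0$. The second sentence of the proposition is then immediate.

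The main conceptual obstacle has already been surmounted in Subsection \ref{r:cut}, namely verifying that the truncation operation on the geodesics of $H$ preserves tightness, the subordinancy relations, and completeness of slices. So the only thing that could go subtly wrong here is a bookkeeping issue at the endpoints: one must check that $\tau_{\mu_0}$ and $\tau_{\eta_0}$ are indeed the initial and terminal slices of $H_0$ in the sense of Definition \ref{r:slice def}, i.e. that their distinguished bottom pairs are $(g'_H, v_{H,\mu_0})$ and $(g'_H, v_{H,\eta_0})$ respectively. This follows from the fact that $g'_H$ is the truncation of the main geodesic $g_H$ of $H$, together with the definition of the truncation procedure at the initial and terminal slices.
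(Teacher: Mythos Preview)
Your proposal is correct and follows essentially the same approach as the paper: the paper's proof of Proposition \ref{r:loceff} is literally the phrase ``Thus we have shown,'' since the content has already been established in Lemmata \ref{r:trunc hier} and \ref{r:trunc hier slice}. Your added bookkeeping about compatibility of markings and the endpoint slices being the initial and terminal slices of $H_0$ is a reasonable clarification of details the paper leaves implicit.
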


As an immediate corollary of Proposition \ref{r:loceff} and \cite{MM00}[Theorem 6.10], we have:

\begin{corollary}\label{r:hier qg}
Hierarchy paths are uniform quasigeodesics in $\MM(S)$.
\end{corollary}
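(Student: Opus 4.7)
The plan is to upgrade the global efficiency statement for full hierarchy paths (the marking-complex analogue of Theorem \ref{r:eff aug hier}, i.e.\ \cite{MM00}[Theorem 6.10]) to a uniform quasigeodesic statement by applying it along every subpath. Recall that \cite{MM00}[Theorem 6.10] gives constants $C_1, C_2 > 0$ such that for any hierarchy $H$ between $\mu, \eta \in \MM(S)$, the length of the hierarchy satisfies
\[ |H| \asymp_{(C_1, C_2)} \sum_{d_Y(\mu,\eta) > K} d_Y(\mu, \eta) \asymp d_{\MM(S)}(\mu, \eta), \]
where the second coarse equality is the Masur–Minsky distance formula (Theorem \ref{r:MM distance}). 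Combined with the fact that any resolution $\tau_\mu = \tau_0 \to \cdots \to \tau_N = \tau_\eta$ of $H$ has length $N \leq |H|$ and produces a hierarchy path $\Gamma$ of length at most $N$ times a uniform constant (the marking-complex analogue of Lemma \ref{r:elm move distance}), this shows $|\Gamma| \asymp d_{\MM(S)}(\mu, \eta)$. Hence $\Gamma$ is globally efficient.

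To obtain local efficiency, I fix arbitrary points $\mu_0, \eta_0 \in \Gamma$ and invoke Proposition \ref{r:loceff}: the subpath $[\mu_0, \eta_0] \subset \Gamma$ is itself a hierarchy path, based on the truncated hierarchy $H_0$ built in Subsection \ref{r:cut}. Applying the previous paragraph to $\mu_0, \eta_0$ and $H_0$ yields
\[ |[\mu_0,\eta_0]| \asymp_{(C_1,C_2)} |H_0| \asymp d_{\MM(S)}(\mu_0, \eta_0), \]
with the same universal constants, since the coarse equalities from \cite{MM00}[Theorem 6.10] and Theorem \ref{r:MM distance} depend only on the topology of $S$. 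Combining the upper and lower bounds, there exist uniform constants $Q_1, Q_2 > 0$ such that $\frac{1}{Q_1} d_{\MM(S)}(\mu_0, \eta_0) - Q_2 \leq |[\mu_0, \eta_0]| \leq Q_1 d_{\MM(S)}(\mu_0, \eta_0) + Q_2$, which is precisely the statement that $\Gamma$ is a uniform $(Q_1, Q_2)$-quasigeodesic in $\MM(S)$.

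The only potential obstacle is verifying that the constants produced by applying \cite{MM00}[Theorem 6.10] to the truncated hierarchy $H_0$ are genuinely the same as those for the original $H$; but since those constants depend only on $\xi(S)$ and the threshold $K$ (both of which are unchanged by truncation), this is automatic. Thus the entire argument reduces to plugging Proposition \ref{r:loceff} into the global distance formula at each subpath, and the corollary follows.
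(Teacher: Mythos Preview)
Your proof is correct and is precisely the argument the paper has in mind: the paper states this corollary as ``an immediate corollary of Proposition \ref{r:loceff} and \cite{MM00}[Theorem 6.10]'', and you have simply unpacked that sentence by applying the global efficiency estimate to the truncated hierarchy $H_0$ supplied by Proposition \ref{r:loceff} on each subpath, noting that all constants depend only on $S$.
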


\begin{remark}
The fact that hierarchy paths are uniform quasigeodesics is well-known to the experts, but has not, to our knowledge, ever been recorded.  We note that Proposition \ref{r:loceff} is a stronger statement than necessary for this fact.
\end{remark}

\subsection{Structure of active segments} \label{r:struct of act}

Given a hierarchy path $\Gamma$ based on a hierarchy $H$ between $\mu, \eta \in \MM(S)$ and a nonannular subsurface $Y$ with nonempty active segment $\Gamma_Y$, every marking $\mu \in \Gamma_Y$ naturally restricts to a complete, clean marking $\mu|_Y \in \MM(Y)$.  In the case that $Y$ is an annulus with core $\alpha$, $\mu|_Y = t_{\alpha}$, where $t_{\alpha}$ is the transversal to $\alpha$ in $\mu$.  In this subsection, we prove that the restriction of $\Gamma_Y$ to $\MM(Y)$ coincides with a hierarchy path naturally defined from the restricted hierarchy for $\Gamma_Y$ constructed in Proposition \ref{r:loceff}.  For the purposes of this subsection, a hierarchy and hierarchy path on an annular domain are just a geodesic.\\

By Proposition \ref{r:loceff}, we may consider $\Gamma_Y$ as a hierarchy path based on $H'$, so we may suppose without loss of generality that $\Gamma = \Gamma_Y$, $H = H'$, and $\mu_{Y, int} = \mu$ and $\mu_{Y, ter} = \eta$.  Let $H_Y = \{g_Z \in H | Z \subseteq Y\}$ be the collection of all tight geodesic in $H$ supported on subsurfaces of $Y$ with the same initial and terminal markings as in $H$.  Note that if $g_Z \in H_Y$ with $D(g_Z) = Z \subset Y$, then $\mathbf{I}(g_Z)|_Z =\mathbf{I}(g_Z)$ and $\mathbf{T}(g_Z)|_Z = \mathbf{T}(g_Z)$.  

\begin{lemma} \label{r:structure lemma 1}
 $H_Y$ is a hierarchy between $\mu_Y=\mu|_Y$ and $\eta_Y=\eta|_Y$.
\end{lemma}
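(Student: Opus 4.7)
The plan is to verify the three defining properties (H1)--(H3) of Definition \ref{r:hier def} for $H_Y$ with $g_Y$ serving as the main geodesic; tightness of each geodesic in $H_Y$ is inherited directly from $H$.

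For (H1), I would observe that after the reduction via Proposition \ref{r:loceff} we may take $H$ to be the truncated hierarchy with $\mathbf{I}(H) = \mu_{Y,int} = \mu$ and $\mathbf{T}(H) = \mu_{Y,ter} = \eta$. Following the inductive recipe used in the construction of the truncated hierarchy down the subordinacy chain from $g_H$ to $g_Y$, one obtains $\mathbf{I}(g_Y) = \mu|_Y = \mu_Y$ and symmetrically $\mathbf{T}(g_Y) = \eta_Y$, so $g_Y$ plays the role of main geodesic with the correct end markings.

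For (H2), let $g_X, g_Z \in H_Y$ and let $W$ be a subsurface which is a component domain of both such that $g_X \swarrow W \searrow g_Z$. Then $W \subseteq X \subseteq Y$, and because the marking data on each geodesic of $H_Y$ agrees with that in $H$, the same relations hold in $H$. Property (H2) for $H$ then produces a unique $g_W \in H$ with $g_X \swarrow g_W \searrow g_Z$, and $g_W \in H_Y$ because $W \subseteq Y$.

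The main obstacle is (H3). Given $g_Z \in H_Y$ with $g_Z \neq g_Y$, which forces $Z \subsetneq Y$, property (H3) for $H$ produces $g_X, g_W \in H$ with $g_X \swarrow g_Z \searrow g_W$; what remains is to confirm $X, W \subseteq Y$ so that both geodesics live in $H_Y$. I would invoke the standard-hierarchy analogue of Theorem \ref{r:sigma}(3), namely \cite{MM00}[Theorem 4.7(3)], which represents $\Sigma^-(Z)$ in $H$ as a $\swarrow$-chain $g_H = g_{X_n} \swarrow \cdots \swarrow g_{X_0} = g_X$ with $Z \subseteq X_i$ for every $i$. The key step is to show that $g_Y$ appears in this chain: since $\mathbf{I}(g_Y) = \mu|_Y$ is a complete marking on $Y$ and $Z \subseteq Y$, the further restriction $\mathbf{I}(g_Y)|_Z$ is a nonempty partial marking on $Z$, so $g_Y \in \Sigma^-(Z)$, and thus $g_Y = g_{X_j}$ for some $j$. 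The nested structure of the chain then forces $X = X_0 \subseteq X_j = Y$, and the symmetric argument using $\Sigma^+(Z)$ yields $W \subseteq Y$. The delicate point to monitor is the nonvanishing of $\mathbf{I}(g_Y)|_Z$ for every $Z \subsetneq Y$ supporting a geodesic in $H_Y$, which requires that $\mu|_Y$ fills enough of $Y$ to meet every such $Z$ essentially; this is exactly what was guaranteed by the construction of the truncated hierarchy in Proposition \ref{r:loceff}.
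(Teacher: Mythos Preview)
Your argument is correct, and it establishes (H3) by a genuinely different mechanism than the paper's. The paper exploits the slice resolution directly: since after truncation $\Gamma = \Gamma_Y$, every slice, and in particular the initial slice $\tau_{Z,int}$ of the active segment of any $Z \subsetneq Y$, contains a pair on $g_Y$; Lemma \ref{r:ss} then reads off a $\swarrow$-chain $g_Y \swarrow g_{X_{n-1}} \swarrow \cdots \swarrow g_Z$ living entirely inside $Y$, and similarly a $\searrow$-chain terminating at $g_Y$. Your route bypasses Lemma \ref{r:ss} entirely and argues at the level of the $\Sigma^\pm$ structure from \cite{MM00}[Theorem 4.7]: completeness of $\mu|_Y$ forces $g_Y \in \Sigma^-(Z)$, and the nesting of domains along the $\swarrow$-chain then traps the immediate predecessor of $g_Z$ below $Y$. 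One small indexing slip: in the $\Sigma^-(Z)$ chain, the bottom element is $g_Z$ itself (Theorem 4.7(3)), so $g_X$ sits at level $1$ rather than $0$; this does not affect the conclusion, since $g_Y$ lies at some level $j \geq 1$ and the domains nest. Your approach is a bit more self-contained, while the paper's approach reuses the slice machinery that drives the rest of the appendix.
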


\begin{proof}
In the case that $Y$ is an annulus with core $\alpha$, $H_Y = \{g_{Y}\}$ and the conclusion is obvious.  Suppose $Y$ is nonannular.  Let $g_Y \in H_Y$ be the base geodesic of $H_Y$, with $\mathbf{I}(g_Y) = \mu_Y$ and $\mathbf{I}(g_Y) = \eta_Y$ by definition.  Let $\tau_{int} \rightarrow \cdots \rightarrow \tau_{ter}$ be the sequence of elementary moves of slices which give $\Gamma$.  Let $g_Z \in H'$ with $Z \subset Y$ and suppose $(g'_Z, z)  \in \tau_{Z, int}$, where $\tau_{Z, int}$ is a initial slice of the active segment of $Z$ along $\Gamma$, namely $\Gamma_Z$ .  Since $\Gamma = \Gamma_Y$, there is a $y \in g_Y$ with $(g_Y, y) \in \tau$ and Lemma \ref{r:ss} implies there is a sequence of $\{g_{X_i}\}_{i=1}^n \subset H$, with $X_n = Y$, $X_1 = Z$, and $g_Y \swarrow g_{X_{n-1}} \swarrow \cdots \swarrow g_Z$.  Similarly, $g_Z \searrow \cdots \searrow g_Y$.  In particular, all geodesics in $H_Y$ other than $g_Y$ are directly forward and backward subordinate to other geodesics in $H_Y$.  It follows easily from the definitions that $H_Y$ is a hierarchy between $\mu_Y$ and $\eta_Y$.
\end{proof}

Consider the resolution $\tau_{\mu} = \tau_1 \rightarrow \cdots \rightarrow \tau_N = \tau_{\eta}$ of slices of $H$ which gives $\Gamma$.  For each $\tau_i$ in this resolution,  let $\mu_i \in \Gamma$ be its corresponding marking and set $\tau_{Y,i} = \{(g_Z, z)| (g_Z, z) \in \tau_i \text{ and } g_Z \in H_Y\}$.  The set of $\{\tau_{Y,i}\}_{i=1}^N$ possibly contains redundancies corresponding to elementary moves along $\tau_{\mu} = \tau_1 \rightarrow \cdots \rightarrow \tau_N = \tau_{\eta}$ which make progress on geodesics whose domains of support are not contained in $Y$; removing these redundancies and relabeling as necessary gives a sequence of slices $\tau_{\mu_Y}=\tau_{Y,1} \rightarrow \cdots \rightarrow \tau_{Y,N'} = \tau_{\eta_Y}$.  We may similarly reparametrize $\mu|_Y = (\mu_1)|_Y \rightarrow \cdots \rightarrow (\mu_N)|_Y = \eta_Y$ to $\mu_Y = \mu_{Y,1} \rightarrow \cdots \rightarrow \mu_{Y, N'} =\eta_Y$, which we denote by $\left(\Gamma_Y\right)\big|_Y$.  It follows from the definitions that $(\mu_i)|_Y$ is compatible with $\tau_{Y,i}$.

\begin{lemma}  \label{r:structure lemma 2}
The sequence $\mu_Y = \mu_{Y,1} \rightarrow \cdots \rightarrow \mu_{Y, N'} =\eta_Y$  is a hierarchy path based on $H_Y$.
\end{lemma}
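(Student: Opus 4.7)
The plan is to verify that $\tau_{Y,1} \rightarrow \cdots \rightarrow \tau_{Y,N'}$ is a resolution of slices of the hierarchy $H_Y$ (from Lemma \ref{r:structure lemma 1}) and that each $\mu_{Y,i}$ is a marking compatible with $\tau_{Y,i}$; by the definition of a hierarchy path, this suffices to establish the conclusion.

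First, I will show each $\tau_{Y,i}$ is a complete slice of $H_Y$ with bottom pair $(g_Y, y_i)$, where $y_i$ is the unique vertex of $g_Y$ with $(g_Y, y_i) \in \tau_i$, which exists since $\Gamma = \Gamma_Y$. Property (S1) is inherited from $\tau_i$. For (S3), given any $(g_Z, z) \in \tau_{Y,i}$ with $Z \subsetneq Y$, iterating (S3) in $\tau_i$ produces a chain of pairs in which $Z$ is nested successively inside larger domains, terminating at the bottom pair $(g_H, h) \in \tau_i$. By the uniqueness of the pair in $\tau_i$ supported on each domain (\cite{MM00}[Theorem 4.7(4)]) and the fact that $(g_Y, y_i) \in \tau_i$, this chain must pass through $(g_Y, y_i)$, so truncating at $(g_Y, y_i)$ yields a witness in $\tau_{Y,i}$ for (S3) as a slice of $H_Y$. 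Completeness (S4) follows by applying the same argument to each component domain of each pair.

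Second, I will identify $\tau_{Y,1}$ and $\tau_{Y,N'}$ with the initial and terminal slices of $H_Y$, and verify that each transition $\tau_{Y,j} \rightarrow \tau_{Y,j+1}$ is an elementary move in $H_Y$. Since $\tau_1 = \tau_\mu$ is the initial slice of $H$, every pair $(g_X, x) \in \tau_1$ has $x$ equal to the initial vertex of $g_X$; restricting to those $g_X$ with $X \subseteq Y$ identifies $\tau_{Y,1}$ with the initial slice of $H_Y$ built inductively from $(g_Y, y_{Y,\mathrm{int}})$ as in Definition \ref{r:slice def}. The terminal case is symmetric. For elementary moves, after removing redundancies, each non-trivial transition $\tau_{Y,j} \rightarrow \tau_{Y,j+1}$ arises from an elementary move $\tau_i \rightarrow \tau_{i+1}$ in $H$ along some $g_V \in H$. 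Non-triviality of the restriction forces the $H$-transition slices $\sigma \subset \tau_i$ and $\sigma' \subset \tau_{i+1}$ to contain a pair whose support lies in $Y$; chasing upward through the nested structure of the transition slices, using (S3) and the uniqueness of pairs per domain, forces $V \subseteq Y$, whence $g_V \in H_Y$. The slices $\sigma, \sigma'$ then consist entirely of pairs with support contained in $V \subseteq Y$, so they lie inside $\tau_{Y,i}, \tau_{Y,i+1}$ respectively and serve as the $H_Y$-transition slices, confirming the move is elementary in $H_Y$.

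Third, I will verify marking compatibility and endpoint agreement. By construction, $\mu_{Y,i} = (\mu_i)|_Y$ retains exactly those transverse pairs of $\mu_i$ whose base curves lie in $Y$; its base pants decomposition is the set of vertices appearing in the nonannular geodesics of $\tau_{Y,i}$, and its transversals coincide with the annular pairs of $\tau_{Y,i}$ whose cores lie in $Y$. This is precisely the definition of compatibility with $\tau_{Y,i}$. The endpoint identifications $\mu_{Y,1} = \mu_Y$ and $\mu_{Y,N'} = \eta_Y$ hold by construction of $\mu_Y, \eta_Y$ as the restrictions of $\mu$ and $\eta$ to $Y$. The main obstacle is the second step: verifying that non-redundant transitions correspond to elementary moves along geodesics in $H_Y$, which requires careful bookkeeping of transition slices against the nested forest structure of $\tau_i$ and a controlled use of the uniqueness clause in \cite{MM00}[Theorem 4.7(4)].
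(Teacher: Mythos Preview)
Your proposal is correct and follows essentially the same approach as the paper's proof: both verify that the restricted slices $\tau_{Y,i}$ are complete slices of $H_Y$, that each non-redundant transition $\tau_{Y,j}\rightarrow\tau_{Y,j+1}$ is an elementary move along a geodesic $g_V\in H_Y$, and that the transition slices $\sigma,\sigma'$ for the $H$-move serve verbatim as the $H_Y$-transition slices. Your treatment is in fact more thorough than the paper's---you spell out the slice axioms (S1)--(S4), identify the initial and terminal restricted slices explicitly, and record marking compatibility, whereas the paper dispatches these in a sentence.

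One small point: your justification that a non-redundant move must occur along $g_V$ with $V\subseteq Y$ (``chasing upward through the nested structure of the transition slices\ldots forces $V\subseteq Y$'') is somewhat imprecise as written. Chasing upward in $\sigma$ from a pair with support $W\subseteq Y$ only shows $W\subseteq V$; what actually forces the nesting direction is the standing assumption $\Gamma=\Gamma_Y$ together with the fact (from Lemma~\ref{r:lemma 5.2}) that the positions in $\sigma$ and $\sigma'$ assemble to $v\cup v'$, which are disjoint simplices. This rules out the possibility that $(g_Y,y_j)$ lies in $\sigma$ while $(g_Y,y_{j+1})$ lies in $\sigma'$ with $Y\subsetneq V$, leaving only the redundant case (where everything below $Y$ is preserved) when $V\not\subseteq Y$. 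The paper's proof simply asserts the conclusion (``realizes forward progress\ldots along some geodesic $g_Z\in H_Y$'') without this justification, so your argument is no less complete than the original.
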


\begin{proof}
If $Y$ is an annulus, then $\mu_Y = \mu_{Y,1} \rightarrow \cdots \rightarrow \mu_{Y, N'} =\eta_Y$ is the geodesic $g_Y$, satisfying the claim.  Otherwise, it suffices to show that $\tau_{Y, i} \rightarrow \tau_{Y,i+1}$ is an elementary move on slices of $H_Y$ for each $1 \leq i \leq N'-1$.  Each such pair $\tau_{Y, i} \rightarrow \tau_{Y,i+1}$ is restricted from an elementary move of slices $\tau_j \rightarrow \tau_{j+1}$.  Since $\tau_j$ and $\tau_{j+1}$ are complete slices on $S$, it follows that $\tau_{Y,i}$ and $\tau_{Y,i+1}$ are complete slices on $Y$.  Having removed redundancies, $\tau_j \rightarrow \tau_{j+1}$ realizes forward progress from $z$ to $z'$ along some geodesic $g_Z \in H_Y$ .  Let $\sigma\subset \tau_j$ and $\sigma' \subset \tau_{j+1}$ with $\tau_j \setminus \sigma = \tau_{j+1} \setminus \sigma'$ be the transition slices for $\tau_j \rightarrow \tau_{j+1}$.  By definition \cite{MM00}[Section 5], the domains supporting geodesics $\sigma$ and $\sigma'$ are component domains of $g_Z \setminus z'$ and $g_Z \setminus z$, respectively.  It follows from the definition that $\sigma \subset \tau_{Y,i}$ and $\sigma' \subset \tau_{Y,i+1}$ with $\tau_{Y,i} \setminus \sigma = \tau_{Y,i+1} \setminus \sigma'$ are the transition slices the transition from $z$ to $z'$ along $g_Z$ in $H_Y$.  Thus  $\tau_{Y,i} \rightarrow \tau_{Y,i+1}$ is an elementary move of slices in $H_Y$, completing the proof.
\end{proof}



Combined with Proposition \ref{r:loceff}, we have the following proposition about the structure of active segments of hierarchy paths, which resembles \cite{Raf14}[Theorem 5.3] for Teichm\"uller geodesics:

\begin{proposition}[The structure of active segments] \label{r:structure prop}
Let $K>0$ be the large link constant from Lemma \ref{r:large link condition} and $\Gamma$ a hierarchy path based on a hierarchy $H$.  Let $\Gamma_Y \subset \Gamma$ be the active segment of $g_Y \in H$ with $D(g_Y) = Y \subset S$ and $H_Y$ the corresponding restricted hierarchy in $\MM(Y)$.  Then the following hold:

\begin{enumerate}
\item For any segment $[\mu_0, \eta_0] \subset \Gamma$ with $[\mu_0, \eta_0] \cap \Gamma_Y = \emptyset$, we have $d_Y(\mu_0, \eta_0) <K $
\item The restriction of $\Gamma_Y$ to $\MM(Y)$ can be reparametrized to a hierarchy path based on $H_Y$.
\end{enumerate}
\end{proposition}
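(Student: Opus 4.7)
The plan is to handle the two parts essentially independently, with part (2) being a direct assembly of Lemmas \ref{r:structure lemma 1} and \ref{r:structure lemma 2}, and part (1) following from Proposition \ref{r:loceff} combined with the contrapositive of the Large Links Lemma \ref{r:large link condition}.

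For part (1), I will start with a subsegment $[\mu_0, \eta_0] \subset \Gamma$ satisfying $[\mu_0, \eta_0] \cap \Gamma_Y = \emptyset$. By Proposition \ref{r:loceff}, this subsegment is itself a hierarchy path based on a truncated hierarchy $H_0$, whose set of geodesics is precisely
\[ H_0 = \{g'_Z \mid \Gamma_Z \cap [\mu_0, \eta_0] \neq \emptyset\} \]
with appropriately inherited initial and terminal marking data. Since by hypothesis $\Gamma_Y \cap [\mu_0, \eta_0] = \emptyset$, no geodesic supported on the domain $Y$ appears in $H_0$ (by uniqueness of supporting domain, \cite{MM00}[Theorem 4.7(4)], applied to $H_0$). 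The contrapositive of Lemma \ref{r:large link condition}, applied to the hierarchy $H_0$ between $\mu_0$ and $\eta_0$, then immediately yields $d_Y(\mu_0, \eta_0) \leq K$, as desired.

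For part (2), the work has essentially been done. Lemma \ref{r:structure lemma 1} shows that $H_Y$ is a hierarchy between $\mu|_Y$ and $\eta|_Y$, and Lemma \ref{r:structure lemma 2} shows that after removing the redundant steps corresponding to elementary moves along geodesics in $H \setminus H_Y$, the natural restriction $(\Gamma_Y)|_Y$ reparametrizes to a sequence of elementary moves of slices of $H_Y$. Since the endpoints of this reparametrized sequence coincide with $\mathbf{I}(H_Y) = \mu|_Y$ and $\mathbf{T}(H_Y) = \eta|_Y$, this reparametrization is by definition a hierarchy path based on $H_Y$.

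The main conceptual subtlety, which is already handled by Proposition \ref{r:loceff}, is ensuring that the subpath structure interacts cleanly with the truncation: specifically, that no stray geodesic supported on $Y$ sneaks into $H_0$ via marking data inherited from geodesics elsewhere in $H$. But this is ruled out by construction, since $H_0$ contains only truncations of geodesics whose active segments genuinely overlap $[\mu_0, \eta_0]$. Once that point is in hand, both parts of the proposition are short.
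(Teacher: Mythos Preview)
Your proposal is correct and takes essentially the same approach as the paper: for (1) you invoke Proposition \ref{r:loceff} to realize the disjoint subsegment as a hierarchy path, observe that $Y$ supports no geodesic in the truncated hierarchy, and apply the contrapositive of Lemma \ref{r:large link condition}; for (2) you assemble Lemmata \ref{r:structure lemma 1} and \ref{r:structure lemma 2}. The only cosmetic difference is that the paper phrases (1) in terms of the two maximal components of $\Gamma \setminus \Gamma_Y$ rather than an arbitrary disjoint segment, but your direct formulation is equivalent and arguably cleaner.
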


\begin{proof}
Let $\Gamma_1 = [\mu, \mu_1], \Gamma_2 = [\mu_2, \eta] \subset \Gamma$ be the two components of $\Gamma \setminus \Gamma_Y$.  These are both hierarchy paths by Proposition \ref{r:loceff}, based on hierarchies $H_1$ and $H_2$, respectively.  Since $g_Y$ is in both $H_Y$ and $H$, it follows that neither $H_1$ nor $H_2$ contains a geodesic supported on $Y$.  Thus Lemma \ref{r:large link condition} implies that $d_Y(\mu, \mu_1), d_Y(\mu_2, \eta) < K$, completing the proof of (1).\\
  
(2) follows directly from Lemmata \ref{r:structure lemma 1} and \ref{r:structure lemma 2}.
\end{proof}

\end{document}